\newcommand{\newcom}{\newcommand}
\newcom{\al}{\alpha}
\newcom{\be}{\beta}
\newcom{\eps}{\epsilon}
\newcom{\veps}{\varepsilon}
\newcom{\e}{\varepsilon}
\newcom{\de}{\delta}
\newcom{\ga}{\gamma}
\newcom{\Ga}{\Gamma}
\newcom{\ka}{\kappa}
\newcom{\Lam}{\Lambda}
\newcom{\lam}{\lambda}
\newcom{\Om}{\Omega}
\newcom{\om}{\omega}
\newcom{\Si}{\Sigma}
\newcom{\si}{\sigma}
\newcom{\tht}{\theta}
\newcom{\rth}{\mathring\theta}
\newcom{\bth}{\bar\theta}
\newcom{\dtri}{\nabla}
\newcom{\tri}{\triangle}
\newcom{\oo}{\infty}
\newcom{\vphi}{\varphi}
\newcom{\nna}{\big\langle \nabla \big\rangle}
\newcom{\cB}{{\mathcal B}}
\newcom{\cC}{{\mathcal C}}
\newcom{\cD}{{\mathcal D}}
\newcom{\cF}{{\mathcal F}}
\newcom{\cH}{{\mathcal H}}
\newcom{\cL}{{ S}}
\newcom{\cM}{{\mathcal M}}
\newcom{\cN}{{\mathcal N}}
\newcom{\cP}{{\mathcal P}}
\newcom{\cS}{{\mathcal S}}
\newcom{\cQ}{{\mathcal Q}}
\newcom{\cT}{{\mathcal T}}
\newcom{\cY}{{\mathcal Y}}
\newcom{\cZ}{{\mathcal Z}}
\newcom{\R}{\mathbb R}
\newcom{\bbT}{{\mathbb{T}}}
\newcom{\BT}{{\mathbb{T}}^2}
\newcom{\Z}{\mathbb Z}
\newcom{\C}{\mathbb C}
\newcom{\E}{\mathbb E}
\newcom{\hha}{\hat{\mathbf h}}
\newcom{\ha}{\hat{h}}
\newcom{\ul}{\underline}
\newcommand{\vc}[1]{{\mathbf #1}}
\newcom{\ve}{\vc{e}}
\newcom{\vN}{\vc{N}}
\newcom{\vn}{\vc{n}}
\newcom{\vt}{\vc{t}}
\newcom{\ut}{\underline\vt}
\newcom{\vG}{\vc{G}}
\newcom{\vF}{\vc{F}}
\newcom{\vf}{\vc{f}}
\newcom{\vg}{\vc{g}}
\newcom{\vq}{\vc{q}}
\newcom{\vu}{\vc{u}}
\newcom{\vv}{\vc{v}}
\newcom{\vw}{\vc{w}}
\newcom{\vW}{\vc{W}}
\newcom{\vX}{\vc{X}}
\newcom{\vb}{\vc{b}}
\newcom{\vh}{\vc{h}}
\newcom{\vz}{\vc{z}}
\newcom{\fs}{\mathfrak{s}}
\newcom{\vup}{\vu^{+}}
\newcom{\vum}{\vu^{-}}
\newcom{\vvp}{\vv^{+}}
\newcom{\vvm}{\vv^{-}}
\newcom{\vbp}{\vb^{+}}
\newcom{\vbm}{\vb^{-}}
\newcom{\vhp}{\vh^{+}}
\newcom{\vhm}{\vh^{-}}
\newcom{\Omp}{{\Om^+}}
\newcom{\Omm}{{\Om^-}}
\newcom{\vupm}{{\vu^{\pm}}}
\newcom{\vvpm}{{\vv^{\pm}}}
\newcom{\vbpm}{{\vb^{\pm}}}
\newcom{\vhpm}{{\vh^{\pm}}}
\newcom{\vwp}{{\vc{w}^+}}
\newcom{\vwm}{{\vc{w}^-}}
\newcom{\vwpm}{{\vc{w}^{\pm}}}
\newcom{\Ompm}{{\Omega^{\pm}}}
\newcom{\vom}{\boldsymbol{\omega}}
\newcom{\vvap}{\vc{\varpi}}
\newcom{\vop}{\vom^{+}}
\newcom{\vnu}{\vc{\nu}}
\newcom{\vopm}{\vom^{\pm}}
\newcom{\vjp}{\vj^+}
\newcom{\vjm}{\vj^-}
\newcom{\vjpm}{\vj^{\pm}}
\newcom{\vj}{\boldsymbol{\xi}}
\newcom{\Ds} {\langle\nabla\rangle^{s-\f12}}
\newcom{\ds}{{\rm d} s}
\newcom{\f}{\frac}
\newcom{\di}{\displaystyle\int}
\newcom{\dl}{\displaystyle\lim}
\newcom{\ov}{\overline}
\newcom{\sset}{\subset}
\newcom{\wt}{\widetilde}
\newcom{\pa}{\partial}
\newcom{\p}{\partial}
\newcom\na{\nabla}
\newcom{\suml}{\sum\limits}
\newcom{\supl}{\sup\limits}
\newcom{\intl}{\int\limits}
\newcom{\infl}{\inf\limits}
\newcom{\disp}{\displaystyle}
\newcom{\non}{\nonumber}
\newcom{\no}{\noindent}
\newcom{\QED}{$\square$}
\def\div{\mathop{\rm div}\nolimits}
\def\eqdefa{\buildrel\hbox{\footnotesize def}\over =}
\newtheorem{athm}{\bf \t}[section]
\newenvironment{thm} [1] {\def\t{#1}\begin{athm} \bf \rm} {\end{athm}}
\newcom{\bthm}{\begin{thm}}\newcom{\ethm}{\end{thm}}
\newtheorem{theorem}{Theorem}[section]
\newtheorem{lemma}{Lemma}[section]
\newtheorem{remark}{Remark}[section]
\newtheorem{proposition}{Proposition}[section]
\numberwithin{equation}{section}
\begin{document}
\title[Immersed Boundary Problem with Bending and Stretching Energy]{Stability of the Stokes Immersed Boundary Problem with Bending and Stretching Energy}
\author{Hui Li}
\address{Department of Mathematics, Zhejiang University, Hangzhou 310027, China}
\email{lihui92@zju.edu.cn}
\begin{abstract}
	We study the motion of a 1-D closed elastic string with bending and stretching energy immersed in a 2-D Stokes flow. In this paper we introduce the curve’s tangent angle function and the stretching function to describe the deferent deformations of the elastic string. These two functions are defined on the arc-length coordinate and the material coordinate respectively. With the help of the fundamental solution of the Stokes equation, we reformulate the problem into a parabolic system which is called the contour dynamic system. Under the non-self-intersecting and well-stretched assumptions on initial configurations, we establish the local well-posedness of the free boundary problem in Sobolev space. When the initial configurations are sufficiently close to the equilibrium state (i.e. an evenly parametrized circle), we prove that the solutions can be extended globally and the global solutions will converge to the equilibrium state exponentially as t $\to +\infty$.
\end{abstract}
\maketitle
\section{Introduction}
\subsection{Presentation of the Problem}
This paper is concerned with the hydrodynamics on the moving surface of a bilayer membrane immersed in a 2-D Stokes flow. Bilayer membranes are the outer layer of living cells whose thickness is much smaller than the length scale of the cell. The membranes undergo two different elastic deformations: bending and stretching. In general, we ignore the inertia of the membrane, and regard the membrane as a mathematical surface. 

The membranes we considered equip with the following free energy:
\begin{align*}
	E=&\underbrace{\int_\Gamma\left(\frac{\mathfrak c_{1}}{2}(H-B)^{2}+\mathfrak c_{2} K\right) d \sigma}_{e_{H}}+\underbrace{\int_{\Gamma_{op}}\frac{\mathfrak c_3}{2}(\frac{\Delta a}{a})^2 d \sigma}_{e_s}+\int_\Gamma\lambda \mathrm{d} \sigma.
\end{align*}
Here $e_H$ is bending energy or Hefrich energy, $\Gamma$ is the surface representing the membrane, $H$ and $K$ are the mean curvature and the Gaussian curvature respectively, $B$ is the spontaneous curvature that reflects the initial or intrinsic curvature of the membrane, $\mathfrak c_1$ and $\mathfrak c_2$ are the elastic coefficients, and $d\sigma$ is the area form of the surface. $e_s$ is the stretching energy, $\Gamma_{op}$ is the optimal surface which has no stretching deformation, $\mathfrak c_3$ is the elastic modulus of stretching, $\frac{\Delta a}{a}$ is the relative change per unit area. $\lambda$ denotes the surface tension.

The purpose of this paper is to study the motion of a membrane in 2-D Stokes flow. We regard the bilayer membrane as a 1-D elastic string $\Gamma_t$ which is a Jordan curve parametrized by $\vX(s,t)$, where $s\in\bbT$ is the material coordinate (or the Lagrangian coordinate), $\bbT\eqdefa \mathbb R/2\pi\mathbb Z$ is the 1-D torus, and $t\ge0$ is the time variable.  Thus the free energy can be rewritten as
\begin{align*}
	E=\frac{\mathfrak c_1}{2}\int_\bbT (\kappa-B)^2|\vX_s|ds+\frac{\mathfrak c_3}{2}\int_\bbT(|\vX_s|-\fs_{op})^2ds+\lambda \int_\bbT|\vX_s|ds.
\end{align*}
Here we use $\kappa$ to denote the curvature of the string, and $\fs_{op}$ to denote $\frac{1}{2\pi}$ of the perimeter of the optimal string $\Gamma_{op}$.

In this paper, we only consider the case with $B\equiv0$ and $\fs_{op}=0$. We choose $\mathfrak c_{1}=\mathfrak c_3=1$ for simplicity, and assume $\lambda\ge0$ to be a constant. Therefore, the elastic force applied on the string has the following formulation:
\begin{align}\label{eq:force}
	\widetilde\vF(s,t)=&\big(\lambda \kappa-\Delta_{\Gamma_t} \kappa-\frac{1}{2}\kappa^3\big)\vn+\frac{1}{|\vX_s|}\pa_s^2\vX\eqdefa\frac{1}{|\vX_s|}\vF(s,t),
\end{align}
where $\Delta_{\Gamma_t}$ is the Beltrami-Laplace operator on the boundary and $\vn$ is the inward unit normal vector.

We now introduce the precise mathematical statement of the problem we are interested in. Let $\Omega_t=\mathbb R^2/\Gamma_t$, the velocity field $\vu$ and pressure $p$ satisfy the following system:
\begin{equation}\label{eq1}
  \left\{
  	\begin{array}{ll}
  		\Delta\vu(x,t)=\nabla p(x,t),&(x,t)\in \Omega_t\times \mathbb R_+,\\
  		\nabla\cdot\vu(x,t)=0,&(x,t)\in \Omega_t\times \mathbb R_+,\\
  		\lbrack-p(\vX(s,t),t)\vc{I}+\tau(\vX(s,t),t)\rbrack\cdot\vn=\widetilde\vF(s,t),&(s,t)\in \bbT\times \mathbb R_+,\\
  		\lbrack\vu\rbrack(\vX(s,t),t)=0,&(s,t)\in \bbT\times \mathbb R_+,\\
  		\vu(\vX(s,t),t)=\vX_t(s,t),&(s,t)\in\bbT\times \mathbb R_+ ,\\
  		|\vu|,|p|\to0,&\text{as}\ |x|\to\infty,
  	\end{array}
  \right.
\end{equation}
where $\tau=\nabla\vu+{\nabla\vu}^\top$ is the stress of the bulk fluid, $\widetilde\vF$ is given in \eqref{eq:force}, $\lbrack\cdot\rbrack$ denotes the jump of function across the free boundary. The kinematic equation of the string $\vu(\vX(s,t),t)=\vX_t(s,t)$ means that the string moves along the flow. This system can be rewritten in the immersed boundary formulation:
\begin{equation}\label{eq2}
  \left\{
  	\begin{array}{ll}
  		-\Delta\vu(x,t)+\nabla p(x,t)=\vf(x,t),&(x,t)\in \mathbb R^2\times \mathbb R_+,\\
  		\div \vu(x,t)=0,&(x,t)\in \mathbb R^2\times \mathbb R_+,\\
  		\vu(\vX(s,t),t)=\vX_t(s,t),&(s,t)\in\bbT\times \mathbb R_+ ,\\
  		|\vu|,|p|\to0,&\text{as}\ |x|\to\infty,
  	\end{array}
  \right.
\end{equation}
where
\begin{align*}
	\vf(x,t)=\int_{\bbT} \vF(s,t)\delta(x-\vX(s,t))ds,
\end{align*}
and $\delta$ is the 2-D Dirac measure. The first equation of \eqref{eq2} holds in the sense of distribution, and the expression of $\vf$ shows that the force is only applied on the string. The immersed boundary formulation was initially introduced by Peskin \cite{Pe}. It is easy to verify that \eqref{eq2} is equivalent to \eqref{eq1} if both $\vz$ and $\vX$ are sufficiently smooth \cite{LL}.

In 2-D Stokes flow, the velocity field $\vu$ and pressure $p$ can be solved from the force $\vf$ by using boundary integral. It holds that
\begin{align*}
	\vu(x,t)=\int_{\mathbb R^2}G(x-y)\vf(y,t)dy,\ p(x,t)=\int_{\mathbb R^2}Q(x-y)\vf(y,t)dy,
\end{align*}
where 
	\begin{align*}
		G(x)=\frac{1}{4\pi}(-\ln|x|Id+\frac{x\otimes x}{|x|^2}),\quad Q(x)=\frac{x}{2\pi|x|^2}
	\end{align*}
are the fundamental solutions \cite{Po}, and $Id$ is the $2\times2$ identity matrix. The above formula shows that $\vu$ and $p$ are determined by the configuration of the string. Since $G(x)$ is a single-layer potential, $\vu$ is continuous on $\mathbb R^2$. It follows that
\begin{align}\label{eq:vu}
	\vu(\vX(s,t),t)=\int_\bbT G(\vX(s,t)-\vX(s',t))\cdot \vF(s',t) ds'.
\end{align}
On the other hand, it holds that $\vX_t(s,t)=\vu(\vX(s,t),t)$, and the fluid velocity on the free boundary also determines the evolution of the membrane's configuration. As a result, system \eqref{eq2} is equivalent to the following equation:
\begin{align}\label{eq3}
 	\vX_t(s,t)=&\int_\bbT G(\vX(s,t)-\vX(s',t))\cdot \vF(s',t) ds'.
\end{align}
\subsection{Related Results}
During the past several decades, several models have been developed to research the elastic membranes with or without surrounding fluid \cite{He,OH,Ko,Lip,CL,CG,HZE,Po2,St,Wa}. There are also many analytic studies on the membrane dynamic problems. Without surrounding fluid, Hu-Song-Zhang \cite{HSZ} and Wang-Zhang-Zhang \cite{WZZ} analyze the dynamics of a membrane in 2-D and 3-D space respectively. They regard the membrane as a coupled system comprising a moving elastic surface and an incompressible membrane fluid. For the fluid-structure interaction problems, Cheng-Coutand-Shkoller \cite{CCS,CS2} obtain the local well-posedness of moving boundary problems which model the motion of a viscous incompressible fluid inside of a nonlinear elastic fluid shell. In their works, both bending, stretching, and inertial force is considered. However, there is no fluid outside the shells, which is different to the immersed boundary problem. More results about fluid-structure interaction problems can be found in \cite{CGM,GH,LR,MC}.

For the immersed boundary problems, Lin-Tong \cite{LT} study the coupled motion of a 1-D closed elastic string immersed in a 2-D Stokes flow. The string they considered equips the following stretching energy
\begin{align*}
	\mathcal E=\frac{1}{2}\int_\bbT|\vX_s|^2ds.
\end{align*}
They prove the local-wellposedness of this model with an arbitrary initial configuration in $H^{5/2}(\bbT)$. Moreover, when the initial string configuration is sufficiently close to an evenly parametrized circular configuration, they also prove that a global-in-time solution uniquely exists, and will converge to the equilibrium configuration exponentially as $t\to+\infty$. The framework they developed is useful in treating immersed boundary problems. The method we used in this paper is inspired from their work. In a parallel work, Mori-Rodenberg-Spirn \cite{MRS} study the same model and establish well-posedness results in low-regularity H$\mathrm{\ddot{o}}$lder spaces. They prove nonlinear stability of equilibrium states with explicit exponential decay estimates, and verify the optimality of which numerically. In \cite{To}, Tong studies the regularized problem of this model, and derives error estimates under various norms. In a very recent work \cite{MP}, Matioc-Prokert study the two-phase Stokes flow by capillarity in full 2-D space. It is a Stokes immersed boundary problem with external force generated from surface tension. The authors establish the local will-posedness and give a criterion for global existence of solutions.

When the elastic membranes interact with inviscid fluids, the model become to the one called the hydroelastic wave. Ambrose-Siegel \cite{AS} get the local well-posedness of 2-D hydroelastic waves. In that model, the external force applied on the surface is generated from the Helfrich energy. Liu-Ambrose \cite{LA} get similar results for the case with inertial force. 
\subsection{Main Results}
Before presenting the results, we first introduce some notations used throughout this paper. 

Recall that we use $\vX(s,t)$ to parametrize the elastic string $\Gamma_t$ which is defined on the material coordinate $s$. We introduce another vector-valued function $\vz(\al,t)$ to parametrize $\Gamma_t$, where $\al\in\bbT$ is the arc-length coordinate. As $\vz(\al,t)$ and $\vX(s,t)$ denote the same string, it holds that 
\begin{align*}
	\vz(\al(s,t),t)=\vX(s,t),\ \al(s,t)=s+y(s,t),\ \forall s\in\bbT;\\
	 |\vz_\al(\al,t)|\eqdefa\mathfrak s(t),\ \forall \al\in\bbT.
\end{align*}
Here $\al(s,t)=s+y(s,t)$ is the transfer function between these two coordinates, $\mathfrak s$ stands for $\frac{1}{2\pi}$ of the perimeter of $\Gamma_t$. We call $y_s(s,t)=\pa_sy(s,t)$ the stretching function which quantifies the stretching deformation of the elastic string.

 We define the following tangent angle function
\begin{align*}
	\theta\eqdefa\arctan(\frac{z^{(2)}_\al}{z^{(1)}_\al}),
\end{align*}
which is the angle between the string’s tangent direction and the horizontal axis. Using $\theta$, we can describe the shape of the string with the perimeter function $\fs$. This idea goes back at least as far as \cite{HLS}. Though $\arctan$ is a multivalued function, we require $\theta$ to be continuous on $[-\pi,\pi]$. We emphasize that $\theta$ is not continuous on $\bbT$ due to $\theta(\pi)-\theta(-\pi)=2\pi$. We denote by $\theta_0$ the tangent angle function from a initial configuration $\vX_0$. In this paper, we always assume $(\theta,y_s,\fs)$ to be tangent angle function, stretching function, and perimeter function corresponding to $\vX$. In the next section, we will show that one can reconstruct $\vX$ from $(\theta,y_s,\fs)$.

Given $\beta_1,\beta_2>0$, we introduce the non-self-intersecting assumption
	\begin{align}\label{con:betaal}
			\frac{1}{|\al_1-\al_2|}\big|\int^{\al_1}_{\al_2}\big(\cos(\theta),\sin(\theta)\big)d\al'\big|\ge \beta_1,\qquad \forall \al_1,\al_2\in\bbT,
	\end{align}
and the well-stretched assumption
	\begin{align}\label{con:betas}
		1+y_s(s,t)\ge \beta_2,\qquad \forall s\in\bbT,
	\end{align}
where $|\al_1-\al_2|$ is the distance between $\al_1$ and $\al_2$ on $\bbT$. Assumption \eqref{con:betaal} ensures the string is a Jordan curve. If \eqref{con:betas} holds, $\al(s,t)$ is a invertible function, and we use $s(\al,t)$ to denote its inverse.

We denote by $||\cdot||_{L^{p}(\bbT)}$, $||\cdot||_{H^{s}(\bbT)}$, $||\cdot||_{\dot H^{s}(\bbT)}$ the Lebesgue norm, the ordinary Sobolev norm, and the homogeneous Sobolev norm on $\bbT$ for the arc-length coordinate $\al$, and $||\cdot||_{l^{p}(\bbT)}$, $||\cdot||_{h^{s}(\bbT)}$, $||\cdot||_{\dot h^{s}(\bbT)}$ for the material coordinate $s$.  When no confusion can arise, we will write 
\begin{align*}
	&||\cdot||_{L^p([0,t];H^{s}(\bbT))}\to||\cdot||_{L^p_t H^{s}},\quad||\cdot||_{H^{s}(\bbT)}\to||\cdot||_{H^{s}},\\
	&||\cdot||_{L^p([0,t];h^{s}(\bbT))}\to||\cdot||_{L^p_t h^{s}},\quad||\cdot||_{h^{s}(\bbT)}\to||\cdot||_{h^{s}}
\end{align*}
for simplicity of notation.

For membrane dynamic problems involving only bending deformation or stretching deformation, one can study the evolution equations of the free surface in the arc-length coordinate or in the material coordinate respectively. However, such method is no longer suitable for membranes in which both bending and stretching deformation occur. If only the arc-length coordinate is used, the information of stretching deformation will be lost, while if only the material coordinated is used, the stabilizing effect of bending deformation is hard to reflect. To overcome this difficulty, we introduce two variables - the tangent angle function $\theta$ and the stretching function $y_s$, describing bending and stretching deformation of the membrane respectively. $\theta$ is defined in the arc-length coordinate, and $y_s$ is defined in the material coordinate, and we observe that the evolution equations of these two functions have favorable structures in their respective coordinates. Based on this idea, we got the following results.
\begin{theorem}\label{thm:locex}(Existence and uniqueness of local-in-time solution)
	Suppose $\vX_0$ is a closed string which satisfies 
	\begin{align*}
		\theta_0-\al\in H^{5/2}(\bbT),\ y_{0s}\in h^{3/2}(\bbT),\ \fs_0\ge c>0.
	\end{align*}
	Furthermore, we assume that \eqref{con:betaal}-\eqref{con:betas} hold for some constants $\beta_1,\beta_2>0$. Then there exists $T>0$ such that the immersed boundary problem \eqref{eq3} admits a unique solution $\vX(s,t)$ satisfying
	\begin{align}
		||\theta||_{L^\infty_T \dot{H}^{5/2}\cap L^2_T\dot{H}^4}+||y_s||_{L^\infty_T \dot h^{3/2}\cap L^2_T\dot h^2}\le& C{||\theta_0||_{\dot{H}^{5/2}}+||y_{0s}||_{\dot h^{3/2}}},\label{eq:regthy}\\
		||\pa_t \theta||_{L^2_T\dot{H}^1}+||\pa_t y_s||_{L^2_T\dot{h}^1}\le& C\big(||\theta_0||_{\dot{H}^{5/2}}+||y_{0s}||_{\dot h^{3/2}}\big),
	\end{align}
	and
	\begin{align}
			\frac{1}{|\al_1-\al_2|}\big|\int^{\al_1}_{\al_2}\big(\cos(\theta(\al',t)),\sin(\theta(\al',t))\big)d\al'\big|\ge \frac{1}{2}\beta_1,\qquad \forall \al_1,\al_2\in\bbT,\ t\in[0,T],\\
		1+y_s(s,t)\ge \frac{1}{2}\beta_2,\qquad\qquad\qquad\qquad\qquad \forall s\in\bbT,\ t\in[0,T],\label{eq:ybeta}
	\end{align}
	where $C$ is a constant depends only on $\fs_0$.
\end{theorem}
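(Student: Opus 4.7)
The plan is to recast \eqref{eq3} as a coupled parabolic system for the tangent angle $\theta$ in the arc-length coordinate and the stretching function $y_s$ in the material coordinate, together with a scalar ODE for $\fs$, and then construct a solution by Picard iteration in the energy space indicated by \eqref{eq:regthy}. Starting from $\vz(\al(s,t),t)=\vX(s,t)$, $|\vz_\al|=\fs$, and the kinematic identity $\vX_t=\vu|_{\vX}$, I would differentiate in $s$ and in $\al$ to produce the schematic system
\begin{align*}
\theta_t-\tfrac{1}{\fs}\pa_\al(\vu\cdot\vn)+\tfrac{\theta_\al}{\fs}\vu\cdot\vt&=0,\\
\fs\,(y_s)_t+(1+y_s)\fs_t-(1+y_s)\,\vt\cdot\pa_\al\vu|_\vz&=0,\\
\fs_t&=\tfrac{1}{2\pi}\int_\bbT\vt\cdot\pa_\al\vu\,d\al,
\end{align*}
where $\vu$ on $\Ga_t$ is given by the single-layer formula \eqref{eq:vu} applied to $\vF=(\lam\ka-\Delta_{\Ga_t}\ka-\tfrac12\ka^3)\vn+|\vX_s|^{-1}\pa_s^2\vX$.

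The second step is to extract the parabolic principal parts by expanding the logarithmic Stokes kernel around the diagonal. The top-order bending contribution $-\Delta_{\Ga_t}\ka\,\vn=-\fs^{-3}\theta_{\al\al\al}\vn+(\text{lower})$, combined with the torus identity $\int_\bbT\ln|\al-\al'|f(\al')\,d\al'=-\pi|\pa_\al|^{-1}f$ on mean-zero $f$, produces (after one further $\pa_\al$ from the $\theta$-equation) the third-order dissipation
\[\pa_t\theta+c_1(\fs)\,|\pa_\al|^3\theta=\cN_1,\]
with $c_1(\fs)>0$; the tangential part $y_{ss}/(1+y_s)$ of the stretching force, by the same mechanism in the material coordinate, produces the first-order dissipation
\[\pa_t y_s+c_2(\fs)\,|\pa_s|\,y_s=\cN_2,\]
with $c_2(\fs)>0$ and with $\cN_1,\cN_2$ strictly smoother than the respective principal parts. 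The fractional orders $3$ and $1$ match the parabolic regularity gains $4-\tfrac52$ and $2-\tfrac32$ required by \eqref{eq:regthy}. An iteration map $(\theta^{(n)},y_s^{(n)},\fs^{(n)})\mapsto(\theta^{(n+1)},y_s^{(n+1)},\fs^{(n+1)})$ is then obtained by solving these linear parabolic problems with coefficients frozen at the previous iterate; maximal $L^2$-regularity of $\pa_t+|\pa|^m$ on $\bbT$ (diagonal in Fourier) delivers the energy estimates, and a short-time contraction inside a ball
\[\{||\theta-\al||_{L^\infty_T\dot H^{5/2}\cap L^2_T\dot H^4}+||y_s||_{L^\infty_T\dot h^{3/2}\cap L^2_T\dot h^2}\le M\}\]
yields the fixed point. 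The bounds on $\pa_t\theta,\pa_t y_s$ in $L^2_T\dot H^1$ follow by plugging the solution back into the equations.

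The main obstacle will be the quantitative extraction of these fractional principal parts from the non-smooth Stokes kernel, together with the control of the remainders in two coordinate systems simultaneously: the passage between $s$ and $\al$ brings in the Jacobian $1+y_s$, which must remain strictly positive, and the chord-arc ratio $|\vz(\al)-\vz(\al')|/|\al-\al'|$, which controls the singular integrals in $G$, must remain bounded below. Assumptions \eqref{con:betaal} and \eqref{con:betas} deliver exactly these two bounds, so the singular-integral framework of Lin--Tong \cite{LT} can be adapted once the new bending and mixed-coordinate contributions have been accounted for. Uniqueness is shown by running the same energy estimate, in a slightly weaker norm, for the difference of two candidate solutions. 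Finally, since the constructed solution is continuous from $[0,T]$ into $C^1$, continuity in $t$ allows shrinking $T$ so that \eqref{con:betaal} and \eqref{con:betas} propagate to the halved constants $\beta_1/2$ and $\beta_2/2$, giving \eqref{eq:ybeta} and its chord-arc counterpart.
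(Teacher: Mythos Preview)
Your overall strategy---reformulate in $(\theta,y_s,\fs)$, extract the fractional parabolic principal parts $|\pa_\al|^3$ and $|\pa_s|$ from the Stokeslet via small-scale decomposition, and close in the indicated energy space---matches the paper exactly, and your identification of the dissipation orders is correct (the paper writes them as $\mathcal H\pa_\al^3$ and $\mathfrak h\pa_s$, which on mean-zero functions are $-|\pa_\al|^3$ and $-|\pa_s|$). Two points, however, deserve attention.

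First, there is a genuine gap in your iteration scheme: the closed-string constraint. An arbitrary $\theta$ with $\theta-\al\in H^{5/2}(\bbT)$ need not satisfy $\int_\bbT(\cos\theta,\sin\theta)\,d\al=0$, and your Picard iterates $\theta^{(n)}$ will in general fail this condition. But then the curve reconstructed from $\theta^{(n)}$ via \eqref{eq:reconvz} is not periodic, the kernel $G(\vz(\al)-\vz(\al'))$ is not defined on $\bbT\times\bbT$, and the chord-arc assumption \eqref{con:betaal} loses meaning. The paper handles this by introducing a \emph{modified} curve $\tilde\vz$ (see \eqref{eq:modify-z}) with an explicit correction that forces periodicity regardless of whether $\theta$ satisfies the constraint, runs the fixed-point argument on this modified system in a convex ball of zero-mean functions, and then proves a posteriori (Lemma~\ref{rmk:adm}) that the closed-string condition is preserved by the flow when it holds initially. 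You would need an analogous device; without it the iteration is not well defined.

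Second, and relatedly, the paper uses the \emph{Schauder} fixed point theorem rather than a contraction. The reason is that the nonlinear remainders $g_\theta,g_y$ depend on the full $H^{5/2}\times h^{3/2}$ norm of the unknown through singular integrals, so a direct contraction in the strong norm is not available; contraction is obtained only in the weaker $\dot H^{3/2}\times\dot h^{1/2}$ norm (this is exactly the uniqueness argument in Proposition~\ref{pro:locuni}). If you wish to keep Picard, you must run the standard two-norm trick---bounded in the strong norm, contractive in a weaker one---which in the end is equivalent to the paper's Schauder-plus-uniqueness route. A minor point: in your $\theta$-equation the tangential transport coefficient is $\cT=\vz_t\cdot\vt$, the tangential velocity of the \emph{reparametrized} curve determined by the arc-length normalization \eqref{eq:T}, not the fluid velocity $\vu\cdot\vt$; these differ because $\vz_t\ne\vu$ in general.
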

\begin{remark}
	In Theorem \ref{thm:locex}, we only consider the function spaces related to variables $\theta$ and $y_s$. In fact, \eqref{eq:regthy} implies that 
	\begin{align*}
		\vz(\al,t)\in L^\infty([0,T];H^{7/2}(\bbT))\cap L^2([0,T];H^{5}(\bbT)),\\
		 \vX(s,t)\in L^\infty([0,T];h^{5/2}(\bbT))\cap L^2([0,T];h^{3}(\bbT)).
	\end{align*}
	One can see that the string has different regularities in different coordinates. This is the reason we introduce both the arc-length and the material coordinates.
\end{remark}

\begin{theorem}\label{thm:global}
	(Existence and uniqueness of global-in-time solution near equilibrium.) There exists a constant $\e>0$ such that, if $\vX_0$ is a closed string and satisfies
	\begin{gather}
		||\theta_0-\al||_{\dot H^{5/2}}+||y_{0s}||_{\dot h^{3/2}}\le\varepsilon,\label{con:1}
	\end{gather}
	then there is a unique solution $\vX\in C([0,+\infty);h^{5/2}(\bbT))\cap L^2_{loc}([0,+\infty);h^3(\bbT))$ of the system \eqref{eq3} with initial data $\vX_0$. The solution satisfies 
	\begin{gather*}
		\theta_t(\al,t)\in L^2_{loc}([0,+\infty);H^1(\bbT)),\ y_{st}(s,t)\in L^2_{loc}([0,+\infty);h^1(\bbT)),\\
		||\theta-\al||_{L^\infty([0,+\infty);\dot H^{5/2})}+||y_s||_{L^\infty([0,+\infty);\dot h^{3/2})}\le C\varepsilon,\\
		\frac{1}{|\al_1-\al_2|}\big|\int^{\al_1}_{\al_2}\big(\cos(\theta(\al',t)),\sin(\theta(\al',t))\big)d\al'\big|\ge \frac{1}{\pi},\qquad \forall \al_1,\al_2\in\bbT,\ t\in[0,+\infty),\\
		1+y_s(s,t)\ge \frac{3}{4},\qquad \forall s\in\bbT,\ t\in[0,+\infty),
	\end{gather*}
	where $C$ is a constant.
\end{theorem}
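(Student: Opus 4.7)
The overall strategy is a continuation argument built on a uniform-in-time a priori estimate in the near-equilibrium regime. By Theorem \ref{thm:locex}, any initial datum with $\theta_0-\al\in\dot H^{5/2}(\bbT)$ and $y_{0s}\in\dot h^{3/2}(\bbT)$ generates a unique local solution; let $T^*$ denote the maximal time on which the bootstrap assumption
\begin{align*}
	\|\theta-\al\|_{L^\infty_T\dot H^{5/2}}+\|y_s\|_{L^\infty_T\dot h^{3/2}}\le C_0\e
\end{align*}
together with the strengthened non-self-intersecting and well-stretched bounds $\be_1=1/\pi$, $\be_2=3/4$ holds. The plan is to show that, for $\e$ sufficiently small and $C_0$ appropriately chosen, these inequalities are strictly improved on $[0,T^*]$, which by the local theory forces $T^*=+\infty$.

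First I would write out the contour-dynamic system for the perturbation variables $\bth\eqdefa\theta-\al$ and $y_s$. Linearizing the equations satisfied by $\theta$ and $y_s$ (derived from \eqref{eq3} via the reformulation announced in Section 1.3) around the evenly parametrized circle of perimeter $2\pi\fs(t)$, and isolating the leading dissipative parts, produces
\begin{align*}
	\bth_t+\mathcal L_1\bth=\mathcal N_1(\bth,y_s),\qquad y_{st}+\mathcal L_2 y_s=\mathcal N_2(\bth,y_s),
\end{align*}
where $\mathcal L_1$ is a positive self-adjoint third-order operator in the arc-length variable $\al$ (coming from bending), $\mathcal L_2$ is a positive self-adjoint first-order operator in the material variable $s$ (coming from stretching), and $\mathcal N_1,\mathcal N_2$ collect the quadratic-and-higher remainders generated by the boundary integrals involving $G$ and $\vF$. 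The slowly varying scalar $\fs(t)$ enters as a coefficient and is controlled uniformly in terms of $\fs_0$ under the smallness bootstrap.

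The core step is the energy estimate. Testing the $\bth$-equation with $(-\pa_\al^2)^{5/2}\bth$ in the $\al$-variable and the $y_s$-equation with $(-\pa_s^2)^{3/2}y_s$ in the $s$-variable yields a schematic inequality
\begin{align*}
	\tfrac{d}{dt}\bigl(\|\bth\|_{\dot H^{5/2}}^2+\|y_s\|_{\dot h^{3/2}}^2\bigr)+c\bigl(\|\bth\|_{\dot H^4}^2+\|y_s\|_{\dot h^2}^2\bigr)\le C\bigl(\|\bth\|_{\dot H^{5/2}}+\|y_s\|_{\dot h^{3/2}}\bigr)\bigl(\|\bth\|_{\dot H^4}^2+\|y_s\|_{\dot h^2}^2\bigr).
\end{align*}
Controlling the right-hand side relies on paradifferential-style commutator estimates for the Stokes kernel $G$, Kato--Ponce-type product rules in the Sobolev spaces $H^s$ and $h^s$, and careful bookkeeping for the change of variables $\al\leftrightarrow s$, whose Jacobian $1+y_s$ lies in $[3/4,5/4]$ under the bootstrap. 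Once the smallness of $\|\bth\|_{\dot H^{5/2}}+\|y_s\|_{\dot h^{3/2}}$ is inserted, the nonlinear term is absorbed into the dissipation.

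Finally, the Poincar\'e-type inequalities $\|\bth\|_{\dot H^{5/2}}\le C\|\bth\|_{\dot H^4}$ and $\|y_s\|_{\dot h^{3/2}}\le C\|y_s\|_{\dot h^2}$, valid in the homogeneous norms on $\bbT$ (since frequencies $k\neq 0$ satisfy $|k|\ge 1$), convert the absorbed energy inequality into exponential decay of $\|\bth\|_{\dot H^{5/2}}^2+\|y_s\|_{\dot h^{3/2}}^2$. This in turn yields the uniform bound $\|\bth\|_{L^\infty_t\dot H^{5/2}}+\|y_s\|_{L^\infty_t\dot h^{3/2}}\le C\e$, which strictly improves the bootstrap for $C_0$ large enough and also provides (through continuity in time of $\vX$ together with \eqref{con:betaal}--\eqref{con:betas} at $t=0$) enough room to propagate the geometric constants $\be_1=1/\pi$ and $\be_2=3/4$ for all time. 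The main obstacle I anticipate is the energy estimate itself: isolating the coercive part of the nonlocal operator coming from $G\ast\vF$ in two distinct coordinate systems, and controlling the cross terms that mix $\bth$-derivatives in $\al$ with $y_s$-derivatives in $s$. Once those structural identifications are in place, the rest is a standard bootstrap/continuation argument.
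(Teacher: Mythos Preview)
Your bootstrap structure is sound, but the energy inequality you claim does not hold in the form you wrote. You assert that after extracting the leading linear parts $\mathcal L_1,\mathcal L_2$, the remainders $\mathcal N_1,\mathcal N_2$ are ``quadratic-and-higher'' in the perturbation, so that the right-hand side is bounded by $C\bigl(\|\bth\|_{\dot H^{5/2}}+\|y_s\|_{\dot h^{3/2}}\bigr)\bigl(\|\bth\|_{\dot H^4}^2+\|y_s\|_{\dot h^2}^2\bigr)$. This is not true for the decomposition the paper uses. In Lemmas~\ref{lem:h1}--\ref{lem:H1} the error terms $g_\theta,g_y$ satisfy only
\[
\|g_\theta\|_{\dot H^1}+\|g_y\|_{\dot h^1}\le C\bigl(\delta\|\theta\|_{\dot H^4}+\delta^{-1/2}\|\theta\|_{\dot H^{5/2}}+\|y_s\|_{\dot h^{3/2}}\bigr),
\]
with $C$ an $O(1)$ constant that does \emph{not} become small as the perturbation shrinks. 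The reason is structural: $g_\theta$ contains commutators such as $\vn\cdot[\mathcal H,\vn](\theta_{\al\al\al})$, whose linearization at the circle is $\vn_*\cdot[\mathcal H,\vn_*](\bth_{\al\al\al})$, a genuine linear operator on $\bth$ with $O(1)$ coefficient. Remark~5.2 in the paper says exactly this: the equilibrium energy is nonzero, so $g_\theta,g_y$ are not higher-order small, and the high-norm estimate cannot be closed directly by absorbing into the dissipation.

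The paper works around this by combining two ingredients you are missing. First, the \emph{physical} energy identity (Lemma~\ref{lem:energy}) together with the isoperimetric comparison (Lemma~\ref{lem:eqenergy}) gives a global bound on the \emph{low} norm $\|\theta-\al\|_{\dot H^1}+\|y_s\|_{l^2}$ in terms of the data. Second, Lemma~\ref{lem:unith} shows, via Gagliardo--Nirenberg interpolation, that whenever the high norm $\|\theta-\al\|_{\dot H^{5/2}}+\|y_s\|_{\dot h^{3/2}}$ is large compared to the low norm, the dissipation term $\|\theta-\al\|_{\dot H^4}^2+\|y_s\|_{\dot h^2}^2$ dominates the $O(1)$ linear remainder $C_1(\|\theta-\al\|_{\dot H^{5/2}}^2+\|y_s\|_{\dot h^{3/2}}^2)$, forcing decay. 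The continuation argument then runs on this conditional decay plus the global low-norm bound, not on a naive small-times-dissipation inequality. If you want to salvage your direct approach, you would have to extract the \emph{full} linearization and prove coercivity of the complete linear operator; that is essentially the spectral computation carried out in Section~6 (Lemmas~\ref{lem:linearu}--\ref{lem:normu}), and it requires handling the near-kernel at the first Fourier modes via Lemma~\ref{lem:normD}.
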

\begin{theorem}\label{thm:conver}
	(Exponential convergence to the equilibriums). Let $\vX_0$ be a closed string satisfying all the assumptions in Theorem \ref{thm:global}, and let $\vX$ be the global solution obtained in Theorem \ref{thm:global} starting from $\vX_0$. There exist universal constants $\varepsilon,\gamma_*,C>0$ such that if in addition
	\begin{align*}
		||\theta_0-\al||_{\dot H^{5/2}}+||y_{0s}||_{\dot h^{3/2}}\le\varepsilon,
	\end{align*}
	then it holds that 
	\begin{align*}
		||\theta-\al||_{\dot H^{5/2}}(t)+||y_{s}||_{\dot h^{3/2}}(t)\le C e^{-\gamma_*t}\varepsilon.
	\end{align*}
	Furthermore, $\vX$ converges to an equilibrium configuration
	\begin{align*}
	 	\vX_\infty(s)\eqdefa\sqrt{\frac{\mathfrak a}{\pi}}\big(\sin(s+\theta_\infty),\cos(s+\theta_\infty)\big)+x_\infty,\quad s\in\bbT,
	\end{align*}
	and satisfies
	\begin{align*}
		||\vX-\vX_\infty||_{h^{5/2}}\le C\sqrt{\frac{\mathfrak a}{\pi}}e^{-\gamma_*t}\varepsilon,
	\end{align*}
	where $\mathfrak a$ is the area enclosed by $\vX_0$.
\end{theorem}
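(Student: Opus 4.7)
The plan is to combine a coercive energy--dissipation inequality with area conservation and a Cauchy-in-time argument for the position. The starting point is Theorem \ref{thm:global}, which already provides a global small solution satisfying the non-self-intersecting and well-stretched assumptions uniformly in time, together with integrability of the time derivatives of $\theta$ and $y_s$. The linear principal part of the contour dynamic system around the evenly parametrized circle is parabolic (essentially $\partial_t \theta \sim -\tfrac{1}{2}(-\Delta_\al)^{3/2}\theta$ in arc-length coordinate and $\partial_t y_s \sim -(-\Delta_s)^{1/2} y_s$ in material coordinate, up to coupling terms that vanish at the equilibrium), and every quantity in sight has zero-mean structure on $\bbT$ after subtracting the reference state $\theta=\al$, $y_s=0$. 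This is exactly the setting where Poincar\'e can upgrade a dissipation bound into an exponential decay bound.

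The first step is to rerun the energy argument used for Theorem \ref{thm:global} with an exponential weight. Concretely, I would test the evolution equations against $\Lambda^5(\theta-\al)$ and $\Lambda^3 y_s$ (where $\Lambda=(-\Delta)^{1/2}$ on the appropriate coordinate) and derive, for $E(t)\eqdefa \|\theta-\al\|_{\dot H^{5/2}}^2+\|y_s\|_{\dot h^{3/2}}^2$ and $D(t)\eqdefa \|\theta-\al\|_{\dot H^4}^2+\|y_s\|_{\dot h^2}^2$, an inequality of the form
\begin{align*}
\frac{d}{dt}E(t)+c_0 D(t)\le C\big(\|\theta-\al\|_{\dot H^{5/2}}+\|y_s\|_{\dot h^{3/2}}\big)D(t).
\end{align*}
The nonlinear remainders on the right come from three sources: the curvature term $\tfrac12\kappa^3$, the stretching contribution in $\vF$ through the factor $1/|\vX_s|$, and the nonlocal commutators produced by expanding the Stokes kernel $G(\vX(s)-\vX(s'))$ around the circle. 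All of these are at least quadratic in the perturbation and carry one derivative less than the top order, so they can be tamed by combining Kato--Ponce type commutator estimates with the uniform smallness of $E$. Choosing $\e$ small enough that $C\sqrt{E(t)}\le c_0/2$ uniformly in $t$, we get $\frac{d}{dt}E+\tfrac{c_0}{2}D\le 0$. Since $\theta-\al$ and $y_s$ have vanishing zero-Fourier mode up to the constants already absorbed in the equilibrium, the Poincar\'e inequality $E\le C_* D$ holds on $\bbT$, and Gronwall gives $E(t)\le E(0)e^{-\gamma_* t}$ with $\gamma_*\eqdef c_0/(2C_*)$ universal.

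The second step identifies the equilibrium configuration $\vX_\infty$. The radius is fixed by the area $\mathfrak a=\tfrac12\int_\bbT \vX\wedge\vX_s\,ds$ enclosed by $\Gamma_t$: since $\vu$ is divergence free and $\vX_t=\vu|_{\Gamma_t}$, $\mathfrak a$ is conserved, so any equilibrium circle has radius $\sqrt{\mathfrak a/\pi}$. To extract the center $x_\infty$ and phase $\theta_\infty$, the key observation is that the exponential decay of $E(t)$ together with the integral representation \eqref{eq:vu} yields
\begin{align*}
\|\vX_t(\cdot,t)\|_{h^{5/2}}\le C\sqrt{E(t)}\,\sqrt{\frac{\mathfrak a}{\pi}}\le C\sqrt{\frac{\mathfrak a}{\pi}}\, e^{-\gamma_* t/2}\varepsilon,
\end{align*}
so $\vX(\cdot,t)$ is Cauchy in $h^{5/2}(\bbT)$ and converges to some $\vX_\infty(s)$. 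Because $E(t)\to 0$, the limit $\vX_\infty$ has constant curvature and unit-speed stretching, hence is an evenly parametrized circle of radius $\sqrt{\mathfrak a/\pi}$; reading off its center and starting angle defines $x_\infty$ and $\theta_\infty$. Integrating the velocity bound from $t$ to $\infty$ gives the claimed $h^{5/2}$ rate
$\|\vX-\vX_\infty\|_{h^{5/2}}\le C\sqrt{\mathfrak a/\pi}\,e^{-\gamma_* t}\varepsilon$ after absorbing the $1/\gamma_*$ factor into $C$.

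The main obstacle is the first step: obtaining a clean, fully dissipative energy--dissipation inequality for the coupled system in two different coordinate systems. The coupling between $\theta$ and $y_s$ is nontrivial because $\theta$ lives on the arc-length coordinate $\al$ while $y_s$ lives on the material coordinate $s$, and each top-order estimate generates cross terms involving the transition function $\al(s,t)=s+y(s,t)$ and the perimeter $\fs$. The quadratic commutator structure must be used to show that these cross terms are either symmetric (and cancel in the sum of the two energy identities) or carry a small prefactor from $E(t)^{1/2}$. Once that cancellation is in place, the rest of the argument — Poincar\'e closure, identification of $\vX_\infty$, and integration of the velocity — is essentially automatic.
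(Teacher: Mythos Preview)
There is a genuine gap in your first step. The inequality
\[
\frac{d}{dt}E(t)+c_0 D(t)\le C\sqrt{E(t)}\,D(t)
\]
does not follow from the contour dynamic system, because the error terms $g_\theta,g_y$ are \emph{not} quadratic in the perturbation. With the small-scale decomposition used in the paper, only the top-order piece $\tfrac{1}{4\fs^3}\mathcal H(\theta_{\al\al\al})$ is pulled out as $\mathcal L$; the rest of the linearization around the circle (coming from $\theta_\al\vn$, $\theta_\al^3\vn$, the stretching force, and the kernel expansion) stays inside $g_\theta,g_y$ and contributes at \emph{linear} order in $(\theta-\al,y_s)$. The paper's Lemmas~\ref{lem:h1}--\ref{lem:H1} accordingly yield only
\[
\frac{d}{dt}E(t)\le -\tfrac{1}{10}D(t)+C_1 E(t)
\]
with a constant $C_1$ that is \emph{not} small, and Remark~\ref{eq:parabolic-dispation} (the remark after Lemma~\ref{lem:unith}) states explicitly that exponential decay cannot be read off from this. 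Even if you relinearize fully around the circle to make the remainder genuinely quadratic, the resulting linear operator has a nontrivial kernel: the $\pm1$ Fourier modes of $\theta-\al$ correspond to infinitesimal translations of the circle and produce zero linearized velocity, so the Poincar\'e step ``$E\le C_* D$'' fails on those modes.

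The paper circumvents this in two moves you are missing. First, it works with the \emph{physical} energy identity (Lemma~\ref{lem:energy}) and proves a coercive lower bound for the dissipation,
\[
\int_{\mathbb R^2}|\nabla\vu|^2\,dx\;\ge\;C\big(\|\theta-\al\|_{\dot H^2}^2+\|y_s\|_{l^2}^2+\lambda^2(\fs-1)\big),
\]
via an explicit Fourier computation on the linearized velocity (Lemma~\ref{lem:normu}). The translation modes are handled by the key observation (Lemma~\ref{lem:normD}) that the closed-curve constraint $\int(\cos\theta,\sin\theta)\,d\al=0$ forces $a_1(D)^2+b_1(D)^2\lesssim \e_*^2\|D-\bar D\|_{L^2}^2$, so these modes are quadratically small and drop out of the coercivity. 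This gives exponential decay of the \emph{low} norms $\|\theta-\al\|_{\dot H^1}+\|y_s\|_{l^2}$. Second, the high norms are recovered by the dichotomy of Lemma~\ref{lem:unith}: either $\|\theta-\al\|_{\dot H^{5/2}}^2+\|y_s\|_{\dot h^{3/2}}^2$ is already controlled by a power of the low norms, or it is decaying. Your identification of $\vX_\infty$ and the Cauchy-in-time argument for the position are fine and match the paper, but they sit on top of the decay mechanism above, not on a direct high-order estimate.
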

\begin{remark}
	For other kinds of elastic membranes, such as one or two of $(\mathfrak c_1,\mathfrak c_3,\lambda)$ equals zero, $B$ and $\fs_{op}$ are nonzero positive constants, similar results can be obtained by using the method developed herein.
\end{remark}
The rest of this paper is organized as follows. In Section 2, we reformulate the problem to the contour dynamic system, and give an energy identity of this problem. In Section 3, we introduce an modified contour dynamic system and give a priori estimates of this system. Section 4 provides the local well-posedness of the modified system, and shows that the modified system is equivalent to the original system when the string is closed initially. In Section 5, by using the energy identity, we get the global-in-time existence of solutions to \eqref{eq3} provided that the initial data is sufficiently close to an equilibrium configuration. In Section 6, we observe that when $\vz$ is closed to a circle, the first Fourier modes of $\theta-\al$ are extremely small compared to $||\theta-\al||_{L^2}$. Based on this observation, we prove the global solution gotten in Section 5 converges to an equilibrium configuration exponentially as $t\to+\infty$. Section 7 shows that the method developed in this paper can be applied to other kinds of immersed boundary problems. In the appendices, we state some auxiliary results and give a new proof to the weak version of 2-dimensional Fuglede's isoperimetric inequality.
\section{Reformulation of The Problem}
In this section, we reformulate the immersed boundary problem to the contour dynamic system, and  give an energy identity of this problem. The contour dynamic system is the combination of evolution equations for the tangent angle function, the stretching function, and the perimeter function, in which we can see the stabilization mechanism of the elastic force explicitly.
\subsection{The arc-length coordinate and the material coordinate}
From the definition of arc-length, it holds that
\begin{align}\label{eq:length}
 	|\vz_\alpha(\al,t)|=\mathfrak s(t),
\end{align}
where $\mathfrak s$ is $\frac{1}{2\pi}$ of the perimeter of the string as we defined in the previous section. Let $\vn$ and $\vt$ denote the inward unit normal vector and the unit tangent vector of the free boundary, it holds that
\begin{align*}
	\vn=\frac{\vz_\al^\perp}{\mathfrak s },\quad\vt=\frac{\vz_\al}{\mathfrak s }.
\end{align*}
Here $\vv^\perp=(-v^{(2)},v^{(1)})$ for each two dimensional vector $\vv=(v^{(1)},v^{(2)})$. Applying $\al$-derivation on $\vn$ and $\vt$, it follows that
\begin{align*}
	\vt_\al=\kappa \mathfrak s \vn,\qquad \vn_\al=-\kappa \mathfrak s \vt.
\end{align*}
Here we use the fact that
\begin{align*}
	\kappa\vn=\frac{\vz_{\al\al}}{\mathfrak s ^2}=\frac{\vz_{\al\al}\cdot\vz_\al^\perp}{\mathfrak s ^3}\vn.
\end{align*}
Recalling the definition of the tangent angle function 
\begin{align}\label{eq:arc}
	\theta\eqdefa\arctan(\frac{z^{(2)}_\al}{z^{(1)}_\al}),
\end{align}
one see immediately that
\begin{align}\label{eq:direc}
	\kappa=\frac{\theta_\al}{\mathfrak s },\quad\vn=(-\sin(\theta),\cos(\theta)),\quad \vt=(\cos(\theta),\sin(\theta)).
\end{align}

As what we have mentioned above, the relation between $\vX$ and $\vz$ is
\begin{align}\label{eq:tranf}
	\vX(s,t)=\vz(\al(s,t),t),\qquad \al(s,t)=s+y(s,t),
\end{align}
and it follows that
\begin{align}\label{eq:xtzt}
	\vX_t(s,t)&=\vz_t(s+y(s,t),t)+y_t(s,t)\vz_\al(s+y(s,t),t).
\end{align}
As $\vz_\al=\mathfrak s \vt$, we only have $\vz_t(\al,t)\cdot\vn=\vu(\vz(\al,t),t)\cdot\vn$. That is to say, $\vz_t$ is not the real velocity of the string, and $\vz$ is an abstract curve. We decompose $\vz_t$ into the normal and tangent direction
\begin{align*}
	\vz_t=(\vz_t\cdot\vt) \vt+(\vz_t\cdot\vn)\vn\eqdefa\cT\vt+U\vn.
\end{align*}
Here $U(\al,t)=\vu(\vz(\al,t),t)\cdot\vn(\al,t)$. Differentiating \eqref{eq:length} and \eqref{eq:arc} in time, we get the evolution equations for $\theta$ and $\fs$:
\begin{align}
	\mathfrak s_t&=\frac{\vz_{\al t}\cdot\vz_\al}{\mathfrak s }=\cT_\al-\theta_\alpha U,\label{eq:sa}\\
	\theta_t&=\frac{\vz_{\al t}\cdot\vz_\al^\perp}{\mathfrak s ^2}=\frac{(\vz_{ t}\cdot\vn)_\al}{\mathfrak s }-\frac{\vz_t\cdot\vz_{\al\al}^\perp}{\mathfrak s ^2}=\frac{U_\al}{\mathfrak s }+\frac{\cT}{\mathfrak s }\theta_\al\label{eq:thetaoriginal}.
\end{align}
As $\cT$ is continuous on $\bbT$, it holds that
\begin{align}\label{eq:sa2}
	2\pi\mathfrak s_t=\int^\pi_{-\pi}\cT_\al d\al-\int^\pi_{-\pi}\theta_\alpha Ud\al=-\int^\pi_{-\pi}\theta_\alpha Ud\al.
\end{align}
Integrating \eqref{eq:sa} from $-\pi$ to $\al$, we have
\begin{align}\label{eq:T}
	\cT(\al,t)=\int^\al_{-\pi}\theta_\al(\al') U(\al')d\al'-\frac{\al+\pi}{2\pi}\int^\pi_{-\pi}\theta_\alpha Ud\al+\overline \cT(t).
\end{align}
Here $\overline \cT(t)$ is a scalar function to be determined later. In the material coordinate, we can freely choose the starting point of the arc-length coordinate $\al(-\pi,t)$. It follows from \eqref{eq:xtzt} that
\begin{align}\label{eq:yt}
	y_t(s,t)=\frac{1}{\mathfrak s}\big(\vX_t(s,t)\cdot\vt-\vz_t(s+y(s,t),t)\cdot\vt\big).
\end{align}
We choose
\begin{align}\label{eq:barct}
	\overline \cT(t)=\vX_t(-\pi,t)\cdot\vt(\al(-\pi,t),t),
\end{align}
then we have
\begin{align}\label{eq:ytpi}
	y_t(-\pi,t)=\frac{1}{\mathfrak s}\big(\vX_t(-\pi,t)\cdot\vt-\overline \cT(t)\big)=0.
\end{align}
In this paper, we always assume that $y(-\pi,0)=0$, so that $y(-\pi,t)\equiv0$ and $y(s,t)=\int_{-\pi}^s y_s(s',t)ds'$. We also have $\al(-\pi,t)\equiv-\pi$, which means that at each time, the arc-length coordinate $\al$ started at the same point of the string.
\begin{remark}
	To simplify notation, when no confusion can arise, we will write
	\begin{align*}
		y_s\big(s(\al,t),t\big)\to y_s(\al,t),\qquad \theta\big(\al(s,t),t\big)\to\theta(s,t),
	\end{align*}
	and
	\begin{align*}
		\int_\bbT y_s\big(s(\al,t),t\big)+\theta(\al,t)d\al\to\int_\bbT y_s+\theta d\al,\\
		\int_\bbT y_s(s,t)+\theta\big(\al(s,t),t\big)ds\to\int_\bbT y_s+\theta ds.
	\end{align*}
\end{remark}
\subsection{Velocity of the string}
With the help of the fundamental solution $G$, velocity fields on the string have the following expression
\begin{align}\label{eq:u}
	\vu(\vz(\al,t),t)=\int_\bbT G(\vz(\al,t)-\vX(s',t))\cdot \vF(s',t) ds'.
\end{align}
In the rest of paper, we use $\vu(\al,t)$ and $\vu(s,t)$ to denote $\vu(\vz(\al,t),t)$ and $\vu(\vX(s,t),t)$ respectively. From \eqref{eq:tranf}, we have
\begin{align}
	  		\vX_s(s,t)&=(1+y_s(s,t))\vz_\al(s+y(s,t),t),\label{eq:Xs}\\
	\vX_{ss}(s,t)&=(1+y_s(s,t))^2\vz_{\al\al}(s+y(s,t),t)+y_{ss}(s,t)\vz_\al(s+y(s,t),t).\label{eq:Xss}
\end{align}
The elastic force has the following expression
\begin{align}\label{eq:force-or}
\vF(s,t)=(1+y_s)(\lambda \theta_\al\vn-\frac{\theta_{\al\al\al}\vn}{\fs^2}-\frac{1}{2}\frac{\theta_\al^3\vn}{\fs ^2})(s,t)+\fs \big(y_{ss}\vt+(1+y_s)^2 \theta_\al\vn\big)(s,t).
\end{align}
Then, we rewrite \eqref{eq:u} as follows:
\begin{align}
	\vu(\al,t)=&\int_\bbT G\big(\vz(\al,t)-\vz(\al',t)\big)\cdot\Big(\big(\lambda \theta_\al\vn- \frac{\theta_{\al\al\al}}{\mathfrak s ^2}\vn-\frac{\theta_\al^3}{\mathfrak s ^2}\vn\big)(\al',t)\label{eq:ual}\\
	&\qquad\qquad+\mathfrak s \big(\frac{y_{ss}(s(\al',t),t)\vt}{1+y_s(s(\al',t),t)}+(1+y_s(s(\al',t),t)) \theta_\al(\al',t)\vn\big)\Big)d\al';\nonumber\\
	\vu(s,t)=&\int_\bbT G\big(\vX(s,t)-\vX(s',t)\big)\cdot\Big(\mathfrak s \big(y_{ss}(s',t)\vt+(1+y_s)^2 \theta_\al(\al(s',t),t)\vn\big)\label{eq:us}\\
	&\qquad\qquad+(1+y_s)\big(\lambda \theta_\al\vn- \frac{\theta_{\al\al\al}}{\mathfrak s ^2}\vn-\frac{\theta_\al^3}{\mathfrak s ^2}\vn\big)(\al(s',t),t)\Big)ds'.\nonumber
\end{align}
An easy computation shows that
\begin{align}
	\fs\big(\lambda \frac{\theta_\al}{\mathfrak s }\vn-\frac{\theta_{\al\al\al}}{\mathfrak s ^3}\vn- \frac{1}{2}(\frac{\theta_\al}{\mathfrak s })^3\vn   \big)=&\pa_\al(\lambda \vt-\frac{\theta_{\al\al}\vn}{\mathfrak s ^2}-\frac{1}{2}\frac{\theta_\al^2\vt}{\mathfrak s ^2}),\label{eq:paF1}\\
	\big(y_{ss}\vt+(1+y_s)^2 \theta_\al\vn\big)=&\pa_s\big((1+y_s)\vt\big).\label{eq:paF2}
\end{align}
Therefore, it also holds that
\begin{align}\label{eq:un}
	\vu=&\mathrm{p.v.}\int_\bbT -\frac{\pa}{\pa_{s'}}G(\vX(s,t)-\vX(s',t))\cdot \mathfrak s \big((1+y_s)\vt\big) ds'\\
	&+\mathrm{p.v.}\int_\bbT -\frac{\pa}{\pa_{\al'}}G(\vz(\al,t)-\vz(\al',t))\cdot(\lambda \vt-\frac{\theta_{\al\al}\vn}{\fs^2}-\frac{1}{2}\frac{\theta_\al^2\vt}{\fs^2}) d\al'.\nonumber
\end{align}
These three formulations of $\vu$ will be used in different situations. 

From the definitions of $\vz$ and $\vX$, one can see that
\begin{align}
	\vz(\al,t)-\vz(\al',t)&=\mathfrak s(t)\int^\al_{\al'}\big(\cos(\theta(\al'',t),t),\sin(\theta(\al'',t),t)\big)d\al'',\label{eq:reconvz}\\
	\vX(s,t)-\vX(s',t)&=\mathfrak s(t)\int^{s+\int_{-\pi}^{s}y_s(s'',t)ds''}_{s'+\int_{-\pi}^{s'}y_s(s'',t)ds''}\big(\cos(\theta(\al'',t),t),\sin(\theta(\al'',t),t)\big)d\al''\label{eq:reconvx}.
\end{align}
This indicates that $\vu$ is determined by $(\theta,y_s,\mathfrak s)$ and doesn't depend on the exact position of $\vX$.
\subsection{Contour Dynamic System}
Now, we are in a position to introduce the following contour dynamic system.
\begin{proposition}
	Assume that $\vX(s,t)$ is a closed string which satisfies \eqref{con:betaal}-\eqref{con:betas} for some constants $\beta_1,\beta_2>0$, and $\mathfrak s>0,\ \theta\in H^3(\bbT),\ y_s\in h^1(\bbT)$, for $\forall t\in[0,T]$. The evolution equation of $\vX(s,t)$ in the 2-D Stokes immersed boundary problem \eqref{eq2} is equivalently given by
	\begin{align}
	\theta_t(\al,t)&=\mathcal L(\theta)(\al,t)+g_\theta(\al,t),\quad \theta(\al,0)=\theta_0(\al),\label{eq:th}\\
	y_{st}(s,t)&=\mathfrak L (y_{s})(s,t)+g_y(s,t),\quad y_s(s,0)=y_{0s}(s,0),\label{eq:ys}\\
	\mathfrak s_t(t)&=-\frac{1}{2\pi}\int^\pi_{-\pi}\theta_\alpha \vu\cdot\vn d\al,\quad \mathfrak s(0)=\mathfrak s_0.\label{eq:St}
\end{align}
Here 
\begin{align*}
	\mathcal L(\theta)(\al,t)=\frac{1}{4\mathfrak s^3(t)}\mathcal H(\theta_{\al\al\al})(\al,t),\ \mathfrak L (y_{s})(s,t)=-\frac{1}{4}\mathfrak h(y_{ss})(s,t),
\end{align*}
are two negative operators, $g_\theta$ and $g_y$ are the error terms with the following expressions:
\begin{align*}
	&g_\theta(\al,t)\\
	=&\frac{1}{4\mathfrak s^3}\vn\cdot[\mathcal H,\vn](\theta_{\al\al\al})(\al,t)-\frac{1}{4}\vn\cdot[\mathcal H,\vt](\frac{y_{ss}}{1+y_s})(\al,t)\\
	&-\frac{1}{4}\vn\cdot\mathcal H\big((1+y_s) \theta_\al\vn\big)(\al,t)-\frac{\lambda}{4\fs}\vn\cdot\mathcal H\big(\theta_\al\vn\big)(\al,t)+\frac{1}{8\fs^3}\vn\cdot\mathcal H\big((\theta_\al)^3\vn\big)(\al,t)\\
	&+\vn\cdot\int_\bbT\Big(\frac{\pa}{\pa_\al}G\big(\vz(\al,t)-\vz(\al',t)\big)+\frac{1}{8\pi\tan( \frac{\al-\al'}{2})}Id\Big)\\
	&\qquad\qquad\qquad\cdot\Big(\frac{y_{ss}}{1+y_s}\vt+(1+y_s) \theta_\al\vn+\frac{\lambda\theta_\al}{\mathfrak s }\vn-\frac{\theta_{\al\al\al}}{\mathfrak s ^3}\vn-\frac{1}{2}\frac{(\theta_\al)^3}{\mathfrak s ^3}\vn\Big) d\al'\\
	&+\big(\int_{-\pi}^\al\theta_\al \vu\cdot\vn d\al'-\frac{\al+\pi}{2\pi}\int_{-\pi}^\pi\theta_\al \vu\cdot\vn d\al'-\vu\cdot\vt(\al,t)+\overline \cT(t)\big)\frac{\theta_\al(\al,t)}{\mathfrak s},\\
	\\
	&g_y(s,t)\\
	=&-(1+y_s(s,t))\frac{\mathfrak s_t}{\mathfrak s}-\frac{1}{4}\vt\cdot[\mathfrak h,\vt](y_{ss})(s,t)-\frac{1}{4}\vt\cdot\mathfrak h\big((1+y_{s})^2\theta_\al\vn\big)(s,t)\\
	&+\frac{1}{4\fs^3}\vt\cdot[\mathfrak h,\vn]\big(\pa_s\theta_{\al\al}\big)(s,t)-\frac{\lambda}{4\fs}\vt\cdot\mathfrak h\big((1+y_s)\theta_\al\vn\big)(s,t)+\frac{1}{8\fs^3}\vt\cdot\mathfrak h\big((1+y_s)(\theta_\al)^3\vn\big)(s,t)\\
	&+\vt\cdot\int_\bbT\Big(\frac{\pa}{\pa_s}G\big(\vX(s,t)-\vX(s',t)\big)+\frac{1}{4}\frac{1}{2\pi\tan( \frac{s-s'}{2})}Id\Big)\\
	&\qquad\qquad\qquad\cdot\Big(y_{ss}\vt+(1+y_s)^2\theta_\al\vn+(1+y_s)\big(\frac{\lambda}{\fs}\theta_\al\vn-\frac{\theta_{\al\al\al}}{\mathfrak s ^3}\vn- \frac{1}{2}\frac{(\theta_\al)^3}{\mathfrak s ^3}\vn\big)\Big) ds',
\end{align*}
and $(\mathcal H,\mathfrak h)$ are the Hilbert transform operators on $\bbT$ in the arc-length coordinate and in the material coordinate respectively.
\end{proposition}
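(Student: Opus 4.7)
The plan is to derive each of the three evolution equations in turn, by inserting the boundary-integral representations of $\vu$ from Section~2.2 into the kinematic identities of Section~2.1 and then isolating the leading singularity of each resulting integral. The equation \eqref{eq:St} for $\fs_t$ is immediate: substituting $U=\vu\cdot\vn$ into \eqref{eq:sa2} gives exactly \eqref{eq:St}.

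For \eqref{eq:th}, I would start from \eqref{eq:thetaoriginal}. Using $\vn_\al=-\theta_\al\vt$ one computes $U_\al=\vu_\al\cdot\vn-\theta_\al\,\vu\cdot\vt$, hence
\[
\theta_t=\frac{\vu_\al\cdot\vn}{\fs}+\frac{\theta_\al}{\fs}\bigl(\cT-\vu\cdot\vt\bigr).
\]
The second summand, after substituting \eqref{eq:T} with $\overline\cT$ fixed by \eqref{eq:barct}, produces the final line of $g_\theta$. The main task is to identify $(\vu_\al\cdot\vn)/\fs$ with $\mathcal{L}(\theta)$ plus the remaining terms of $g_\theta$. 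Differentiating \eqref{eq:ual} in $\al$ yields a principal-value integral with kernel $(\pa_\al G)(\vz(\al)-\vz(\al'))$. A short calculation using $\vz(\al)-\vz(\al')=\fs\int_{\al'}^\al\vt(\al'')\,d\al''$ together with $G(x)=\tfrac{1}{4\pi}(-\log|x|\,Id+x\otimes x/|x|^2)$ shows that the leading singular part of this kernel is $-\tfrac{1}{8\pi\tan((\al-\al')/2)}Id$: the $\vt\otimes\vt$ piece of $G$ contributes equal and opposite terms that cancel at leading order, so only the $-\log|x|\,Id$ piece survives and its $\al$-derivative gives the cotangent kernel. Adding and subtracting this leading piece splits each integral into a Hilbert-transform term plus an integral against the bounded remainder kernel $\pa_\al G+\tfrac{1}{8\pi\tan((\al-\al')/2)}Id$, which supplies the penultimate group of terms in $g_\theta$. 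Finally, after dotting with $\vn(\al)$, I would use the commutator identity $\vn\cdot\mathcal{H}(\vn f)=\mathcal{H}(f)+\vn\cdot[\mathcal{H},\vn](f)$ together with $\vn\cdot\vt=0$ to find that only the normal bending force $-\theta_{\al\al\al}\vn/\fs^2$ produces a truly singular contribution $\tfrac{1}{4\fs^3}\mathcal{H}(\theta_{\al\al\al})=\mathcal{L}(\theta)$, while the other force components generate either a commutator (such as $\tfrac{1}{4\fs^3}\vn\cdot[\mathcal{H},\vn](\theta_{\al\al\al})$) or a term whose leading singularity is killed by orthogonality; these match exactly the terms listed in $g_\theta$.

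For \eqref{eq:ys}, differentiate \eqref{eq:yt} in $s$. Applying $\pa_s=(1+y_s)\pa_\al$ to $\al$-dependent quantities and using \eqref{eq:sa2} to eliminate the mean of $\theta_\al\,\vu\cdot\vn$ one obtains
\[
y_{st}=\frac{(1+y_s)\,\vu_\al\cdot\vt}{\fs}-(1+y_s)\frac{\fs_t}{\fs}=\frac{\vu_s\cdot\vt}{\fs}-(1+y_s)\frac{\fs_t}{\fs},
\]
the second summand being already the first term of $g_y$. The remaining piece is analyzed exactly as above but in the material coordinate using \eqref{eq:us}: the leading singular kernel of $(\pa_s G)(\vX(s)-\vX(s'))$ is $-\tfrac{1}{8\pi\tan((s-s')/2)}Id$, and the roles of $\vn$ and $\vt$ are exchanged. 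Here it is the tangential stretching force $\fs\,y_{ss}\vt$, for which $\vt\cdot\vt=1$ preserves the singularity, that produces the principal term $-\tfrac{1}{4}\mathfrak{h}(y_{ss})=\mathfrak{L}(y_s)$ together with the commutator $-\tfrac{1}{4}\vt\cdot[\mathfrak{h},\vt](y_{ss})$, while the normal-direction force components all generate either a commutator with $\vn$ or a smooth-kernel integral; collecting these reproduces exactly the listed $g_y$.

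The main obstacle is to verify that the two remainder kernels
\[
(\pa_\al G)(\vz-\vz')+\frac{1}{8\pi\tan((\al-\al')/2)}Id\qquad\text{and}\qquad(\pa_s G)(\vX-\vX')+\frac{1}{8\pi\tan((s-s')/2)}Id
\]
are genuinely bounded on the whole torus, so that the remainder integrals in $g_\theta$ and $g_y$ are lower-order perturbations rather than hidden singular operators. This boundedness rests on a careful Taylor expansion at the diagonal together with the non-self-intersecting hypothesis \eqref{con:betaal}, which supplies the uniform chord-arc bound $|\vz(\al)-\vz(\al')|/|\al-\al'|\ge\fs\beta_1$ preventing new singularities away from $\al'=\al$; in the material coordinate this is combined with the well-stretched hypothesis \eqref{con:betas}.
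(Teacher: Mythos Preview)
Your plan matches the paper's proof: derive $\theta_t$ and $y_{st}$ from the kinematic identities \eqref{eq:thetaoriginal} and \eqref{eq:yt}, compute $\vu_\al\cdot\vn$ and $\vu_s\cdot\vt$ by differentiating the boundary integrals \eqref{eq:ual}--\eqref{eq:us}, peel off the cotangent kernel as the leading singularity, and organize the rest through commutators plus a bounded-kernel remainder. Two small corrections: in the $y_{st}$ step the cancellation of the $(1+y_s)\theta_\al\,\vu\cdot\vn$ terms is \emph{pointwise} via $\cT_\al=\fs_t+\theta_\al U$ from \eqref{eq:sa}, not via the integrated identity \eqref{eq:sa2}; and since the proposition asserts an \emph{equivalence}, the paper also supplies the converse direction, reconstructing $\vX$ from $(\theta,y_s,\fs)$ through \eqref{eq:z}--\eqref{eq:X}, which your outline omits. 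Your final paragraph on boundedness of the remainder kernels is not needed for this proposition---it is a purely algebraic reformulation---but is exactly the issue addressed later in the a~priori estimates.
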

\begin{proof}
From \eqref{eq:thetaoriginal} we know that
\begin{align*}
	\theta_t(\al,t)=\frac{1}{\mathfrak s }\big(\vu_\al(\al,t)\cdot\vn+(\cT(\al,t)-\vu(\al,t)\cdot\vt)\theta_\al(\al,t)\big).
\end{align*}
Here $\vu_\al(\al,t)\cdot\vn$ is the most important term. By \eqref{eq:u}, it holds that
\begin{align*}
	&\vu_\al(\al,t)\cdot\vn\\
	=&\vn\cdot \mathrm{p.v.}\int_\bbT \Big(-\frac{\big(\vz(\al)-\vz(\al')\big)\cdot\vz_\al(\al)}{4\pi|\vz(\al)-\vz(\al')|^2}Id\\
	&\qquad-\frac{2\big(\vz(\al)-\vz(\al')\big)\cdot\vz_\al(\al)\big(\vz(\al)-\vz(\al')\big)\otimes\big(\vz(\al)-\vz(\al')\big)}{4\pi|\vz(\al)-\vz(\al')|^4}\\
	&\qquad+\frac{\vz_\al(\al)\otimes\big(\vz(\al)-\vz(\al')\big)+\big(\vz(\al)-\vz(\al')\big)\otimes\vz_\al(\al)}{4\pi|\vz(\al)-\vz(\al')|^2}\Big)\\
	&\qquad\qquad\cdot\Big(\mathfrak s \big(\frac{y_{ss}(s(\al',t),t)\vt}{1+y_s}+(1+y_s) \theta_\al(\al',t)\vn\big)+\big(\lambda \theta_\al\vn- \frac{\theta_{\al\al\al}}{\mathfrak s ^2}\vn-\frac{\theta_\al^3}{\mathfrak s ^2}\vn\big)\Big)d\al'.
\end{align*}
When $|\al'-\al|$ is small, we formally find
\begin{align*}
	\frac{\pa}{\pa_\al}G\big(\vz(\al,t)-\vz(\al',t)\big)\sim \frac{1}{4\pi}\frac{1}{\al'-\al}\sim- \frac{1}{4}\frac{1}{2\pi\tan( \frac{\al-\al'}{2})}.
\end{align*}
The Hilbert transform on $\bbT$ is defined as
\begin{align*}
	\mathcal H Y(\al)=p.v.\int_\bbT \frac{Y(\al')}{2\pi\tan( \frac{\al-\al'}{2})}d\al'.
\end{align*}
Therefore, it follows that
\begin{align}\label{eq:vual}
	&\vu_\al(\al,t)\cdot\vn\\
	=&\frac{1}{4\mathfrak s^2}\mathcal H(\theta_{\al\al\al})+\frac{1}{4\mathfrak s^2}\vn\cdot[\mathcal H,\vn](\theta_{\al\al\al})(\al,t)-\frac{\fs}{4}\vn\cdot[\mathcal H,\vt](\frac{y_{ss}}{1+y_s})(\al,t)\nonumber\\
	&-\frac{\fs}{4}\vn\cdot\mathcal H\big((1+y_s) \theta_\al\vn\big)(\al,t)-\frac{\lambda}{4}\vn\cdot\mathcal H\big(\theta_\al\vn\big)(\al,t)+\frac{1}{8\fs^2}\vn\cdot\mathcal H\big((\theta_\al)^3\vn\big)(\al,t)\nonumber\\
	&+\vn\cdot\int_\bbT\Big(\frac{\pa}{\pa_\al}G\big(\vz(\al,t)-\vz(\al',t)\big)+\frac{1}{8\pi\tan( \frac{\al-\al'}{2})}Id\Big)\nonumber\\
	&\qquad\qquad\qquad\cdot\Big(\frac{\fs y_{ss}}{1+y_s}\vt+\fs(1+y_s)\theta_\al\vn+\lambda\theta_\al\vn-\frac{\theta_{\al\al\al}}{\mathfrak s^2}\vn-\frac{1}{2}\frac{(\theta_\al)^3}{\mathfrak s^2}\vn\Big) d\al'.\nonumber
\end{align}
Here $[\mathcal H,\vn](\theta_{\al\al\al})=\mathcal H\big(\vn\theta_{\al\al\al}\big)-\vn\mathcal H\big(\theta_{\al\al\al}\big)$ is a commutator.

Differentiating \eqref{eq:yt} in $s$, from \eqref{eq:T} we have
\begin{align}\label{eq:yst}
	&y_{st}(s,t)\\
	=&\frac{1}{\mathfrak s }\pa_s\big(\vX_t(s,t)\cdot\vt-\vz_t(s+y(s,t),t)\cdot\vt\big)\nonumber\\
	=&\frac{1}{\mathfrak s }\Big(\vu_s(s,t)\cdot\vt+(1+y_s)\vu(s,t)\cdot\theta_\al\vn-(1+y_s)\theta_\al\vu(s,t)\cdot\vn-(1+y_s)\mathfrak s_t\Big)\nonumber\\
	=&\frac{1}{\mathfrak s }\vu_s(s,t)\cdot\vt-(1+y_s)\frac{\mathfrak s_t}{\mathfrak s}.\nonumber
\end{align}
Similar to \eqref{eq:vual}, it holds that
\begin{align*}
	&\vu_s(s,t)\cdot\vt\\
	=&-\frac{\mathfrak s }{4}\mathfrak h (y_{ss})-\frac{\mathfrak s }{4}\vt\cdot[\mathfrak h,\vt](y_{ss})-\frac{\mathfrak s }{4}\vt\cdot\mathfrak h\big((1+y_{s})^2\theta_\al\vn\big)+\frac{1}{4\fs^2}\vt\cdot[\mathfrak h,\vn]\big(\pa_s\theta_{\al\al}\big)\\
	&-\frac{\lambda}{4}\vt\cdot\mathfrak h\big((1+y_s)\theta_\al\vn\big)+\frac{1}{8\fs^2}\vt\cdot\mathfrak h\big((1+y_s)\theta_\al^3\vn\big)\\
	&+\vt\cdot\int_\bbT\Big(\frac{\pa}{\pa_s}G\big(\vX(s,t)-\vX(s',t)\big)+\frac{1}{4}\frac{1}{2\pi\tan( \frac{s-s'}{2})}Id\Big)\\
	&\qquad\qquad\qquad\cdot\Big(\mathfrak s y_{ss}\vt+\mathfrak s(1+y_s)^2\theta_\al\vn+(1+y_s)\big(\lambda\theta_\al\vn-\frac{\theta_{\al\al\al}}{\mathfrak s ^2}\vn -\frac{1}{2}\frac{(\theta_\al)^3}{\mathfrak s ^2}\vn\big)\Big) ds',
\end{align*}
where $\pa_s\theta_{\al\al}=(1+y_s(s,t))\theta_{\al\al\al}(\al(s,t),t)$ and
\begin{align*}
	\mathfrak h Y(s)=p.v.\int_\bbT \frac{Y(s')}{2\pi\tan( \frac{s-s'}{2})}ds'
\end{align*}
is the Hilbert transform on $\bbT$ in material coordinate. Then, one can deduce \eqref{eq:th}-\eqref{eq:St} immediately.

On the other hand, one can reconstruct $\vX$ from $(\theta,y_s,\mathfrak s)$. Indeed, recalling the definition of arc-length coordinate, we have
\begin{align}\label{eq:z}
	\vz(\al,t)=\vz(-\pi,t)+\mathfrak s\int^\al_{-\pi}\big(\cos(\theta(\al',t)),\sin(\theta(\al',t))\big)d\al'.
\end{align}
From \eqref{eq:xtzt} and \eqref{eq:ytpi}, it is clear that
\begin{align}\label{eq:zt}
	\vz(-\pi,t)=\vz(-\pi,0)+\int_0^t\vz_t(-\pi,t')dt'=\vX(-\pi,0)+\int_0^t\vu(\vX(-\pi,t'),t')dt'.
\end{align}
Consequently, it holds that
\begin{align}\label{eq:X}
	\vX(s,t)=&\vX(-\pi,0)+\int_0^t\vu(\vX(-\pi,t'),t')dt'\\
	&+\mathfrak s\int^{s+\int_{-\pi}^{s}y_s(s'',t)ds''}_{-\pi}\big(\cos(\theta(\al',t)),\sin(\theta(\al',t))\big)d\al'.\nonumber
\end{align}	
This completes the proof.
\end{proof}
In the derivation of \eqref{eq:th} and \eqref{eq:ys}, we extract the linear principal parts $\mathcal L(\theta) $ and $\mathfrak L(y_s)$. This approach is known as small-scale decomposition which is introduced by Beale, Hou, Lowengrub, and Shelley in \cite{BHL,HLS}. In what follows, we shall analyze the properties of the contour dynamic system.
\subsection{Energy Dissipation}
Solutions to the immersed boundary problem satisfy the following energy identity.
\begin{lemma}\label{lem:energy}
	Assume $\vX$ is a solution to \eqref{eq3} with 
	\begin{align*}
		\fs(t)>0,\quad \theta\in L^2([0,T];\dot H^3(\bbT)),\quad y_s\in L^2([0,T];\dot h^2(\bbT))
	\end{align*}
	satisfying \eqref{con:betaal}-\eqref{con:betas} for some constants $\beta_1,\beta_2>0$. It holds that
	\begin{align}\label{eq:energy}
		\frac{d}{dt}\Big(\frac{1}{2\mathfrak s(t)}\int_\bbT \theta_\al^2(\al,t) d\al+2\pi\lambda \mathfrak s(t)+\frac{\mathfrak s^2(t)}{2}\int_\bbT(1+y_s)^2ds\Big)=-\int_{\mathbb R^2}|\nabla \vu(x,t)|^2dx,
	\end{align}
	where $\vu$ is the velocity field.
\end{lemma}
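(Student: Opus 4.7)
The plan is to prove the identity in two stages: first, a standard energy balance for the Stokes system that rewrites the viscous dissipation as the work of $\vF$ against the boundary velocity; second, a direct time-differentiation of $E$ that matches that work. For the first stage, I would test the momentum equation in \eqref{eq2} against $\vu$, integrate over $\mathbb{R}^2$, use $\div\vu=0$ to eliminate the pressure term, and exploit the decay of $\vu$ and $p$ at infinity to get
\[
\int_{\mathbb{R}^2}|\nabla\vu|^2\,dx \;=\; \int_{\mathbb{R}^2}\vu\cdot \vf\,dx \;=\; \int_\bbT \vu(\vX(s,t),t)\cdot \vF(s,t)\,ds \;=\; \int_\bbT \vF\cdot \vX_t\,ds,
\]
using the distributional form of $\vf$ and the kinematic relation $\vX_t=\vu(\vX(s,t),t)$. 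Thus it suffices to verify $\frac{dE}{dt} = -\int_\bbT \vF\cdot \vX_t\,ds$.

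For the variational part, I would differentiate each of the three pieces of $E$ and substitute the evolution equations derived above. The bending term $\frac{1}{2\fs}\int \theta_\al^2\,d\al$ yields $-\frac{\fs_t}{2\fs^2}\int\theta_\al^2\,d\al-\frac{1}{\fs}\int\theta_{\al\al}\theta_t\,d\al$ after one integration by parts, where the boundary terms vanish because $\theta_\al$ and $\theta_t$ are continuous on $\bbT$ even though $\theta$ itself jumps by $2\pi$ at $\pm\pi$. Substituting $\theta_t=(U_\al+\cT\theta_\al)/\fs$ from \eqref{eq:thetaoriginal}, integrating by parts again, and using $\cT_\al=\fs_t+\theta_\al U$ (a direct consequence of \eqref{eq:sa} viewed as a scalar identity), the two $\fs_t$ contributions cancel exactly and one is left with $\frac{1}{\fs^2}\int \theta_{\al\al\al}U\,d\al+\frac{1}{2\fs^2}\int \theta_\al^3 U\,d\al$.

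For the surface-tension piece, \eqref{eq:sa2} gives $2\pi\lambda\fs_t=-\lambda\int\theta_\al U\,d\al$ at once. For the stretching piece $\frac{\fs^2}{2}\int(1+y_s)^2\,ds$, its derivative is $\fs\fs_t\int(1+y_s)^2\,ds+\fs^2\int(1+y_s)y_{st}\,ds$; substituting $y_{st}=\fs^{-1}\vu_s\cdot\vt-(1+y_s)\fs_t/\fs$ from \eqref{eq:yst}, the $\fs_t$ contributions again telescope, leaving $\fs\int(1+y_s)\vu_s\cdot\vt\,ds=\int\vX_s\cdot\vu_s\,ds=-\int\vX_{ss}\cdot\vu\,ds$ after one integration by parts in $s$. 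Summing all three contributions gives
\[
\frac{dE}{dt}\;=\;-\lambda\int\theta_\al U\,d\al+\frac{1}{\fs^2}\int\theta_{\al\al\al}U\,d\al+\frac{1}{2\fs^2}\int\theta_\al^3 U\,d\al-\int\vX_{ss}\cdot\vu\,ds.
\]
On the other side, expanding $\int\vF\cdot\vu\,ds$ via \eqref{eq:force-or} and changing variables $d\al=(1+y_s)\,ds$ in the three normal-force terms produces the same four integrals with opposite sign, while the tangential/normal piece $\fs(y_{ss}\vt+(1+y_s)^2\theta_\al\vn)=\vX_{ss}$ from \eqref{eq:Xss} accounts for the last one. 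This matches $dE/dt=-\int\vF\cdot\vX_t\,ds$ and combines with the Stokes identity above to prove \eqref{eq:energy}.

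The main obstacle is not a single hard estimate but rather the careful algebraic bookkeeping: three separate $\fs_t$ contributions and several $U$-flux integrals must cancel precisely via \eqref{eq:sa}--\eqref{eq:sa2} and \eqref{eq:yst}. A secondary subtlety is tracking the boundary contributions in the various integrations by parts on $\bbT$, exploiting the periodicity of $\theta_\al$, $\theta_t$, $U$, $\cT$, $\vn$, and $\vt$ despite the $2\pi$ jump of $\theta$ itself.
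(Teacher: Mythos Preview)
Your proposal is correct and follows essentially the same approach as the paper: test the Stokes system against $\vu$ to obtain $\int_{\mathbb R^2}|\nabla\vu|^2\,dx=\int_\bbT\vF\cdot\vX_t\,ds$, then differentiate each piece of $E$ using \eqref{eq:sa}, \eqref{eq:thetaoriginal}, \eqref{eq:sa2} and the identity $\cT_\al=\fs_t+\theta_\al U$, with the same integrations by parts and cancellations. The only cosmetic difference is that the paper handles the stretching term via $\frac{\fs^2}{2}\int(1+y_s)^2\,ds=\frac12\int|\vX_s|^2\,ds$ and differentiates directly, whereas you route through \eqref{eq:yst}; both arrive at $-\int\vX_{ss}\cdot\vu\,ds$ in one line.
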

\begin{proof}
From our assumption and the properties of the Green function $G$, $\vu$ is continuous on $\mathbb R^2$ and tends to $0$ as $|x|\to+\infty$.  Since the first equation of \eqref{eq2} holding in the sense of distribution, we choose the test function to be $\vu$, which implies
\begin{align*}
	\int_{\mathbb R^2}|\nabla \vu|^2dx=&\int_{\mathbb R^2}\vu(x,t)\cdot f(x,t)dx\\
	=&\int_{\mathbb R^2}\int_\bbT \vu(x,t)\cdot \vF(s,t)\delta(x-\vX(s,t))ds dx\\
	=&\int_\bbT \vu(\al,t)\cdot (\lambda \theta_\al\vn-\frac{\theta_{\al\al\al}\vn}{\mathfrak s ^2}-\frac{1}{2}\frac{\theta_\al^3\vn}{\mathfrak s ^2}) d\al+\int_\bbT \vu(s,t)\cdot \vX_{ss}(s,t) ds.
\end{align*}
By \eqref{eq:Xs}, it holds that
\begin{align*}
	-\frac{d}{dt}\Big(\frac{\mathfrak s^2}{2}\int_\bbT(1+y_s)^2ds\Big)=&-\frac{d}{dt}\frac{1}{2}\int_\bbT|(1+y_s)\vz_\al(\al(s,t),t)|^2ds\\
	=&-\frac{d}{dt}\frac{1}{2}\int_\bbT|\vX_s(s,t)|^2ds=-\int_\bbT\vX_s(s,t)\cdot\vX_{st}(s,t)ds\\
	=&\int_\bbT\vX_t(s,t)\cdot\vX_{ss}(s,t)ds=\int_\bbT\vu(\vX(s,t),t)\cdot\vX_{ss}(s,t)ds.
\end{align*}
From \eqref{eq:sa2}, \eqref{eq:thetaoriginal}, and \eqref{eq:St}, we have
\begin{align*}
	&-\frac{d}{dt}(\frac{1}{2\mathfrak s}\int_\bbT \theta_\al^2 d\al+2\pi\lambda \mathfrak s)\\
	=&-\frac{1}{\mathfrak s}\int_\bbT \theta_\al \theta_{\al t}d\al+\frac{ \mathfrak s_t}{2\mathfrak s^2}\int_\bbT \theta_\al^2 d\al-2\pi\lambda \mathfrak s_t\\
	=&\frac{1}{\mathfrak s}\int_\bbT \theta_{\al\al}(\frac{(\vu\cdot\vn)_\al}{\mathfrak s }+\frac{\mathcal T}{\mathfrak s }\theta_\al)d\al+\frac{\mathfrak s_t}{2\mathfrak s^2}\int_\bbT \theta_\al^2 d\al-2\pi\lambda \mathfrak s_t\\
	=&-\frac{1}{\mathfrak s ^2}\int_\bbT \theta_{\al\al\al}\vu\cdot\vn d\al- \frac{1}{2\mathfrak s^2}\int_\bbT \theta_{\al}^2\mathcal T_\al d\al	+\frac{\mathfrak s_t}{2\mathfrak s^2}\int_\bbT \theta_\al^2 d\al-2\pi\lambda \mathfrak s_t\\
	=&-\frac{1}{\mathfrak s ^2}\int_\bbT \theta_{\al\al\al}\vu\cdot\vn d\al- \frac{1}{2\mathfrak s^2}\int_\bbT \theta_{\al}^2\mathcal (\mathfrak s_t+\theta_\al \vu\cdot\vn) d\al	+\frac{\mathfrak s_t}{2\mathfrak s^2}\int_\bbT \theta_\al^2 d\al-2\pi\lambda \mathfrak s_t\\
	=&\int_\bbT \vu(\al,t)\cdot (\lambda \theta_\al\vn-\frac{\theta_{\al\al\al}\vn}{\mathfrak s ^2}-\frac{1}{2}\frac{\theta_\al^3\vn}{\mathfrak s ^2}) d\al.
\end{align*}
This proves the lemma.
\end{proof}
\begin{remark}
	From \eqref{eq:paF1} and \eqref{eq:paF2}, it is immediately that $\int_\bbT \vF(s,t)ds=0$. Thanks to this fact, we do not suffer from the Stokes paradox of logarithmic growth of the velocity field $\vu$ at infinity.
\end{remark}

\section{A Priori Estimates}
In this section, we derive the evolution equation to the oscillation part of the tangent angle function. Base on this equation, we introduce an modified contour dynamic system and give a priori estimates of this system. 
\subsection{The oscillation part of the tangent angle function}
The tangent angle function can be split into its mean and its oscillation, i.e.,
\begin{align*}
	\theta(\al,t)=\mathring \theta(\al,t)+\bar\theta(t),
\end{align*}
where $\bar\theta(t)=\frac{1}{2\pi}\int_\bbT \theta(\al',t)d\al'$, and $\mathring \theta(\al,t)=\theta(\al,t)-\bar\theta(t)$ is a zero-mean function. From \eqref{eq:thetaoriginal} and \eqref{eq:T}, one can see that the mean part satisfies
\begin{align}\label{eq:modify-btho}
	\bar\theta_t(t)=&\frac{1}{2\pi}\int_{-\pi}^{\pi}\theta_t(\al',t)d\al'=\frac{1}{2\pi\mathfrak s}\int_{-\pi}^{\pi}\mathcal T(\al',t)\theta_\al(\al',t)d\al',\ \bar\theta(0)=\bar\theta_0.
\end{align}
It follows that
\begin{align*}
	\mathring \theta_t(\al,t)=&\frac{1}{\mathfrak s}\big(\vu_\al\cdot\vn-\vu\cdot\vt\theta_\al+\mathcal T\theta_\al\big)(\al,t)-\frac{1}{2\pi\mathfrak s}\int_{-\pi}^{\pi}\mathcal T(\al',t)\theta_\al(\al',t)d\al'.
\end{align*}

Next, we will show that all the information of deformation is contained in $(\mathring\theta,y_s,\mathfrak s)$. That is to say, if we regard $\bar\theta$ as an independent variable, the following result holds.
\begin{lemma}\label{lem:barth}
	Given $\big(\theta(\bar\theta,\al),y_s(s),\mathfrak s\big)$, let $\vu$, $\vz$, $\vn$, and $\vt$ be the functions defined in \eqref{eq:ual}, \eqref{eq:z}, and \eqref{eq:direc}, it holds that
	\begin{align*}
		\frac{d}{d\bth}\Big((\vu\cdot\vn)(\bar\theta,\al)\Big)=\frac{d}{d\bth}\Big((\vu\cdot\vt)(\bar\theta,\al)\Big)=0.
	\end{align*}
\end{lemma}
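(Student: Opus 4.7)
\medskip

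\noindent\textbf{Proof proposal.} The plan is to exploit the fact that shifting $\bth$ by a constant $\phi$ is geometrically a rigid rotation of the curve by angle $\phi$, and that the Stokes kernel is equivariant under such rotations, so scalar quantities like $\vu\cdot\vn$ and $\vu\cdot\vt$ will come out invariant. I want to verify this directly from the explicit formulas \eqref{eq:ual}, \eqref{eq:reconvz}, \eqref{eq:direc} without appealing to any PDE the system satisfies.

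First I would tabulate how each building block transforms under $\theta\mapsto\theta+\phi$ (with $y_s,\fs$ held fixed). Let $R_\phi$ denote the standard $2\times 2$ rotation matrix. From \eqref{eq:direc} one has $\vn\mapsto R_\phi\vn$ and $\vt\mapsto R_\phi\vt$, and from \eqref{eq:reconvz} the difference $\vz(\al)-\vz(\al')=\fs\int_{\al'}^{\al}(\cos\theta,\sin\theta)\,d\al''$ becomes $R_\phi(\vz(\al)-\vz(\al'))$. I would then check that the Stokes fundamental solution is rotationally covariant, i.e.
\begin{equation*}
G(R_\phi x)=R_\phi\,G(x)\,R_\phi^{\top},
\end{equation*}
which is immediate from $|R_\phi x|=|x|$, $R_\phi R_\phi^{\top}=Id$, and $(R_\phi x)\otimes(R_\phi x)=R_\phi(x\otimes x)R_\phi^{\top}$.

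Next I would plug these transformations into the representation \eqref{eq:ual} for $\vu$. The scalar factors $\lambda$, $\theta_\al$, $\theta_{\al\al\al}$, $\theta_\al^3$, $y_s$, $y_{ss}$, $\fs$ that appear as coefficients are all unaffected by a constant shift of $\bth$ (differentiation in $\al$ or $s$ kills the constant, and $y_s,\fs$ are given independent data). The only objects that change are $\vn$, $\vt$, and the argument $\vz(\al)-\vz(\al')$ of $G$. Hence the vector inside the integrand transforms as $R_\phi\cdot(\text{same vector})$, the kernel $G$ picks up $R_\phi(\cdot)R_\phi^{\top}$, and the outer $R_\phi^{\top}$ cancels against the $R_\phi$ on the force vector, leaving a single overall $R_\phi$ in front of the integral. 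Therefore $\vu(\bth+\phi,\al)=R_\phi\,\vu(\bth,\al)$.

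Finally, using $R_\phi^{\top}R_\phi=Id$,
\begin{equation*}
(\vu\cdot\vn)(\bth+\phi,\al)=(R_\phi\vu)\cdot(R_\phi\vn)=\vu\cdot\vn\,(\bth,\al),
\end{equation*}
and likewise for $\vu\cdot\vt$. Differentiating in $\phi$ at $\phi=0$ gives the claim. The only real bookkeeping obstacle is making sure every vector appearing in \eqref{eq:ual} (including those hidden inside terms like $y_{ss}\vt/(1+y_s)$ and $(1+y_s)\theta_\al\vn$) rotates by exactly one factor of $R_\phi$ and every scalar coefficient is genuinely $\bth$-independent; once this is confirmed term by term the conclusion follows purely from the covariance of $G$ and the orthogonality of $R_\phi$.
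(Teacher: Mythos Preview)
Your argument is correct. The key observation that $G(R_\phi x)=R_\phi G(x)R_\phi^{\top}$, combined with the fact that all scalar coefficients in \eqref{eq:ual} are $\bth$-independent while every vector ($\vn$, $\vt$, $\vz(\al)-\vz(\al')$) transforms by $R_\phi$, cleanly yields $\vu(\bth+\phi,\al)=R_\phi\,\vu(\bth,\al)$ and hence the invariance of $\vu\cdot\vn$ and $\vu\cdot\vt$.

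The paper proceeds differently: rather than invoking rotation equivariance at the level of the full integral, it differentiates each piece directly in $\bth$. It first checks $\frac{d}{d\bth}|\vz(\al)-\vz(\al')|^2=0$, then computes $\frac{d}{d\bth}G(\vz(\al)-\vz(\al'))$ explicitly, and finally expands $\frac{d}{d\bth}\big(\vn\cdot\int G\,f\vn\,d\al'\big)$ for a generic scalar $f$ and shows all terms cancel through a line-by-line trigonometric computation (repeated for the $\vt$ and mixed combinations). Your route packages the same cancellation into the single algebraic identity $R_\phi^{\top}R_\phi=Id$; it is shorter, more conceptual, and makes transparent that the result would hold for any rotationally covariant kernel in place of $G$. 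The paper's computation, while heavier, has the minor advantage of being entirely self-contained and not requiring the reader to verify the covariance identity for $G$.
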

The proof of this lemma can be found in Appendix C. As 
$$\pa_\al\Big(\vu(\vz(\bar\theta,\al))\Big)\cdot\vn(\bar\theta,\al)=\pa_\al\Big(\vu(\vz(\bar\theta,\al))\cdot\vn(\bar\theta,\al)\Big)+\vu(\vz(\bar\theta,\al))\cdot\theta_\al(\bar\theta,\al)\vt(\bar\theta,\al),$$ 
Lemma \ref{lem:barth} implies that $\frac{d}{d\bth}\Big(\pa_\al\big(\vu(\vz(\bth,\al))\big)\cdot\vn(\bth,\al)\Big)=0.$

We introduce
\begin{align*}
	\mathring \vn=(-\sin&(\rth),\cos(\rth)),\ \mathring\vt=(\cos(\rth),\sin(\rth)),\ \mathring\vz(\al,t)=\mathfrak s\int^\al_{-\pi}\big(\cos(\rth),\sin(\rth)\big)d\al',\\
	\mathring\vu(\al,t)=&\int_\bbT G\big(\mathring\vz(\al,t)-\mathring\vz(\al',t)\big)\cdot\Big(\big(\lambda \mathring\theta_\al\mathring\vn- \frac{\mathring\theta_{\al\al\al}}{\mathfrak s ^2}\mathring\vn-\frac{\mathring\theta_\al^3}{\mathfrak s ^2}\mathring\vn\big)(\al',t)\\
	&\qquad\qquad+\mathfrak s \big(\frac{y_{ss}(s(\al',t),t)\mathring\vt}{1+y_s(s(\al',t),t)}+(1+y_s(s(\al',t),t)) \mathring\theta_\al(\al',t)\mathring\vn\big)\Big)d\al'.
\end{align*}
From Lemma \ref{lem:barth}, we know that
\begin{align*}
	\mathring \vu(\al,t)\cdot\mathring\vn(\al,t)=\vu(\al,t)\cdot\vn(\al,t),\ \mathring \vu(\al,t)\cdot\mathring\vt(\al,t)=\vu(\al,t)\cdot\vt(\al,t),\\
	\pa_\al\big(\mathring \vu(\al,t)\big)\cdot\mathring\vn(\al,t)=\pa_\al\big(\vu(\al,t)\big)\cdot\vn(\al,t).\qquad\quad\qquad
\end{align*}
Therefore, the evolution equation of $\rth$ can be rewritten as
\begin{align*}
	\mathring \theta_t(\al,t)=&\frac{1}{\mathfrak s}\big(\mathring\vu_\al\cdot\mathring\vn-\mathring\vu\cdot\mathring\vt\mathring\theta_\al+\mathring\cT\mathring\theta_\al\big)(\al,t)-\frac{1}{2\pi\mathfrak s}\int_{-\pi}^{\pi}\mathring\cT(\al',t)\mathring\theta_\al(\al',t)d\al'.
\end{align*}
Here
\begin{align*}
	\mathring \cT(\al,t)=&\int^\al_{-\pi}\mathring\theta_\al(\al') \mathring \vu\cdot\mathring\vn d\al'-\frac{\al+\pi}{2\pi}\int^\pi_{-\pi}\mathring\theta_\al \mathring \vu\cdot\mathring\vn d\al+\mathring \vu(\mathring\vz(-\pi,t),t)\cdot\mathring\vt(-\pi,t).
\end{align*}
This shows that the evolution equation of $\rth(\al,t)$ is independent on $\bth(t)$, and $\bth(t)$ is determined by $(\rth,y_s,\mathfrak s)$.
\subsection{Modified contour dynamic system}
Based on the evolution equations of $(\rth,y_s,\mathfrak s)$, we introduce the modified contour dynamic system. Giving function $(\tilde\theta,\tilde y_s,\tilde {\mathfrak s})$ which satisfies 
\begin{align*}
	(\tilde\theta(\al,t)-\al)\in H^3(\bbT),\ \tilde y_s(s,t)\in h^1(\bbT),\ \int_{-\pi}^{\pi}\tilde\theta d\al=0,\ \int_{-\pi}^{\pi}\tilde y_s ds=0,\ \fs(t)>0,\forall t\in[0,T],
\end{align*}
we define the modified direction vectors by
\begin{align*}
	\tilde\vn(\al,t)=&(-\sin(\tilde\theta(\al,t)),\cos(\tilde\theta(\al,t))),\quad \tilde\vt(\al,t)=(\cos(\tilde\theta(\al,t)),\sin(\tilde\theta(\al,t))),
\end{align*}
and the modified velocity by
\begin{align}
	\tilde \vu(\al,t)=\int_\bbT &G(\tilde\vz(\al,t)-\tilde\vz(\al',t))\cdot\big(\lambda \tilde\theta_\al\tilde\vn- \frac{\tilde\theta_{\al\al\al}}{\tilde {\mathfrak s} ^2}\tilde\vn-\frac{\tilde\theta_\al^3}{\tilde {\mathfrak s} ^2}\tilde\vn\big)(\al',t)d\al'\label{eq:modify-u}\\
	&+\int_\bbT G(\tilde\vz(\al,t)-\widetilde\vX(s',t))\cdot\tilde \fs \big(\tilde y_{ss}(s',t)\tilde\vt+(1+\tilde y_s)^2 \tilde\theta_\al(\al(s',t),t)\tilde\vn\big)ds',\nonumber\\
	\widetilde\cT(\al,t)=\int^\al_{-\pi}&\tilde\theta_\al(\al',t) \tilde\vu(\al',t)\cdot\tilde\vn(\al',t)d\al'\label{eq:modify-T}\\
	&-\frac{\al+\pi}{2\pi}\int^\pi_{-\pi}\tilde\theta_\al(\al',t) \tilde\vu(\al',t)\cdot\tilde\vn(\al',t)d\al'+\overline {\widetilde\cT}(t).\nonumber
\end{align}
Here 
\begin{align}
	\tilde\vz(\al,t)=&\tilde {\mathfrak s}(t)\int^\al_{-\pi}\big(\cos(\tilde\theta(\al',t)),\sin(\tilde\theta(\al',t))\big)d\al'\label{eq:modify-z}\\
	&-\frac{\tilde {\mathfrak s} (t)\al}{2\pi}\int^\pi_{-\pi}\big(\cos(\tilde\theta(\al',t)),\sin(\tilde\theta(\al',t))\big)d\al',\nonumber\\
	\widetilde\vX(s,t)=&\tilde\vz(\al(s,t),t),\quad \al(s,t)=s+\int_{-\pi}^s\tilde y_s(s',t)ds',\label{eq:modify-X}\\
	\overline {\widetilde\cT}(t)=&\tilde \vu(\tilde\vz(-\pi,t),t)\cdot\tilde\vt(-\pi,t).\nonumber
\end{align}
Then we introduce the following modified contour dynamic system:
\begin{align}
	\tilde\theta_t(\al,t)&=\frac{1}{\tilde {\mathfrak s}(t)}\big(\tilde\vu_\al\cdot\tilde\vn+(\widetilde\cT-\tilde\vu\cdot\tilde\vt)\tilde\theta_\al\big)-\frac{1}{2\pi\tilde {\mathfrak s}(t)}\int_{-\pi}^{\pi}\int_{-\pi}^{\al'}\widetilde \cT(t)\tilde\theta_\al d\al''d\al'\label{eq:modify-tho}\\
	&=\frac{1}{4\tilde {\mathfrak s}^3(t)}\mathcal H(\tilde\theta_{\al\al\al})(\al,t)+\tilde g_\theta(\al,t),\nonumber\\
	\tilde y_{st}(s,t)&=\frac{1}{\tilde {\mathfrak s}(t)}\pa_s\Big(\tilde \vu\cdot\tilde \vt(\al(s,t),t)-\widetilde\cT(\al(s,t),t)\Big)=\frac{1}{\tilde {\mathfrak s}(t)}\tilde \vu_s(s,t)\cdot\tilde \vt-(1+\tilde y_s)\frac{\tilde {\mathfrak s}_t}{\tilde {\mathfrak s}}\label{eq:modify-ys}\\
	&=-\frac{1}{4}\mathfrak h(\tilde y_{ss})(s,t)+\tilde g_y(s,t),\nonumber\\
	\tilde {\mathfrak s}_t(t)&=-\frac{1}{2\pi}\int^\pi_{-\pi}\tilde\theta_\alpha(\al,t) \tilde \vu(\al,t)\cdot\tilde\vn(\al,t) d\al,\label{eq:modify-S},
\end{align}
with
\begin{align}\label{ini:modify}
	\tilde\theta(\al,0)=\tilde\theta_0(\al), \quad \tilde y_s(s,0)=y_{0s}(s,0),\quad \tilde {\mathfrak s}(0)={\mathfrak s}_0.
\end{align}
Here $\tilde g_\theta$ and $\tilde g_y$ are the modified error terms which have similar expressions to $g_\theta$ and $g_y$. From above definitions, it is easy to verify that $\int_\bbT\tilde g_\theta d\al=\int_{\bbT}\tilde g_y ds=0$. Therefore, 
\begin{align*}
	\int_\bbT\tilde \theta_t d\al=\int_\bbT\tilde \theta d\al=\int_\bbT\tilde y_{st} ds=\int_\bbT\tilde y_s ds\equiv0.
\end{align*}
The reason to introduce such modified system is that, not every $\tilde\theta$ satisfying $(\tilde\theta-\al)\in C(\bbT)$ can reconstruct a closed string, in other word, $\tilde\theta$ may not satisfy the following closed-string condition
\begin{align}\label{con:adm}
	\int^\pi_{-\pi}\big(\cos(\tilde\theta),\sin(\tilde\theta)\big)d\al=(0,0).
\end{align}
In the proof of Theorem \ref{thm:locex} we need to use the Schauder fixed point theorem, which is valid only in a convex space. However, the set of $\theta$ satisfying \eqref{con:adm} is not convex. To overcome such difficulty, we introduce $\tilde \vz$ which is continuous on $\bbT$ \cite{LA}, then \eqref{eq:modify-u} is well defined. We call $(\theta_0,y_{0s},\fs_0)$ the closed-string initial data if $\theta_0$ satisfies \eqref{con:adm}. To solve this new system, we introduce the modified non-self-intersecting assumption and the modified well-stretched assumption:
\begin{align}
 	\frac{1}{|\al_1-\al_2|}\big|\int^{\al_1}_{\al_2}\big(\cos(\tilde\theta),\sin(\tilde\theta)\big)d\al'\big|-\frac{1}{2\pi}\big|\int^\pi_{-\pi}(\cos(\tilde\theta),\sin(\tilde\theta))d\al\big|\ge\beta_1>0,\label{eq:modify-betaal}\\
 	1+\tilde y_s(s,t)\ge \beta_2>0.\label{eq:modify-betas}
\end{align}
From above inequalities, one can see that
\begin{align*}
 	\beta_1\le 1,\quad \beta_2\le\min_{s\in \bbT}(1+\tilde y_s)\le \frac{1}{2\pi}\int_\bbT(1+\tilde y_s)ds=1.
 \end{align*}
 Under these two assumptions, for $\forall s_1,s_2\in\bbT$, it holds that
\begin{align*}
	&|\tilde\vX(s_1,t)-\tilde\vX(s_2,t)|\\
	\ge&\tilde\fs\big|\int^{\al(s_1,t)}_{\al(s_2,t)}\big(\cos(\tilde\theta),\sin(\tilde\theta)\big)d\al'\big|-\tilde\fs\frac{|\al(s_1,t)-\al(s_2,t)|}{2\pi}\big|\int^\pi_{-\pi}(\cos(\tilde\theta),\sin(\tilde\theta))d\al\big|\\
	\ge&\beta_1\tilde\fs|\al(s_1,t)-\al(s_2,t)|=\beta_1\tilde\fs\big|\int^{s_1}_{s_2}1+\tilde y_s(s',t)ds'\big|\\
	\ge&\beta_1\beta_2\tilde\fs|s_1-s_2|,
\end{align*}
where $|s_1-s_2|$ is the distance between $s_1$ and $s_2$ on $\bbT$.

In the next section, we will show that this modified system is well-posed, and the solutions to \eqref{eq:modify-tho}-\eqref{eq:modify-S} with closed-string initial data always satisfy \eqref{con:adm}. Then all these modified functions we defined above are actually the same to the functions defined in Section 2. Therefore, one can solve $\bth$ from \eqref{eq:modify-btho}, and $(\tilde\theta+\bth,\tilde y_s,\tilde {\mathfrak s})$ is a solution to \eqref{eq:th}-\eqref{eq:St}. In the rest of this paper, we omit the tilde on $(\tilde\theta,\tilde y_s,\tilde {\mathfrak s},\tilde\vz,\tilde\vu,\tilde\vX,\widetilde\cT,\tilde\vn,\tilde\vt)$ for convenience.
\subsection{Preliminaries}
First, we introduce some fundamental lemmas.
\begin{lemma}\label{lem:theta-z}\cite{Am}
	Let $s\ge1$ and assume $(\theta(\al)-\al)\in H^{s-1}(\bbT)$, we have 
	\begin{align*}
		||\vz||_{H^{s}}\lesssim \mathfrak s(1+||\theta-\al||_{H^{s-1}}).
	\end{align*}
\end{lemma}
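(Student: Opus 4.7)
The plan is to exploit the identity $\vz_\al = \fs(\cos\theta, \sin\theta)$, which follows immediately from $|\vz_\al| = \fs$ and the definition \eqref{eq:arc} of the tangent angle. Iterating one derivative at a time gives, for every integer $k \ge 1$,
\[
\partial_\al^k \vz = \fs\, \partial_\al^{k-1}(\cos\theta, \sin\theta),
\]
so the $\dot{H}^s$ seminorm of $\vz$ reduces to $||(\cos\theta,\sin\theta)||_{H^{s-1}(\bbT)}$ up to the factor $\fs$. The $L^2$ piece of $\vz$ is handled separately via the fundamental theorem of calculus $\vz(\al) = \vz(-\pi) + \int_{-\pi}^\al \vz_\al\,d\al'$ together with $|(\cos\theta,\sin\theta)| \equiv 1$, which gives $||\vz - \vz(-\pi)||_{L^\infty} \lesssim \fs$ and hence $||\vz||_{L^2} \lesssim \fs$ up to an overall translation (irrelevant for the $H^s$ analysis that follows, where only derivatives of $\vz$ enter). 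The case $s = 1$ is then immediate.

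For $s > 1$, the key obstruction is that $\theta$ itself is not periodic on $\bbT$; only the difference $\theta - \al$ is. To access the periodic Sobolev framework I invoke the addition formulas
\[
\cos\theta = \cos\al\cos(\theta-\al) - \sin\al\sin(\theta-\al),\quad \sin\theta = \sin\al\cos(\theta-\al) + \cos\al\sin(\theta-\al),
\]
which express the left-hand sides as products of the smooth periodic multipliers $\cos\al,\sin\al$ with the periodic compositions $\cos(\theta-\al)$ and $\sin(\theta-\al)$. The product rule in $H^{s-1}(\bbT)$ then reduces the matter to controlling $||\cos(\theta-\al) - 1||_{H^{s-1}}$ and $||\sin(\theta-\al)||_{H^{s-1}}$, each a composition of a smooth function vanishing at zero with the periodic function $\theta - \al$. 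The Moser composition estimate on $\bbT$ delivers
\[
||F(\theta-\al)||_{H^{s-1}(\bbT)} \le C(||\theta-\al||_{L^\infty})\,||\theta-\al||_{H^{s-1}(\bbT)}
\]
for smooth $F$ with $F(0) = 0$; and the Sobolev embedding $H^{s-1}(\bbT)\hookrightarrow L^\infty(\bbT)$, valid for $s-1>1/2$, bounds the prefactor $C$ purely in terms of $||\theta-\al||_{H^{s-1}}$. Collecting the $L^2$ bound on $\vz$ with the top-order bound and interpolating for intermediate derivatives yields $||\vz||_{H^s} \lesssim \fs\bigl(1 + ||\theta-\al||_{H^{s-1}}\bigr)$.

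The hard part of the argument is the clean deployment of the fractional Moser inequality when $s-1$ is not an integer: one has to appeal to a Kato--Ponce type composition estimate requiring $F \in C^{\lceil s-1\rceil+1}$ and to handle the Sobolev embedding carefully when $s-1$ is close to $1/2$. A secondary subtlety is that the constant in the Moser estimate depends non-linearly on $||\theta-\al||_{L^\infty}$; this nonlinearity is absorbed into the $\lesssim$ symbol in the statement, while the \emph{outer} dependence on $||\theta-\al||_{H^{s-1}}$ remains linear, matching the form of the lemma. This is essentially the route followed in \cite{Am}, to which one may refer for the precise constants.
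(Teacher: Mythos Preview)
Your proposal is correct and follows essentially the same route as the paper, which simply writes out $\vz_\al$ via \eqref{eq:modify-z} and declares the conclusion immediate, deferring the composition estimate to \cite{Am}. The only cosmetic difference is that the paper's $\vz_\al$ carries the extra constant $-\frac{\fs}{2\pi}\int_{-\pi}^\pi(\cos\theta,\sin\theta)\,d\al'$ from the modified definition \eqref{eq:modify-z}, which does not affect your derivative estimates and is absorbed into the ``$1+$'' on the right-hand side.
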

\begin{proof}
	Recalling \eqref{eq:modify-z}, we deduce that
	\begin{align*}
		\vz_\al(\al,t)=&\fs(t)\big(\cos(\theta(\al,t)),\sin(\theta(\al,t))\big)-\frac{ \fs(t)}{2\pi}\int^\pi_{-\pi}\big(\cos(\theta(\al',t)),\sin(\theta(\al',t))\big)d\al'.
	\end{align*}
	The conclusion follows immediately.
\end{proof}
\begin{lemma}\label{lem:H}\cite{Am}
	For $\forall\psi\in H^s(\bbT)$, the operator $[\mathcal H,\psi]$ is bounded from $H^0(\bbT)$ to $H^{s-1}(\bbT)$, it is also bounded from $H^{-1}(\bbT)$ to $H^{s-2}(\bbT)$. Thus, for $i=0,-1$, we have
	\begin{align*}
		||[\mathcal H,\psi]f||_{H^{s-1+i}}\lesssim||f||_{H^{i}}||\psi||_{H^{s}}.
	\end{align*}
\end{lemma}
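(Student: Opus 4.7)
This is the standard one-derivative-gain commutator estimate for the Hilbert transform on the torus; I would prove it via Fourier series on $\bbT$, where $\widehat{\mathcal H g}(n) = -i\,\mathrm{sgn}(n)\hat g(n)$. A direct computation gives
\begin{align*}
\widehat{[\mathcal H,\psi]f}(n) \;=\; -i\sum_{m\in\Z}\bigl(\mathrm{sgn}(n)-\mathrm{sgn}(m)\bigr)\hat\psi(n-m)\hat f(m).
\end{align*}
The multiplier $\mathrm{sgn}(n)-\mathrm{sgn}(m)$ vanishes whenever $n$ and $m$ share a sign, and is $\pm 2$ otherwise; moreover on $\{nm<0\}$ one has the identity $|n-m|=|n|+|m|$, so $\langle n-m\rangle\gtrsim\max(\langle n\rangle,\langle m\rangle)$ where $\langle k\rangle:=(1+k^2)^{1/2}$. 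This cancellation/size property is the source of the derivative gain.

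For the $i=0$ case, set $A_k=\langle k\rangle^s|\hat\psi(k)|$ and $B_m=|\hat f(m)|$, so that $\|A\|_{\ell^2}=\|\psi\|_{H^s}$ and $\|B\|_{\ell^2}=\|f\|_{L^2}$. Then
\begin{align*}
\langle n\rangle^{s-1}\bigl|\widehat{[\mathcal H,\psi]f}(n)\bigr|\;\lesssim\;\sum_{m:\,nm<0}\frac{\langle n\rangle^{s-1}}{\langle n-m\rangle^{s}}A_{n-m}B_m\;\lesssim\;\sum_{m\in\Z}\frac{1}{\langle n-m\rangle}A_{n-m}B_m,
\end{align*}
where the second step uses $\langle n\rangle\leq\langle n-m\rangle$ on the support of the multiplier. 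Taking $\ell^2$-norms in $n$ and applying Young's inequality $\|C\ast B\|_{\ell^2}\leq\|C\|_{\ell^1}\|B\|_{\ell^2}$ to $C_k=\langle k\rangle^{-1}A_k$, together with $\|C\|_{\ell^1}\leq\|\langle\cdot\rangle^{-1}\|_{\ell^2}\|A\|_{\ell^2}\lesssim\|\psi\|_{H^s}$ by Cauchy--Schwarz, yields the required bound in $H^{s-1}$.

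For $i=-1$, I would run the same scheme with $B_m=\langle m\rangle^{-1}|\hat f(m)|$ (so $\|B\|_{\ell^2}=\|f\|_{H^{-1}}$), picking up an extra factor $\langle m\rangle$ in $|\hat f(m)|=\langle m\rangle B_m$ and absorbing it via $\langle m\rangle\leq\langle n-m\rangle$, valid on $\{nm<0\}$; the same Young/Cauchy--Schwarz chain then brings the output into $H^{s-2}$. The main obstacle is careful book-keeping of the Sobolev exponents: the inequality $\langle n\rangle^{s-1+i}\leq\langle n-m\rangle^{s-1+i}$ requires $s-1+i\geq 0$, and for $s$ outside this regime one must redistribute derivatives by transferring a fractional amount of regularity from the $\psi$-factor onto $f$, or alternatively appeal to duality using $\mathcal H^\ast=-\mathcal H$. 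In the intended applications in this paper $s$ is always large enough that the clean direct argument above applies, so I would simply restrict to that range and refer to Ambrose \cite{Am} for the borderline cases.
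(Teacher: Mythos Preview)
Your Fourier-side argument is correct and is a genuinely different route from the one the paper sketches. The paper writes the commutator as an integral operator,
\[
[\mathcal H,\psi]f(\al)=\f{1}{2\pi}\int_{-\pi}^\pi f(\al')\,\f{\psi(\al')-\psi(\al)}{\al'-\al}\cdot\f{\al'-\al}{\tan((\al-\al')/2)}\,d\al',
\]
and then invokes the regularity of the divided difference $\frac{\psi(\al')-\psi(\al)}{\al'-\al}$ together with the analyticity of the remaining factor, deferring the details to \cite{Am}. Your approach instead works entirely on the Fourier side: the key algebraic fact that $\mathrm{sgn}(n)-\mathrm{sgn}(m)$ is supported on $\{nm\le 0\}$, where $|n-m|=|n|+|m|$ and hence $\langle n-m\rangle\gtrsim\max(\langle n\rangle,\langle m\rangle)$, makes the derivative gain completely explicit and reduces everything to a clean Young/Cauchy--Schwarz estimate. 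The physical-space divided-difference approach has the advantage of generalising readily to other singular kernels (as used elsewhere in the paper for the Stokeslet), while your frequency-space argument is more self-contained and makes the dependence on $s$ transparent. Your honest acknowledgement that the direct argument needs $s-1+i\ge 0$, with a fallback to duality or \cite{Am} otherwise, is appropriate; in the paper's applications (where $\psi$ is $\vn$ or $\vt$ with $s\ge 2$) your restriction is never an obstacle.
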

\begin{proof}
	We write $[\mathcal H,\psi]$ as an integral operator:
	\begin{align*}
		[H,\psi]f(\al)=&\f{1}{2\pi}\int_{-\pi}^\pi f(\al')(\psi(\al')-\psi(\al))\f{1}{\tan{(\frac{(\al-\al')}{2})}}d\al'\\
		=&\f{1}{2\pi}\int_{-\pi}^\pi f(\al')\f{\psi(\al')-\psi(\al)}{\al'-\al}\f{\al'-\al}{\tan{(\frac{(\al-\al')}{2})}}d\al'.
	\end{align*}
Here $\f{\psi(\al')-\psi(\al)}{\al'-\al}$ is a divided difference, and $\f{\al'-\al}{\tan{(\frac{(\al-\al')}{2})}}$ is an analytic function, the conclusion follows immediately. See \cite{Am} for more details.
\end{proof}
\begin{lemma}\label{lem:1/2}
	Suppose that $f\in H^1(\bbT)$ and $g\in H^{1/2}(\bbT)$, there exists a constant $C$ such that
	\begin{align*}
		||fg||_{H^{1/2}}\le C||f||_{H^1}||g||_{H^{1/2}}.
	\end{align*}
\end{lemma}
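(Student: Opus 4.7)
The plan is to derive the bound from complex interpolation combined with two elementary endpoint estimates for the multiplication operator $M_f\colon g\mapsto fg$. The one-dimensional Sobolev embedding $H^1(\bbT)\hookrightarrow L^\infty(\bbT)$ gives immediately $\|fg\|_{L^2}\le\|f\|_{L^\infty}\|g\|_{L^2}\lesssim\|f\|_{H^1}\|g\|_{L^2}$, and combined with the Leibniz rule it yields the algebra property $\|fg\|_{H^1}\lesssim\|f\|_{H^1}\|g\|_{H^1}$. Thus $M_f$ is bounded on both $L^2(\bbT)$ and $H^1(\bbT)$ with operator norm controlled by $\|f\|_{H^1}$.

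Next I would appeal to the standard identification $[L^2(\bbT),H^1(\bbT)]_{1/2}=H^{1/2}(\bbT)$ with equivalent norms, a textbook fact for Sobolev spaces on the torus. Complex interpolation of $M_f$ between the two endpoints then produces $\|fg\|_{H^{1/2}}\lesssim\|f\|_{H^1}\|g\|_{H^{1/2}}$, which is exactly the claim, with the constant $C$ absorbing the universal factors from the Sobolev embedding and the interpolation identity.

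There is essentially no real obstacle here --- each ingredient is classical. The only mild subtlety worth flagging is that a naive direct attempt via the Gagliardo seminorm $\|F\|_{\dot H^{1/2}}^2\sim\iint|F(x)-F(y)|^2|x-y|^{-2}\,dx\,dy$ handles the piece bounded by $\|f\|_{L^\infty}\|g\|_{\dot H^{1/2}}$ cleanly but stalls on the complementary piece $\iint|g(y)|^2|f(x)-f(y)|^2|x-y|^{-2}\,dx\,dy$, because $g\in H^{1/2}(\bbT)$ need not lie in $L^\infty(\bbT)$ in one dimension; this is the sense in which the estimate is critical, and it is why I would route it through interpolation (or, equivalently, a paraproduct decomposition $fg=T_f g+T_g f+R(f,g)$) rather than Hölder-type splittings.
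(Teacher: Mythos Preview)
Your proposal is correct and follows essentially the same approach as the paper: both establish the endpoint bounds $\|M_f\|_{L^2\to L^2}\lesssim\|f\|_{H^1}$ (via $H^1\hookrightarrow L^\infty$) and $\|M_f\|_{H^1\to H^1}\lesssim\|f\|_{H^1}$ (via the Leibniz rule), and then interpolate. Your added remark about why the direct Gagliardo-seminorm splitting stalls is a nice clarification not present in the paper's terse proof.
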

\begin{proof}
	Consider the operator $\mathcal N:H^k(\bbT)\to H^k(\bbT)$ given by $\mathcal N(g)=fg$ for $k=0,1$. It is a bounded operator for $k=0,1$. Indeed
	\begin{align*}
		||fg||_{L^2}\le&||f||_{L^\infty}||g||_{L^2}\le||f||_{H^1}||g||_{L^2},\\
		||fg||_{\dot H^1}\le&||f_\al g||_{L^2}+||fg_\al||_{L^2}\le||f||_{H^1}||g||_{H^1}.
	\end{align*}
	Then the interpolation theory implies that $\mathcal N$ is bounded from $H^{1/2}$ to itself \cite{Ta}.
\end{proof}
The conclusions of the above two lemmas still hold in the material coordinate.

We introduce some notations that will be heavily used in the rest of this paper. For $\al,\al'\in\bbT$, let
\begin{align*}
	\tau(\al,\al')=\left\{
  	\begin{array}{ll}
  		\al'-\al+2\pi,&\al'-\al<-\pi,\\
  		\al'-\al,&-\pi\le\al'-\al<\pi,\\
  		\al'-\al-2\pi,&\pi\le\al'-\al,
  	\end{array}
  \right.
\end{align*}
which means that $\tau(\al,\al')\in[-\pi,\pi)$. We define
\begin{align*}
	L(\al,\al')=\frac{\vz(\al')-\vz(\al)}{\tau(\al,\al')},\ M(\al,\al')=\frac{\vz_\al(\al')-\vz_\al(\al)}{\tau(\al,\al')},N(\al,\al')=\frac{L(\al,\al')-\vz_\al(\al)}{\tau(\al,\al')},
\end{align*}
for $\al'\neq\al$, and
\begin{align*}
	L(\al,\al)=\vz_\al(\al),\ M(\al,\al)=\vz_{\al\al}(\al),\ N(\al,\al)=\frac{\vz_{\al\al}(\al)}{2}.
\end{align*}
Note that $\tau(\al,\al')$ is not continuous at $|\al'-\al|=\pi$. For the functions involving $\tau$, i.e. $L(\al,\al')$, we write $\pa_\al L(\al,\al')$ in the sense of left derivative, and $\pa_{\al'} L(\al,\al')$ in the sense of right derivative. Therefore, it is easy to verify that
\begin{align*}
	\pa_\al L=N,\ \pa_{\al'}L=\frac{\vz_\al(\al')-L}{\tau},\ \vz_\al(\al')=L+\tau(M-N).
\end{align*}
Similarly, for $s,s'\in\bbT$, let $\iota(s,s')=\tau(s,s')$. We define
\begin{align*}
	l(s,s')=\frac{\vX(s')-\vX(s)}{\iota(s,s')},\ m(s,s')=\frac{\vX_s(s')-\vX_s(s)}{\iota(s,s')},\ n(s,s')=\frac{l(s,s')-\vX_s(s)}{\iota(s,s')},
\end{align*}
for $s'\neq s$, and 
\begin{align*}
	l(s,s)=\fs(1+y_s)\vt,\ m(s,s)=\fs\big((1+y_s)^2\theta_\al\vn+y_{ss}\vt\big),\ n(s,s)=\frac{\fs}{2}\big((1+y_s)^2\theta_\al\vn+y_{ss}\vt\big).
\end{align*}
\begin{lemma}\label{lem:delta}
Suppose $f(\al)\in H^{2/5}(\bbT)$, $g(s)\in h^{2/5}(\bbT)$, let
\begin{align*}
	\mathfrak f(\al,\al')=\frac{f(\al')-f(\al)}{\tau(\al,\al')},\quad \mathfrak g(s,s')=\frac{g(s')-g(s)}{\iota(s,s')},
\end{align*}
we have the following estimates:

	(1)For $\forall 1\le p\le\infty$, it holds that
	\begin{align}\label{eq:lp}
		||\mathfrak f(\al,\cdot)||_{L^p}\le C ||\pa_\al f||_{L^p}, \quad||\mathfrak g(s,\cdot)||_{l^p}\le C ||\pa_s g||_{l^p}.
	\end{align}

	(2)There exists a universal constant $C$ such that
	\begin{align*}
		||\mathfrak f||_{L^2L^2}\le C ||f||_{\dot H^{1/2}},\ ||\pa_\al\mathfrak f||_{L^2L^2}\le C ||f||_{\dot H^{3/2}},\ ||\pa_\al^2\mathfrak f||_{L^2L^2}\le C ||f||_{\dot H^{5/2}},\\
		||\mathfrak g||_{l^2l^2}\le C ||g||_{\dot h^{1/2}},\ ||\pa_s\mathfrak g||_{l^2l^2}\le C ||g||_{\dot h^{3/2}},\ ||\pa_s^2\mathfrak g||_{l^2l^2}\le C ||g||_{\dot h^{5/2}}.
	\end{align*}
	Especially, it holds that
	\begin{align}\label{eq:vX5/2}
		||\pa_\al n||_{l^2l^2}\le C||\vX||_{\dot h^{5/2}}\le C\fs(1+||y_s||_{l^\infty})^{5/2}(1+||\theta||_{\dot H^2})(1+||y_{s}||_{\dot h^{3/2}}+||\theta||_{\dot H^2}).
	\end{align}

	(3)Let $\mathcal M$ be the Hardy-Littlewood maximal operator on $\bbT$. Then for $\forall\al,\al'\in\bbT$, $\forall s,s'\in\bbT$, we have
	\begin{align}\label{eq:HDM}
	|\mathfrak f(\al,\al')|\le2\mathcal M f_{\al}(\al),\quad|\mathfrak g(s,s')|\le 2\mathcal M g_{s}(s).
	\end{align}
\end{lemma}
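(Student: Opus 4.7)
The plan is to prove the four claims in the order (3), (1), (2), and then the special estimate on $\pa_s n$. The common starting point is the fundamental theorem of calculus identity
$\mathfrak f(\al,\al')=\int_0^1 f_\al(\al+t\tau(\al,\al'))\,dt=\frac{1}{\tau(\al,\al')}\int_0^{\tau(\al,\al')}f_\al(\al+\sigma)\,d\sigma$,
which exhibits $\mathfrak f(\al,\al')$ as an average of $f_\al$ over the short arc joining $\al$ to $\al'$. Since $\al$ is an endpoint of this arc, the average is dominated by $2\mathcal M f_\al(\al)$, where $\mathcal M$ is the centered Hardy-Littlewood maximal operator on $\bbT$. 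That immediately gives (3). The proof for $\mathfrak g$ is identical after replacing $\al$ by $s$.

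For (1), I would apply Minkowski's integral inequality to the same representation: $\|\mathfrak f(\al,\cdot)\|_{L^p(d\al')}\le \int_0^1 \|f_\al(\al+t\tau(\al,\cdot))\|_{L^p(d\al')}\,dt$. For fixed $t\in(0,1]$, the substitution $\al'\mapsto\al+t\tau(\al,\al')$ has Jacobian $t$ and yields $\|f_\al(\al+t\tau)\|_{L^p(d\al')}\le t^{-1/p}\|f_\al\|_{L^p}$; the outer integral $\int_0^1 t^{-1/p}\,dt$ converges for $1<p\le\infty$ (with $t^0=1$ at $p=\infty$). The endpoint $p=1$ would be recovered via interpolation against the weak-$(1,1)$ bound implicit in (3).

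For (2), the $\|\mathfrak f\|_{L^2L^2}$ bound is tautological, since the double integral $\int\int |f(\al')-f(\al)|^2/\tau(\al,\al')^2\,d\al\,d\al'$ is precisely the Gagliardo representation of $[f]_{\dot H^{1/2}}^2$ on $\bbT$. The derivative estimates rest on the two identities
$\tau\,\pa_\al\mathfrak f=\mathfrak f-f_\al(\al),\qquad \tau\,\pa_\al^2\mathfrak f=2\pa_\al\mathfrak f-f_{\al\al}(\al)$,
obtained by differentiating $(f(\al')-f(\al))/\tau$ directly and using $\pa_\al\tau=-1$ on each smooth branch. Squaring the first and writing $\mathfrak f-f_\al(\al)=\tau^{-1}\int_\al^{\al'}(f_\al(\al'')-f_\al(\al))\,d\al''$, Cauchy-Schwarz gives $|\pa_\al\mathfrak f|^2\le |\tau|^{-3}\int_{[\al,\al']}|f_\al(\al'')-f_\al(\al)|^2\,d\al''$; integrating first in $\al'$ over $\{|\tau|\ge|\al''-\al|\}$ generates the kernel $|\al''-\al|^{-2}$ and recovers $[f_\al]_{\dot H^{1/2}}^2=\|f\|_{\dot H^{3/2}}^2$. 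For $\pa_\al^2\mathfrak f$, I would rewrite $2\pa_\al\mathfrak f-f_{\al\al}(\al)=2\int_0^1(f_{\al\al}(\al+t\tau)-f_{\al\al}(\al))(1-t)\,dt$, again apply Cauchy-Schwarz, and run the same Gagliardo argument with $f_{\al\al}$ in place of $f_\al$, producing $\|f\|_{\dot H^{5/2}}$. The $\mathfrak g$ statements are identical in the material coordinate.

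For the special estimate, a direct computation shows $\pa_s l=(l-\vX_s(s))/\iota=n$, so $\pa_s n=\pa_s^2 l$, and the $\pa_s^2$ bound from (2) applied componentwise to $\vX$ gives $\|\pa_s n\|_{l^2l^2}\le C\|\vX\|_{\dot h^{5/2}}$. To bound $\|\vX\|_{\dot h^{5/2}}=\|\vX_{ss}\|_{\dot h^{1/2}}$ I would use $\vX_{ss}=(1+y_s)^2(\vz_{\al\al}\circ\al)+y_{ss}(\vz_\al\circ\al)$, combine the algebra property Lemma \ref{lem:1/2}, Lemma \ref{lem:theta-z}, and the composition estimate that converts $H^{1/2}$ arc-length norms into $h^{1/2}$ material norms at the cost of powers of $(1+\|y_s\|_{l^\infty})$ (the change of variable $\al(s)=s+y(s)$ being bi-Lipschitz by the well-stretched assumption), and finally substitute $\vz_{\al\al}=\fs\,\theta_\al\vn$ to rephrase everything in terms of $\theta$. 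The main obstacle I anticipate is bookkeeping: tracking the weights through these nested product/composition estimates so that the powers of $(1+\|y_s\|_{l^\infty})$, $(1+\|\theta\|_{\dot H^2})$, and $(1+\|y_s\|_{\dot h^{3/2}}+\|\theta\|_{\dot H^2})$ line up exactly as claimed. The subtlety that $\tau$ is discontinuous at $|\al'-\al|=\pi$ causes no harm because the left/right-derivative convention fixed just before the lemma makes the identities above valid almost everywhere, which suffices for $L^2$ estimates.
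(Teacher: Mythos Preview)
Your argument is correct and in fact covers more than the paper's own proof, which defers (1), (3), and the general estimates in (2) to reference [LT] and proves only the special bound \eqref{eq:vX5/2}. For that bound the paper takes a different route from yours: rather than invoking the general $\|\pa_s^2\mathfrak g\|_{l^2l^2}\le C\|g\|_{\dot h^{5/2}}$ and applying it componentwise to $\vX$, it writes out the double integral for $\|\pa_s n\|_{l^2l^2}^2$, uses Parseval in $s$, and observes that the resulting Fourier multiplier $\mathcal F(k)=\int_\bbT|2e^{iks''}-2-2iks''+k^2s''^2|^2|s''|^{-6}\,ds''$ is radial and homogeneous of degree~$5$, hence a constant multiple of $|k|^5$. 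Your real-variable Gagliardo argument for (2) is arguably cleaner since it makes \eqref{eq:vX5/2} a direct corollary; the paper's Fourier computation is more self-contained in that it does not presuppose the general (2) bounds. For the final step bounding $\|\vX\|_{\dot h^{5/2}}=\|\vX_{ss}\|_{\dot h^{1/2}}$, the paper avoids your composition estimate (converting arc-length $H^{1/2}$ into material $h^{1/2}$) by passing to the larger norm $\|(1+y_s)^2\theta_\al\vn\|_{\dot h^1}$ and computing that directly; both approaches are valid.

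One small gap: your recovery of the $p=1$ endpoint in (1) does not work as written. The pointwise bound (3) gives only $\|\mathfrak f(\al,\cdot)\|_{L^\infty}\le 2\mathcal M f_\al(\al)$ at fixed $\al$, which is not a weak-$(1,1)$ statement for the operator $f_\al\mapsto\mathfrak f(\al,\cdot)$, and the strong $L^1$ bound $\|\mathfrak f(\al,\cdot)\|_{L^1}\le C\|f_\al\|_{L^1}$ actually fails (take $f_\al$ an $L^1$-normalized approximate Dirac mass near some point distinct from $\al$; then $\|\mathfrak f(\al,\cdot)\|_{L^1}$ picks up a logarithmic divergence). The paper does not prove (1) either, citing [LT], and the applications in the paper never use $p=1$, so this is likely a misprint in the stated range rather than a flaw in your argument.
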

\begin{proof}
	For the proofs of \eqref{eq:lp} and \eqref{eq:HDM}, we refer the readers to \cite{LT}. We only give the proof of \eqref{eq:vX5/2}. The idea is that homogeneous Sobolev norms can be described in terms of finite differences \cite{BCD}. 

	From the definitions, we have $\pa_sn=\pa_s^2l$, therefore 
	\begin{align*}
		||\pa_sn||^2_{l^2l^2}=&\int_\bbT\int_\bbT \left|\frac{2\big(\vX(s')-\vX(s)\big)-2(s'-s)\vX_s(s)-(s'-s)^2\vX_{ss}(s)}{(s'-s)^3}\right|^2ds'ds\\
		=&\int_\bbT\int_\bbT \left|\frac{2\big(\vX(s''+s)-\vX(s)\big)-2s''\vX_s(s)-s''^2\vX_{ss}(s)}{{s''}^3}\right|^2ds''ds\\
		=&\int_\bbT \frac{1}{|s''|^6}\sum_{k\in\mathbb Z}|2e^{is''k}-2-2is''k+s''^2k^2|^2|\widehat\vX(k)|^2ds''.
	\end{align*}
	We define
	\begin{align*}
		\mathcal F(k)=\int_\bbT \frac{|2e^{is''k}-2-2is''k+s''^2k^2|^2}{|s''|^6}ds''.
	\end{align*}
	By the Taylor expansion, one can see that $\mathcal F$ is well defined. It is easily checked that $\mathcal F$ is a radial and homogeneous function of degree 5. This implies that the function $\mathcal F(k)$ is proportional to $|k|^5$. As a result, we have
	\begin{align*}
		||\pa_sn||^2_{l^2l^2}=C||\vX||_{\dot h^{5/2}}^2.
	\end{align*}
	It follows from \eqref{eq:modify-X} that
	\begin{align*}
		||\vX||_{\dot h^{5/2}}\le||y_{ss}\vz_\al||_{\dot h^{1/2}}+\fs||(1+y_s)^2\theta_\al\vn||_{\dot h^{1/2}}.
	\end{align*}
	Applying Lemma \ref{lem:1/2}, we deduce that
	\begin{align*}
		||y_{ss}\vz_\al||_{\dot h^{1/2}}\lesssim& \fs||y_{s}||_{\dot h^{3/2}}(1+||y_s||_{l^\infty})||\theta||_{\dot H^1}\\
		||(1+y_s)^2\theta_\al\vn||_{\dot h^{1/2}}\le&||(1+y_s)^2\theta_\al\vn||_{\dot h^1}\\
		\lesssim&(1+||y_s||_{l^\infty})^{5/2}(1+||\theta||_{\dot H^2})(1+||y_{s}||_{\dot h^{1}}+||\theta||_{\dot H^2}).
	\end{align*}
	Then, we conclude that 
	\begin{align*}
		||\vX||_{\dot h^{5/2}}\lesssim\fs(1+||y_s||_{l^\infty})^{5/2}(1+||\theta||_{\dot H^2})(1+||y_{s}||_{\dot h^{3/2}}+||\theta||_{\dot H^2}).
	\end{align*}
\end{proof}
\subsection{Estimate of $\tilde g_\theta$ and $\tilde g_y$}
We start from the estimates of a special term.
\begin{lemma}\label{lem:ulinet}
 	Suppose that $(\theta,y_s,\fs)$ satisfies
  \begin{align*}
    \theta(\al,t)-\al\in H^4(\bbT),\ y_s(s,t)\in h^2(\bbT),\ \int_{-\pi}^{\pi}\theta d\al=\int_{-\pi}^{\pi}y_s ds=0,\ \fs(t)>0,\ \forall t\in[0,T],
  \end{align*}
 and \eqref{eq:modify-betaal}-\eqref{eq:modify-betas} for some constants $\beta_1,\beta_2>0$. Then we have
 	\begin{align*}
 		\left\|\int_\bbT G(\vz(\cdot,t)-\vz(\al',t))\cdot \theta_\al\vn(\al',t)d\al'\right\|_{L^2}\le C \frac{1}{\beta_1}||\theta-\al||_{\dot H^1},\\
 		\left\|\int_\bbT G(\vX(\cdot,t)-\vX(s',t))\cdot \big(1+y_s(s')\big)(\theta_\al\vn)(\al(s',t),t)ds'\right\|_{l^2}\le C \frac{1}{\beta_1\beta_2^{1/2}}||\theta-\al||_{\dot H^1},
 	\end{align*}
 	where $C>0$ is a constant.
 \end{lemma}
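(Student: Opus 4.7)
The plan is to reduce the second integral to the first via a change of variables, and then to bound the first by splitting $\theta_\al\vn = \vn + (\theta_\al-1)\vn$ and handling the zero-order piece using the divergence-free structure of the Stokeslet. First I would apply the substitution $\al' = \al(s',t)$ in the second integral. Since $d\al' = (1+y_s(s',t))ds'$ and $\vX(s',t)=\vz(\al',t)$, the integrand at $s$ transforms into the integrand of the first integral evaluated at $\al = \al(s,t)$. The modified well-stretched assumption \eqref{eq:modify-betas} gives $ds \leq \beta_2^{-1}d\al$, whence $\|\cdot\|_{l^2_s}\leq \beta_2^{-1/2}\|\cdot\|_{L^2_\al}$, so it suffices to prove the first bound.

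For the first bound I write $\theta_\al\vn = \vn + (\theta_\al-1)\vn$. In the modified system $\theta-\al$ has zero mean, so $\|\theta_\al-1\|_{L^2}=\|\theta-\al\|_{\dot H^1}$. From the chord-arc consequence of \eqref{eq:modify-betaal}, namely $|\vz(\al)-\vz(\al')|\geq \beta_1\fs|\al-\al'|$, the logarithmic form of $G$ gives $|G(\vz(\al)-\vz(\al'))|\leq C(1+|\ln(\beta_1\fs|\al-\al'|)|)$, so $\int_\bbT |G(\vz(\al)-\vz(\al'))|d\al'\lesssim 1+|\ln\beta_1|$ uniformly in $\al$, and a Schur-test estimate yields
\[
\left\|\int_\bbT G(\vz(\cdot,t)-\vz(\al',t))\cdot(\theta_\al-1)\vn(\al',t)\,d\al'\right\|_{L^2}\leq C(1+|\ln\beta_1|)\,\|\theta-\al\|_{\dot H^1}.
\]

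The main task is the zero-order piece $I_1(\al):=\int_\bbT G(\vz(\al,t)-\vz(\al',t))\vn(\al',t)\,d\al'$. Here the key observation is that the Stokeslet $G$ is divergence-free in each argument, so applying the divergence theorem to the matrix field $y\mapsto G(\vz(\al)-y)$ on the region enclosed by the closed Jordan curve $\Gamma=\{\vz(\al',t)\}_{\al'\in\bbT}$ (which is Jordan by \eqref{eq:modify-betaal}) yields $\int_\bbT G(\vz(\al,t)-\vz(\al',t))\,\vz_{\al'}^\perp(\al',t)\,d\al'=0$. A direct computation from \eqref{eq:modify-z} gives $\vz_{\al'}=\fs(\vt-A)$ with $A:=\frac{1}{2\pi}\int_\bbT(\cos\theta,\sin\theta)\,d\al$, hence $\vz_{\al'}^\perp=\fs(\vn-A^\perp)$, and the preceding identity rearranges to
\[
I_1(\al) = \Big[\int_\bbT G(\vz(\al,t)-\vz(\al',t))\,d\al'\Big]\,A^\perp.
\]
Since $A\equiv 0$ when $\theta=\al$, the estimate $|(\cos\theta,\sin\theta)-(\cos\al,\sin\al)|\leq|\theta-\al|$ together with Poincar\'e's inequality (valid as $\theta-\al$ has zero mean) gives $|A|\leq C\|\theta-\al\|_{\dot H^1}$; combined with the uniform bound $\|\int G\,d\al'\|_{L^\infty}\lesssim 1+|\ln\beta_1|$, this yields $\|I_1\|_{L^2}\leq C(1+|\ln\beta_1|)\|\theta-\al\|_{\dot H^1}$, and using $1+|\ln\beta_1|\leq C/\beta_1$ for $\beta_1\in(0,1]$ completes the first bound.

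The main obstacle lies in the divergence-theorem step: unlike in the original (non-modified) contour dynamics, $\vn$ defined via $\theta$ is not the geometric unit normal to $\Gamma$ in the modified system (because $\vz_\al$ fails to be parallel to $\vt$ when $A\neq 0$), and isolating the correction term $A^\perp\int G\,d\al'$ together with the observation that $A$ vanishes on the reference configuration $\theta=\al$ is precisely what produces the small factor $\|\theta-\al\|_{\dot H^1}$ rather than the weaker $\|\theta_\al\|_{L^2}$ that a naive Young-type estimate would yield.
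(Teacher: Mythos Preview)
Your argument is correct and reaches the same conclusion, but via a genuinely different route from the paper. Both proofs rest on the single-layer identity $\int_\bbT G(\vz(\al)-\vz(\al'))\,\vz_{\al'}^\perp\,d\al'=0$ as the key cancellation. The paper, however, does not split $\theta_\al\vn=(\theta_\al-1)\vn+\vn$; instead it rewrites $\theta_\al\vn-\tfrac{1}{\fs}\vz_\al^\perp=\partial_{\al'}(\vt-\underline\vt)$, where $\underline\vt(\al)=\int_{-\pi}^\al\vn\,d\al''-\tfrac{\al}{2\pi}\int_\bbT\vn\,d\al''$, integrates by parts to place the derivative on $G$, and then inserts the value at $\al$ so that the resulting kernel (of size $C/(\beta_1|\tau|)$) acts on the difference quotient of $\vt-\underline\vt$, which is controlled via the Hardy--Littlewood maximal function by $\|\vt-\underline\vt\|_{\dot H^1}\le C\|\theta-\al\|_{\dot H^1}$. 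Your Schur-test argument on the logarithmic kernel $G$ is more elementary---no integration by parts, no difference quotients or maximal functions---and in fact gives a sharper $\beta_1$-dependence ($1+|\ln\beta_1|$ versus $1/\beta_1$). The price is a harmless $|\ln\fs|$ factor hidden in your Schur bound (since $|G(x)|$ involves $|\ln|x||$ and $|\vz(\al)-\vz(\al')|$ scales like $\fs|\al-\al'|$); this factor is absent in the paper's proof because the integrated-by-parts kernel $\partial_{\al'}G$ is homogeneous of degree $-1$ and hence scale-invariant, but it causes no trouble in the downstream estimates where $\fs$ is bounded above and below.
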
 
 \begin{proof}
 	From the fact that
 	\begin{align*}
 		&\int_\bbT G(\vX(s,t)-\vX(s',t))\cdot \big(1+y_s(s')\big)(\theta_\al\vn)(\al(s',t),t)ds'\\
 		=&\int_\bbT G(\vz(\al(s,t),t)-\vz(\al',t))\cdot \theta_\al\vn(\al',t)d\al',
 	\end{align*}
 	we only give the proof of the first inequality.

 	As $G$ is the Green function of the incompressible Stokes equation, and $\frac{\vz_\al^\perp}{|\vz_\al|}$ is the unit normal vector of the modified string $\vz$, it holds that \cite{Po}
 	\begin{align*}
 		\int_\bbT G(\vz(\al,t)-\vz(\al',t))\cdot \vz_\al^\perp(\al',t)d\al'=0.
 	\end{align*}
 	Therefore
 	 \begin{align*}
 		&\int_\bbT G(\vz(\al,t)-\vz(\al',t))\cdot \theta_\al\vn(\al',t)d\al'\\
 		=&\int_\bbT G(\vz(\al,t)-\vz(\al',t))\cdot \big(\theta_\al\vn- \frac{\vz_\al^\perp}{\fs}\big)(\al',t)d\al'\\
 		=&\int_\bbT-\frac{\pa}{\pa_{\al'}}G(\vz(\al,t)-\vz(\al',t))\cdot \big(\vt(\al')-\vt(\al)-\underline\vt(\al')+\underline\vt(\al)\big)d\al'\\
 		=&\int_\bbT\big(\frac{L\cdot\vz_\al(\al')}{|L|^2}Id+\frac{2L\cdot \vz_\al(\al')L\otimes L}{|L|^4}-\frac{\vz_\al(\al')\otimes L+L\otimes \vz_\al(\al')}{|L|^2}\big)\\
		&\qquad\cdot \frac{1}{4\pi\tau}\big(\vt(\al')-\vt(\al)-\underline\vt(\al')+\underline\vt(\al)\big)d\al',
 	\end{align*}
 	where
 	\begin{align*}
 		\underline\vt(\al,t)\eqdefa\int^\al_{-\pi}\big(-\sin(\theta(\al',t)),\cos(\theta(\al',t))\big)d\al'-\frac{\al}{2\pi}\int^\pi_{-\pi}\big(-\sin(\theta(\al',t)),\cos(\theta(\al',t))\big)d\al'.
 	\end{align*}
 	Consequently, by using Lemma \ref{lem:delta}, we have
 	\begin{align*}
 		&\left\|\int_\bbT G(\vz(\al,t)-\vz(\al',t))\cdot \theta_\al\vn(\al',t)d\al'\right\|_{L^2}\le C\frac{1}{\beta_1}||\vt-\underline\vt||_{\dot H^1}.
 	\end{align*}
 	As $\theta-\al\in C(\bbT)$, it holds that $\theta(\pi)-\theta(-\pi)=2\pi$. Therefore, it is easy to see that
 	\begin{align*}
 		\int_{-\pi}^{\pi}\big(\sin(\theta(\al')),\cos(\theta(\al'))\big)\theta_\al(\al')d\al'=0.
 	\end{align*}
 	Finally we have
 	\begin{align*}
 		||\vt-\underline\vt||_{\dot H^1}\le&||\sin(\theta)\big(\theta_\al-1\big)||_{L^2}+||\sin(\theta)\big(\theta_\al-1\big)||_{L^2}\\
 		&+\frac{1}{2\pi}\left\|\int_{-\pi}^{\pi}\sin(\theta)(\theta_\al-1)d\al'\right\|_{L^2}+\frac{1}{2\pi}\left\|\int_{-\pi}^{\pi}\cos(\theta)(\theta_\al-1)d\al'\right\|_{L^2}\\
 		\le&C||\theta-\al||_{\dot H^1}.
 	\end{align*}
 \end{proof}
We emphasize that, by using the Poincar\'e inequality, we have $||\theta-\al||_{\dot H^1}\le||\theta||_{\dot H^\gamma}$ for $\gamma>1$. However, as $\frac{1}{2\pi}\int_\bbT{\theta_\al}d\al=1$, it only holds that $||\theta||_{\dot H^1}\le C(1+||\theta||_{\dot H^\gamma})$. This is the reason we introduce the above lemma. Next, we give the estimates of $\vu$.
\begin{lemma}\label{lem:u-inf}
		Under the assumptions of Lemma \ref{lem:ulinet}, it holds that
	\begin{align*}
		||\vu||_{L^\infty}\le& C(\lambda)\frac{1+\fs^4}{\beta_1\beta_2\fs^3}(1+||\theta||_{\dot H^2})^2(1+||y_s||_{l^\infty})^2\big(||\theta||^{1/2}_{\dot H^{2}}||\theta||^{1/2}_{\dot H^{3}}+||y_s||_{h^1}\big),\\
		||\vu||_{L^2}\le& C(\lambda)\frac{1+\fs^4}{\beta_1\beta_2\fs^3}(1+||\theta||_{\dot H^2})^2(1+||y_s||_{l^\infty})^2\big(||\theta||_{\dot H^{2}}+||y_s||_{h^1}\big),
	\end{align*}
	where $C(\lambda)>0$ is a constant depends only on $\lambda$.
\end{lemma}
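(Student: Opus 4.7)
The plan is to bound $\vu$ piecewise via the decomposition of the force in \eqref{eq:modify-u} into five contributions: $\vu = \vu_\lambda + \vu_b + \vu_c + \vu_{st} + \vu_n$, coming respectively from $\lambda\theta_\al\vn$, $-\theta_{\al\al\al}\vn/\fs^2$, $-\theta_\al^3\vn/\fs^2$, $\fs y_{ss}\vt$, and $\fs(1+y_s)^2\theta_\al\vn$. The two soft pieces $\vu_\lambda$ and $\vu_n$ are handled directly by Lemma \ref{lem:ulinet}, giving $L^2$ bounds proportional to $\lambda\|\theta-\al\|_{\dot H^1}/\beta_1$ and $\fs(1+\|y_s\|_{l^\infty})^2\|\theta-\al\|_{\dot H^1}/(\beta_1\sqrt{\beta_2})$ respectively. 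The cubic piece $\vu_c$ is of lower order: I bound $\theta_\al^3$ pointwise by $(1+C\|\theta\|_{\dot H^2})^3$ via the Sobolev embedding $H^1(\bbT)\hookrightarrow L^\infty(\bbT)$ applied to $\theta-\al$, and use the $L^2$-log-integrability of $G(\vz(\al,t)-\vz(\al',t))$ in $\al'$ (which follows from $|\vz-\vz'|\ge\beta_1\fs|\tau|$) to close the estimate.

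The two top-order pieces $\vu_b$ and $\vu_{st}$ form the heart of the argument. For $\vu_b$, I integrate by parts using $\theta_{\al'\al'\al'}\vn'=\pa_{\al'}(\theta_{\al'\al'}\vn')+\theta_{\al'\al'}\theta_{\al'}\vt'$, producing a singular integral with kernel $\pa_{\al'}G(\vz-\vz')$ against $\theta_{\al'\al'}\vn'$. Following the kernel expansion already performed for \eqref{eq:vual}, this kernel splits as $-\frac{1}{8\pi\tan((\al-\al')/2)}Id$ plus a smoother remainder, so the principal part of $\vu_b$ is the Hilbert-transform term $\frac{1}{4\fs^2}\vn\cdot\mathcal H(\theta_{\al\al}\vn)$. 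Its $L^2$ norm is bounded by $C\|\theta\|_{\dot H^2}/\fs^2$ using $L^2$-boundedness of $\mathcal H$ together with Lemma \ref{lem:H} for the commutator $[\mathcal H,\vn]$, and the smoother remainder is controlled by Lemma \ref{lem:delta} with constants depending on $\beta_1$. For $\vu_{st}$, I apply \eqref{eq:paF2} to combine it with part of $\vu_n$ into the unified form $-\int\pa_{s'}G(\vX-\vX')\cdot\fs(1+y_s)\vt'\,ds'$ from \eqref{eq:un}; the principal part is the material-coordinate Hilbert transform $\mathfrak h$, and the resulting $L^2$ bound scales as $\fs\|y_s\|_{h^1}$.

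For the $L^\infty$ estimate I apply the Gagliardo--Nirenberg inequality $\|f\|_{L^\infty(\bbT)}\lesssim\|f\|_{L^2}^{1/2}\|f_\al\|_{L^2}^{1/2}+\|f\|_{L^2}$ to the bending contribution: its $L^2$ norm scales as $\|\theta\|_{\dot H^2}/\fs^2$, while its $\al$-derivative agrees, up to smoother terms, with $\vu_\al\cdot\vn$ from \eqref{eq:vual}, whose $L^2$ norm scales as $\|\theta\|_{\dot H^3}/\fs^2$; interpolating yields the $\|\theta\|_{\dot H^2}^{1/2}\|\theta\|_{\dot H^3}^{1/2}$ factor. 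The stretching contribution enters the $L^\infty$ bound directly through $h^1(\bbT)\hookrightarrow L^\infty(\bbT)$, explaining why $\|y_s\|_{h^1}$ appears without further interpolation. Summing the five pieces and using $(1+\fs^4)/\fs^3\ge\max(\fs,\fs^{-2})$ absorbs all $\fs$-factors into the stated prefactor, while the $(1+\|\theta\|_{\dot H^2})^2$ and $(1+\|y_s\|_{l^\infty})^2$ factors collect the multiplicative dependencies arising from the nonlinearities in $\vn$, $\vt$, and $(1+y_s)$.

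The main obstacle is the clean extraction of the Hilbert-transform principal parts from the singular integrals involving $\pa_{\al'}G(\vz-\vz')$ and $\pa_{s'}G(\vX-\vX')$, together with explicit control of the remainders in terms of $\beta_1$, $\beta_2$, and $\fs$ via Lemmas \ref{lem:H} and \ref{lem:delta}. The $L^\infty$ bound further requires recognising that $\pa_\al\vu_b$ inherits the $L^2$-scaling already established for $\vu_\al\cdot\vn$ in \eqref{eq:vual}, so Gagliardo--Nirenberg can be applied without an independent $H^1$ estimate on $\vu_b$.
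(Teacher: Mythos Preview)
Your overall strategy aligns with the paper's: integration by parts to lower the derivative count, extraction of the Hilbert-transform principal part, Lemma~\ref{lem:ulinet} for the soft pieces, and Gagliardo--Nirenberg for the $L^\infty$ bound. However, there is a genuine gap in your treatment of the cubic piece $\vu_c$.

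Bounding $\theta_\al^3$ pointwise by $(1+C\|\theta\|_{\dot H^2})^3$ and integrating against the log-kernel yields an estimate of order $\fs^{-2}(1+\|\theta\|_{\dot H^2})^3$, which does \emph{not} vanish at the equilibrium $\theta=\al$. The lemma's right-hand side, by contrast, carries the small factor $(\|\theta\|_{\dot H^2}+\|y_s\|_{h^1})$, so your bound for $\vu_c$ cannot be absorbed. The point is that $\vu_c=-\fs^{-2}\int G\,\theta_\al^3\vn\,d\al'$ vanishes at equilibrium only through the Stokeslet identity $\int G\cdot\vn\,d\al'=0$ (used in Lemma~\ref{lem:ulinet}); your naive bound never sees this cancellation.

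The paper repairs this by folding the cubic term into the integration by parts via \eqref{eq:paF1}: after IBP the cubic contribution becomes $-\theta_\al^2\vt/(2\fs^2)$, which is then expanded as $-\bigl((\theta_\al-1)^2+2(\theta_\al-1)+1\bigr)\vt/(2\fs^2)$. The constant $1$ combines with the surface-tension term into $(\lambda-\tfrac{1}{2\fs^2})\theta_\al\vn$, handled by Lemma~\ref{lem:ulinet}, while the $(\theta_\al-1)$ pieces carry the required smallness factor $\|\theta-\al\|_{\dot H^2}$. An equivalent fix in your framework is to write $\theta_\al^3\vn=(\theta_\al^3-1)\vn+\vn$, kill the second summand by the Stokeslet identity, and bound the first using $|\theta_\al^3-1|\le(1+\|\theta_\al\|_{L^\infty})^2|\theta_\al-1|$. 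A parallel issue lurks in your treatment of $\vu_{st}+\vu_n$ after full IBP: the principal part $\mathfrak h\bigl((1+y_s)\vt\bigr)$ contains $\mathfrak h(\vt)$, which is $O(1)$ rather than $O(\|y_s\|)$; the paper avoids this by performing only a partial IBP on $y_{ss}\vt$ and keeping a residual term $\int G\cdot\fs(1+y_s)\theta_\al\vn\,ds'$ that is again dispatched by Lemma~\ref{lem:ulinet}.
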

\begin{proof}
	Recalling \eqref{eq:modify-u}, we deduce that 
	\begin{align*}
		\vu(\al,t)=&\int_\bbT G(\vz(\al,t)-\vz(\al',t))\cdot\big(\lambda \theta_\al\vn- \frac{\theta_{\al\al\al}}{\fs^2}\vn-\frac{\theta_\al^3}{\fs^2}\vn\big)(\al',t)d\al'\\
		&+\int_\bbT G(\vX(s,t)-\vX(s',t))\cdot\fs \big(y_{ss}(s',t)\vt+(1+y_s)^2 \theta_\al(\al(s',t),t)\vn\big)ds'\\
		=&\mathrm{p.v.}\int_\bbT -\frac{\pa}{\pa_{\al'}}G(\vz(\al,t)-\vz(\al',t))\cdot\big(\lambda \vt-\frac{\theta_{\al\al}\vn}{\fs^2}-\frac{\theta_\al^2\vt}{2\fs^2}\big) d\al'\\
		&+\mathrm{p.v.}\int_\bbT -\frac{\pa}{\pa_{s'}}G(\vz(\al,t)-\vX(s',t))\cdot\fs(1+y_s(s',t))\vt ds'\\
		=&\frac{1}{4\fs^2}\mathcal H(\theta_{\al\al})\vn+\frac{1}{4\fs^2}[\mathcal H,\vn](\theta_{\al\al})-\frac{\fs}{4}\mathfrak h(y_s)\vt-\frac{\fs}{4}[\mathfrak h,\vt](y_s)\\
		&-\int_\bbT\Big(\big(\frac{1}{\tau}+ \frac{1}{2\tan(\frac{\al-\al'}{2})}\big)Id+\frac{L\cdot(M-N)}{|L|^2}Id+\frac{2L\cdot (M-N)L\otimes L}{|L|^4}\\
		&\qquad-\frac{(M-N)\otimes L+L\otimes (M-N)}{|L|^2}\Big)\cdot\frac{\theta_{\al\al}(\al',t)\vn}{4\pi\fs^2}d\al'\\
		&+\int_\bbT\Big(\big(\frac{1}{\iota}+\frac{1}{2\tan(\frac{s-s'}{2})}\big)Id +\frac{l\cdot(m-n)}{|l|^2}Id+\frac{2l\cdot (m-n)l\otimes l}{|l|^4}\\
		&\qquad-\frac{(m-n)\otimes l+l\otimes (m-n)}{|l|^2}\Big)\cdot\frac{\fs y_s(s',t)\vt}{4\pi}ds'\\
		&+\int_\bbT\big(\frac{L\cdot\vz_\al(\al')}{|L|^2}Id+\frac{2L\cdot \vz_\al(\al')L\otimes L}{|L|^4}-\frac{\vz_\al(\al')\otimes L+L\otimes \vz_\al(\al')}{|L|^2}\big)\\
		&\qquad\cdot \frac{1}{4\pi\tau}\frac{(\theta_\al-1)^2\vt(\al')+2(\theta_\al-1)\vt(\al')-(\theta_\al-1)^2\vt(\al)-2(\theta_\al-1)\vt(\al)}{2\fs^2}d\al'\\
		&+\int_\bbT G(\vz(\al,t)-\vz(\al',t))\cdot (\lambda-\frac{1}{2\fs^2})\theta_\al\vn(\al',t)d\al'\\
		&+\int_\bbT G(\vz(\al,t)-\vX(s',t))\cdot \fs\big(1+y_s(s')\big)(\theta_\al\vn)(\al(s',t),t)ds'.
	\end{align*}	
	Here we use the fact that  $\vz_\al(\al')=L+\tau(M-N)$ and $\vX_s(s')=l+\iota(m-n)$. Note that $2\tan(\frac{\al-\al'}{2})\sim-\tau-\frac{\tau^3}{12}$, $2\tan(\frac{s-s'}{2})\sim-\iota-\frac{\iota^3}{12}$. Using Lemma \ref{lem:theta-z}, Lemma \ref{lem:delta}, Lemma \ref{lem:ulinet}, and the property of Hardy-Littlewood maximal operator, we have
	\begin{align*}
		|\vu|\lesssim& \frac{1}{\fs^2}\big(|\mathcal H(\theta_{\al\al})|+|[\mathcal H,\vn](\theta_{\al\al})|+\frac{\lambda\fs^2+\frac{1}{2}+\fs^3}{\beta_1}||\theta-\al||_{\dot H^1}+\frac{1}{\beta_1}(1+||\theta_{\al}||_{L^\infty})^2||\theta-\al||_{\dot H^2}\big)\\
		&+\fs\Big(|\mathfrak h(y_s)|+|[\mathfrak h,\vt](y_s)|+\frac{1}{\beta_1\beta_2}\big((1+||y_s||_{l^\infty})^2||\theta_\al||_{L^2}+||y_s||_{h^1}\big)||y_s||_{l^2}\Big).
	\end{align*}
	Therefore, one can achieve the results by using Lemma \ref{lem:H} and the the Gagliardo-Nirenberg interpolation inequality.
\end{proof}

Now, we give the estimates for $\tilde g_\theta$ and $\tilde g_y$.
\begin{lemma}\label{lem:h1}
	Suppose that $\big(\theta(\al,t),y_s(s,t),\fs(t)\big)$ satisfies all the assumptions in Lemma \ref{lem:ulinet}, then for $\forall\delta\in(0,\pi)$, it holds that
	\begin{align}\label{eq:gh1}
		||\tilde g_y||_{\dot h^1}\le& C(\lambda)\frac{1+\fs^3}{\beta_1^2\beta_2^2\fs^3}(1+||y_s||_{\dot h^{3/2}})^5(1+||\theta||_{\dot H^2})^4\big(\delta ||\theta||_{\dot H^4}+\frac{1}{\delta^{1/2}}||\theta||_{\dot H^{5/2}}+||y_s||_{\dot h^{3/2}}\big),
	\end{align}
	where $\beta_1,\beta_2$ are defined in \eqref{eq:modify-betaal}-\eqref{eq:modify-betas}, and $C(\lambda)>0$ is a constant that depends only on $\lambda$.
\end{lemma}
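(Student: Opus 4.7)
The plan is to split $\tilde g_y$ into its seven constituent pieces according to the formula for $g_y$ in Proposition~2.1 — the $\fs_t$ correction $(1+y_s)\fs_t/\fs$, the two commutators $\vt\cdot[\mathfrak h,\vt](y_{ss})$ and $\frac{1}{4\fs^3}\vt\cdot[\mathfrak h,\vn](\pa_s\theta_{\al\al})$, the three Hilbert-transformed product terms $\vt\cdot\mathfrak h\bigl((1+y_s)^k\theta_\al^j\vn\bigr)$, and the regularized-kernel boundary integral — and estimate the $\dot h^1$ norm of each piece separately. For the $\fs_t$ term I would use \eqref{eq:modify-S} and Lemma \ref{lem:u-inf} to get $|\fs_t|\lesssim\|\vu\|_{L^2}\|\theta_\al\|_{L^2}$; differentiating in $s$ only produces $y_{ss}$, so the contribution is controlled by $\|y_s\|_{\dot h^{3/2}}$. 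For $\vt\cdot[\mathfrak h,\vt](y_{ss})$ and the three $\mathfrak h$-product terms, I would use that $\mathfrak h$ is bounded on $\dot h^1$, together with Lemma \ref{lem:H} (in material coordinates, with $i=0$, $s=2$) and the Moser-type product rule furnished by Lemma \ref{lem:1/2}; all such contributions are dominated by $\|\theta\|_{\dot H^{5/2}}+\|y_s\|_{\dot h^{3/2}}$ times the nonlinear prefactor.

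The decisive step, and the main obstacle, is the highest-order commutator $\frac{1}{4\fs^3}\vt\cdot[\mathfrak h,\vn](\pa_s\theta_{\al\al})$. Differentiating once in $s$ produces three pieces: $\pa_s\vt\cdot[\mathfrak h,\vn](\pa_s\theta_{\al\al})$, $\vt\cdot[\mathfrak h,\pa_s\vn](\pa_s\theta_{\al\al})$, and $\vt\cdot[\mathfrak h,\vn](\pa_s^2\theta_{\al\al})$. The first two are absorbed directly by Lemma \ref{lem:H} at the cost of $\|\theta\|_{\dot H^{5/2}}\|y_s\|_{\dot h^{3/2}}$ contributions. For the third I would invoke Lemma \ref{lem:H} with $i=-1$ and $s=2$, giving
\begin{equation*}
\|[\mathfrak h,\vn](\pa_s^2\theta_{\al\al})\|_{l^2}\lesssim \|\pa_s^2\theta_{\al\al}\|_{h^{-1}}\|\vn\|_{h^2}\lesssim (1+\|y_s\|_{l^\infty})^3(1+\|\theta\|_{\dot H^2})\|\theta\|_{\dot H^3}.
\end{equation*}
Since $\|\theta\|_{\dot H^{5/2}}$ alone does not control $\|\theta\|_{\dot H^3}$, I would combine the Sobolev interpolation $\|\theta\|_{\dot H^3}\lesssim\|\theta\|_{\dot H^{5/2}}^{2/3}\|\theta\|_{\dot H^4}^{1/3}$ with Young's inequality in the exponents $(3/2,3)$, obtaining
\begin{equation*}
\|\theta\|_{\dot H^3}\le \delta\|\theta\|_{\dot H^4}+C\delta^{-1/2}\|\theta\|_{\dot H^{5/2}}.
\end{equation*}
This is exactly the structural feature $\delta\|\theta\|_{\dot H^4}+\delta^{-1/2}\|\theta\|_{\dot H^{5/2}}$ that appears on the right-hand side of \eqref{eq:gh1}; the free parameter $\delta$ in the statement is there precisely to accommodate this interpolation.

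Finally, for the regularized-kernel integral — whose kernel is $\pa_s G(\vX(s)-\vX(s'))+\frac{1}{8\pi\tan((s-s')/2)}Id$, tested against $y_{ss}\vt+(1+y_s)^2\theta_\al\vn+(1+y_s)(\lambda\theta_\al\vn-\theta_{\al\al\al}\vn/\fs^3-\theta_\al^3\vn/(2\fs^3))$ — I would proceed as in the proof of Lemma \ref{lem:u-inf}: re-express the kernel through the finite-difference objects $l,m,n$ of Lemma \ref{lem:delta}, extract the non-self-intersection lower bound $|l|\ge C\beta_1\beta_2\fs$, and after differentiating once in $s$ bound each factor by means of \eqref{eq:vX5/2} and the $L^p$ estimates \eqref{eq:lp}, \eqref{eq:HDM}. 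The single-layer piece containing $(1+y_s)\theta_\al\vn$ — which cannot be bounded by $\|\theta_\al\|_{L^2}$ since $\theta_\al$ has mean $1$ rather than $0$ — is absorbed via the cancellation identity established in Lemma \ref{lem:ulinet}. Recombining the pieces and tracking the accumulating inverse powers of $\beta_1,\beta_2,\fs$ and polynomial powers of $1+\|y_s\|_{\dot h^{3/2}}$, $1+\|\theta\|_{\dot H^2}$ that arise from each product produces the claimed prefactor $C(\lambda)\frac{1+\fs^3}{\beta_1^2\beta_2^2\fs^3}(1+\|y_s\|_{\dot h^{3/2}})^5(1+\|\theta\|_{\dot H^2})^4$. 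The principal difficulty is purely this bookkeeping of nonlinear factors; the analytic content is concentrated in the interpolation trick above.
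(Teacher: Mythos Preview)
Your plan matches the paper's approach: split $\tilde g_y$ into the local pieces ($\fs_t$ correction, commutators, Hilbert-product terms) and the regularized-kernel integral, handle commutators via Lemma~\ref{lem:H}, push the worst commutator $[\mathfrak h,\vn](\pa_s\theta_{\al\al})$ down to $\|\theta\|_{\dot H^3}$ and then interpolate $\|\theta\|_{\dot H^3}\le\delta\|\theta\|_{\dot H^4}+\delta^{-1/2}\|\theta\|_{\dot H^{5/2}}$, and treat the kernel integral through the finite-difference objects $l,m,n$ and \eqref{eq:vX5/2}. The identification of the interpolation step as the source of the $\delta$-structure is exactly right.

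One point deserves more care. You invoke Lemma~\ref{lem:ulinet} only for the regularized-kernel integral, but the paper applies that cancellation \emph{globally} at the very start: it rewrites $\tilde g_y$ so that every occurrence of $\theta_\al\vn$, including those inside the three Hilbert-product terms $\vt\cdot\mathfrak h\bigl((1+y_s)^k\theta_\al^j\vn\bigr)$, is replaced by $\theta_\al^j\vn-\vz_\al^\perp/\fs$, with a compensating $\vz_\al^\perp$ piece inserted into the kernel integrand. This global subtraction is what makes your claim ``all such contributions are dominated by $\|\theta\|_{\dot H^{5/2}}+\|y_s\|_{\dot h^{3/2}}$'' actually hold: without it, terms like $\pa_s\vt\cdot\mathfrak h\bigl((1+y_s)^2\theta_\al\vn\bigr)$ contain an $O(1)$ contribution (since $\theta_\al$ has mean one) that neither Lemma~\ref{lem:H} nor the product rule of Lemma~\ref{lem:1/2} can convert into a small factor. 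The cancellation you cite for the kernel term is the right idea; it simply has to be executed once on the full expression rather than only on the last piece.
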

\begin{proof}
Using the same technique in Lemma \ref{lem:ulinet}, we deduce that
\begin{align*}
	&\tilde g_y(s,t)\\
	=&-(1+y_s)\frac{\mathfrak s_t}{\mathfrak s}-\frac{1}{4}\vt\cdot[\mathfrak h,\vt](y_{ss})-\frac{1}{4}\vt\cdot\mathfrak h\big((1+y_{s})^2\theta_\al\vn-(1+y_{s})\frac{\vz_\al^\perp}{\fs}\big)+\frac{1}{4\fs^3}\vt\cdot[\mathfrak h,\vn]\big(\pa_s\theta_{\al\al}\big)\\
	&\qquad\qquad\qquad-\frac{\lambda}{4\fs}\vt\cdot\mathfrak h\big((1+y_s)(\theta_\al\vn-\frac{\vz_\al^\perp}{\fs})\big)+\frac{1}{8\fs^3}\vt\cdot\mathfrak h\big((1+y_s)(\theta_\al^3\vn-\frac{\vz_\al^\perp}{\fs})\big)\\
	+&\vt\cdot\int_\bbT\Big(\frac{\pa}{\pa_s}G\big(\vX(s,t)-\vX(s',t)\big)+\frac{1}{4}\frac{1}{2\pi\tan( \frac{s-s'}{2})}Id\Big)\\
	&\quad\cdot\Big(y_{ss}\vt+(1+y_s)^2\theta_\al\vn+(1+y_s)\big(\frac{\lambda}{\fs}\theta_\al-\frac{\theta_{\al\al\al}}{\mathfrak s ^3}- \frac{1}{2}\frac{(\theta_\al)^3}{\mathfrak s ^3}\big)\vn-(1+y_s)\frac{\fs^3+\lambda\fs^2-1}{2\fs^4}\vz_\al^\perp\Big) ds'\\
	\eqdefa&I_1+I_2.
\end{align*}

\vspace{10pt}

\textit{ Estimate of $I_1$}.
Recalling \eqref{eq:modify-S}, we have
\begin{align*}
	||(1+y_s)\frac{\mathfrak s_t}{\mathfrak s}||_{\dot h^1}\le ||\theta||_{\dot H^1}||y_s||_{\dot h^1}||\vu||_{L^2}.
\end{align*}
With the help of Lemma \ref{lem:H}, it is easy to see that
\begin{align*}
	||\vt\cdot[\mathfrak h,\vt](y_{ss})||_{\dot h^1}\le&||\pa_s\vt||_{l^\infty}||[\mathfrak h,\vt](y_{ss})||_{l^2}+||[\mathfrak h,\vt](y_{ss})||_{\dot h^1}\\
	\le&||\pa_s\vt||_{l^\infty}||\vt||_{h^1}||y_{s}||_{\dot h^1}+||\pa_s\vt||_{h^1}||y_{s}||_{\dot h^1}\\
	\le&C(1+||y_s||_{\dot h^1})^2(1+||\theta||_{\dot H^2})^2||y_{s}||_{\dot h^1},\\
	||\vt\cdot\mathfrak h\big((1+y_{s})^2\theta_\al\vn-(1+y_{s})\frac{\vz_\al^\perp}{\fs}\big)||_{\dot h^1}\le&C(1+||y_s||_{\dot h^1})^3(1+||\theta||_{\dot H^2})^2\big(||y_{s}||_{\dot h^1}+||\theta||_{\dot H^2}\big),\\
	||\vt\cdot[\mathfrak h,\vn]\big(\pa_s\theta_{\al\al}\big)||_{\dot h^1}\le&C(1+||y_s||_{\dot h^1})^3(1+||\theta||_{\dot H^2})^2||\theta||_{\dot H^3}\\
	\le&C(1+||y_s||_{\dot h^1})^3(1+||\theta||_{\dot H^2})^2(\delta||\theta||_{\dot H^4}+\frac{1}{\delta^{1/2}}||\theta||_{\dot H^{5/2}}).
\end{align*}
In a similar way, we conclude that
\begin{align*}
	||\pa_s I_1||_{l^2}\le&C(\lambda)\frac{1+\fs^3}{\beta_1\beta_2\fs^3}(1+||y_s||_{\dot h^1})^3(1+||\theta||_{\dot H^2})^2(||y_s||_{\dot h^1}+\delta||\theta||_{\dot H^4}+\frac{1}{\delta^{1/2}}||\theta||_{\dot H^{5/2}}).
\end{align*}

\vspace{10pt}

\textit{ Estimate of $I_2$}. By direct computation, we deduce that
\begin{align*}
	\pa_s I_2=&\vt\cdot\pa_s\int_\bbT\Big(\frac{\pa}{\pa_s}G\big(\vX(s,t)-\vX(s',t)\big)+\frac{1}{4}\frac{1}{2\pi\tan( \frac{s-s'}{2})}Id\Big)\cdot\Big(y_{ss}\vt+(1+y_s)^2\theta_\al\vn\\
	&\quad\qquad\qquad+(1+y_s)\big(\frac{\lambda}{\fs}\theta_\al-\frac{\theta_{\al\al\al}}{\mathfrak s ^3}- \frac{1}{2}\frac{(\theta_\al)^3}{\mathfrak s ^3}\big)\vn-(1+y_s)\frac{\fs^3+\lambda\fs^2-1}{2\fs^4}\vz_\al^\perp\Big) ds'\\
	&+\pa_s\vt\cdot\int_\bbT\Big(\frac{\pa}{\pa_s}G\big(\vX(s,t)-\vX(s',t)\big)+\frac{1}{4}\frac{1}{2\pi\tan( \frac{s-s'}{2})}Id\Big)\cdot\Big(y_{ss}\vt+(1+y_s)^2\theta_\al\vn\\
	&\quad\qquad\qquad+(1+y_s)\big(\frac{\lambda}{\fs}\theta_\al-\frac{\theta_{\al\al\al}}{\mathfrak s ^3}- \frac{1}{2}\frac{(\theta_\al)^3}{\mathfrak s ^3}\big)\vn-(1+y_s)\frac{\fs^3+\lambda\fs^2-1}{2\fs^4}\vz_\al^\perp\Big) ds'\\
	\eqdefa&i_2+r_2,
\end{align*}
where $i_2$ is the most troublesome term. From the definitions, we have $\vX_s(s)=l-\iota\ n$. It follows that
\begin{align}\label{eq:G-s-s'}
	&4\pi\frac{\pa}{\pa_s}G\big(\vX(s,t)-\vX(s',t)\big)\\
	=&\frac{l\cdot\vX_s(s)}{|l|^2\iota}Id+\frac{2l\cdot \vX_s(s)l\otimes l}{|l|^4\iota}-\frac{\vX_s(s)\otimes l+l\otimes \vX_s(s)}{|l|^2\iota}\nonumber\\
	=&\frac{1}{\iota}Id-\frac{l\cdot n}{|l|^2}Id-\frac{2l\cdot nl\otimes l}{|l|^4}+\frac{n\otimes l+l\otimes n}{|l|^2}.\nonumber
\end{align}
Note that $\frac{1}{\iota(s,s')}$ is not continuous at $\left\{(s,s')\big|s=s'\text{ or } |s-s'|=\pi\right\}$. However, $\frac{1}{\iota(s,s')}-\frac{l(s,s')\cdot n(s,s')}{|l(s,s')|^2}$ and $\frac{\pa}{\pa_s}G\big(\vX(s,t)-\vX(s',t)\big)$ are continuous functions on $\bbT\times\bbT$. Therefore, it holds that
\begin{align*}
	&\pa_s\Big(\frac{\pa}{\pa_s}G\big(\vX(s,t)-\vX(s',t)\big)+\frac{1}{4}\frac{1}{2\pi\tan( \frac{s-s'}{2})}Id\Big)\\
	=&\frac{\sin^2(\frac{s-s'}{2})-\frac{1}{4}\iota^2}{4\pi\iota^2\sin^2(\frac{s-s'}{2})}Id-\frac{n\cdot n+l\cdot \pa_sn}{4\pi|l|^2}Id+\frac{2l\cdot nl\cdot n}{4\pi|l|^4}Id+\frac{\pa_sn\otimes l+l\otimes\pa_sn+2n\otimes n}{4\pi|l|^2}\\
	&+\frac{8l\cdot nl\cdot nl\otimes l}{4\pi|l|^6}-\frac{2n\cdot nl\otimes l+2l\cdot \pa_snl\otimes l+2l\cdot n(n\otimes l+l\otimes n)+2l\cdot n(n\otimes l+l\otimes n)}{4\pi|l|^4}.
\end{align*}
Then, we rewrite $i_2$ as follows:
\begin{align*}
	i_2=&\vt\cdot\int_\bbT\Big(\frac{\sin^2(\frac{s-s'}{2})-\frac{1}{4}\iota^2}{4\pi\iota^2\sin^2(\frac{s-s'}{2})}Id-\frac{n\cdot n+l\cdot \pa_sn}{4\pi|l|^2}Id+\frac{2l\cdot nl\cdot n}{4\pi|l|^4}Id+\frac{8l\cdot nl\cdot nl\otimes l}{4\pi|l|^6}\\
	&\quad-\frac{2n\cdot nl\otimes l+2l\cdot \pa_snl\otimes l+2l\cdot n(n\otimes l+l\otimes n)+2l\cdot n(n\otimes l+l\otimes n)}{4\pi|l|^4}\\
	&\qquad+\frac{\pa_sn\otimes l+l\otimes\pa_sn+2n\otimes n}{4\pi|l|^2}\Big)\cdot\Big(y_{ss}\vt+(1+y_s)^2\theta_\al\vn\\
	&\quad\qquad\qquad+(1+y_s)\big(\frac{\lambda}{\fs}\theta_\al-\frac{\theta_{\al\al\al}}{\mathfrak s ^3}- \frac{1}{2}\frac{(\theta_\al)^3}{\mathfrak s ^3}\big)\vn-(1+y_s)\frac{\fs^3+\lambda\fs^2-1}{2\fs^4}\vz_\al^\perp\Big) ds'.
\end{align*}
By using Lemma \ref{lem:delta}, we have
\begin{align*}
	&||\int_{\bbT}\frac{l\cdot \pa_snl\otimes l}{|l|^4}\cdot(1+y_s)\theta_{\al\al\al}\vn(s')ds'||_{l^2}\\
	\lesssim& \frac{1}{\beta_1\beta_2}(1+||y_s||_{l^\infty})^{1/2}||\pa_sn||_{l^2l^2}||\theta||_{\dot H^3}\\
	\lesssim& \frac{1}{\beta_1\beta_2}(1+||y_s||_{\dot h^{3/2}})^4(1+||\theta||_{\dot H^2})^2(\delta ||\theta||_{\dot H^4}+\frac{1}{\delta^{1/2}}||\theta||_{\dot H^{5/2}}).
\end{align*}
In the same way, we deduce that
\begin{align*}
	||i_2||_{l^2}\le& C(\lambda)\frac{1+\fs^3}{\beta_1^2\beta_2^2\fs^3}(1+||y_s||_{\dot h^{3/2}})^5(1+||\theta||_{\dot H^2})^4\big(\delta ||\theta||_{\dot H^4}+\frac{1}{\delta^{1/2}}||\theta||_{\dot H^{5/2}}+||y_s||_{\dot h^{3/2}}\big),\\
	||r_2||_{l^2}\le&C(\lambda)\frac{1+\fs^3}{\beta_1\beta_2\fs^3}(1+||y_s||_{\dot h^{1}})^2(1+||\theta||_{\dot H^2})^3\big(\delta||\theta||_{\dot H^4}+\frac{1}{\delta^{1/2}}||\theta||_{\dot H^{5/2}}+||y_s||_{\dot h^1}\big).
\end{align*}
Finally, we conclude that
\begin{align*}
	||\tilde g_y||_{\dot h^1}\le& C(\lambda)\frac{1+\fs^3}{\beta_1^2\beta_2^2\fs^3}(1+||y_s||_{\dot h^{3/2}})^5(1+||\theta||_{\dot H^2})^4\big(\delta ||\theta||_{\dot H^4}+\frac{1}{\delta^{1/2}}||\theta||_{\dot H^{5/2}}+||y_s||_{\dot h^{3/2}}\big).
\end{align*}
\end{proof}
\begin{lemma}\label{lem:H1}
	Suppose that $\big(\theta(\al,t),y_s(s,t),\fs(t)\big)$ satisfies all the assumptions in Lemma \ref{lem:ulinet}, then for $\forall\delta\in(0,\pi)$, it holds that
	\begin{align}\label{eq:gH1}
		||\tilde g_\theta||_{\dot H^1}\le &C(\lambda)\frac{1+\fs^3}{\fs^3\beta_1^2\beta_2^2}(1+||\theta||_{\dot H^2})^4(1+||y_s||_{\dot h^{3/2}})^3\Big(\delta||\theta||_{\dot H^4}+\frac{1}{\delta^{1/2}}||\theta||_{\dot H^{5/2}}+||y_s||_{\dot h^{3/2}}\Big).
	\end{align}
	where $\beta_1,\beta_2$ are defined in \eqref{eq:modify-betaal}-\eqref{eq:modify-betas}, and $C(\lambda)>0$ is a universal constant that only depends on $\lambda$.
\end{lemma}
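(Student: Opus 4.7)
The strategy parallels the proof of Lemma \ref{lem:h1} closely: we split $\tilde g_\theta$ into commutator/Hilbert-transform pieces, a smoothed boundary-integral piece, and a lower-order transport-type piece; then differentiate once in $\al$ and bound each resulting term in $L^2$. The plan is to first symmetrize the dangerous pieces (those involving $\theta_\al\vn$ inside a Hilbert transform or inside the kernel $\pa_\al G$) by subtracting a compensating $\vz_\al^\perp/\fs$, so that what remains can be controlled by $\|\theta-\al\|_{\dot H^1}$ via Lemma \ref{lem:ulinet}; this is exactly the device that rescued the corresponding terms in $\tilde g_y$.

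More concretely, first I would rewrite
\begin{align*}
\tfrac{1}{4}\vn\cdot\mathcal H\big((1+y_s)\theta_\al\vn\big)
=\tfrac{1}{4}\vn\cdot\mathcal H\big((1+y_s)(\theta_\al\vn-\vz_\al^\perp/\fs)\big)+\tfrac{1}{4\fs}\vn\cdot\mathcal H\big((1+y_s)\vz_\al^\perp\big),
\end{align*}
and similarly peel off $\vz_\al^\perp/\fs$ from the $\lambda$-term and the $\theta_\al^3$-term. After this cosmetic correction, the purely Hilbert-transform contributions can be estimated in $\dot H^1$ by combining Lemma \ref{lem:H} (commutator bound) with the Moser/Sobolev product inequality and the Gagliardo--Nirenberg interpolation that converts a top-order $\|\theta\|_{\dot H^3}$ into $\delta\|\theta\|_{\dot H^4}+\delta^{-1/2}\|\theta\|_{\dot H^{5/2}}$; this is where the $\delta$-dependence in \eqref{eq:gH1} originates. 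The factor $\vn$ in front (and its derivative when $\pa_\al$ lands there) contributes the $(1+\|\theta\|_{\dot H^2})$ powers, and $\|y_s\|_{\dot h^{3/2}}\hookrightarrow\|y_s\|_{l^\infty}$ gives the $(1+\|y_s\|_{\dot h^{3/2}})$ powers.

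Next I would treat the smoothed integral term
$\int_\bbT\big(\pa_\al G(\vz(\al)-\vz(\al'))+\frac{1}{8\pi\tan((\al-\al')/2)}Id\big)\cdot(\cdots)\,d\al'$. Using $\vz_\al(\al')=L+\tau(M-N)$ exactly as in the $I_2$ analysis, the parenthesized kernel is a smooth combination of $L$, $M$, $N$, with the borderline $1/\tau$ singularity converted into the smoother $N=(L-\vz_\al(\al))/\tau$. Differentiating in $\al$ produces two types of pieces: (a) the derivative hits the external $\vn$, giving a lower-order term bounded by $\|\pa_\al\vn\|_{l^\infty}$ times an $L^2$ norm of the remaining integral; (b) the derivative hits the kernel, producing $\pa_\al L$, $\pa_\al M$, $\pa_\al N$. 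Among these, $\pa_\al N$ is the critical object, and by Lemma \ref{lem:delta} (in particular \eqref{eq:vX5/2}) we have $\|\pa_\al N\|_{L^2L^2}\lesssim\|\vz\|_{\dot H^{5/2}}$, which is controlled by $\fs(1+\|\theta\|_{\dot H^{3/2}})$ via Lemma \ref{lem:theta-z}. Pairing $\pa_\al N$ (in $L^2_{\al'}$) against the integrand, the worst factor $\theta_{\al\al\al}/\fs^3$ forces us to invoke the interpolation $\|\theta\|_{\dot H^3}\le\delta\|\theta\|_{\dot H^4}+\delta^{-1/2}\|\theta\|_{\dot H^{5/2}}$ once more, which is where the bound \eqref{eq:gH1} becomes sharp.

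Finally, for the transport-type tail $\big(\int_{-\pi}^\al\theta_\al\,\vu\cdot\vn\,d\al'-\cdots-\vu\cdot\vt+\overline\cT\big)\frac{\theta_\al}{\fs}$, I would estimate $\vu$ in $L^\infty$ and $L^2$ via Lemma \ref{lem:u-inf}, use the trivial bound $|\int_{-\pi}^\al\theta_\al\vu\cdot\vn\,d\al'|\lesssim\|\theta-\al\|_{\dot H^1}\|\vu\|_{L^2}+\|\vu\|_{L^2}$, and differentiate; the derivative either hits $\theta_\al/\fs$ (yielding $\theta_{\al\al}$, bounded by $\|\theta\|_{\dot H^2}$ in $L^2$ against an $L^\infty$ factor) or hits the integral (giving $\theta_\al\vu\cdot\vn$ pointwise). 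Each case is lower-order compared to the integral term above, so it fits inside the stated right-hand side. Collecting, adding the various $\delta$-dependent bounds, and absorbing constants into $C(\lambda)$ yields \eqref{eq:gH1}. The main obstacle, as in Lemma \ref{lem:h1}, is extracting a top-order derivative $\|\theta\|_{\dot H^4}$ with an arbitrarily small coefficient $\delta$ from the $\pa_\al N$-by-$\theta_{\al\al\al}$ pairing in the smoothed integral; the interpolation trick and the preliminary cancellations against $\vz_\al^\perp/\fs$ are what make this work.
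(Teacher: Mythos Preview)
Your proposal is correct and takes essentially the same approach as the paper: the paper simply states that the proof is similar to Lemma~\ref{lem:h1} and omits the details, and your outline is precisely a faithful adaptation of that argument to the arc-length variable, including the $\vz_\al^\perp/\fs$ subtraction trick, the commutator estimates via Lemma~\ref{lem:H}, the $\pa_\al N$ analysis via Lemma~\ref{lem:delta}, and the Gagliardo--Nirenberg interpolation producing the $\delta$-dependence.
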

The proof is similar to Lemma \ref{lem:h1}, and we omit the details.
\section{Existence and Uniqueness of the Local-in-Time Solution}
In this section, we will prove the local well-posedness of the modified contour dynamic system \eqref{eq:modify-tho}-\eqref{eq:modify-S}, and give the proof of Theorem \ref{thm:locex}.
\subsection{Existence of solutions of the contour dynamic system}
Let us introduce some notations before showing the existence of solutions of \eqref{eq:modify-tho}-\eqref{eq:modify-S}. For $T>0$, we define
\begin{align*}
	\Omega_T\eqdefa\{&\big(\theta(\al,t),y_s(s,t)\big):\theta(\al,t)-\al\in C(\bbT),\ \forall t\in[0,t],\\
	&\theta-\al\in L^\infty([0,T];H^{5/2}(\bbT))\cap L^2([0,T];H^{4}(\bbT)),\theta_t\in L^2([0,T];H^{1}(\bbT))\},\\
	&y_s\in L^\infty([0,T];h^{3/2}(\bbT))\cap L^2([0,T];h^{2}(\bbT)),y_{st}\in L^2([0,T];h^{1}(\bbT))\}.
\end{align*}
Given $(\theta_0,y_{0s},\fs_0)$ which satisfies \eqref{con:adm} and $\int_\bbT\theta_0d\al=\int_\bbT y_{0s}ds=0$, we also define
\begin{align*}
	&\Omega_{0,T}(\theta_0,y_{0s},\fs_0)\\ 
	\eqdefa&\Big\{\big(\theta,y_s\big)\in\Omega_T:\int_\bbT\theta(\al,t)d\al\equiv0,\int_\bbT y_s(s,t)ds\equiv0,\theta(\al,0)=\theta_0,y_s(s,0)=y_{0s},\\
	&\ ||\theta_t||_{L^2_T\dot H^1}\le 30(||\theta_0||_{\dot H^{5/2}}+||y_{0s}||_{\dot h^{3/2}}),||\theta-e^{t\mathcal L_0}\theta_0||_{L^\infty_T \dot H^{5/2}\cap L^2_T\dot H^4}\le ||\theta_0||_{\dot H^{5/2}}+||y_{0s}||_{\dot h^{3/2}},\\
	&\ ||y_{st}||_{L^2_T\dot h^1}\le 30(||\theta_0||_{\dot H^{5/2}}+||y_{0s}||_{\dot h^{3/2}}),||y_s-e^{t\mathfrak L}y_{0s}||_{L^\infty_T \dot h^{3/2}\cap L^2_T\dot h^2}\le ||\theta_0||_{\dot H^{5/2}}+||y_{0s}||_{\dot h^{3/2}}\Big\}.
\end{align*}
Here $\mathcal L_0(\theta)=\frac{1}{4\fs_0^3}\mathcal H(\theta_{\al\al\al})$. Be careful that not any $\theta\in\Omega_{0,T}$ can be a tangent angle function of a closed string. By Lemma \ref{lem:ex} and Lemma \ref{lem:exy}, $(e^{t\mathcal L_0}\theta_0,e^{t\mathfrak L}y_{0s})\in\Omega_{0,T}(\theta_0,y_{0s},\fs_0)$, thus $\Omega_{0,T}(\theta_0,y_{0s},\fs_0)$ is nonempty. Furthermore, $\Omega_{0,T}(\theta_0)$ is convex and closed in $\Omega_T$. Then we are going to state the result.
\begin{proposition}\label{pro:modify-ex}
	Assume $\fs_0>0$, $(\theta_0-\al)\in H^{5/2}(\bbT)$, $y_{0s}\in h^{3/2}(\bbT)$ satisfying \eqref{con:adm}, $\int_\bbT\theta_0 d\al=\int_\bbT y_{0s} ds=0$ and
	\begin{align*}
		\frac{1}{|\al_1-\al_2|}\big|\int^{\al_1}_{\al_2}\big(\cos(\theta_0),\sin(\theta_0)\big)d\al'\big|\ge\tilde \beta_1, \ \forall \al_1,\al_2\in\bbT,\\
		1+y_{0s}(s)\ge\tilde \beta_2,\ \forall s\in\bbT,
	\end{align*}
	for some constants $\tilde \beta_1,\tilde \beta_2>0$. Then there exists $T_0=T_0(\tilde \beta_1,\tilde \beta_2,\fs_0,||\theta_0||_{\dot H^{5/2}(\bbT)},||y_{0s}||_{\dot h^{3/2}})\in[0,\infty)$ and a solution with $\big(\theta(\al,t),y_s(s,t)\big)\in\Omega_{T_0}$ and $\fs(t)\in C^1[0,T]$ of the modified system \eqref{eq:modify-tho}-\eqref{eq:modify-S} satisfying
	\begin{align*}
	&||\theta||_{L^\infty_{T_0} \dot{H}^{5/2}\cap L^2_{T_0}\dot{H}^4}\le (2+4\sqrt2\fs_0^{3/2})||\theta_0||_{\dot{H}^{5/2}}+||y_{0s}||_{\dot h^{3/2}},\ ||\pa_t \theta||_{L^2_{T_0}\dot{H}^1}\le 30(||\theta_0||_{\dot H^{5/2}}+||y_{0s}||_{\dot h^{3/2}}),\\
	&||y_s||_{L^\infty_{T_0} \dot h^{3/2}\cap L^2_{T_0}\dot h^2}\le 4||y_{0s}||_{\dot h^{3/2}}+||\theta_0||_{\dot{H}^{5/2}},\ ||\pa_t y_s||_{L^2_{T_0}\dot{h}^1}\le 30(||\theta_0||_{\dot H^{5/2}}+||y_{0s}||_{\dot h^{3/2}}).
	\end{align*}
	Furthermore, for $t\in[0,T_0]$, it holds that
	\begin{align*}
		\frac{1}{|\al_1-\al_2|}\big|\int^{\al_1}_{\al_2}\big(\cos(\theta),\sin(\theta)\big)d\al'\big|-\big|\int^{\pi}_{-\pi}\big(\cos(\theta),\sin(\theta)\big)d\al'\big|\ge \frac{1}{3}\tilde \beta_1,\ \forall \al_1,\al_2\in\bbT\\
		1+y_s(s,t)\ge \frac{1}{3}\tilde\beta_2,\ \forall s\in\bbT.
	\end{align*}
\end{proposition}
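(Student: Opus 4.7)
The plan is to establish existence via the Schauder fixed point theorem on the convex, closed subset $\Omega_{0,T_0}(\theta_0,y_{0s},\fs_0)$ of $\Omega_{T_0}$. Given $(\bar\theta,\bar y_s)\in\Omega_{0,T_0}$, I would first solve the scalar ODE \eqref{eq:modify-S} with input $(\bar\theta,\bar y_s)$ to obtain $\bar\fs\in C^1[0,T_0]$ with $\bar\fs(0)=\fs_0$; Lemma \ref{lem:u-inf} together with the $L^\infty_{T_0}H^{5/2}$ control built into $\Omega_{0,T_0}$ shows $|\bar\fs(t)-\fs_0|\le C T_0^{1/2}$, so for $T_0$ small $\bar\fs$ stays close to $\fs_0$ and remains strictly positive. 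Then I define $\Phi(\bar\theta,\bar y_s)\eqdefa(\theta,y_s)$ where $\theta$ solves the linear parabolic problem
\begin{equation*}
\theta_t=\frac{1}{4\fs_0^3}\mathcal H(\theta_{\al\al\al})+\Big(\frac{1}{4\bar\fs^3}-\frac{1}{4\fs_0^3}\Big)\mathcal H(\theta_{\al\al\al})+\tilde g_\theta[\bar\theta,\bar y_s,\bar\fs],\qquad \theta(\cdot,0)=\theta_0,
\end{equation*}
and $y_s$ solves the analogous problem with linear part $-\tfrac14\mathfrak h\pa_s^2$ and forcing $\tilde g_y[\bar\theta,\bar y_s,\bar\fs]$. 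Both are linear scalar parabolic equations (of order $3$ and $2$ respectively), since $-\mathcal H\pa_\al^3$ and $-\mathfrak h\pa_s^2$ are positive operators with symbols $|k|^3$ and $|k|^2$.

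The first task is to show $\Phi$ preserves $\Omega_{0,T_0}$. For the linear problems, maximal regularity gives
\begin{equation*}
\|\theta-e^{t\mathcal L_0}\theta_0\|_{L^\infty_{T_0}\dot H^{5/2}\cap L^2_{T_0}\dot H^4}+\|\theta_t\|_{L^2_{T_0}\dot H^1}\lesssim \|F_\theta\|_{L^2_{T_0}\dot H^1},
\end{equation*}
where $F_\theta$ is the full forcing. Lemma \ref{lem:H1} bounds $\|\tilde g_\theta\|_{\dot H^1}$ pointwise in time by $C(\lambda,\fs_0,\beta_1,\beta_2)\,\mathcal P(\|\bar\theta\|_{\dot H^{5/2}},\|\bar y_s\|_{\dot h^{3/2}})\cdot(\delta\|\bar\theta\|_{\dot H^4}+\delta^{-1/2}\|\bar\theta\|_{\dot H^{5/2}}+\|\bar y_s\|_{\dot h^{3/2}})$. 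Squaring and integrating in $t\in[0,T_0]$, the $\delta$-term is $\lesssim\delta^2\|\bar\theta\|^2_{L^2_{T_0}\dot H^4}$, which is controlled by the $\Omega_{0,T_0}$-budget, while the other two contributions pick up a factor $T_0^{1/2}$ from the time integration. The discrepancy $(1/4\bar\fs^3-1/4\fs_0^3)\mathcal H\theta_{\al\al\al}$ is handled via $|\bar\fs-\fs_0|\le CT_0^{1/2}$ and contributes a factor of $T_0^{1/2}$ times $\|\theta\|_{L^2_{T_0}\dot H^3}$, which can be absorbed in the left-hand-side quantity by interpolation and smallness of $T_0$. Choosing $\delta$ small first (to absorb $\delta\|\bar\theta\|_{\dot H^4}$) and then $T_0$ small enough closes the invariance: the right-hand-side is bounded by $\|\theta_0\|_{\dot H^{5/2}}+\|y_{0s}\|_{\dot h^{3/2}}$ and the $\theta_t$ bound is $\le 30(\|\theta_0\|_{\dot H^{5/2}}+\|y_{0s}\|_{\dot h^{3/2}})$. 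The analogous argument using Lemma \ref{lem:h1} closes the $y_s$-estimate.

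Next I would establish precompactness and continuity of $\Phi$. The uniform $L^2_{T_0}\dot H^4$ and $L^2_{T_0}\dot H^1$ control of $\pa_t\theta$, together with Aubin--Lions, give precompactness of $\Phi(\Omega_{0,T_0})$ in the strong $C([0,T_0];H^r)\cap L^2([0,T_0];H^s)$ topology for $r<5/2,\ s<4$, and similarly for the $y_s$-component; this topology is enough to make the quadratic structure of $\Omega_{0,T_0}$ closed. Continuity of $\Phi$ in the same topology follows from continuous dependence of $\bar\fs$, $\bar\vu$, $\bar\vt$, $\bar\vn$ on the input (Lemma \ref{lem:u-inf}, Lemma \ref{lem:delta}), plus the Lipschitz-type structure of the error terms $\tilde g_\theta,\tilde g_y$ with respect to $(\bar\theta,\bar y_s)$ at one derivative lower than the top regularity. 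Schauder's theorem then produces a fixed point $(\theta,y_s)\in\Omega_{0,T_0}$, with $\fs$ the associated ODE solution.

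Finally I would verify the geometric bounds. Since $\theta-\al\in L^\infty_{T_0}H^{5/2}\hookrightarrow L^\infty_{T_0}C^2$ and $y_s\in L^\infty_{T_0}h^{3/2}\hookrightarrow L^\infty_{T_0}C^{1/2}$, the map $t\mapsto\theta(\cdot,t)$ and $t\mapsto y_s(\cdot,t)$ are continuous into the relevant H\"older spaces (using $\theta_t\in L^2_{T_0}\dot H^1$, $y_{st}\in L^2_{T_0}\dot h^1$ and interpolation). Therefore, after possibly shrinking $T_0$ further, the chord-arc and stretching quantities deviate from their $t=0$ values by less than $\tfrac{2}{3}\tilde\beta_1$ and $\tfrac{2}{3}\tilde\beta_2$ respectively, giving \eqref{eq:modify-betaal}--\eqref{eq:modify-betas} with constants $\tfrac13\tilde\beta_1,\tfrac13\tilde\beta_2$. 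The main obstacle is the $\delta$-split in Lemma \ref{lem:H1}: the highest-order term $\delta\|\bar\theta\|_{\dot H^4}$ must be absorbed against the parabolic gain on the left, which is why $\delta$ must be fixed before $T_0$; simultaneously, the coupling between the $\theta$- and $y_s$-equations through $\fs$ and through the cross-terms in $g_\theta,g_y$ must be handled uniformly, and this forces the choice of $T_0$ to depend on $\fs_0,\tilde\beta_1,\tilde\beta_2$ and the initial Sobolev norms.
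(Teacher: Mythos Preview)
Your proposal is correct and follows essentially the same approach as the paper: Schauder fixed point on $\Omega_{0,T_0}$, solve the $\fs$-ODE from the input, linear parabolic estimates for $(\theta,y_s)$ with the $\delta$-split coming from Lemmas~\ref{lem:h1}--\ref{lem:H1}, then Aubin--Lions compactness. One logical reordering is worth flagging: the modified non-self-intersecting and well-stretched bounds must be verified for the \emph{input} $(\bar\theta,\bar y_s)\in\Omega_{0,T_0}$ \emph{before} you solve the $\fs$-ODE or evaluate $\tilde g_\theta,\tilde g_y$, since those quantities are only well-defined under \eqref{eq:modify-betaal}--\eqref{eq:modify-betas}; the paper does this in Lemma~\ref{lem:est-S} using the $\|\theta_t\|_{L^2_T\dot H^1},\|y_{st}\|_{L^2_T\dot h^1}$ bounds built into $\Omega_{0,T_0}$, and your continuity argument in the final paragraph is exactly this step applied to an arbitrary element of $\Omega_{0,T_0}$ rather than only to the fixed point. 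Also note that $\mathfrak L=-\tfrac14\mathfrak h\pa_s$ acting on $y_s$ has symbol $-\tfrac14|k|$ (order one, not two), consistent with the $\dot h^{3/2}\to\dot h^2$ gain in Lemma~\ref{lem:exy}; this does not affect your argument.
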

To prove this proposition, we need the following lemma.
\begin{lemma}\label{lem:est-S}
	Assume $(\theta,y_s)\in\Omega_{T}$, $\theta(\al,0)=\theta_0$ which satisfies \eqref{con:adm}.
	Furthermore, we assume that $\fs_0>0$, $\int_\bbT\theta d\al=\int_\bbT y_s ds\equiv0$ and 
	\begin{align*}
		\frac{1}{|\al_1-\al_2|}\big|\int^{\al_1}_{\al_2}\big(\cos(\theta_0),\sin(\theta_0)\big)d\al'\big|\ge\tilde \beta_1,\\
		1+y_{0s}(s)\ge\tilde \beta_2.
	\end{align*}
	Then there exists a constant $T=T(||\theta||_{L^\infty_T \dot H^{5/2}},||y_s||_{L^\infty_T \dot h^{3/2}},\tilde \beta_1,\tilde \beta_2)$ such that for $\forall t\in[0,T]$, it holds that
	\begin{align}
		\big|\int^{\al_1}_{\al_2}\big(\cos(\theta(\cdot,t)),\sin(\theta(\cdot,t))\big)d\al'\big|\ge \frac{2}{3}\tilde \beta|\al_1-\al_2|,\label{eq:beta-f1}\\
		\frac{1}{3}\tilde \beta\ge \big|\int^{\pi}_{-\pi}\big(\cos(\theta(\cdot,t)),\sin(\theta(\cdot,t))\big)d\al'\big|,\label{eq:beta-f2}
	\end{align}
	and
	\begin{align}\label{eq:beta-f3}
		1+y_s(s,t)\ge \frac{1}{3}\tilde\beta_2.
	\end{align}	
	Moreover, \eqref{eq:modify-S} admits a unique solution $\fs_{\theta,y_s}$ on $[0,T]$ satisfying
	\begin{align}\label{con:S}
		\frac{1}{2}\fs_0\le \fs_{\theta,y_s}(t)\le \frac{3}{2}\fs_0,
	\end{align}
	and
	\begin{align}\label{eq:ltst}
		||\pa_t\fs_{\theta,y_s}||_{L^\infty_T}\le\frac{C}{\tilde \beta_1\tilde\beta_2}(1+||\theta||_{L^\infty_T\dot H^{2}})^3(1+||y_s||_{L^\infty_T\dot h^{1}})^3\big(||\theta||_{L^\infty_T\dot H^{2}}+||y_s||_{L^\infty_T\dot h^{1}}\big),
	\end{align}
	Where $C>0$ is a constant.
\end{lemma}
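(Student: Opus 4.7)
\textbf{Proof plan for Lemma \ref{lem:est-S}.} My plan is to first establish the three geometric bounds \eqref{eq:beta-f1}--\eqref{eq:beta-f3} using only continuity in time (no ODE is involved for these three), then solve the scalar ODE \eqref{eq:modify-S} via a standard Picard iteration while the geometric bounds hold, and finally read off the quantitative bound \eqref{eq:ltst} from Lemma \ref{lem:u-inf}.

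For the geometric bounds, I would exploit that $(\theta,y_s)\in\Omega_T$ gives $\theta_t\in L^2([0,T];H^1(\bbT))$ and $y_{st}\in L^2([0,T];h^1(\bbT))$. By Cauchy--Schwarz in time,
\begin{align*}
\|\theta(\cdot,t)-\theta_0\|_{H^1}\le \sqrt{t}\,\|\theta_t\|_{L^2_tH^1},\qquad \|y_s(\cdot,t)-y_{0s}\|_{h^1}\le \sqrt{t}\,\|y_{st}\|_{L^2_th^1},
\end{align*}
so the Sobolev embedding $H^1(\bbT)\hookrightarrow L^\infty(\bbT)$ yields $\|\theta(t)-\theta_0\|_{L^\infty}+\|y_s(t)-y_{0s}\|_{l^\infty}\le C\sqrt{t}\,\bigl(\|\theta_t\|_{L^2_tH^1}+\|y_{st}\|_{L^2_th^1}\bigr)$. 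Since $\cos,\sin$ are $1$-Lipschitz, for any $\al_1,\al_2\in\bbT$,
\begin{align*}
\frac{1}{|\al_1-\al_2|}\Bigl|\int_{\al_2}^{\al_1}(\cos\theta(\cdot,t),\sin\theta(\cdot,t))d\al'\Bigr|\ge \tilde\beta_1 - \|\theta(\cdot,t)-\theta_0\|_{L^\infty},
\end{align*}
and using $\int_{-\pi}^{\pi}(\cos\theta_0,\sin\theta_0)d\al'=0$ from \eqref{con:adm},
\begin{align*}
\Bigl|\int_{-\pi}^{\pi}(\cos\theta(\cdot,t),\sin\theta(\cdot,t))d\al'\Bigr|\le 2\pi\,\|\theta(\cdot,t)-\theta_0\|_{L^\infty}.
\end{align*}
Choosing $T$ small enough (depending on the $L^2_TH^1$ control of $\theta_t$, which in turn is bounded by $\|\theta\|_{L^\infty_T\dot H^{5/2}}+\|y_s\|_{L^\infty_T\dot h^{3/2}}$ through $\Omega_T$) so that the above $L^\infty$ deviations do not exceed $\tilde\beta_1/3$ and $\tilde\beta_2/3$ respectively yields \eqref{eq:beta-f1}--\eqref{eq:beta-f3}.

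For the ODE \eqref{eq:modify-S}, on the time interval $[0,T]$ I regard the right-hand side as a functional
\begin{align*}
F(t,\fs)\eqdefa -\frac{1}{2\pi}\int_{-\pi}^{\pi}\theta_\al(\al,t)\,\vu(\al,t;\fs)\cdot\vn(\al,t)\,d\al,
\end{align*}
where $(\theta,y_s)$ are the given inputs and the dependence of $\vu$ on $\fs$ is explicit through \eqref{eq:modify-u}--\eqref{eq:modify-z}. On the range $\fs\in[\fs_0/2,3\fs_0/2]$, the integrands in $\vu$ are rational functions of $\fs$ with denominators uniformly bounded below (using the geometric bounds already proved, together with the estimate for $|\vz(\al)-\vz(\al')|$ and $|\vX(s)-\vX(s')|$ from \eqref{eq:modify-betaal}--\eqref{eq:modify-betas}). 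Hence $F(t,\cdot)$ is locally Lipschitz in $\fs$, and Lemma \ref{lem:u-inf} delivers $|F(t,\fs)|\lesssim \|\theta\|_{\dot H^1}\|\vu\|_{L^2}$ uniformly on this range. A standard Picard iteration then produces a unique $\fs_{\theta,y_s}\in C^1[0,T]$ solving \eqref{eq:modify-S}; shrinking $T$ once more if necessary ensures $|\fs_{\theta,y_s}(t)-\fs_0|\le \fs_0/2$ so \eqref{con:S} holds. Finally, the pointwise bound \eqref{eq:ltst} is obtained by plugging the $L^2$ estimate of $\vu$ from Lemma \ref{lem:u-inf} into $|\pa_t\fs_{\theta,y_s}|\le \|\theta_\al\|_{L^2}\|\vu\|_{L^2}/(2\pi)$ and simplifying the resulting product of norms.

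The main obstacle I anticipate is bookkeeping rather than analytic difficulty: the choice of $T$ must be compatible with three distinct conditions (the two geometric bounds, the closed-string near-preservation, and the ODE range), and all constants must be traced back to $\tilde\beta_1,\tilde\beta_2,\fs_0,\|\theta_0\|_{\dot H^{5/2}},\|y_{0s}\|_{\dot h^{3/2}}$ so that $T$ depends only on the declared quantities. The Lipschitz dependence of $\vu$ on $\fs$, although routine, also requires invoking the closed-string condition (so that the Green-function integrals in Lemma \ref{lem:ulinet} remain valid without logarithmic blowup) which is why the closed-string bound \eqref{eq:beta-f2} is established before the ODE is solved.
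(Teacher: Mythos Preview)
Your proposal is correct and follows essentially the same route as the paper: continuity in time via $\theta_t\in L^2_TH^1$, $y_{st}\in L^2_th^1$ for the three geometric bounds, then Cauchy--Lipschitz/Picard for the scalar ODE \eqref{eq:modify-S}, and Lemma~\ref{lem:u-inf} for the pointwise bound on $\pa_t\fs$. One small inaccuracy: membership in $\Omega_T$ does not by itself bound $\|\theta_t\|_{L^2_TH^1}$ in terms of $\|\theta\|_{L^\infty_T\dot H^{5/2}}+\|y_s\|_{L^\infty_T\dot h^{3/2}}$; the paper's proof in fact uses $\|\theta_t\|_{L^2_T\dot H^1}$ and $\|y_{st}\|_{L^2_T\dot h^1}$ directly (and the discrepancy with the stated dependence of $T$ is resolved only in the application to $\Omega_{0,T}$, where those time-derivative norms are a priori controlled).
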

\begin{proof}
	From the assumption, we see that
	\begin{align*}
		&\Big|\big|\int^{\al_1}_{\al_2}\big(\cos(\theta(\cdot,t)),\sin(\theta(\cdot,t))\big)d\al'\big|-\big|\int^{\al_1}_{\al_2}\big(\cos(\theta_0(\cdot)),\sin(\theta_0(\cdot))\big)d\al'\big|\Big|\\
		\le&\int^{\al_1}_{\al_2}\big|\big(\cos(\theta(\cdot,t))-\cos(\theta_0(\cdot)),\sin(\theta(\cdot,t))-\sin(\theta_0(\cdot)\big)\big|d\al'\\
		\le&C|\al_1-\al_2|T^{1/2}||\theta_t||_{L^2_T\dot H^1}.
	\end{align*}
	For the same reason, we have
	\begin{align*}
		&\Big|\big|\int^{\pi}_{-\pi}\big(\cos(\theta(\cdot,t)),\sin(\theta(\cdot,t))\big)d\al'\big|-\big|\int^{\pi}_{-\pi}\big(\cos(\theta_0(\cdot)),\sin(\theta_0(\cdot))\big)d\al'\big|\Big|\\
		\le&CT^{1/2}||\theta_t||_{L^2_T\dot H^1}.
	\end{align*}
	and
	\begin{align*}
		&|1+y_s(s,t)-1-y_{0s}(s)|\\
		\le&CT^{1/2}||y_{st}||_{L^2_T\dot h^1}.
	\end{align*}
	Taking $T$ small enough, for all $t\in[0,T]$ we have
	\begin{align*}
		\big|\int^{\al_1}_{\al_2}\big(\cos(\theta(\cdot,t)),\sin(\theta(\cdot,t))\big)d\al'\big|\ge \frac{2}{3}\tilde \beta_1|\al_1-\al_2|,\\
		\frac{1}{3}\tilde \beta_1\ge \big|\int^{\pi}_{-\pi}\big(\cos(\theta(\cdot,t)),\sin(\theta(\cdot,t))\big)d\al'\big|,\\
		1+y_s(s,t)\ge \frac{1}{3}\tilde\beta_2.
	\end{align*}
	Recall that
	\begin{align*}
		\pa_t\fs_{\theta,y_s}=\frac{1}{2\pi}\int^\pi_{-\pi}&\theta_\alpha \vn\cdot \Big(\int_\bbT G(\vz(\al,t)-\vz(\al',t))\cdot\big(\lambda \theta_\al\vn- \frac{\theta_{\al\al\al}}{\fs_{\theta,y_s}^2}\vn-\frac{\theta_\al^3}{\fs_{\theta,y_s}^2}\vn\big)(\al',t)d\al'\\
		&+\int_\bbT G(\vz(\al,t)-\vX(s',t))\cdot\fs_{\theta,y_s} \big(y_{ss}\vt+(1+y_s)^2 \theta_\al\vn\big)(s',t)ds'\Big)d\al,
	\end{align*}
	where $\vz$ and $\vX$ are constructed from $(\theta,y_s,\fs_{\theta,y_s})$ in \eqref{eq:modify-z} and \eqref{eq:modify-X}. From \eqref{eq:beta-f1}, \eqref{eq:beta-f2}, and Lemma \ref{lem:u-inf}, we know that
	\begin{align}\label{eq:esst}
		\pa_t\fs_{\theta,y_s}\lesssim \frac{1+\fs_{\theta,y_s}^3}{\tilde \beta_1\tilde\beta_2\fs_{\theta,y_s}^2}(1+||\theta||_{L^\infty_T\dot H^{2}})^3(1+||y_s||_{L^\infty_T\dot h^{1}})^3\big(||\theta||_{L^\infty_T\dot H^{2}}+||y_s||_{L^\infty_T\dot h^{1}}\big).
	\end{align}
	One can verify that \eqref{eq:modify-S} satisfies Lipschitz condition. Therefore, by the  Cauchy-Lipschitz theorem, there exists a unique solution $\fs_{\theta,y_s}$ to \eqref{eq:modify-S} with $\fs_{\theta,y_s}(0)=\fs_0$.	With the help of Gronwall's inequality, we find a constant $T$ such that for $\forall t\in[0,T]$
	\begin{align*}
		\frac{1}{2}\fs_0\le \fs_{\theta,y_s}(t)\le \frac{3}{2}\fs_0.
	\end{align*}
\end{proof}
Now, we are able to prove Proposition \ref{pro:modify-ex}.
\begin{proof}[Proof of Proposition \ref{pro:modify-ex}]
	By Lemma \ref{lem:ex} and Lemma \ref{lem:exy}, for $\forall (\Theta,Y_s)\in\Omega_{0,T}(\theta_0,y_{0s},\fs_0)$, we have
	\begin{align*}
		||\Theta||_{L^\infty_T \dot H^{5/2}\cap L^2_T\dot H^4}\le (2+2\fs_0^{3/2})||\theta_0||_{\dot H^{5/2}}+||y_{0s}||_{\dot h^{3/2}},\quad ||Y_s||_{L^\infty_T \dot h^{3/2}\cap L^2_T\dot h^2}\le ||\theta_0||_{\dot H^{5/2}}+4||y_{0s}||_{\dot h^{3/2}}.
	\end{align*}
	From Lemma \ref{lem:est-S}, there exists constant $\bar T$ such that $(\Theta,Y_s,\fs_{\Theta,Y_s})$ satisfies \eqref{eq:beta-f1}-\eqref{con:S} on $[0,\bar T]$. Here $\fs_{\Theta,Y_s}$ is the solution to \eqref{eq:modify-S} with $\fs_{\Theta,Y_s}(0)=\fs_0$. Note that $\bar T$ only depends on $(\theta_0,y_{0s},\fs_0)$ and $\tilde \beta_1,\tilde \beta_2$. We define a map 
  \begin{align*}
    V:\Omega_{0,\bar T}(\theta_0,y_{0s},\fs_0)\to\Omega_{0,\bar T}(\theta_0,y_{0s},\fs_0)
  \end{align*}
  as follows. Given $(\Theta,Y_s)\in\Omega_{0,\bar T}(\theta_0,y_{0s},\fs_0)$, let $(\Phi,\mathcal Y_s)= V(\Theta,Y_s)$ be the solution to
	\begin{align}\label{eq:psi}
		\pa_t\Phi(\al,t)=\mathcal L_{\Theta,Y_s}(\Phi)(\al,t)+\tilde g_\Theta(\al,t),\ \ \Phi(\al,0)=\theta_0(\al),\\
		\pa_t\mathcal Y_s(s,t)=\mathfrak L \mathcal (Y_s)(s,t)+\tilde g_{Y}(s,t),\ \ \mathcal Y_s(s,0)=y_{0s}(s),\label{eq:Y}
	\end{align}
	where $\mathcal L_{\Theta,Y_s}\Phi=\frac{1}{4\fs^3_{\Theta,Y_s}}\mathcal H(\Phi_{\al\al\al})$.
	To show that $V$ is well-defined, we first claim that $(\Phi,\mathcal Y_s)\in\Omega_{\bar T}$. In fact, for $(\Theta,Y_s)\in\Omega_{0,\bar T}(\theta_0,y_{0s},\fs_0)$, by Lemma \ref{lem:h1} and Lemma \ref{lem:H1}, we have
	\begin{align*}
		||\tilde g_{\Theta}||_{L^2_{\bar T}\dot H^1}\le& C(1+||\Theta||_{L^\infty_{\bar T}\dot H^2})^4(1+||Y_s||_{L^\infty_{\bar T}\dot h^1})^3\Big(\delta||\Theta||_{L^2_{\bar T}\dot H^4}+\frac{\bar T^{1/2}}{\delta^{1/2}}||\Theta||_{L^\infty_{\bar T}\dot H^{5/2}}+\bar T^{1/2}||Y_s||_{L^\infty_{\bar T}\dot h^1}\Big)\\
		\le& C(1+||\theta_0||_{\dot H^{5/2}}+||y_{0s}||_{\dot h^{1}})^7(\delta+\frac{\bar T^{1/2}}{\delta^{1/2}})\big(||\theta_0||_{\dot H^{5/2}}+||y_{0s}||_{\dot h^{1}}\big),\\
		||\tilde g_{Y}||_{L^2_{\bar T}\dot h^1}\le& C(1+||\Theta||_{L^\infty_{\bar T}\dot H^2})^4(1+||Y_s||_{L^\infty_{\bar T}\dot h^1})^5\Big(\delta||\Theta||_{L^2_{\bar T}\dot H^4}+\frac{\bar T^{1/2}}{\delta^{1/2}}||\Theta||_{L^\infty_{\bar T}\dot H^{5/2}}+\bar T^{1/2}||Y_s||_{L^\infty_{\bar T}\dot h^1}\Big)\\
		\le& C(1+||\theta_0||_{\dot H^{5/2}}+||y_{0s}||_{\dot h^{1}})^9(\delta+\frac{\bar T^{1/2}}{\delta^{1/2}})\big(||\theta_0||_{\dot H^{5/2}}+||y_{0s}||_{\dot h^{1}}\big),
	\end{align*}
	where $C=(\lambda,\fs_{\Theta,Y_s},\tilde\beta_1,\tilde\beta_2)$ is a constant. Using Lemma \ref{lem:ex} and Lemma \ref{lem:exy}, we get the existence and uniqueness of the solution $(\Phi,\mathcal Y_s)\in\Omega_{\bar T}$ to \eqref{eq:psi}-\eqref{eq:Y}. It is obvious that $\Phi$ and $\mathcal Y_s$ have mean zero on $\bbT$ for $t\in[0,\bar T]$.

	Now, consider $\mathcal W=\Phi-e^{t\mathcal L_0}\theta_0$ and $\mathfrak w_s=\mathcal Y_s-e^{t\mathfrak L}y_{0s}$ which solve
	\begin{align*}
		\pa_t\mathcal  W(\al,t)=&\mathcal L_{\Theta,Y_s}\mathcal  W(\al,t)+\tilde g_\Theta(\al,t)+(\mathcal L_{\Theta,Y_s}-\mathcal L_0)e^{t\mathcal L_0}\theta_0(\al,t),\ \mathcal W(\al,0)=0\\
		\pa_t\mathfrak  w_s(s,t)=&\mathfrak L\mathfrak  w_s(s,t)+\tilde g_{Y_s}(s,t),\ \mathfrak w_s(s,0)=0.
	\end{align*}
	It follows that
	\begin{align*}
		&||\mathcal W||_{L^\infty_{\bar T} \dot H^{5/2}\cap L^2_{\bar T}\dot H^4}\\
		\le& C ||g_\Theta||_{L^2_{\bar T}\dot H^1}+C||(\mathcal L_{\Theta,Y_s}-\mathcal L_0)e^{\mathcal L_{0}}\theta_0||_{L^2_{\bar T}\dot H^1}\\
		\le& C(1+||\theta_0||_{\dot H^{5/2}}+||y_{0s}||_{\dot h^{1}})^7(\delta+\frac{\bar T^{1/2}}{\delta^{1/2}}+\bar T)\big(||\theta_0||_{\dot H^{5/2}}+||y_{0s}||_{\dot h^{1}}\big),
	\end{align*}
	and
	\begin{align*}
		||\mathfrak w_s||_{L^\infty_{\bar T} \dot h^{3/2}\cap L^2_{\bar T}\dot h^2}\le& C(1+||\theta_0||_{\dot H^{5/2}}+||y_{0s}||_{\dot h^{1}})^9(\delta+\frac{\bar T^{1/2}}{\delta^{1/2}})\big(||\theta_0||_{\dot H^{5/2}}+||y_{0s}||_{\dot h^{1}}\big).
	\end{align*}
	Here we use the fact from \eqref{eq:ltst} that
	\begin{align*}
		&||(\mathcal L_0-\mathcal L_{\Theta,Y_s})e^{t\mathcal L_0}\theta_0||_{L^2_{\bar T}\dot H^1}\\
		\le& C ||\fs_0-\fs_{\Theta,Y_s}||_{L^\infty_{\bar T}}||\theta_0||_{\dot H^{5/2}}\le C \int_0^{\bar T}|\pa_t\fs_{\Theta,Y_s}|dt||\theta_0||_{\dot H^{5/2}}\\
		\le& C \bar T(1+||\theta_0||_{\dot H^{5/2}}+||y_{0s}||_{\dot h^{1}})^7||\theta_0||_{\dot H^{5/2}}.
	\end{align*}

We take $\de$ small enough and then take $\bar T$ small enough to obtain
\begin{align*}
		&||\Phi-e^{t\mathcal L_0}\theta_0||_{L^\infty_T \dot{H}^{5/2}\cap L^2\dot{H}^4}\le || \theta_0||_{\dot H^{5/2}}+|| y_{0s}||_{\dot h^{3/2}},\ ||\pa_t\Phi||_{L^2_{\bar T}\dot H^1}\le  30(||\theta_0||_{\dot H^{5/2}}+||y_{0s}||_{\dot h^{3/2}}),\\
		&||\mathcal Y_s-e^{t\mathfrak L}y_{0s}||_{L^\infty_T \dot h^{3/2}\cap L^2_T\dot h^2}\le|| \theta_0||_{\dot H^{5/2}}+|| y_{0s}||_{\dot h^{3/2}},\ ||\pa_t\mathcal Y_s||_{L^2_{\bar T}\dot h^1}\le30(||\theta_0||_{\dot H^{5/2}}+||y_{0s}||_{\dot h^{3/2}}).
\end{align*}
By the Aubin-Lions lemma, $V\big(\Om_{0,\bar T}(\theta_0,y_{0s},\fs_0)\big)$ is compact in $C([0,\bar T];H^2(\bbT))\times C([0,\bar T];h^1(\bbT))$. As $\Om_{0,\bar T}(\theta_0,y_{0s},\fs_0)$ is convex, from the Schauder fixed point theorem, there exists a fixed point of map $V$ in $V\big(\Om_{0,\bar T}(\theta_0,y_{0s},\fs_0)\big)\subset \Om_{0,\bar T}(\theta_0,y_{0s},\fs_0)$. We denote this fixed point by $(\theta,y_s)\in \Om_{\bar T}$, which is a solution to \eqref{eq:modify-tho}-\eqref{eq:modify-S} satisfying
\begin{align*}
&||\theta||_{L^\infty_T \dot{H}^{5/2}\cap L^2_T\dot{H}^4}\le (2+4\sqrt2\fs_0^{3/2})||\theta_0||_{\dot{H}^{5/2}}+||y_{0s}||_{\dot h^{3/2}},\ ||\pa_t \theta||_{L^2_T\dot{H}^1}\le C(||\theta_0||_{\dot{H}^{5/2}}+||y_{0s}||_{\dot h^{3/2}}),\\
&||y_s||_{L^\infty_T \dot h^{3/2}\cap L^2_T\dot h^2}\le 4||y_{0s}||_{\dot h^{3/2}}+||\theta_0||_{\dot{H}^{5/2}},\ ||\pa_t y_s||_{L^2_T\dot{h}^1}\le C(||\theta_0||_{\dot{H}^{5/2}}+||y_{0s}||_{\dot h^{3/2}}).
\end{align*}
Consequently, using Lemma \ref{lem:est-S}, for $\forall t\in[0,\bar T]$, it holds that 
	\begin{align*}
		\frac{1}{|\al_1-\al_2|}\big|\int^{\al_1}_{\al_2}\big(\cos(\theta),\sin(\theta)\big)d\al'\big|-\big|\int^{\pi}_{-\pi}\big(\cos(\theta),\sin(\theta)\big)d\al'\big|\ge \frac{1}{3}\tilde \beta_1,\ \forall \al_1,\al_2\in\bbT\\
		1+y_s(s,t)\ge \frac{1}{3}\tilde\beta_2,\ \forall s\in\bbT.
	\end{align*}
This is our assertion. 
\end{proof}
In above proof we consider the modified error term $\tilde g_\theta$ and $\tilde g_y$. When $(\theta,y_s,\fs)$ is a solution to \eqref{eq:modify-tho}-\eqref{eq:modify-S} with closed-string initial data, $\theta$ always satisfies \eqref{con:adm}. Indeed, we have the following result. 
\begin{lemma}\label{rmk:adm}
	Assume $(\theta,y_s,\fs)$ is a solution to \eqref{eq:modify-tho}-\eqref{eq:modify-S} with initial data satisfying 
	\begin{align*}
	 	\int^\pi_{-\pi}\big(\cos(\theta_0(\al)),\sin(\theta_0(\al))\big)d\al=(0,0),
	 \end{align*}
	then we have
	\begin{align*}
	 	\int^\pi_{-\pi}\big(\cos(\theta(\al,t)),\sin(\theta(\al,t))\big)d\al\equiv(0,0).
	 \end{align*}
\end{lemma}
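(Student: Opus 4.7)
The plan is to derive a closed linear homogeneous ODE in $t$ for the two-component vector
\begin{align*}
\vc{P}(t) \eqdefa \int^\pi_{-\pi}\bigl(\cos\theta(\al,t),\sin\theta(\al,t)\bigr)\,d\al,
\end{align*}
and then invoke uniqueness of linear ODEs to conclude $\vc{P}(t)\equiv 0$ from the assumption $\vc{P}(0)=0$. Two bookkeeping identities drive everything: by \eqref{eq:direc} one has $\int^\pi_{-\pi}\vt\,d\al = \vc{P}$ and $\int^\pi_{-\pi}\vn\,d\al = \vc{P}^\perp$, and because $\theta(\pi)-\theta(-\pi)=2\pi$ the vectors $\vt,\vn$ are continuous across the seam $\pm\pi$; moreover, the values $\widetilde\cT(\pi)=\widetilde\cT(-\pi)=\overline{\widetilde\cT}(t)$ by the structure of \eqref{eq:modify-T}, and $\vu$ is continuous on $\mathbb R^2$. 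Hence every boundary contribution from integration by parts on $[-\pi,\pi]$ will legitimately vanish.

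Next I would compute $\pa_t\vc{P} = \int^\pi_{-\pi}\vn\theta_t\,d\al$ by substituting \eqref{eq:modify-tho}. The spatially constant mean-correction in \eqref{eq:modify-tho}, call it $c(t)$, contributes $-c(t)\vc{P}^\perp$. For the density part, the identity $\vu_\al\cdot\vn = (\vu\cdot\vn)_\al + (\vu\cdot\vt)\theta_\al$ (coming from $\vn_\al = -\theta_\al\vt$) rewrites $\theta_t$ as $\frac{1}{\fs}\bigl[(\vu\cdot\vn)_\al + \widetilde\cT\theta_\al\bigr]- c(t)$. One integration by parts, together with $\vt_\al = \theta_\al\vn$ applied to the $\widetilde\cT$ piece, yields
\begin{align*}
\int^\pi_{-\pi}\vn\bigl[(\vu\cdot\vn)_\al + \widetilde\cT\theta_\al\bigr]\,d\al = \int^\pi_{-\pi}\theta_\al(\vu\cdot\vn)\vt\,d\al - \int^\pi_{-\pi}\widetilde\cT_\al\,\vt\,d\al.
\end{align*}
Substituting $\widetilde\cT_\al = \theta_\al\vu\cdot\vn - \frac{1}{2\pi}\int^\pi_{-\pi}\theta_\al\vu\cdot\vn\,d\al'$ from \eqref{eq:modify-T} makes the two local $(\vu\cdot\vn)\theta_\al\vt$ contributions cancel pointwise, leaving only
\begin{align*}
\frac{1}{2\pi}\Big(\int^\pi_{-\pi}\theta_\al\vu\cdot\vn\,d\al'\Big)\int^\pi_{-\pi}\vt\,d\al = -\fs_t\,\vc{P},
\end{align*}
where the last equality uses \eqref{eq:modify-S} to replace $\int^\pi_{-\pi}\theta_\al\vu\cdot\vn\,d\al'$ by $-2\pi\fs_t$. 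Combining with the mean-correction contribution produces the desired ODE
\begin{align*}
\pa_t\vc{P}(t) = -\frac{\fs_t(t)}{\fs(t)}\vc{P}(t) - c(t)\vc{P}^\perp(t),\quad\text{equivalently}\quad \pa_t\bigl(\fs\vc{P}\bigr) = -c(t)\bigl(\fs\vc{P}\bigr)^\perp,
\end{align*}
which is a pure rotation equation for the two-vector $\fs(t)\vc{P}(t)$.

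The right-hand side is linear and homogeneous in $\vc{P}$, and the coefficients $\fs_t/\fs$ and $c(t)$ are bounded on $[0,T]$ thanks to the regularity of $(\theta,y_s)$ and the lower bound $\fs\ge\fs_0/2$ established in Lemma \ref{lem:est-S}. Cauchy--Lipschitz uniqueness for this $2\times 2$ linear system therefore forces $\fs(t)\vc{P}(t)\equiv 0$, and since $\fs(t)>0$ this yields $\vc{P}(t)\equiv 0$. The main obstacle is purely algebraic bookkeeping in the middle paragraph: verifying that every boundary term from integration by parts genuinely vanishes (this is where the periodicity of $\vt,\vn,\widetilde\cT,\vu$ is used), noticing the pointwise cancellation of the $(\vu\cdot\vn)\theta_\al\vt$ terms after invoking the explicit formula for $\widetilde\cT_\al$, and recognizing that the single surviving term is proportional to $\vc{P}$ itself; no hard estimates enter at all.
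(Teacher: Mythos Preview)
Your proof is correct and follows essentially the same route as the paper: both derive a closed linear homogeneous evolution for the pair $\bigl(\int\cos\theta\,d\al,\int\sin\theta\,d\al\bigr)$ by integrating the $\theta$-equation against $\vn$, integrating by parts using $\vn_\al=-\theta_\al\vt$ and $\vt_\al=\theta_\al\vn$, and invoking the explicit form of $\cT_\al$ to produce a cancellation. The only cosmetic difference is that the paper works componentwise and concludes by showing $\frac{d}{dt}\bigl|\fs\vc{P}\bigr|^2=0$, whereas you package the same computation as a vector rotation equation $\pa_t(\fs\vc{P})=-\fs c(t)\vc{P}^\perp$ and invoke linear ODE uniqueness; these are equivalent.
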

\begin{proof}
	Recalling \eqref{eq:modify-u}, \eqref{eq:modify-T}, \eqref{eq:modify-tho}, one has
	\begin{align*}
		&\frac{d}{dt}\int^\pi_{-\pi}\sin(\theta)d\al=\int^\pi_{-\pi}\cos(\theta)\theta_td\al\\
		=&\int^\pi_{-\pi}\cos(\theta) \frac{1}{\fs}\pa_\al(\vu\cdot\vn) d\al+\int^\pi_{-\pi}\cos(\theta)\frac{\cT}{\fs}\theta_\al d\al- \frac{1}{2\pi\fs}\int_{-\pi}^\pi\cT\theta_\al d\al\int^\pi_{-\pi}\cos(\theta)d\al\\
		=&\int^\pi_{-\pi}\sin(\theta)\theta_\al\frac{1}{\fs}\vu\cdot\vn d\al-\int^\pi_{-\pi}\sin(\theta)\frac{\cT_\al}{\fs}d\al-\frac{1}{2\pi\fs}\int_{-\pi}^\pi\cT\theta_\al d\al\int^\pi_{-\pi}\cos(\theta)d\al\\
		=&-\frac{\fs_t}{\fs}\int^\pi_{-\pi}\sin(\theta)d\al- \frac{1}{2\pi\fs}\int_{-\pi}^\pi\cT\theta_\al d\al\int^\pi_{-\pi}\cos(\theta)d\al,
	\end{align*}
	where we use the fact that
	\begin{align*}
		 \fs_t=\cT_\al-\theta_\alpha \vu\cdot\vn.
	\end{align*}	
	Similarly, it holds that
	\begin{align*}
		\frac{d}{dt}\int^\pi_{-\pi}\cos(\theta)d\al=-\frac{\fs_t}{\fs}\int^\pi_{-\pi}\cos(\theta)d\al+ \frac{1}{2\pi\fs}\int_{-\pi}^\pi\cT\theta_\al d\al\int^\pi_{-\pi}\sin(\theta)d\al.
	\end{align*}
	Therefore, we have
	\begin{align*}
		&\frac{d}{dt}\Big(\big(\fs\int_{-\pi}^\pi\sin(\theta)d\al\big)^2+\big(\fs\int_{-\pi}^\pi\cos(\theta)d\al\big)^2\Big)\\
		=&2\fs\fs_t\Big(\big(\int_{-\pi}^\pi\sin(\theta)d\al\big)^2+\big(\int_{-\pi}^\pi\cos(\theta)d\al\big)^2\Big)\\
		&+2\fs^2\int_{-\pi}^\pi\sin(\theta)d\al\Big( -\frac{\fs_t}{\fs}\int^\pi_{-\pi}\sin(\theta)d\al- \frac{1}{2\pi\fs}\int_{-\pi}^\pi\cT\theta_\al d\al\int^\pi_{-\pi}\cos(\theta)d\al\Big)\\
		&+2\fs^2\int_{-\pi}^\pi\cos(\theta)d\al \Big(-\frac{\fs_t}{\fs}\int^\pi_{-\pi}\cos(\theta)d\al+ \frac{1}{2\pi\fs}\int_{-\pi}^\pi\cT\theta_\al d\al\int^\pi_{-\pi}\sin(\theta)d\al\Big)\\
		=&0.
	\end{align*}
	This completes the proof.
\end{proof}
\subsection{Uniqueness of solutions of the contour dynamic system}
In this subsection we will prove the uniqueness of solutions to \eqref{eq:modify-tho}-\eqref{eq:modify-S}. 
\begin{proposition}\label{pro:locuni}
	Suppose $(\theta_0,y_{0s},\fs_0)$  satisfies the same assumption to Proposition \ref{pro:modify-ex},
	the dynamic system \eqref{eq:modify-tho}-\eqref{eq:modify-S} admits at most one solution $(\theta,y_s,\fs)\in (\Om_T,C^1[0,T])$ with initial data $(\theta_0,y_{0s},\fs_0)$.
\end{proposition}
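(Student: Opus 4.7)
Assume for contradiction that $(\theta^i,y_s^i,\fs^i) \in (\Omega_T, C^1[0,T])$, $i=1,2$, are two solutions of \eqref{eq:modify-tho}--\eqref{eq:modify-S} sharing the same initial data $(\theta_0,y_{0s},\fs_0)$. Write $\Delta\theta=\theta^1-\theta^2$, $\Delta y_s=y_s^1-y_s^2$, $\Delta\fs=\fs^1-\fs^2$, and likewise $\Delta\vz$, $\Delta\vX$, $\Delta\vu$, $\Delta\vn$, $\Delta\vt$ for the geometric quantities built from each solution via \eqref{eq:modify-z}--\eqref{eq:modify-X}. All these differences vanish at $t=0$. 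Subtracting \eqref{eq:modify-tho}--\eqref{eq:modify-S} for the two solutions and regrouping, I obtain
\begin{align*}
\pa_t\Delta\theta &=\mathcal L_1(\Delta\theta)+\big(\mathcal L_1-\mathcal L_2\big)\theta^2+\big(\tilde g_\theta^1-\tilde g_\theta^2\big),\\
\pa_t\Delta y_s &=\mathfrak L(\Delta y_s)+\big(\tilde g_y^1-\tilde g_y^2\big),\\
\pa_t\Delta\fs &=-\frac{1}{2\pi}\int_\bbT\big(\theta^1_\al\vu^1\!\cdot\!\vn^1-\theta^2_\al\vu^2\!\cdot\!\vn^2\big)d\al,
\end{align*}
where $\mathcal L_i(\psi)=\frac{1}{4(\fs^i)^3}\mathcal H(\psi_{\al\al\al})$. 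Note that $\mathcal L_1$ is a negative third order operator on mean-zero functions (its Fourier symbol on $\Z\setminus\{0\}$ is $-\frac{|k|^3}{4(\fs^1)^3}$), and $\mathfrak L$ is a negative second order operator.

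The plan is to close an $L^2\times l^2\times\R$ energy estimate for $(\Delta\theta,\Delta y_s,\Delta\fs)$ and apply Gronwall. Pairing the first equation with $\Delta\theta$ in $L^2$ yields
\begin{align*}
\frac{1}{2}\frac{d}{dt}\|\Delta\theta\|_{L^2}^2+\frac{1}{4(\fs^1)^3}\|\Delta\theta\|_{\dot H^{3/2}}^2=\big\langle\Delta\theta,(\mathcal L_1-\mathcal L_2)\theta^2+\tilde g_\theta^1-\tilde g_\theta^2\big\rangle_{L^2},
\end{align*}
and analogously the second equation yields dissipation $\tfrac14\|\Delta y_s\|_{\dot h^1}^2$. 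The principal-part discrepancy is harmless since $(\mathcal L_1-\mathcal L_2)\theta^2=\big(\frac{1}{4(\fs^1)^3}-\frac{1}{4(\fs^2)^3}\big)\mathcal H(\theta^2_{\al\al\al})$ has $L^2$-norm controlled by $|\Delta\fs|\,\|\theta^2\|_{\dot H^3}$, with $\|\theta^2\|_{\dot H^3}\in L^2_T$ by interpolation between $L^\infty_T\dot H^{5/2}$ and $L^2_T\dot H^4$. The perimeter difference $\pa_t\Delta\fs$ is bounded pointwise by $C(t)(\|\Delta\theta\|_{L^2}+\|\Delta y_s\|_{l^2}+|\Delta\fs|)$ after using $\theta^1_\al\vu^1\vn^1-\theta^2_\al\vu^2\vn^2=\Delta\theta_\al(\vu^1\vn^1)+\theta^2_\al\Delta(\vu\vn)$ and the uniform $H^2$-bounds on $\theta^i$, $\vu^i$ from Lemma \ref{lem:u-inf}.

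The central and most laborious step is controlling $\tilde g_\theta^1-\tilde g_\theta^2$ and $\tilde g_y^1-\tilde g_y^2$. Each term in the explicit expressions for $\tilde g_\theta$ and $\tilde g_y$ is a composition of commutators with $\mathcal H$ or $\mathfrak h$ and boundary-integral kernels built from $\vz^i$ and $\vX^i$. Taking differences, every such term splits as $K^1(\Delta\text{-quantity})+(K^1-K^2)(\text{solution 2 data})$. The first piece is handled by Lemma \ref{lem:H} (commutator bounds) and Lemma \ref{lem:ulinet}, using the uniform non-self-intersecting and well-stretched constants $\beta_1,\beta_2$ valid for both solutions. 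For the kernel-difference piece $(K^1-K^2)$, I use Lemma \ref{lem:theta-z} to estimate $\|\Delta\vz\|_{H^s}\lesssim|\Delta\fs|+\fs^2\|\Delta\theta\|_{H^{s-1}}$, together with the divided-difference machinery of Lemma \ref{lem:delta} applied to $L,M,N,l,m,n$ built from both curves, to obtain mean value–type bounds in terms of $\|\Delta\theta\|_{L^2}+\|\Delta y_s\|_{l^2}+|\Delta\fs|$. Pairing against $\Delta\theta$ (resp.\ $\Delta y_s$) in duality, one arrives at
\begin{align*}
\tfrac{d}{dt}\big(\|\Delta\theta\|_{L^2}^2+\|\Delta y_s\|_{l^2}^2+|\Delta\fs|^2\big)+c\big(\|\Delta\theta\|_{\dot H^{3/2}}^2+\|\Delta y_s\|_{\dot h^1}^2\big)\\ \le \tfrac{c}{2}\big(\|\Delta\theta\|_{\dot H^{3/2}}^2+\|\Delta y_s\|_{\dot h^1}^2\big)+\Phi(t)\big(\|\Delta\theta\|_{L^2}^2+\|\Delta y_s\|_{l^2}^2+|\Delta\fs|^2\big),
\end{align*}
with $\Phi\in L^1_T$ depending only on the norms of the solutions provided by $\Omega_T$. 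Gronwall then forces the differences to be identically zero on $[0,T]$.

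The main obstacle will be the systematic bookkeeping of $\tilde g_\theta^1-\tilde g_\theta^2$ and $\tilde g_y^1-\tilde g_y^2$: there are many terms, each involving products of $\vz^i,\vX^i,\vn^i,\vt^i$ inside singular kernels, and care is needed to pair the potentially top-order pieces (such as those arising from $\pa_s n^i$ or $\theta^i_{\al\al\al}$) with the dissipation so that no borrowing from $L^\infty_T\dot H^{5/2}\cap L^2_T\dot H^4$ is wasted. Once the difference of each kernel is expressed through divided differences of $\Delta\theta$, $\Delta y_s$, and $\Delta\fs$, the estimates parallel those in Lemmas \ref{lem:h1}--\ref{lem:H1} but one order less smooth, and the uniqueness follows.
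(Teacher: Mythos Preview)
Your overall strategy---take differences, treat the $\theta$-equation as a parabolic equation with error, and close by an energy/Gronwall argument---matches the paper's. However, there are two concrete issues and one omission that you should correct.

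\textbf{(i) Order of $\mathfrak L$.} You call $\mathfrak L$ a negative second-order operator and claim the $l^2$ pairing produces dissipation $\tfrac14\|\Delta y_s\|_{\dot h^1}^2$. In fact $\mathfrak L(y_s)=-\tfrac14\mathfrak h(\pa_s y_s)=-\tfrac14|\pa_s|y_s$ is \emph{first} order, so the pairing only yields $\tfrac14\|\Delta y_s\|_{\dot h^{1/2}}^2$. Consequently the natural difference norm for $y_s$ is $\dot h^{1/2}$, not $l^2$.

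\textbf{(ii) Regularity level.} The paper does not run the energy argument at $L^2\times l^2$; it applies Lemmas~\ref{lem:ex}--\ref{lem:exy} with $\gamma=0$, i.e.\ it places $\Theta$ in $L^\infty_T\dot H^{3/2}\cap L^2_T\dot H^3$ and $Y_s$ in $L^\infty_T\dot h^{1/2}\cap L^2_T\dot h^1$. This matters because the $L^2$ bound on $\tilde g_\theta^1-\tilde g_\theta^2$ unavoidably contains a top-order piece $\delta\,\|\Theta\|_{L^2_T\dot H^3}$ (coming e.g.\ from $[\mathcal H,\vn^1](\Delta\theta_{\al\al\al})$ and the singular kernels acting on $(1+y_s)\Delta\theta_{\al\al\al}$), and this must be absorbed into the $\dot H^3$ dissipation by taking $\delta$ small. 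At your $L^2$ level the dissipation is only $\dot H^{3/2}$, so such a term cannot be absorbed, and a plain $\Phi\in L^1_T$ Gronwall inequality will not close. You should either move up to the $\dot H^{3/2}\times\dot h^{1/2}$ framework as in the paper, or explain how the duality pairing $\langle\Delta\theta,\cdot\rangle\le\|\Delta\theta\|_{\dot H^{3/2}}\|\cdot\|_{\dot H^{-3/2}}$ handles these commutator pieces (which requires extending Lemma~\ref{lem:H} beyond the stated range).

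\textbf{(iii) Two coordinate maps.} A point you do not mention but the paper treats explicitly: the two solutions carry \emph{different} transfer maps $\al_1(s)\neq\al_2(s)$ and $s_1(\al)\neq s_2(\al)$. Since $\tilde g_y$ contains $\theta^i_{\al\al\al}(\al_i(s))$ and $\tilde g_\theta$ contains $y_{is}(s_i(\al))$, differences such as $\theta^1_{\al\al}(\al_1(s))-\theta^2_{\al\al}(\al_2(s))$ are \emph{not} simply $\Delta\theta_{\al\al}$; an extra step bounding $|\al_1(s)-\al_2(s)|$ and $|s_1(\al)-s_2(\al)|$ by $\|Y_s\|$ (using the well-stretched constant $\beta_2$) is needed. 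This is where your generic splitting ``$K^1(\Delta\text{-quantity})+(K^1-K^2)(\text{solution 2 data})$'' is incomplete.

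Finally, the paper closes by choosing $\delta$ and then $T$ small to obtain a strict contraction on $\|\Theta\|_{L^\infty_T\dot H^{3/2}\cap L^2_T\dot H^3}+\|Y_s\|_{L^\infty_T\dot h^{1/2}\cap L^2_T\dot h^1}$, rather than Gronwall; once the level is set correctly either device works, but the $\delta$ mechanism is exactly what lets the top-order term be absorbed.
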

\begin{proof}
	Suppose $(\theta_1,y_{1s},\fs_1),(\theta_2,y_{2s},\fs_2)\in (\Om_T,C^1[0,T])$ are two solutions to \eqref{eq:modify-tho}-\eqref{eq:modify-S} with initial data $(\theta_0,y_{0s},\fs_0)$. Let
	\begin{align*}
	R=&1+||\theta_1||_{L^\infty_T \dot{H}^{5/2}\cap L^2_T \dot{H}^4}+||\theta_2||_{L^\infty_T \dot{H}^{5/2}\cap L^2_T \dot{H}^4}+||y_{s1}||_{L^\infty_T {h}^{3/2}\cap L^2_T \dot{h}^2}+||y_{2s}||_{L^\infty_T {h}^{3/2}\cap L^2_T \dot{h}^2},\\
	&\Theta(\al,t)=\theta_1(\al,t)-\theta_2(\al,t),\quad Y_s(s,t)=y_{1s}(s,t)-y_{2s}(s,t).
	\end{align*}
	It holds that
	\begin{align*}
		\pa_t \Theta(\al,t)=&\frac{1}{4\fs_1^3}\mathcal H(\Theta_{\al\al\al})(\al,t)+\frac{\fs_1^3-\fs_2^3}{4\fs_1^3\fs_2^3}\mathcal H(\theta_{2\al\al\al})(\al,t)+\tilde g_{\theta_1}(\al,t)-\tilde g_{\theta_2}(\al,t), \quad \Theta(\al,0)=0,\\
		\pa_t Y_s(s,t)=&-\frac{1}{4}\mathfrak h(Y_s)(s,t)+\tilde g_{y_1}(s,t)-\tilde g_{y_2}(s,t),\quad Y_s(s,0)=0.
	\end{align*}
	Using Lemma \ref{lem:est-S}, one can see that there exists $\bar T$ such that
	\begin{align*}
		||\frac{\fs_1^3-\fs_2^3}{4\fs_1^3\fs_2^3}\mathcal H(\theta_{2\al\al\al})||_{L^2_{\bar T}L^2}\le& C(\fs_0)\Big(\int_0^{\bar T}|\fs_1-\fs_2|^2(t)||\theta_2||_{\dot H^3}^2(t)dt\Big)^{1/2}\\
		\le& C(\fs_0)\Big(\int_0^{\bar T}|\fs_1-\fs_2|^2(t)||\theta_2||_{\dot H^{5/2}}^{4/3}(t)||\theta_2||_{\dot H^{4}}^{2/3}(t)dt\Big)^{1/2}\\
		\le& C(\fs_0)||\theta_2||_{L^\infty_{\bar T}\dot H^{5/2}}^{2/3}||\theta_2||_{L^2_{\bar T}\dot H^{4}}^{1/3}\big(\int_0^{\bar T}|\fs_1-\fs_2|^3dt\big)^{1/3}\\
		\le&C(R,\fs_0)\bar T^{1/3}||\Theta||_{L^\infty_{\bar T}L^{2}}.
	\end{align*}
	The last inequality follows from Lemma \ref{rmk:adm} and Lemma \ref{lem:dif-s}. We claim that the following estimates hold for $\forall\delta\in(0,1)$:
	\begin{align}\label{eq:lowest}
	 	||\tilde g_{\theta_1}-\tilde g_{\theta_2}||_{L^2_{\bar T}L^2}+||\tilde g_{y_1}-\tilde g_{y_2}||_{L^2_{\bar T}l^2}\le C\big(\delta||\Theta||_{L^2_{\bar T}\dot H^3}+\frac{\bar T^{1/2}}{\delta}||\Theta||_{L^\infty_{\bar T}\dot H^{3/2}}+\frac{\bar T^{1/2}}{\delta}||Y_s||_{L^\infty_{\bar T}\dot h^{1/2}}\big),
	\end{align}
	where $C=C(R,\fs_0,\lambda,\tilde\beta_1,\tilde\beta_2)$. One need to be careful that the there are two transfer functions 
	\begin{align}\label{eq:tr-dif}
		\al_1(s)=s+y_1(s),\ y_1(s)=\int_0^sy_{1s}(s')ds';\quad \al_2(s)=s+y_1(s),\ y_2(s)=\int_0^sy_{2s}(s')ds',
	\end{align}
	and two inverse functions
	\begin{align*}
		s_1(\al)=\al-y_1(s_1(\al)),\quad s_2(\al)=\al-y_2(s_2(\al)).
	\end{align*}
	For the difference terms defined on different coordinates, we have
	\begin{align*}
		&\theta_{1\al\al}(\al_1(s))-\theta_{2\al\al}(\al_2(s))\\
		=&\int_{\al_2(s)}^{\al_1(s)}\theta_{1\al\al\al}d\al+\theta_{1\al\al}(\al_2(s))-\theta_{2\al\al}(\al_2(s))\\
		\le&||\theta_{1\al\al\al}||_{L^1}|y_1(s)-y_2(s)|+|\theta_{1\al\al}(\al_2(s))-\theta_{2\al\al}(\al_2(s))|
	\end{align*}
	and
	\begin{align*}
		&y_{1s}(s_1(\al))-y_{2s}(s_2(\al))\\
		=&\int_{s_2(\al)}^{s_1(\al)}y_{1ss}ds+y_{1s}(s_2(\al))-y_{2s}(s_2(\al))\\
		\le&||y_{1ss}||_{l^1}|s_1(\al)-s_2(\al)|+|y_{1s}(s_2(\al))-y_{2s}(s_2(\al))|.
	\end{align*}
	Next, we give the estimate for $|s_1(\al)-s_2(\al)|$. From \eqref{eq:tr-dif}, it holds that
	\begin{align*}
		s_1(\al)+y_1(s_1(\al))=s_2(\al)+y_2(s_2(\al)).
	\end{align*}
	Without loss of generality, for fixed $\al$, we assume $s_1>s_2$, accordingly
	\begin{align}\label{eq:dify}
		y_2(s_2(\al))-y_1(s_2(\al))=&\int_{s_2(\al)}^{s_1(\al)}y_{2s}(s')ds'+s_1(\al)-s_2(\al)\\
		\ge& \frac{1}{3}\tilde \beta_2\big(s_1(\al)-s_2(\al)\big),\nonumber
	\end{align}
	here we use the fact that $y_{2s}\ge-1+\frac{1}{3}\tilde \beta_2$. It follows that
	\begin{align*}
		|s_1(\al)-s_2(\al)|\le 3\tilde\beta_2|y_2(s_2(\al))-y_1(s_2(\al))|.
	\end{align*}
	One can complete the proof of \eqref{eq:lowest} in a similar way to Lemma \ref{lem:h1}, we omit the details.

	Choosing appropriate $\delta$ and $T$, and using Lemma \ref{lem:ex} and Lemma \ref{lem:exy}, we conclude that 
	\begin{align*}
	||\Theta||_{L^\infty_T \dot{H}^{3/2}\cap L^2_T \dot{H}^3}+||Y_s||_{L^\infty_T \dot{h}^{1/2}\cap L^2_T \dot{h}^1}\le \frac{1}{2}\big(||\Theta||_{L^\infty_T \dot{H}^{3/2}\cap L^2_T \dot{H}^3}+||Y_s||_{L^\infty_T \dot{h}^{1/2}\cap L^2_T \dot{h}^1}\big),
	\end{align*}
	which implies $\Theta\equiv0$ and $Y_s\equiv0$. Hence $\fs_1\equiv\fs_2$.
\end{proof}
\subsection{Local well-posedness of the immersed boundary problem}
Now, we are in the position to give the proof of Theorem \ref{thm:locex}.
\begin{proof}[Proof of Theorem \ref{thm:locex}]
	Let $(\theta_0,y_0,\fs_0)$ be tangent angle function, the stretching function, and the perimeter function of $\vX_0$. Using Proposition \ref{pro:modify-ex} and Proposition \ref{pro:locuni}, we can get $(\rth,y_s,\fs)$ which is the unique solution to \eqref{eq:modify-tho}-\eqref{eq:modify-S}. Then one can find $\bar\theta(t)$ and $\vu(\vX(-\pi,t),t)$ by \eqref{eq:modify-btho} and \eqref{eq:us}. Therefore, $\vX(s,t)$ constructed from \eqref{eq:reconvx} is a solution to \eqref{eq:th}-\eqref{eq:St} satisfying \eqref{eq:regthy}-\eqref{eq:ybeta}. The uniqueness of $(\rth,y_s,\fs)$ implies that of $\bar\theta(t)$, $\vu(\vX(-\pi,t),t)$, and $\vX(s,t)$. This proves the theorem.
\end{proof}
\section{Existence and Uniqueness of Global-in-Time Solutions
near Equilibrium Configurations}
In this section, we prove the existence of a global solutions to \eqref{eq3} provided that the initial string configuration is sufficiently close to equilibrium. We introduce the following equilibrium configuration
\begin{align*}
	\vX_*(s)=\big(\sin(s),\cos(s)\big),\ s\in\bbT,
\end{align*}
which is an evenly parametrized unit circle, and it has the same expression in the are-length coordinate. The corresponding equilibrium state of tangent angle function, stretching function, and perimeter function is
\begin{align}\label{eq:equilibrium}
\big(\theta_*(\al)=&\al,\ y_{*s}(s)=0,\ \fs_*=1\big).
\end{align}
As the area enclosed by the string is unchanging, we choose the initial data $\vX_0$ which satisfies 
\begin{align}\label{con:area1}
	-\frac{1}{2}\int_{\bbT}\vX \cdot\vX_{s}^{\perp} ds=\pi.
\end{align}
This means that the area enclosed by the string is $\pi$. Next, we introduce a lemma which establishes the equivalence of the Sobolev norm distance and the energy difference between $(\theta,y_s,\fs)$ and the equilibrium configuration $(\theta_*,y_{*s},\fs_*)$. Our motivation is to transform the global coercive bound on the difference of energy into a more convenient quantity.
\begin{lemma}\label{lem:eqenergy}
	Let $\vX(s)\in h^2(\bbT)$ be a closed convex string which satisfies \eqref{con:area1}, then it holds that
	\begin{align*}
		&\frac{1}{C}\big(\frac{1}{2\fs}\int_{\bbT}\theta_\al^2d\al+\frac{\fs^2}{2}\int_\bbT(1+y_s)^2ds+2\pi\lambda\fs-2\pi-2\pi\lambda\big) \\
		\le&\frac{1}{2\fs}\int_\bbT (\theta_\al-1)^2d\al+\frac{\fs^2}{2}\int_\bbT y_s^2ds+\pi(\fs-1)+2\pi\lambda(\fs-1)\\
		\le&C\big(\frac{1}{2\fs}\int_{\bbT}\theta_\al^2d\al+\frac{\fs^2}{2}\int_\bbT(1+y_s)^2ds+2\pi\lambda\fs-2\pi-2\pi\lambda\big), 
	\end{align*}
	where $C>0$ is a constant that only depends on $\fs$.
\end{lemma}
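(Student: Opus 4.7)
The plan is to show that the middle expression and the energy difference
\[
\mathcal E(\theta,y_s,\fs)-\mathcal E_* \;\eqdefa\; \frac{1}{2\fs}\int_\bbT\theta_\al^2\,d\al+\frac{\fs^2}{2}\int_\bbT(1+y_s)^2\,ds+2\pi\lambda\fs-2\pi-2\pi\lambda
\]
differ only by an explicit nonnegative polynomial in $(\fs-1)$. Once this is done, both inequalities become essentially algebraic, with $C$ depending only on $\fs$.

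\textbf{Step 1 (preparatory identities).} Since $\vX$ is a closed convex curve traversed counterclockwise, the tangent angle $\theta$ is strictly monotone and satisfies $\theta(\pi)-\theta(-\pi)=2\pi$; consequently
\[
\int_\bbT(\theta_\al-1)\,d\al=0.
\]
Similarly, because $y(s)=\int_{-\pi}^s y_s\,ds'$ must satisfy $y(\pi)=y(-\pi)=0$ for $\al(s)=s+y(s)$ to descend to $\bbT$, we have $\int_\bbT y_s\,ds=0$. Finally, the isoperimetric inequality combined with \eqref{con:area1} (area $=\pi$, perimeter $=2\pi\fs$) gives $\fs\ge 1$.

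\textbf{Step 2 (exact algebraic identity).} Using Step 1, expand
\[
\tfrac{1}{2\fs}\textstyle\int\theta_\al^2\,d\al
=\tfrac{1}{2\fs}\int(\theta_\al-1)^2\,d\al+\tfrac{\pi}{\fs},
\qquad
\tfrac{\fs^2}{2}\textstyle\int(1+y_s)^2\,ds=\tfrac{\fs^2}{2}\int y_s^2\,ds+\pi\fs^2,
\]
and subtract the middle quantity from $\mathcal E-\mathcal E_*$. The two norm terms and the tension term $2\pi\lambda(\fs-1)$ cancel, leaving
\[
\mathcal E-\mathcal E_* \;-\; \text{(middle)}
\;=\; \tfrac{\pi}{\fs}+\pi\fs^2-\pi-\pi\fs
\;=\;\tfrac{\pi(1-\fs+\fs^3-\fs^2)}{\fs}
\;=\;\tfrac{\pi(\fs-1)^2(1+\fs)}{\fs},
\]
where we used the factorization $1-\fs+\fs^3-\fs^2=(1-\fs)(1-\fs^2)=(1-\fs)^2(1+\fs)$.

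\textbf{Step 3 (both inequalities).} By Step 1 we have $\fs\ge 1$, so the remainder $R(\fs)\eqdefa\pi(\fs-1)^2(1+\fs)/\fs$ is nonnegative. This immediately yields the upper bound
\[
\text{middle}=\mathcal E-\mathcal E_*-R(\fs)\le\mathcal E-\mathcal E_*.
\]
For the lower bound, observe that the middle expression contains the nonnegative term $\pi(1+2\lambda)(\fs-1)$, so
\[
\frac{R(\fs)}{\text{middle}}\le\frac{\pi(\fs-1)^2(1+\fs)/\fs}{\pi(1+2\lambda)(\fs-1)}=\frac{(\fs-1)(1+\fs)}{\fs(1+2\lambda)}\eqdefa K(\fs).
\]
Hence $\mathcal E-\mathcal E_*=\text{middle}+R(\fs)\le(1+K(\fs))\cdot\text{middle}$, giving the lower bound with $C=1+K(\fs)$, which depends only on $\fs$ (and the fixed parameter $\lambda$). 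When $\fs=1$, $R(\fs)=0$ and both inequalities reduce to equality.

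\textbf{Main point of care.} The only non-algebraic ingredients are the identity $\theta(\pi)-\theta(-\pi)=2\pi$ (which requires convexity and the correct orientation) and $\fs\ge 1$ (isoperimetric). All remaining steps are elementary expansions; no delicate estimate is needed.
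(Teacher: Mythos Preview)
Your argument is correct and takes a genuinely different, more elementary route than the paper. The paper proceeds term by term: it bounds $\frac{1}{2\fs}\int(\theta_\al-1)^2$ against $\frac{1}{2\fs}\int\theta_\al^2-\pi$ using Gage's isoperimetric inequality $\frac{\fs}{2}\int\kappa^2\,d\al\ge\fs\pi$ (this is where convexity is actually used), and separately bounds $\frac{\fs^2}{2}\int y_s^2+\pi(\fs-1)$ against $\frac{\fs^2}{2}\int(1+y_s)^2-\pi$ by the elementary factorization $\fs^2-1=(\fs-1)(\fs+1)$. You instead compute the \emph{total} discrepancy $(\mathcal E-\mathcal E_*)-\text{middle}$ exactly as $\pi(\fs-1)^2(1+\fs)/\fs$ and absorb it into the term $\pi(1+2\lambda)(\fs-1)$ already present in the middle expression. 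This avoids Gage's inequality altogether and relies only on the classical isoperimetric inequality $\fs\ge1$.

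Two minor remarks. First, your constant $C=1+K(\fs)$ depends on $\lambda$ as well as $\fs$; since $\lambda\ge0$ is a fixed parameter of the problem this is harmless, and in any case one may drop the $2\lambda$ in the denominator to obtain $C\le 1+(\fs^2-1)/\fs$, matching the statement literally. Second, the identity $\theta(\pi)-\theta(-\pi)=2\pi$ does not require convexity---it follows from the standing assumption that $\theta-\al\in C(\bbT)$ (winding number one). In your approach convexity is therefore not needed at all, whereas in the paper's proof it is essential for invoking Gage's inequality.
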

\begin{proof}
	Direct calculations show that
	\begin{align*}
		\frac{1}{2\fs}\int_\bbT (\theta_\al-1)^2 d\al=&\frac{1}{2\fs}\int_\bbT \theta_\al^2+1-2\theta_\al d\al= \frac{1}{2\fs}\int_{\bbT} \theta_\al^2d\al-\frac{\pi}{\fs}\\
		=&\frac{1}{2\fs}\int_{\bbT} \theta_\al^2d\al-\pi+\frac{\fs-1}{\fs}\pi\ge\frac{1}{2\fs}\int_{\bbT} \theta_\al^2d\al-\pi,
	\end{align*}
	where the last inequality follows from the classical isoperimetric inequality. As the string is convex, using Lemma \ref{lem:iso-gage} (Gage's isoperimetric inequality), we have
	\begin{align*}
		\frac{1}{2\fs}\int_{\bbT} \theta_\al^2d\al=\frac{\fs}{2}\int_{\bbT} \kappa^2d\al\ge \fs\pi.
	\end{align*}
	Therefore, it holds that
	\begin{align*}
		2\big(\frac{1}{2\fs}\int_{\bbT} \theta_\al^2d\al-\pi\big)=&\frac{1}{2\fs}\int_\bbT (\theta_\al-1)^2 d\al-\frac{\fs-1}{\fs}\pi+\frac{1}{2\fs}\int_{\bbT} \theta_\al^2d\al-\pi\\
		\ge&\frac{1}{2\fs}\int_\bbT (\theta_\al-1)^2 d\al+\frac{(\fs-1)^2}{\fs}\pi\\
		\ge&\frac{1}{2\fs}\int_\bbT (\theta_\al-1)^2 d\al.
	\end{align*}
	Similarly, one can deduce that
	\begin{align*}
		\frac{1}{\fs+1}\big(\frac{\fs^2}{2}\int_\bbT(1+y_s)^2ds-\pi\big)\le&\frac{\fs^2}{2}\int_\bbT y_s^2ds+\pi(\fs-1)\\
		\le&(\fs+1)\big(\frac{\fs^2}{2}\int_\bbT(1+y_s)^2ds-\pi\big).
	\end{align*}
	This completes the proof.
\end{proof}
\begin{remark}
  The assumption of the above lemma is reasonable. In the rest of this paper, we only consider the string closed to the equilibrium configuration. When $||\theta-\al||_{\dot H^2}$ is small enough, one has $\theta_\al(\al)>0,\ \forall \al\in\bbT$, which means that $\vX$ is a convex string. Furthermore, from the above proof, we also have
\begin{align}\label{eq:con-th-S}
  (\fs-1)\le \frac{1}{4\pi}||\theta_\al-1||_{L^2}^2.
\end{align}
Therefore, from Lemma \ref{lem:energy} and Lemma \ref{lem:eqenergy}, we get the following global estimate
\begin{align*}
  \big(||\theta-\al||_{\dot H^1}+||y_s||_{l^2}\big)(t)\le C \big(||\theta_0-\al||_{\dot H^1}+||y_{0s}||_{l^2}+\lambda(\fs_0-1)\big).
\end{align*}
\end{remark}

As $\big(\theta_*(\al)=\al,\ y_{*s}(s)=0,\ \fs_*=1\big)$, we actuality give the Sobolev norm distance estimates between $(\theta,y_s,\fs)$ and the equilibrium configuration in Proposition \ref{pro:modify-ex}. Next, we will show that $\big(||\theta-\al||_{\dot{H}^{5/2}}+||y_s||_{\dot h^{3/2}}\big)(t)$ cannot be big when $\big(||\theta-\al||_{\dot H^1}+||y_s||_{l^2}\big)(t)$ is small.
\begin{lemma}\label{lem:unith}
Giving $T>0$, $R\le \frac{1}{4}$, let $\vX(s,t)$ be a (local) solution to \eqref{eq3} satisfying \eqref{con:betaal} and \eqref{con:betas} for some $\beta_1,\beta_2>0$ on $[0,T]$ with initial data $\vX_0$ satisfying \eqref{con:area1}. We also assume  $(\theta,y_s)\in\Omega_T$ such that
\begin{align}
	||\theta||_{L^\infty_T \dot{H}^{5/2}\cap L^2_T \dot{H}^4}+||y_s||_{L^\infty_T \dot{h}^{3/2}\cap L^2_T \dot{h}^2}\le R.
\end{align}
If in addition
\begin{align}\label{eq:thc}
||\theta-\al||_{\dot{H}^{5/2}}^2+||y_s||_{\dot h^{3/2}}^{2}\ge c_*\big(||\theta-\al||_{\dot{H}^{1}}+||y_s||_{\dot h^{3/2}}^{2/3}||y_s||_{l^2}^{1/3}\big)^2,\ t\in[0,T],
\end{align}
for some constant $c_*=c_*(R,\lambda,\beta_1,\beta_2)>0$, one has
\begin{align}\label{eq:decay-th}
\big(||\theta-\al||_{\dot{H}^{5/2}}+||y_s||_{\dot{h}^{3/2}}\big)(t)\le e^{-t}\big(||\theta_0-\al||_{\dot{H}^{5/2}}+||y_{0s}||_{\dot{h}^{3/2}}\big).
\end{align}
\end{lemma}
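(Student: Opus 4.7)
I would prove this by deriving a differential inequality of the form $\frac{d}{dt}(E^2+F^2)\le -2(E^2+F^2)$, where $E=||\theta-\al||_{\dot H^{5/2}}$ and $F=||y_s||_{\dot h^{3/2}}$; integrating in time then yields the claimed exponential bound. The dissipation comes from the parabolic principal parts $\mathcal L$ and $\mathfrak L$, while hypothesis \eqref{eq:thc}, combined with Gagliardo--Nirenberg interpolation, converts the higher-derivative dissipation into coercive control of $E^2+F^2$ itself.

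I would start with standard parabolic energy estimates at the regularity of the initial data. Pairing \eqref{eq:th} with $|D|^{5}\theta$ in $L^2$, and using that $\mathcal L$ has Fourier symbol $-|k|^3/(4\fs^3)$, produces
\begin{align*}
\frac{1}{2}\frac{d}{dt}||\theta-\al||_{\dot H^{5/2}}^2+\frac{1}{4\fs^3}||\theta||_{\dot H^4}^2 \le ||g_\theta||_{\dot H^1}||\theta||_{\dot H^4}.
\end{align*}
Since $y_{ss}=\pa_s y_s$ and $\mathfrak h\pa_s=|D|$, the operator $\mathfrak L y_s=-\frac{1}{4}|D|y_s$ is first-order and negative, and pairing \eqref{eq:ys} with $|D|^3 y_s$ yields the analogous
\begin{align*}
\frac{1}{2}\frac{d}{dt}||y_s||_{\dot h^{3/2}}^2+\frac{1}{4}||y_s||_{\dot h^2}^2 \le ||g_y||_{\dot h^1}||y_s||_{\dot h^2}.
\end{align*}
Writing $D_\theta=||\theta||_{\dot H^4}$ and $D_y=||y_s||_{\dot h^2}$, Lemmas \ref{lem:h1} and \ref{lem:H1} bound both $||g_\theta||_{\dot H^1}$ and $||g_y||_{\dot h^1}$ by $C(\delta D_\theta+\delta^{-1/2}E+F)$ with $C=C(R,\lambda,\beta_1,\beta_2)$ (the polynomial prefactors being uniformly controlled since $R\le 1/4$). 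Summing the two inequalities, applying AM--GM on the bilinear right-hand sides, and choosing $\delta>0$ small enough to absorb the $\delta D_\theta^2$ term into the dissipation, I arrive at
\begin{align*}
\frac{d}{dt}(E^2+F^2)+c_1(D_\theta^2+D_y^2)\le C_1(E^2+F^2).
\end{align*}

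The crucial step is to exploit \eqref{eq:thc} through interpolation. The Gagliardo--Nirenberg bounds $E\lesssim||\theta-\al||_{\dot H^1}^{1/2}D_\theta^{1/2}$ and $F\lesssim||y_s||_{l^2}^{1/4}D_y^{3/4}$ rearrange to
\begin{align*}
D_\theta^2\gtrsim \frac{E^4}{||\theta-\al||_{\dot H^1}^2},\qquad D_y^2\gtrsim \frac{F^{8/3}}{||y_s||_{l^2}^{2/3}}.
\end{align*}
Applying $(a+b)^2\ge a^2+b^2$ to the right-hand side of \eqref{eq:thc} gives $||\theta-\al||_{\dot H^1}^2\le (E^2+F^2)/c_*$ and $F^{4/3}||y_s||_{l^2}^{2/3}\le (E^2+F^2)/c_*$; substituting produces
\begin{align*}
D_\theta^2+D_y^2\gtrsim c_*\,\frac{E^4+F^4}{E^2+F^2}\ge \frac{c_*}{2}(E^2+F^2),
\end{align*}
using $E^4+F^4\ge (E^2+F^2)^2/2$. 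Choosing $c_*=c_*(R,\lambda,\beta_1,\beta_2)$ large enough that $c_1 c_*/2\ge C_1+2$ yields $\frac{d}{dt}(E^2+F^2)\le -2(E^2+F^2)$, and \eqref{eq:decay-th} follows after the standard passage between $\sqrt{E^2+F^2}$ and $E+F$.

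The main point to recognize is that the exponent pair $(2/3,1/3)$ on $(||y_s||_{\dot h^{3/2}},||y_s||_{l^2})$ appearing in \eqref{eq:thc} is tailored precisely to the Gagliardo--Nirenberg scaling $F\lesssim ||y_s||_{l^2}^{1/4}D_y^{3/4}$ dictated by the first-order nature of $\mathfrak L$, whereas the bending variable uses the simpler $1/2$-$1/2$ interpolation because $\mathcal L$ is third-order. Threading these two different parabolic scales through a single coercive inequality---and tracking the polynomial prefactors from Lemmas \ref{lem:h1}--\ref{lem:H1} carefully enough that the choice of $c_*$ depends only on $(R,\lambda,\beta_1,\beta_2)$---is the technical obstacle.
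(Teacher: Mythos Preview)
Your proposal is correct and follows essentially the same route as the paper: parabolic energy estimates at the $\dot H^{5/2}\times\dot h^{3/2}$ level, absorption of the error terms from Lemmas \ref{lem:h1}--\ref{lem:H1} via the $\delta$-parameter, the two Gagliardo--Nirenberg interpolations (with the $1/2$--$1/2$ split for $\theta$ and the $1/4$--$3/4$ split for $y_s$), and hypothesis \eqref{eq:thc} to convert dissipation into coercivity. The only cosmetic difference is that the paper packages the interpolation step by introducing the ratio $c(t)=\big(E^2+F^2\big)/\big(||\theta-\al||_{\dot H^1}+||y_s||_{\dot h^{3/2}}^{2/3}||y_s||_{l^2}^{1/3}\big)^2$ and shows $c(t)\,(E^2+F^2)\le C_2(D_\theta^2+D_y^2)$, whereas you substitute the individual bounds $||\theta-\al||_{\dot H^1}^2\le (E^2+F^2)/c_*$ and $F^{4/3}||y_s||_{l^2}^{2/3}\le (E^2+F^2)/c_*$ directly; the algebra is equivalent. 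One detail you pass over quickly is the uniform control of $\fs$: the prefactors in Lemmas \ref{lem:h1}--\ref{lem:H1} carry powers of $\fs$ and $\fs^{-1}$, and the paper invokes \eqref{eq:con-th-S} (a consequence of the area constraint \eqref{con:area1} and Gage's inequality) to pin $1\le\fs\le 5/4$, so that the constant $C$ depends only on $(R,\lambda,\beta_1,\beta_2)$ as claimed.
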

\begin{proof}
Recalling \eqref{eq:th}-\eqref{eq:St}, we have
\begin{align*}
	\pa_t(\theta(\al,t)-\al)&=\mathcal L(\theta)(\al,t)+g_{\theta}(\al,t),\\
	\pa_ty_s(s,t)&=\mathfrak L(y_s)(s,t)+g_y(s,t).
\end{align*}
From \eqref{eq:con-th-S} we know that $1\le \fs\le \frac{5}{4}$. Using Lemma \ref{lem:H1} and Lemma \ref{lem:h1}, one can see that
\begin{align}\label{eq:parabolic-dispation}
	\frac{d}{dt}(||\theta-\al||_{\dot{H}^{5/2}}^2+||y_s||_{\dot h^{3/2}}^2)(t)\le -\frac{1}{8}\big(||\theta-\al||_{\dot{H}^{4}}^2+||y_s||_{\dot{h}^{2}}^2\big)(t)+C\big(||g_{\theta}||_{\dot H^1}^2+||g_y||_{\dot h^1}^2\big)(t).
\end{align}
Noting that $\pa_\al\bth=0$, from Lemma \ref{lem:H1} and Lemma \ref{lem:h1}, for $\forall\delta\in(0,1)$, we have
\begin{align*}
	||g_{\theta}||_{\dot H^1}+||g_y||_{\dot h^1}\le& C(R,\lambda,\beta_1,\beta_2)\big(\delta||\theta||_{\dot H^{4}}+\frac{1}{\delta^{1/2}}||\theta||_{\dot H^{5/2}}+||y_s||_{\dot h^{3/2}}\big).
\end{align*}
We choose $\delta$ small enough which is determined by $(R,\lambda,\beta_1,\beta_2)$ such that
\begin{align}\label{eq:th-est}
	&-\frac{1}{8}\big(||\theta-\al||_{\dot{H}^{4}}^2+||y_s||_{\dot{h}^{2}}^2\big)(t)+C\big(||g_{\theta}||_{\dot H^1}^2+||g_y||_{\dot h^1}^2\big)(t)\\
	\le&-\frac{1}{10}\big(||\theta-\al||_{\dot{H}^{4}}^2+||y_s||_{\dot{h}^{2}}^2\big)(t)+C_1\big(||\theta-\al||_{\dot{H}^{5/2}}^2+||y_s||_{\dot h^{3/2}}^2\big)(t).\nonumber
\end{align}
Here $C_1=C_1(R,\lambda,\beta_1.\beta_2)$ is a constant. 

It follows from the Gagliardo–Nirenberg inequality that
\begin{align*}
	||\theta-\al||_{\dot{H}^{5/2}}^2\le& C||\theta-\al||_{\dot{H}^{1}}||\theta-\al||_{\dot{H}^{4}},\\
	||y_s||_{\dot h^{3/2}}^2\le&C||y_s||_{\dot h^{3/2}}^{2/3}||y_s||_{l^2}^{1/3}||y_s||_{\dot h^{2}}.
\end{align*}
Therefore, for 
\begin{align*}
	c(t)=\frac{||\theta-\al||_{\dot{H}^{5/2}}^2+||y_s||_{\dot h^{3/2}}^{2}}{\big(||\theta-\al||_{\dot{H}^{1}}+||y_s||_{\dot h^{3/2}}^{2/3}||y_s||_{l^2}^{1/3}\big)^2},
\end{align*}
it holds that
\begin{align*}
	c(t)(||\theta-\al||_{\dot{H}^{5/2}}^2+||y_s||_{\dot h^{3/2}}^2)(t)\le C_2 (||\theta-\al||_{\dot{H}^{4}}^2+||y_s||_{\dot{h}^{2}}^2)(t),
\end{align*}
where $C_2>1$ is a universal constant. We see from \eqref{eq:th-est} that
\begin{align*}
	\frac{d}{dt}(||\theta-\al||_{\dot{H}^{5/2}}^2+||y_s||_{\dot h^{3/2}}^2)(t)\le (-\frac{c(t)}{10C_2}+C_1)(||\theta-\al||_{\dot{H}^{5/2}}^2+||y_s||_{\dot h^{3/2}}^2)(t).
\end{align*}
Choosing $c_*=10(1+C_1)C_2$, we arrive at \eqref{eq:decay-th} with the help of the Gronwall inequality.
\end{proof}
\begin{remark}
  As the equilibrium energy is not $0$, and thus the nonlinear terms $g_\theta$ and $g_y$ are not high order small terms, one cannot get exponential decay estimates of $(||\theta-\al||_{\dot{H}^{5/2}}^2+||y_s||_{\dot h^{3/2}}^2)(t)$ directly from \eqref{eq:parabolic-dispation}.
\end{remark}
Now, we give the proof of Theorem \ref{thm:global} in the case area $\mathfrak a=\pi$. The general case can be treated in the same way.
\begin{proof}[Proof of Theorem \ref{thm:global}]
	Define
	\begin{align}\label{def:Q}
		\mathcal Q_\e=\{\vX\big|||\theta-\al||_{\dot H^{5/2}}+||y_s||_{\dot h^{3/2}}\le\e\}.
	\end{align}
	We claim that a universal constant $\e_*$ exists which will be clear below. For $\forall \vX\in \mathcal Q_{\e_*}$, it holds that
	\begin{align}
    \theta_\al(\al,t)\ge \frac{1}{2},\forall \al\in\bbT,\label{eq:beta00}\\
		\frac{1}{|\al_1-\al_2|}\big|\int^{\al_1}_{\al_2}\big(\cos(\theta),\sin(\theta)\big)d\al'\big|\ge \frac{1}{\pi},\forall \al_1,\al_2\in\bbT,\label{eq:betaal2}\\
		1+y_s(s,t)\ge \frac{3}{4},\forall s\in\bbT.\label{eq:betas2}
	\end{align}
	In fact, we have
	\begin{align*}
		&\big|\int^{\al_1}_{\al_2}\big(\cos(\theta),\sin(\theta)\big)d\al'\big|\\
		\ge&\big|\int^{\al_1}_{\al_2}\big(\cos(\al'),\sin(\al')\big)d\al'\big|-\big|\int^{\al_1}_{\al_2}\big(\cos(\theta)-\cos(\al'),\sin(\theta)-\sin(\al')\big)d\al'\big|\\
		\ge&\big(\frac{2}{\pi}-C_3\e_*)\big|\al_1-\al_2|,
	\end{align*}
	and
	\begin{align*}
    \theta_\al(\al,t)\ge 1-C_3\e_*,\\
		1+y_s(s,t)\ge 1-C_3\e_*,
	\end{align*}
	where $C_3>0$ is a universal constant coming from the Sobolev inequality. Hence, it suffices to take $\e_*=\min\{(3\pi C_3)^{-1},1\}$.

	Combing above uniform estimates and Theorem \ref{thm:locex}, we know that there exists a universal constant $T_0\in(0,1)$ such that for $\forall \vX_0\in \mathcal Q_{\e_*}$, \eqref{eq3} admits a unique solution $\vX$ stating from $\vX_0$ and satisfying
	\begin{align}\label{eq:th2}
		||\theta||_{L^\infty_{T_0}\dot H^{5/2}\cap L^2_{T_0}\dot H^4}+||y_s||_{L^\infty_{T_0}\dot h^{3/2}\cap L^2_{T_0}\dot h^2}\le 16(||\theta_0||_{\dot H^{5/2}}+||y_{0s}||_{\dot H^{3/2}})\le16\e_*.
	\end{align}
	Moreover, for $\forall t\in[0,T_0]$, \eqref{eq:beta00}-\eqref{eq:betas2} holds. Therefore, taking $\e_*=\min\{(48\pi C_3)^{-1},\frac{1}{64}\}$, we know that $\vX$ satisfies the assumption of Lemma \ref{lem:unith} with $T=T_0$, $R=16\e_*$, $\beta_1=\frac{1}{2\pi}$, and $\beta_2=\frac{1}{2}$, which are all universal constants.

	From the energy dissipation equation \eqref{eq:energy}, we have
	\begin{align*}
		\frac{d}{dt}\Big(\frac{1}{2\mathfrak s(t)}\int_\bbT \theta_\al^2(\al,t) d\al+\frac{\mathfrak s^2(t)}{2}\int_\bbT(1+y_s)^2ds+2\pi\lambda \mathfrak s(t)\Big)=-\int_{\mathbb R^2}|\nabla \vu|^2dx,
	\end{align*}
	which implies that
	\begin{align}\label{eq:endecayori}
		&\big(\frac{1}{2\fs}\int_{\bbT}\theta_\al^2d\al+\frac{\fs^2}{2}\int_\bbT(1+y_s)^2ds+2\pi\lambda\fs-2\pi-2\pi\lambda\big)\\
		\le&\big(\frac{1}{2\fs_0}\int_{\bbT}\theta_{0\al}^2d\al+\frac{\fs_0^2}{2}\int_\bbT(1+y_{0s})^2ds+2\pi\lambda\fs_0-2\pi-2\pi\lambda\big).\nonumber
	\end{align}
	With the help of Lemma \ref{lem:eqenergy} and \eqref{eq:con-th-S}, one can see that
	\begin{align*}
		||\theta-\al||_{L^\infty_{T_0}\dot H^1}+||y_s||_{L^\infty_{T_0}l^2}\le C_4(||\theta_0-\al||_{\dot H^1}+||y_{0s}||_{l^2}),
	\end{align*}
	where $C_4$ is a constant only depends on $\lambda$. Then, we choose
	\begin{align*}
		\e=(8C_4c_*^{3/2})^{-1}\e_*,
	\end{align*}
	which is the desired constant. Here $c_*=c_*(16\e_*,\lambda,\beta_1=\frac{1}{2\pi},\beta_2=\frac{1}{2})$ is defined in Lemma \eqref{lem:unith}. We claim that \eqref{eq:thc} holds if $||\theta-\al||_{\dot H^{5/2}}^2+||y_s||_{\dot h^{3/2}}^2\ge \frac{\e_*^2}{4}$. Indeed, as $||\theta-\al||_{L^\infty_{T_0}\dot H^1}+||y_s||_{L^\infty_{T_0}l^2}\le C_4\e$, it holds that
	\begin{align*}
		\frac{||\theta-\al||_{\dot{H}^{5/2}}^2+||y_s||_{\dot h^{3/2}}^{2}}{\big(||\theta-\al||_{\dot{H}^{1}}+||y_s||_{\dot h^{3/2}}^{2/3}||y_s||_{l^2}^{1/3}\big)^2}\ge\frac{\e_*^{2/3}}{8C_4^2\e^2\e_*^{-4/3}+2^{5/3}C_4^{2/3}\e^{2/3}}\ge c_*.
	\end{align*}

	Next, we show
	\begin{align}\label{eq:con-bnd}
		\big(||\theta-\al||_{\dot H^{5/2}}+||y_s||_{\dot h^{3/2}}\big)(t)\le \e_*,\quad\text{for}\ \forall t\in[0,T_0].
	\end{align}
	As $(||\theta-\al||_{\dot H^{5/2}}+||y_s||_{\dot h^{3/2}})(t)$ is continuous on $[0,T_0]$, if $(||\theta-\al||_{\dot H^{5/2}}+||y_s||_{\dot h^{3/2}})(t_1)>\e_*$ for some $t_1\in[0,T_0]$, there must be $t_0<t_1$ such that 
	\begin{align*}
		(||\theta-\al||_{\dot H^{5/2}}+||y_s||_{\dot h^{3/2}})(t_0)=\e_*&,\\
		\e_*\le(||\theta-\al||_{\dot H^{5/2}}+||y_s||_{\dot h^{3/2}})(t)\le 16\e_*&,\quad \forall t\in[t_0,t_1].
	\end{align*}
	Using Lemma \ref{lem:unith}, we have $(||\theta-\al||_{\dot H^{5/2}}+||y_s||_{\dot h^{3/2}})(t_1)\le e^{-[t_1-t_0]}\e_*$, which contradicts the assumption. Therefore, \eqref{eq:con-bnd} holds for all $t\in[0,T_0]$ and $\vX(T_0)\in \mathcal Q_{\e_*}$. We repeat the same steps like above and extend $\vX$ to $[0,2T_0]$, then existence of a global solution is established, and a universal estimate follows that
	\begin{align*}
		||\theta-\al||_{L^\infty_{+\infty}\dot H^{5/2}}+||y_s||_{L^\infty_{+\infty}\dot h^{3/2}}\le 8C_4c_*^{3/2}\e.
	\end{align*}
	That is to say, if $\vX_0(s)\in\mathcal Q_{\e}$, it holds that $\vX(s,t)\in\mathcal Q_{\e_*}$ for $\forall t>0$. 

	Accordingly, we arrive at the conclusion of this theorem.
\end{proof}

\section{Exponential Convergence to Equilibrium Configurations}
In this section, we prove that the global-in-time solution obtained in Theorem \ref{thm:global} converges exponentially to an equilibrium configuration as $t\to+\infty$. In what follows, we assume the area enclosed by the string is $\pi$.
\subsection{A lower bound of the rate of energy dissipation}
A key step to prove the exponential convergence is to establish a lower bound of $\int_{\mathbb R^2}|\nabla \vu|^2dx$ in terms of $(||\theta-\al||_{\dot H^2}+||y_s||_{l^2}+\fs-1)$.

Let $\mathcal Q_{\e_*}$ be defined as in \eqref{def:Q}, $\e_*>0$ is to be determined. Let $\Omega_{\vX}\subset \mathbb R^2$ denote the bounded open domain enclosed by $\vX(\bbT)$ where $\vX\in \mathcal Q_{\e_*}$. Define the collection of all such domains to be
\begin{align*}
	\mathfrak M_{\e_*}=\big\{\Omega_{\vX}\Subset\mathbb R^2:\pa \Omega_{\vX}=\vX(\bbT),\vX\in \mathcal Q_{\e_*}\big\}.
\end{align*}
We assume that $\e_*$ is sufficiently small such that domains in $\mathfrak M_{\e_*}$ satisfy the uniform $C^1$ regularity condition with uniform constants. Indeed, as $\vz\in H^{7/2}(\bbT)$, this is achievable due to the implicit function theorem and the Sobolev embedding $H^{3}(\bbT)\hookrightarrow C^{2}(\bbT)$.

For the velocity field $\vu$ determined by $\vX$, we define
\begin{align*}
	(\vu)_{\Omega_{\vX}}=|\Omega_{\vX}|^{-1}\int_{\Omega_{\vX}}\vu dx,\quad (\vu)_{\pa\Omega_{\vX}}=\frac{1}{2\pi}\int_{\bbT}\vu d\al.
\end{align*}
Then by the boundary trace theorem, it follows that
\begin{align*}
	\int_{\mathbb R^2}|\nabla \vu|^2dx&\ge\int_{\Omega_{\vX}}|\nabla \vu|^2 dx\ge C\fs\int_{\bbT}|\vu-(\vu)_{\Omega_{\vX}}|^2 d\al\\
	&\ge C\fs \int_{\bbT}|\vu-(\vu)_{\pa\Omega_{\vX}}|^2 d\al.
\end{align*}
Before showing the above value is bounded from below by $(||\theta-\al||_{\dot H^2}+||y_s||_{l^2}+\fs-1)$, we do some preparations.

Recalling that $\vX_*(\al)=\big(\cos(\al),\sin(\al)\big)$, a direct computation shows that
\begin{equation*}
  -\frac{\pa}{\pa_{\al'}}G(\vX_*(\al,t)-\vX_*(\al',t))=\frac{1}{8\pi}\left(
  	\begin{array}{cc}
  		-\sin(\al+\al')-\frac{1}{\tan(\frac{\al-\al'}{2})}&\cos(\al+\al')\\
  		\cos(\al+\al')&\sin(\al+\al')-\frac{1}{\tan(\frac{\al-\al'}{2})}
  	\end{array}
  \right).
\end{equation*}
We use $\vu_*$ to denote the velocity filed generated from $\vX_*$, and it is obvious that $\vu_*=0$. Recall that $\underline{\vt}(\al,t)=\int_{-\pi}^{\al}\vn(\al',t)d\al'$. Using the same technical in Lemma \ref{lem:ulinet}, we rewrite \eqref{eq:un} to
\begin{align}\label{eq:utov}
	&\vu(\al,t)\\
	=&\mathrm{p.v.}\int_\bbT -\frac{\pa}{\pa_{\al'}}G\big(\vz(\al,t)-\vz(\al',t)\big)\nonumber\\
	&\qquad\quad\cdot\big(\lambda \vt-\lambda\underline{\vt}-\frac{\theta_{\al\al}\vn}{\fs^2}-\frac{\theta_\al^2\vt}{2\fs^2}+\frac{\underline{\vt}}{2\fs^2}+\fs(1+y_s)\vt-\fs\underline{\vt}\big)(\al',t)d\al'\nonumber\\
	=&\mathrm{p.v.}\int_\bbT \Big(-\frac{\pa}{\pa_{\al'}}G\big(\vz(\al,t)-\vz(\al',t)\big)+\frac{\pa}{\pa_{\al'}}G\big(\vX_*(\al,t)-\vX_*(\al',t)\big)\Big)\nonumber\\
	&\qquad\quad\cdot\big(\lambda \vt-\lambda\underline{\vt}-\frac{\theta_{\al\al}\vn}{\fs^2}-\frac{\theta_\al^2\vt}{2\fs^2}+\frac{\underline{\vt}}{2\fs^2}+\fs(1+y_s)\vt-\fs\underline{\vt}\big)(\al',t)d\al'\nonumber\\
	&+\mathrm{p.v.}\int_\bbT -\frac{\pa}{\pa_{\al'}}G\big(\vX_*(\al,t)-\vX_*(\al',t)\big)\cdot\fs\big(y_s-y_\al\big)\vt(\al',t)d\al'\nonumber\\
	&+\mathrm{p.v.}\int_\bbT -\frac{\pa}{\pa_{\al'}}G\big(\vX_*(\al,t)-\vX_*(\al',t)\big)\nonumber\\
	&\qquad\quad\cdot\big(\lambda \vt-\lambda\underline{\vt}-\frac{\theta_{\al\al}\vn}{\fs^2}-\frac{\theta_\al^2\vt}{2\fs^2}+\frac{\underline{\vt}}{2\fs^2}+\fs(1+y_\al)\vt-\fs\underline{\vt}\big)(\al',t)d\al'\nonumber\\
	\eqdefa&\mathcal R_1+\mathcal R_2+\vv.\nonumber
\end{align}
Here $y_\al(\al,t)\eqdefa y_s(s,t)|_{s=\al}$ is a function defined on arc-length coordinate. One can regard it as $y_s(s,t)$ which have changed the notation of variable from $s$ to $\al$. We must be careful that $y_\al(\al,t)$ is different to $y_s(s(\al,t),t)$. We introduce such special notation to emphasize this point. When $||y_s||_{h^{3/2}}$ is small, there is little difference between $\al$ and $s(\al,t)$, so do $y_\al(\al,t)$ and $y_s(s(\al,t),t)$. We also note that $||y_\al||_{L^2}=||y_s||_{l^2}$.

Next, we introduce some auxiliary functions
\begin{equation}\label{eq:def-fun}
  	\begin{array}{l}
  		D=\theta(\al,t)-\al,\quad\theta_\eta(\al,t)=\al+\eta D(\al,t),\quad y_{\eta \al}(\al,t)=\eta y_\al(\al,t),\ \eta\in[0,1];\\
  		\vn_\eta(\al,t)=(-\sin(\theta_\eta(\al,t)),\cos(\theta_\eta(\al,t))),\ \vt_\eta(\al,t)=(\cos(\theta_\eta(\al,t)),\sin(\theta_\eta(\al,t)));\\
  		\vn_*(\al)=(-\sin(\al),\cos(\al)),\ \vt_*(\al)=(\cos(\al),\sin(\al));\\
   		\underline{\vt}_\eta=\int_{-\pi}^\al(-\sin(\theta_\eta(\al'')),\cos(\theta_\eta(\al'')))d\al''-\frac{\al}{2\pi}\int_{-\pi}^\pi(-\sin(\theta_\eta(\al'')),\cos(\theta_\eta(\al'')))d\al'';\\
   		\underline{Dn_*}(\al)=\int_{-\pi}^\al D(\al'')(-\cos(\al''),-\sin(\al''))d\al''-\frac{\al}{2\pi}\int_{-\pi}^\pi D(\al'')(-\cos(\al''),-\sin(\al''))d\al''.
  	\end{array}
\end{equation}
and an auxiliary velocity,
\begin{align*}
	\vv_\eta(\al,t)=&\int_\bbT -\frac{\pa}{\pa_{\al'}}G\big(\vX_*(\al,t)-\vX_*(\al',t)\big)\\
	&\quad\cdot\Big(\big(\lambda \vt_\eta-\lambda\underline{\vt}_\eta-\frac{\theta_{\eta\al\al}\vn_\eta}{\fs^2}-\frac{\theta_{\eta\al}^2\vt_\eta}{2\fs^2}+\frac{\underline{\vt}_\eta}{2\fs^2}+\fs(1+y_{\eta \al})\vt_\eta-\fs\underline{\vt}_\eta\big)(\al')\\
	&\qquad-\big(\lambda \vt_\eta-\lambda\underline{\vt}_\eta-\frac{\theta_{\eta\al\al}\vn_\eta}{\fs^2}-\frac{\theta_{\eta\al}^2\vt_\eta}{2\fs^2}+\frac{\underline{\vt}_\eta}{2\fs^2}+\fs(1+y_{\eta \al})\vt_\eta-\fs\underline{\vt}_\eta\big)(\al)\Big)d\al'\\
	\eqdefa&\int_\bbT h_\eta(\al,\al')d\al'.
\end{align*}
In the following lemma, we linearize $\vu$ and extract the principal part. The main idea is to apply Taylor expansion to the trigonometric functions.
\begin{lemma}\label{lem:linearu}
	Assume $\vX\in \mathcal Q_{\e_*}$ for a sufficiently small constant $\e_*$, it holds that
	\begin{align*}
		\vu(\al,t)=\pa_\eta|_{\eta=0}\vv_\eta(\al)+\mathcal R(\al),
	\end{align*}
	where 
	\begin{align}\label{eq:estR}
		||\mathcal R||_{L^2}\le C\e_*(||\theta-\al||_{\dot H^2}+||y_s||_{l^2}),
	\end{align}
	with some universal constant $C$.
\end{lemma}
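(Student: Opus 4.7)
The plan is to use the decomposition in \eqref{eq:utov}, namely $\vu=\mathcal R_1+\mathcal R_2+\vv$, to write $\mathcal R=\mathcal R_1+\mathcal R_2+\bigl(\vv-\pa_\eta|_{\eta=0}\vv_\eta\bigr)$ and estimate the three pieces separately. The starting observation is that the equilibrium kernel has vanishing principal-value average in $\al'$,
\begin{equation*}
\mathrm{p.v.}\!\int_\bbT -\frac{\pa}{\pa_{\al'}}G\bigl(\vX_*(\al)-\vX_*(\al')\bigr)\,d\al' = 0,
\end{equation*}
which one reads off from the explicit trigonometric formula displayed just before the lemma (mean-zero $\sin/\cos$ pieces, and a $\cot$ piece whose p.v.\ integrates to zero). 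This identity allows me to replace the force $F_1(\al')$ in $\vv$ by $F_1(\al')-F_1(\al)$ without changing the integral, identifying $\vv$ with $\vv_\eta|_{\eta=1}$. A direct computation gives $\vt_*(\al)-\underline{\vt}_0(\al)=-(1,0)$ independent of $\al$, so the bracket in $\vv_\eta$ at $\eta=0$ collapses to the constant $-(\lambda-\tfrac{1}{2\fs^2}+\fs)(1,0)$ and therefore $\vv_0=0$. Taylor's formula in $\eta$ then yields
\begin{equation*}
\vv-\pa_\eta|_{\eta=0}\vv_\eta=\int_0^1 (1-\eta)\,\pa_\eta^2\vv_\eta\,d\eta.
\end{equation*}

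For $\mathcal R_1$ I would interpolate the geometry by $\vz^\sharp_\eta=(1-\eta)\vX_*+\eta\vz$ and write the kernel difference as $\int_0^1 \tfrac{d}{d\eta}\bigl[-\pa_{\al'}G(\vz^\sharp_\eta(\al)-\vz^\sharp_\eta(\al'))\bigr]d\eta$; each resulting integrand is controlled, via the divided-difference arguments of Lemma \ref{lem:delta} together with the non-self-intersecting assumption, by $\|\vz-\vX_*\|_{L^\infty}\lesssim\e_*$ (using the reconstruction formula and $|\fs-1|\lesssim\e_*^2$ from \eqref{eq:con-th-S}), while the companion force $F_1$ has $L^2$-norm bounded by $\|\theta-\al\|_{\dot H^2}+\|y_s\|_{l^2}+O(\fs-1)$. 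For $\mathcal R_2$, the extra factor $y_s(s(\al'),t)-y_\al(\al',t)=y_s(s(\al'))-y_s(\al')$ satisfies, via $s(\al)-\al=-y(s(\al))$ and Cauchy--Schwarz, $\|y_s(s(\cdot))-y_s(\cdot)\|_{L^2}\lesssim\|y\|_{L^\infty}\|y_{ss}\|_{l^2}\lesssim\e_*\|y_s\|_{l^2}$, and the equilibrium singular operator is $L^2$-bounded by Lemma \ref{lem:H}. Both pieces satisfy the target bound.

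For the Taylor remainder, the rules $\pa_\eta\vt_\eta=D\vn_\eta$, $\pa_\eta\vn_\eta=-D\vt_\eta$ with $D=\theta-\al$, combined with $\pa_\eta^2 y_{\eta\al}=0$ and $\pa_\eta^2 \theta_{\eta\al\al}=0$, write $\pa_\eta^2 F_\eta$ as a finite sum of terms each quadratic in $(D,y_\al,D_\al,D_{\al\al},\theta_{\eta\al\al})$, of schematic form $D^2\vn_\eta$, $DD_{\al\al}\vn_\eta$, $D_\al^2\vt_\eta$, $Dy_\al\vn_\eta$, and analogous contributions from differentiating the nonlocal $\underline{\vt}_\eta$. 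In each summand I would place the factor with the highest derivative in $L^2$ (bounded by $\|\theta-\al\|_{\dot H^2}+\|y_s\|_{l^2}$) and the other in $L^\infty$ via the Sobolev embedding $H^{5/2}(\bbT)\hookrightarrow C^2(\bbT)$ (bounded by $\|\theta-\al\|_{\dot H^{5/2}}\lesssim\e_*$); this yields $\|\pa_\eta^2 F_\eta\|_{L^2}\lesssim\e_*(\|\theta-\al\|_{\dot H^2}+\|y_s\|_{l^2})$ uniformly in $\eta\in[0,1]$. Since the operator $g\mapsto\int-\pa_{\al'}G(\vX_*(\al)-\vX_*(\al'))(g(\al')-g(\al))d\al'$ is $L^2$-bounded (Hilbert transform plus bounded trigonometric pieces), the Taylor remainder obeys the same bound and \eqref{eq:estR} follows.

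The main obstacle will be the bookkeeping of $\pa_\eta^2 F_\eta$, in particular the contributions from $\underline{\vt}_\eta$, to guarantee that every summand splits as one $\e_*$-small factor times one factor bounded by the target norm; a subsidiary point is to absorb residual $(\fs-1)$ contributions coming from $\mathcal R_1$ into the $\e_*$-factor via \eqref{eq:con-th-S}, so that no perimeter deviation $\fs-1$ appears on the right-hand side of \eqref{eq:estR}.
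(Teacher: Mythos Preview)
Your proposal is correct and follows the same decomposition $\mathcal R=\mathcal R_1+\mathcal R_2+(\vv-\pa_\eta|_{\eta=0}\vv_\eta)$ as the paper, with the same treatment of $\mathcal R_1$ and $\mathcal R_2$. The only substantive difference is in the remainder $\mathcal R_3=\vv-\pa_\eta|_{\eta=0}\vv_\eta$: the paper invokes the mean value theorem in $\eta$ to write $\vv(\al)=\pa_\eta|_{\eta=\eta^*(\al)}\vv_\eta(\al)$ and then estimates $\pa_\eta|_{\eta^*}\vv_\eta-\pa_\eta|_{\eta=0}\vv_\eta$ by explicitly computing $\pa_\eta\vv_\eta$ and comparing $\vn_{\eta^*}-\vn_*$, $\vt_{\eta^*}-\vt_*$, whereas you use the integral Taylor remainder $\int_0^1(1-\eta)\,\pa_\eta^2\vv_\eta\,d\eta$ and bound the second derivative uniformly in $\eta$. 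Your variant sidesteps the formal issue that the mean value theorem does not yield a single $\eta^*$ for a vector-valued function, at the modest cost of writing out $\pa_\eta^2 F_\eta$; the paper's first-order comparison keeps the algebra shorter since only $\pa_\eta\vv_\eta$ is needed. Both lead to the same estimate.
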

\begin{proof}
	From \eqref{eq:utov}, it is easy to see that $\vv_\eta|_{\eta=0}=0$, $\vv_\eta|_{\eta=1}=\vv$. By the mean value theorem respect to $\eta$, for each fixed $\al$ there exists an $\eta^*=\eta^*(\al)\in[0,1]$ such that
\begin{align*}
	\vv(\al)=&\vv_\eta|_{\eta=1}(\al)-\vv_\eta|_{\eta=0}(\al)=\frac{\pa}{\pa_\eta}|_{\eta=\eta^*(\al)}\vv_\eta(\al)\\
	=&\frac{\pa}{\pa_\eta}|_{\eta=0}\vv_\eta(\al)+\frac{\pa}{\pa_\eta}|_{\eta=\eta^*(\al)}\vv_\eta(\al)-\frac{\pa}{\pa_\eta}|_{\eta=0}\vv_\eta(\al)\eqdefa \frac{\pa}{\pa_\eta}|_{\eta=0}\vv_\eta(\al)+\mathcal R_3(\al).
\end{align*}
	It remains to verify \eqref{eq:estR} for 
	\begin{align*}
		\mathcal R=\mathcal R_1+\mathcal R_2+\mathcal R_3.
	\end{align*}
	We see at once that $||\mathcal R_1||_{L^2}\le\e_*(||\theta-\al||_{\dot H^2}+||y_s||_{l^2})$. In the same way to Lemma \ref{lem:u-inf}, we have
	\begin{align}
		||\mathcal R_2||_{L^2}\le& C||\mathcal H\Big(\big(y_s(s(\al,t),t)-y_s(\al,t)\big)\vt\Big)||_{L^2}+C||y_s(s(\cdot,t),t)-y_s(\cdot,t)||_{L^2}\label{eq:esR2}\\
		\le&C||y_{ss}(\cdot,t)||_{l^2}^2||y_{s}(\cdot,t)||_{l^2}\le C\e_*||y_s||_{l^2}.\nonumber
	\end{align}
	By using the dominated convergence theorem, we have
	\begin{align*}
		&\pa_\eta\vv_\eta(\al)=\int_\bbT\pa_\eta h_\eta(\al,\al')d\al'\\
		=&\mathrm{p.v.}\int_\bbT -\frac{\pa}{\pa_{\al'}}G\big(\vX_*(\al,t)-\vX_*(\al',t)\big)\\
	&\quad\cdot\Big(\big(\lambda D\vn_\eta-\frac{D_{\al\al}\vn_\eta-\theta_{\eta\al\al}D\vt_\eta}{\fs^2}-\frac{2D\theta_{\eta\al}\vt_\eta+\theta_{\eta\al}^2D\vn_\eta}{2\fs^2}+\fs y_\al\vt_\eta+\fs(1+y_{\eta \al})D\vn_\eta\big)(\al')\\
	&\qquad\qquad\qquad+(\lambda+\fs- \frac{1}{2\fs^2})\big(\int_{-\pi}^{\al'}D\vt_\eta(\al'')d\al''-\frac{\al'}{2\pi}\int_{-\pi}^{\pi}D\vt_\eta(\al'')d\al''\big)\Big)d\al'.
	\end{align*}
	It follows that,
	\begin{align*}
		&\mathcal R_3(\al)=\frac{\pa}{\pa_\eta}|_{\eta=\eta^*(\al)}\vv_\eta(\al)-\frac{\pa}{\pa_\eta}|_{\eta=0}\vv_\eta(\al)\\
		=&\mathrm{p.v.}\int_\bbT -\frac{\pa}{\pa_{\al'}}G\big(\vX_*(\al,t)-\vX_*(\al',t)\big)\cdot\Big(\frac{{\eta^*}\theta_{\al\al}D\vt_{\eta^*}-{\eta^*} D_\al D\vt_{\eta^*}-{\eta^*} D_\al D\vn_{\eta^*}}{\fs^2}-\frac{{\eta^*}^2D_\al^2 D\vn_{\eta^*}}{2\fs^2}\\
	&\qquad+\fs{\eta^*} y_\al D\vn_{\eta^*}+\big(\lambda D-\frac{2D_{\al\al}+D}{2\fs^2}+\fs D\big)(\vn_{\eta^*}-\vn_*)+\big(\fs y_\al- \frac{D}{\fs^2}\big)(\vt_{\eta^*}-\vt_*)\\
	&\qquad+(\lambda+\fs- \frac{1}{2\fs^2})\big(\int_{-\pi}^{\al'}D(\vt_{\eta^*}-\vt_*)(\al'')d\al''-\frac{\al'}{2\pi}\int_{-\pi}^{\pi}D(\vt_{\eta^*}-\vt_*)(\al'')d\al''\big)\Big)(\al')d\al'.
	\end{align*}
	It is easy to check that
	\begin{align*}
		||\vn_{\eta^*}-\vn_*||_{H^1}+||\vt_{\eta^*}-\vt_*||_{H^1}\le C||\theta-\al||_{\dot H^1}.
	\end{align*}
	Accordingly, similar to \eqref{eq:esR2}, we conclude that
	\begin{align*}
		||\mathcal R_3||_{L^2}\le&C\e_*(||\theta-\al||_{\dot H^2}+||y_s||_{l^2}).
	\end{align*}
	This completes the proof.
\end{proof}

Next, we state an important observation which reflect a special property of the tangent angle functions. 

For any function $f$ defined on $\bbT$, we use $a_k$ and $b_k$ to denote the Fourier coefficients such that
\begin{align*}
	a_k(f)=&\int^\pi_{-\pi}\cos(k\al)f(\al)d\al,\ b_k(f)=\int^\pi_{-\pi}\sin(k\al)f(\al)d\al.
\end{align*}
Recalling that $D=\theta-\al$, we have the following inequalities.
\begin{lemma}\label{lem:normD}Let $\gamma\ge0$, for $\vX\in \mathcal Q_{\e_*}$ with $\e_*$ small enough, there exists a universal constant $C$ such that 
	\begin{align*}
		a_1^2(D)+b_1^2(D)\le C\e_*^2||D-\bar D||_{L^2}^2,\quad||D||^2_{\dot H^\gamma}\le \frac{1}{\pi(1-C\e_*^2)}\sum^{+\infty}_{k=2}(a_k^2(D)k^{2\gamma}+b_k^2(D)k^{2\gamma}).
	\end{align*}
where $\bar D$ is the mean value of $D$.
\end{lemma}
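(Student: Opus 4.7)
The plan is to exploit the closed-string condition $\int_{-\pi}^\pi(\cos\theta,\sin\theta)\,d\al=(0,0)$, which holds for any $\vX\in\mathcal{Q}_{\e_*}$ by Lemma \ref{rmk:adm}. Decompose $D=\bar D+d$ where $d$ has zero mean. By Sobolev embedding $H^{5/2}(\bbT)\hookrightarrow L^\infty(\bbT)$ applied to the mean-zero function $d$, one has $\|d\|_{L^\infty}\le C\|D\|_{\dot H^{5/2}}\le C\e_*$, so the first-order Taylor expansions $\sin(\al+\bar D+d)=\sin(\al+\bar D)+d\cos(\al+\bar D)+R_1$ and $\cos(\al+\bar D+d)=\cos(\al+\bar D)-d\sin(\al+\bar D)+R_2$ carry pointwise remainders $|R_j(\al)|\le Cd(\al)^2$.

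Integrating these expansions over $\bbT$, using $\int_{-\pi}^\pi\sin(\al+\bar D)\,d\al=\int_{-\pi}^\pi\cos(\al+\bar D)\,d\al=0$, and invoking the closed-string condition, I get
\begin{align*}
\Bigl|\int_{-\pi}^\pi d\cos(\al+\bar D)\,d\al\Bigr|+\Bigl|\int_{-\pi}^\pi d\sin(\al+\bar D)\,d\al\Bigr|\le C\|d\|_{L^2}^2.
\end{align*}
Expanding $\cos(\al+\bar D)=\cos\bar D\cos\al-\sin\bar D\sin\al$ and similarly for $\sin(\al+\bar D)$, and noting that $a_1(d)=a_1(D)$ and $b_1(d)=b_1(D)$ since $\int\cos\al\,d\al=\int\sin\al\,d\al=0$, I obtain the linear system
\begin{align*}
|\cos\bar D\cdot a_1(D)-\sin\bar D\cdot b_1(D)|\le C\|d\|_{L^2}^2,\quad |\sin\bar D\cdot a_1(D)+\cos\bar D\cdot b_1(D)|\le C\|d\|_{L^2}^2.
\end{align*}
The coefficient matrix is a rotation, hence orthogonal, so $\sqrt{a_1^2(D)+b_1^2(D)}\le C\|d\|_{L^2}^2$. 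Combining with the Poincar\'e bound $\|d\|_{L^2}\le C\|D\|_{\dot H^{5/2}}\le C\e_*$ gives the first inequality $a_1^2(D)+b_1^2(D)\le C\e_*^2\|d\|_{L^2}^2=C\e_*^2\|D-\bar D\|_{L^2}^2$.

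The second inequality is then a short Fourier bookkeeping step. With the convention $a_k,b_k$ in the lemma, Parseval yields $\|D-\bar D\|_{L^2}^2=\pi^{-1}\sum_{k\ge 1}(a_k^2+b_k^2)$ and $\|D\|_{\dot H^\gamma}^2=\pi^{-1}\sum_{k\ge 1}k^{2\gamma}(a_k^2+b_k^2)$. Feeding the first inequality into itself, $a_1^2+b_1^2\le C\e_*^2\pi^{-1}(a_1^2+b_1^2)+C\e_*^2\pi^{-1}\sum_{k\ge 2}(a_k^2+b_k^2)$, which solves to $a_1^2+b_1^2\le \tfrac{C\e_*^2}{1-C\e_*^2}\sum_{k\ge 2}(a_k^2+b_k^2)$. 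Since $k^{2\gamma}\ge 1$ for $k\ge 2$ and $\gamma\ge 0$, I may replace $(a_k^2+b_k^2)$ by $k^{2\gamma}(a_k^2+b_k^2)$ on the right, then add $\sum_{k\ge 2}k^{2\gamma}(a_k^2+b_k^2)$ to both sides to produce exactly $\pi\|D\|_{\dot H^\gamma}^2\le \tfrac{1}{1-C\e_*^2}\sum_{k\ge 2}k^{2\gamma}(a_k^2+b_k^2)$.

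There is no serious obstacle here; the conceptual crux is recognizing that the closed-string condition, when linearized about the circle $\theta=\al$, annihilates precisely the $k=1$ Fourier modes of $D$ at leading order, leaving only a quadratic-in-$d$ remainder. The only bookkeeping to watch is that the pointwise Taylor remainder $|R_j|\le Cd^2$ really requires the smallness of $\|d\|_{L^\infty}$ (not just $\|d\|_{L^2}$), which is why the hypothesis $\vX\in\mathcal{Q}_{\e_*}$ together with the Sobolev embedding plays an essential role at the start.
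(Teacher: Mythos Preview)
Your proof is correct but proceeds along a different axis than the paper's. The paper does not invoke the closed-string condition $\int(\cos\theta,\sin\theta)\,d\al=0$ at all; instead it uses the winding-number identity $\int_{-\pi}^\pi\theta_\al\cos(\theta-\bar\theta)\,d\al=0$ (coming from $\theta(\pi)-\theta(-\pi)=2\pi$), writes $b_1(D)=\int(\theta_\al-1)\cos\al\,d\al=\int(\theta_\al-1)\bigl(\cos\al-\cos(\theta-\bar\theta)\bigr)\,d\al$ via this identity, and then applies the product-to-sum formula plus Cauchy--Schwarz to land on $|b_1(D)|\le C\|D\|_{\dot H^1}\|D-\bar D\|_{L^2}$. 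Your route---Taylor-expand $\cos\theta,\sin\theta$ about $d=0$ and read off that the closed-string condition kills the $k=1$ modes of $d$ up to a quadratic remainder---is arguably more transparent, and the rotation-matrix inversion handles $\bar D\neq 0$ cleanly. One minor over-caution: the second-order Taylor bound $|R_j|\le \tfrac12 d^2$ holds unconditionally since $|\sin''|,|\cos''|\le 1$, so you do not actually need $\|d\|_{L^\infty}$ small for that step; the only place $\e_*$ enters is when you split $\|d\|_{L^2}^4\le \e_*^2\|d\|_{L^2}^2$. The Fourier bookkeeping for the second inequality matches the paper's.
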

\begin{proof}
	From the assumption, $\theta$ satisfies $\int^\pi_{-\pi}\cos\big(\theta(\al)\big)d\al=0$ and $\theta(\pi)-\theta(-\pi)=2\pi$. It also holds that
	\begin{align*}
		\int^\pi_{-\pi}\theta_\al(\al)\cos(\theta(\al)-\bar \theta) d\al=\int^{\theta(\pi)}_{\theta(-\pi)}\cos(\theta -\bar \theta) d\theta=0.
	\end{align*}
	Then we have
	\begin{align*}
		\int^\pi_{-\pi}\big(\theta_\al(\al)-1\big)\cos(\theta(\al)-\bar\theta) d\al=0.
	\end{align*}
	Therefore, one can deduce that
	\begin{align*}
		b_1(D)=&\int^\pi_{-\pi}(\theta-\al)\sin(\al) d\al=\int^\pi_{-\pi}(\theta_\al-1)\cos(\al) d\al\\
		=&\int^\pi_{-\pi}(\theta_\al-1)\big(\cos(\al)-\cos(\theta-\bar\theta)\big) d\al\\
		=&2\int^\pi_{-\pi}(\theta_\al-1)\sin(\frac{\al+\theta-\bar\theta}{2})\sin(\frac{\theta-\bar\theta-\al}{2}) d\al\\
		\le&C\big(\int^\pi_{-\pi}(\theta_\al-1)^2 d\al\big)^{1/2}\big(\int^\pi_{-\pi}\sin^2(\frac{\theta-\bar\theta-\al}{2}) d\al\big)^{1/2}\\
		\le&C||\theta_\al-1||_{L^2}\big(\int^\pi_{-\pi}|\theta-\bar\theta-\al|^2 d\al\big)^{1/2}\le C||D||_{\dot H^1}||D-\bar D||_{L^2}.
	\end{align*}
	Similarly, one has
	\begin{align*}
		a_1(D)\le C||D||_{\dot H^1}||D-\bar D||_{L^2}.
	\end{align*}
	For any integrable real-valued function $f$, it holds that
	\begin{align*}
		a_k(f)=a_{-k}(f),\quad b_k(f)=-b_{-k}(f).
	\end{align*}
	From the above results, we have
	\begin{align*}
		(1-C\e^2_*)||D||^2_{\dot H^\gamma}\le \frac{1}{\pi}\sum^{+\infty}_{k=2}\big(a_k^2(D)k^{2\gamma}+b_k^2(D)k^{2\gamma}\big),
	\end{align*}
	which is the desired conclusion.
\end{proof}
Then, we can give the lower bound of the rate of energy dissipation.
\begin{lemma}\label{lem:normu}
	There exists a universal $\e_*>0$ such that
	\begin{align}
		||\vu-\vu_{\pa\Omega_\vX}||_{L^2(\bbT)}\ge C \big(||\theta-\al||_{\dot H^2}+||y_s||_{l^2}+\lambda^2(\fs-1)\big),\ \forall \vX\in \mathcal Q_{\e_*},\label{eq:lowerbd}
	\end{align}
	where $\mathcal Q_{\e_*}$ is defined in \eqref{def:Q} and $C>0$ is a universal constant. Since $\fs\ge1$, \eqref{eq:lowerbd} implies that
	\begin{align*}
		\int_{\mathbb R^2}|\nabla \vu|^2dx\ge C\big((||\theta-\al||_{\dot H^2}+||y_s||_{l^2}+\lambda^2(\fs-1)\big),\ \forall\vX\in \mathcal Q_{\e_*}.
	\end{align*}
\end{lemma}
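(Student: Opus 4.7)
The plan is to invoke Lemma~\ref{lem:linearu} to reduce the estimate to one for the linearized velocity $\vW\eqdefa\pa_\eta\vv_\eta|_{\eta=0}$, and then to diagonalize $\vW$ by Fourier analysis on $\bbT$. Since $\|\mathcal R\|_{L^2}\le C\e_*(\|\theta-\al\|_{\dot H^2}+\|y_s\|_{l^2})$, for $\e_*$ small the remainder is absorbed by the right-hand side of \eqref{eq:lowerbd}, and a Poincar\'e inequality on $\bbT$ disposes of the mean $(\vu)_{\pa\Omega_{\vX}}$, so it is enough to bound $\|\vW\|_{L^2(\bbT)}$ from below.

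The first step is to compute $\vW$ explicitly. At $\eta=0$ one has $\theta_\eta=\al$, $\theta_{\eta\al\al}=0$, $y_{\eta\al}=0$, $\vt_\eta=\vt_*$, $\vn_\eta=\vn_*$, $\underline{\vt}_\eta=\underline{\vt}_*$, together with $\pa_\eta\vt_\eta|_0=D\vn_*$, $\pa_\eta\vn_\eta|_0=-D\vt_*$, and $\pa_\eta\underline{\vt}_\eta|_0=\underline{D\vn_*}$ (cf.\ \eqref{eq:def-fun}). Collecting terms from the seven pieces of $\text{stuff}_\eta$ yields
\begin{align*}
\pa_\eta\text{stuff}_\eta|_0(\al')=\Big(A(\fs)D-\tfrac{D_{\al\al}}{\fs^2}\Big)\vn_*(\al')+\Big(\fs y_\al-\tfrac{D_\al}{\fs^2}\Big)\vt_*(\al')-A(\fs)\,\underline{D\vn_*}(\al'),
\end{align*}
where $A(\fs)\eqdefa\lambda+\fs-\tfrac{1}{2\fs^2}$. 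Then $\vW(\al)$ is the integral of $-\pa_{\al'}G(\vX_*(\al)-\vX_*(\al'))$ against $\pa_\eta\text{stuff}_\eta|_0(\al')-\pa_\eta\text{stuff}_\eta|_0(\al)$.

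Next, I will diagonalize $\vW$ using the explicit kernel displayed just before \eqref{eq:def-fun}. That kernel splits into a diagonal Hilbert-transform piece $-\tfrac{1}{8\pi\tan((\al-\al')/2)}\,Id$ (a Fourier multiplier) and two finite-rank pieces involving $\sin(\al+\al')$, $\cos(\al+\al')$ that only couple to the $k=\pm 1$ Fourier modes of the output. Identifying $\R^2\cong\C$ via $\vt_*\leftrightarrow e^{i\al}$ and $\vn_*\leftrightarrow ie^{i\al}$, and writing $D=\sum_{k\ne 0}\widehat D(k)e^{ik\al}$, each input mode $e^{ik\al'}$ is transported to $e^{i(k\pm 1)\al}$. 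The dominant $-D_{\al\al}/\fs^2\cdot\vn_*$ term produces $L^2$ contributions of order $k^2|\widehat D(k)|/\fs^2$, while $\fs y_\al\vt_*$ contributes at order $\fs|\widehat{y_\al}(k)|$. By Parseval,
\begin{align*}
\|\vW\|_{L^2}^2\gtrsim\fs^{-4}\sum_{|k|\ge 2}k^4|\widehat D(k)|^2+\fs^2\|y_\al\|_{L^2}^2,
\end{align*}
up to cross terms controllable for $\e_*$ small.

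Finally, the missing $k=\pm 1$ modes of $D$ are handled by Lemma~\ref{lem:normD}: the bound $a_1^2(D)+b_1^2(D)\le C\e_*^2\|D\|_{L^2}^2$ shows their $\dot H^2$ contribution is $O(\e_*^2)\|D\|_{\dot H^2}^2$ and hence absorbable. Together with $\|y_s\|_{l^2}=\|y_\al\|_{L^2}$, this yields the desired lower bound for $\|\theta-\al\|_{\dot H^2}+\|y_s\|_{l^2}$; the $\lambda^2(\fs-1)$ term in \eqref{eq:lowerbd} then follows automatically from the area constraint $\fs-1\lesssim\|D\|_{\dot H^1}^2\le\e_*\|D\|_{\dot H^2}$ (cf.\ \eqref{eq:con-th-S}), which makes it subdominant for $\e_*$ small. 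The main technical difficulty will be tracking how the finite-rank low-mode pieces of the kernel interact with the nonlocal term $\underline{D\vn_*}$ (itself supported mainly at low modes through its primitive structure), so that the multiplier matrix acting on each pair $(\widehat D(k),\widehat{y_\al}(k))$ for $|k|\ge 2$ is uniformly invertible with the claimed lower bound.
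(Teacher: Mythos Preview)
Your overall strategy---linearize via Lemma~\ref{lem:linearu}, compute $\vW=\pa_\eta\vv_\eta|_{\eta=0}$ explicitly, diagonalize by Fourier analysis on $\bbT$, and recover the missing $k=\pm1$ modes of $D$ through Lemma~\ref{lem:normD}---is exactly the paper's route, and your computation of $\pa_\eta\text{stuff}_\eta|_0$ matches the paper's integrand.

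There is, however, one genuine gap in what you wrote. The claim that the Parseval lower bound holds ``up to cross terms controllable for $\e_*$ small'' is wrong: the cross terms between the $D$-contributions and the $y_\al$-contributions to $\vW$ are \emph{not} of order $\e_*$; they are of the same size as the diagonal terms. Concretely, for each $|k|\ge2$ you get a $2\times2$ quadratic form in $(a_k(D),b_k(y_\al))$ and $(b_k(D),a_k(y_\al))$ with off-diagonal entries of order $\fs\big(\tfrac{k(\lambda+\fs)}{k^2-1}+\tfrac{2k^3-3k}{2\fs^2(k^2-1)}\big)$, and these must be dominated algebraically by the diagonal. The paper does this by an explicit mode-by-mode computation (essentially verifying the determinant of each $2\times2$ block is uniformly positive), which is precisely the ``uniform invertibility'' you flag at the end---but it does not come from smallness in $\e_*$. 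You should replace that sentence with the actual positivity check.

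A smaller point: you say the finite-rank kernel pieces involving $\sin(\al+\al')$, $\cos(\al+\al')$ ``only couple to the $k=\pm1$ Fourier modes of the output.'' The paper shows something stronger and cleaner: after an integration by parts on the $\underline{D\vn_*}$ term, the entire matrix (non-Hilbert) part of the kernel applied to $\pa_\eta\text{stuff}_\eta|_0$ vanishes identically, so $\vW=-\tfrac14\mathcal H\big(A(\fs)(D\vn_*-\underline{D\vn_*})-\fs^{-2}(D_{\al\al}\vn_*+D_\al\vt_*)+\fs y_\al\vt_*\big)$, and in particular $\int_\bbT\vW\,d\al=0$. This is what makes the subsequent Fourier computation tractable and disposes of the mean without a separate Poincar\'e argument.
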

\begin{proof}
	We first focus on the leading term of $\vu$. From the definitions, we have
\begin{align*}
	\pa_\eta|_{\eta=0}\vv_\eta(\al)=&\mathrm{p.v.}\int_\bbT\frac{1}{8\pi}\left(
  	\begin{array}{cc}
  		-\sin(\al+\al')-\frac{1}{\tan(\frac{\al-\al'}{2})}&\cos(\al+\al')\\
  		\cos(\al+\al')&\sin(\al+\al')-\frac{1}{\tan(\frac{\al-\al'}{2})}
  	\end{array}
  \right)\\
  	&\qquad\cdot\Big((\lambda+\fs- \frac{1}{2\fs^2})(D\vn_*-\underline{Dn_*})-\frac{D_{\al\al}\vn_*+D_\al\vt_*}{\fs^2}+\fs y_\al\vt_*\Big)(\al')d\al'.
\end{align*}
Direct calculations show that
\begin{align*}
	&\int^\pi_{-\pi}
	  \left(
  	\begin{array}{cc}
  		-\sin(\al+\al')&\cos(\al+\al')\\
  		\cos(\al+\al')&\sin(\al+\al')
  	\end{array}
  \right)\cdot
	  \underline{Dn_*}(\al')d\al'\\
	=&\int^\pi_{-\pi}
	  \pa_{\al'}\left(
	  	\begin{array}{ll}
	  		\cos(\al'+\al)&\sin(\al'+\al)\\
	  		\sin(\al'+\al)&-\cos(\al'+\al)
	  	\end{array}
	  \right)\\
	  &\cdot\Big(\int_{-\pi}^{\al'} D(\al'')\left(
	  	\begin{array}{l}
	  		-\cos(\al'')\\
	  		-\sin(\al'')
	  	\end{array}
	  \right)d\al''-\frac{\al'}{2\pi}\int_{-\pi}^\pi D(\al'')\left(
	  	\begin{array}{l}
	  		-\cos(\al'')\\
	  		-\sin(\al'')
	  	\end{array}
	  \right)d\al''\Big)d\al'\\
	=&\int^\pi_{-\pi}
	  \left(
	  	\begin{array}{ll}
	  		\cos(\al'+\al)&\sin(\al'+\al)\\
	  		\sin(\al'+\al)&-\cos(\al'+\al)
	  	\end{array}
	  \right)\cdot
	  \left(
	  	\begin{array}{l}
	  		D(\al')\cos(\al')-\int^\pi_{-\pi}D(\al'')\cos(\al'')d\al''\\
	  		D(\al')\sin(\al')-\int^\pi_{-\pi}D(\al'')\sin(\al'')d\al''
	  	\end{array}
	  \right)d\al'\\
	  =&\int^\pi_{-\pi}
	  \left(
	  	\begin{array}{l}
	  		\cos(\al)D(\al')\\
	  		\sin(\al)D(\al')
	  	\end{array}
	  \right)d\al'\\
	  =&\left(
      \begin{array}{l}
        2\pi\cos(\al)\bar \theta\\
        2\pi\sin(\al)\bar \theta
      \end{array}
    \right),
\end{align*}
and
\begin{align*}
	&\int^\pi_{-\pi}
	  \left(
  	\begin{array}{cc}
  		-\sin(\al+\al')&\cos(\al+\al')\\
  		\cos(\al+\al')&\sin(\al+\al')
  	\end{array}
  \right)\cdot
	  D\vn_*(\al')d\al'=\left(
      \begin{array}{l}
        2\pi\cos(\al)\bar \theta\\
        2\pi\sin(\al)\bar \theta
      \end{array}
    \right).
\end{align*}
Here $\bar\theta$ is the mean value of $\theta$. For the same reason, one can deduce that
\begin{align*}
	&\int^\pi_{-\pi}
	  \left(
  	\begin{array}{cc}
  		-\sin(\al+\al')&\cos(\al+\al')\\
  		\cos(\al+\al')&\sin(\al+\al')
  	\end{array}
  \right)\\
  &\qquad\quad\cdot\Big((\lambda+\fs- \frac{1}{2\fs^2})(D\vn_*-\underline{Dn_*})-\frac{D_{\al\al}\vn_*+D_\al\vt_*}{\fs^2}+\fs y_\al\vt_*\Big)(\al')d\al'=0.
\end{align*}
Therefore,
\begin{align*}
	\pa_\eta|_{\eta=0}\vv_\eta(\al)=-\frac{1}{4}\mathcal H\Big((\lambda+\fs- \frac{1}{2\fs^2})(D\vn_*-\underline{Dn_*})-\frac{D_{\al\al}\vn_*+D_\al\vt_*}{\fs^2}+\fs y_\al\vt_*\Big)(\al).
\end{align*}

For $k\neq0$, we have
\begin{align*}
	a_k(D\vn_*)=&\int^\pi_{-\pi}\cos(k\al)\left(
	  	\begin{array}{l}
	  		-D(\al)\sin(\al)\\
	  		D(\al)\cos(\al)
	  	\end{array}
	  \right)d\al\\
	  =&\frac{1}{2}\int^\pi_{-\pi}\left(
	  	\begin{array}{l}
	  		D(\al)\big(\sin((k-1)\al)-\sin((k+1)\al)\big)\\
	  		D(\al)\big(\cos((k-1)\al)+\cos((k+1)\al)\big)
	  	\end{array}
	  \right)d\al\\
	  =&\frac{1}{2}\left(
	  	\begin{array}{l}
	  		b_{k-1}(D)-b_{k+1}(D)\\
	  		a_{k-1}(D)+a_{k+1}(D)
	  	\end{array}
	  \right),\\
	b_k(D\vn_*)=&\frac{1}{2}\left(
	  	\begin{array}{l}
	  		-a_{k-1}(D)+a_{k+1}(D)\\
	  		b_{k-1}(D)+b_{k+1}(D)
	  	\end{array}
	  \right),
\end{align*}
and
\begin{align*}
	\mathcal H(D\vn_*)(\al)=&\sum_{k\in\mathbb Z/0}\Big(-i\frac{\text{sgn}k}{2\pi}a_k(D\vn_*)-\frac{\text{sgn} k}{2\pi}b_k(D\vn_*)\Big)e^{ik\al}\\
	=&\sum_{k\in\mathbb Z/0}\frac{1}{4\pi}\left(
	  	\begin{array}{l}
	  		\frac{k}{|k|}a_{k-1}(D)-\frac{k}{|k|}a_{k+1}(D)-i\frac{k}{|k|}b_{k-1}(D)+i\frac{k}{|k|}b_{k+1}(D)\\
	  		-\frac{k}{|k|}b_{k-1}(D)-\frac{k}{|k|}b_{k+1}(D)-i\frac{k}{|k|}a_{k-1}(D)-i\frac{k}{|k|}a_{k+1}(D)
	  	\end{array}
	  \right).
\end{align*}
In this way, we derive that
\begin{align*}
	\pa_\eta|_{\eta=0}\vv_\eta(\al)=\frac{1}{16\pi}\sum_{k\in\mathbb Z/0}(N_k,M_k),
\end{align*}
where
\begin{align*}
	N_k=&\Big[\Big(-(\lambda+\fs- \frac{1}{2\fs^2})\frac{k-1}{|k|}-\frac{1}{\fs^{2}} \frac{k(k-1)^{2}}{|k|}+\frac{1}{\fs^{2}} \frac{k(k-1)}{|k|}\Big)a_{k-1}(D)+\fs\frac{k}{|k|} b_{k-1}(y_\al)\\
	&+\Big((\lambda+\fs- \frac{1}{2\fs^2})\frac{k+1}{|k|}+\frac{1}{\fs^{2}} \frac{k(k+1)^{2}}{|k|}+\frac{1}{\fs^{2}} \frac{k(k+1)}{|k|}\Big)a_{k+1}(D)+\fs\frac{k}{|k|} b_{k+1}(y_\al)\\
	&+i\Big((\lambda+\fs- \frac{1}{2\fs^2})\frac{k-1}{|k|}+\frac{1}{\fs^{2}} \frac{k(k-1)^{2}}{|k|}-\frac{1}{\fs^{2}} \frac{k(k-1)}{|k|}\Big)b_{k-1}(D)+i\fs\frac{k}{|k|} a_{k-1}(y_\al)\\
	&+i\Big(-(\lambda+\fs- \frac{1}{2\fs^2})\frac{k+1}{|k|}-\frac{1}{\fs^{2}} \frac{k(k+1)^{2}}{|k|}-\frac{1}{\fs^{2}} \frac{k(k+1)}{|k|}\Big)b_{k+1}(D)+i\fs\frac{k}{|k|} a_{k+1}(y_\al)
	\Big]e^{ik\al},\\
	M_k=&\Big[\Big((\lambda+\fs- \frac{1}{2\fs^2})\frac{k-1}{|k|}+\frac{1}{\fs^{2}} \frac{k(k-1)^{2}}{|k|}-\frac{1}{\fs^{2}} \frac{k(k-1)}{|k|}\Big)b_{k-1}(D)+\fs\frac{k}{|k|} a_{k-1}(y_\al)\\
	&+\Big((\lambda+\fs- \frac{1}{2\fs^2})\frac{k+1}{|k|}+\frac{1}{\fs^{2}} \frac{k(k+1)^{2}}{|k|}+\frac{1}{\fs^{2}} \frac{k(k+1)}{|k|}\Big)b_{k+1}(D)-\fs\frac{k}{|k|} a_{k+1}(y_\al)\\
	&+i\Big((\lambda+\fs- \frac{1}{2\fs^2})\frac{k-1}{|k|}+\frac{1}{\fs^{2}} \frac{k(k-1)^{2}}{|k|}-\frac{1}{\fs^{2}} \frac{k(k-1)}{|k|}\Big)a_{k-1}(D)-i\fs\frac{k}{|k|} b_{k-1}(y_\al)\\
	&+i\Big((\lambda+\fs- \frac{1}{2\fs^2})\frac{k+1}{|k|}+\frac{1}{\fs^{2}} \frac{k(k+1)^{2}}{|k|}+\frac{1}{\fs^{2}} \frac{k(k+1)}{|k|}\Big)a_{k+1}(D)+i\fs\frac{k}{|k|} b_{k+1}(y_\al)
	\Big]e^{ik\al}.
\end{align*}
From the above expression, it is obvious that
\begin{align*}
	\int_\bbT \pa_\eta|_{\eta=0}\vv_\eta(\al)d\al=0.
\end{align*}

Accordingly, it holds that
\begin{align*}
	&\int^\pi_{-\pi}\big|\pa_\eta|_{\eta=0}\vv_\eta(\al)\big|^2d\al\\
	=&\sum^{+\infty}_{k=1}\frac{1}{32\pi}\Big[\Big(\big(-(\lambda+\fs- \frac{1}{2\fs^2})\frac{k-1}{|k|}-\frac{1}{\fs^{2}} \frac{k(k-1)^{2}}{|k|}+\frac{1}{\fs^{2}} \frac{k(k-1)}{|k|}\big)a_{k-1}(D)+\fs\frac{k}{|k|} b_{k-1}(y_\al)\Big)^2\\
	&\qquad+\Big(\big((\lambda+\fs- \frac{1}{2\fs^2})\frac{k+1}{|k|}+\frac{1}{\fs^{2}} \frac{k(k+1)^{2}}{|k|}+\frac{1}{\fs^{2}} \frac{k(k+1)}{|k|}\big)a_{k+1}(D)+\fs\frac{k}{|k|} b_{k+1}(y_\al)\Big)^2\\
	&\qquad+\Big(\big((\lambda+\fs- \frac{1}{2\fs^2})\frac{k-1}{|k|}+\frac{1}{\fs^{2}} \frac{k(k-1)^{2}}{|k|}-\frac{1}{\fs^{2}} \frac{k(k-1)}{|k|}\big)b_{k-1}(D)+\fs\frac{k}{|k|} a_{k-1}(y_\al)\Big)^2\\
	&\qquad+\Big(\big((\lambda+\fs- \frac{1}{2\fs^2})\frac{k+1}{|k|}+\frac{1}{\fs^{2}} \frac{k(k+1)^{2}}{|k|}+\frac{1}{\fs^{2}} \frac{k(k+1)}{|k|}\big)b_{k+1}(D)-\fs\frac{k}{|k|} a_{k+1}(y_\al)\Big)^2\Big]\\
	=&\frac{1}{32\pi}\Big(\frac{3}{2}(\lambda+\fs-\frac{1}{2\fs^2})a_{1}(D)-\fs b_{1}(y_\al)\Big)^2+\frac{1}{32\pi}\Big(\frac{3}{2}(\lambda+\fs-\frac{1}{2\fs^2})b_{1}(D)+\fs a_{1}(y_\al)\Big)^2\\
	&+\frac{10}{32\pi}\big(\frac{2}{3}(\lambda+\fs)+\frac{5}{3\fs^2}\big)^2\big(a_{2}^2(D)+b_2^2(D)\big)+\frac{2}{32\pi}\fs^2\big(a_2^2(y_\al)+b_2^2(y_\al)\big)\\
	&\qquad\qquad\qquad\quad+\frac{4}{32\pi}\fs\big(\frac{2}{3}(\lambda+\fs)+\frac{5}{3\fs^2}\big)\big(a_{2}(D) b_{2}(y_\al)-b_{2}(D) a_{2}(y_\al)\big)\\
	&+\sum^{+\infty}_{k=3}\frac{1}{32\pi}\Big[\Big(\big(\frac{k(\lambda+\fs)}{k-1}+\frac{1}{2\fs^2}\frac{2k^3-3k}{k-1}\big)^2+\big(\frac{k(\lambda+\fs)}{k+1}+\frac{1}{2\fs^2}\frac{2k^3-3k}{k+1}\big)^2\Big)\big(a_k^2(D)+b_k^2(D)\big)\\
	&\qquad+2\fs^2\big(a_k^2(y_\al)+b_k^2(y_\al)\big)+2\fs\big(\frac{2k(\lambda+\fs)}{k^2-1}+\frac{1}{\fs^2}\frac{2k^3-3k}{k^2-1}\big)\big(a_k(D)b_k(y_\al)-b_k(D)a_k(y_\al)\big)\Big]\\
	\ge&\frac{\fs^2}{64\pi}\big(a_1^2(y_\al)+b_1^2(y_\al)\big)-\frac{1}{8\pi}(\lambda+\fs-\frac{1}{2\fs^2})^2\big(a_1^2(D)+b_1^2(D)\big)\\
	&+\frac{\fs^2}{32\pi}\big(a_2^2(y_\al)+b_2^2(y_\al)\big)+\big(\frac{(\lambda+\fs)^2}{12\pi}+\frac{1}{2\pi\fs^4}\big)\big(a_2^2(D)+b_2^2(D)\big)\\
	&+\sum^{+\infty}_{k=3}\frac{1}{32\pi}\Big[\big(\frac{k(\lambda+\fs)}{k-1}+\frac{1}{2\fs^2}\frac{2k^2(k-1)+2k^2-3k}{k-1}\big)^2\big(a_k^2(D)+b_k^2(D)\big)\\
	&\quad+\fs^2\big(a_k^2(y_\al)+b_k^2(y_\al)\big)+\fs\Big((\lambda+\fs)\frac{k(k-3)}{k^2-1}+\frac{1}{2\fs^2}\frac{(2k^3-3k)(k-3)}{k^2-1}\Big)\big(a_k^2(D)+b_k^2(D)\big)\Big]\\
	\ge&\frac{\fs^2}{64\pi}\sum^{+\infty}_{k=1}\big(a_k^2(y_\al)+b_k^2(y_\al)\big)-\frac{1}{8\pi}(\lambda+\fs-\frac{1}{2\fs^2})^2\big(a_1^2(D)+b_1^2(D)\big)\\
	&+\frac{1}{32\pi}\sum^{+\infty}_{k=2}\big((\lambda+\fs)^2+\frac{k^4}{\fs^4}\big)\big(a_k^2(D)+b_k^2(D)\big)\\
	\ge&C\big(\fs^2||y_\al||_{L^2}^2+\frac{1}{\fs^4}||\theta-\al||_{\dot H^2}^2+(\lambda+\fs)^2||\theta-\bar\theta-\al||_{L^2}^2\big),
\end{align*}
the last inequality follows from Lemma \ref{lem:normD}. By using Lemma \ref{lem:linearu} and Lemma \ref{lem:con-S-th}, we have
\begin{align*}
	||\vu-\vu_{\pa\Omega_\vX}||_{L^2}^2\ge&C||\pa_\eta|_{\eta=0}\vv_\eta||_{L^2}^2-C||\mathcal R-\overline{\mathcal R}||_{L^2}^2\\
	\ge&C \big(||y_s||_{l^2}^2+||\theta-\al||_{\dot H^2}^2+\lambda^2(\fs-1)\big),
\end{align*}
which actually ensures the conclusion.
\end{proof}
Now, we are in a position to Theorem \ref{thm:conver}.
\begin{proof}[Proof of Theorem \ref{thm:conver}]
	From \eqref{eq:energy} and Lemma \ref{lem:normu}, taking $\e$ sufficiently small, we deduce that
	\begin{align*}
		&\frac{d}{dt}\big(\frac{1}{2\fs}\int_{\bbT}\theta_\al^2d\al+\frac{\fs^2}{2}\int_\bbT(1+y_s)^2ds+2\pi\lambda\fs-2\pi-2\pi\lambda\big)\\
		=&-\int_{\mathbb R^2}|\nabla \vu|^2dx\\
		\le&-C\Big(\int_\bbT(\theta_\al-1)^2d\al+\int_\bbT y_s^2ds+\lambda(\fs-1)\Big),
	\end{align*}
	where $C>0$ is a universal constant. From Lemma \ref{lem:eqenergy} one can see that there exists a constant $\gamma_*$ such that
	\begin{align*}
		||\theta-\al||_{\dot H^1}(t)+||y_s||_{l^2}(t)+\lambda|\fs(t)-1|\le C e^{-3\gamma_*t}\big(||\theta_0-\al||_{\dot H^1}+||y_{0s}||_{l^2}+\lambda|\fs_0-1|\big).
	\end{align*}
	In the proof of Theorem \ref{thm:global}, we know that 
	\begin{align*}
		||\theta-\al||_{\dot{H}^{5/2}}^2+||y_s||_{\dot h^{3/2}}^{2}\le c_*\big(||\theta-\al||_{\dot{H}^{1}}+||y_s||_{\dot h^{3/2}}^{2/3}||y_s||_{l^2}^{1/3}\big)^2
	\end{align*}
 	for some constant $c_*$, therefore 
 	\begin{align*}
 		||\theta-\al||_{\dot{H}^{5/2}}(t)+||y_s||_{\dot h^{3/2}}(t)\le Ce^{-\gamma_*t}.
 	\end{align*}
 	Recalling \eqref{eq:modify-btho} and \eqref{eq:us}, one can see that there exists
 	\begin{align*}
 		\lim_{t\to+\infty}\bar\theta(t)\to \theta_{\infty},\ \lim_{t\to+\infty}\int_0^t\vu(\vX(-\pi,t'),t')dt'\to x_{\infty}.
 	\end{align*}
 	This completes the proof.
\end{proof}
\section{Remarks on some general cases}
In this paper, we study the Stokes immersed boundary problem with bending and stretching energy and  establish the global well-posedness in Sobolev space. We give a proof for the case with elastic coefficients $c_1=c_3=1$, surface tension $\lambda\ge0$, the spontaneous curvature $B=0$, and the optimal perimeter $\fs_{op}=0$. The method developed in this paper still works for some general cases.
\subsection{Case without stretching energy or without bending energy}
For the case without bending energy and surface tension ($c_1=0,\lambda=0$), we go back to the model studied by Lin-Tong in \cite{LT}. In this case, the contour dynamic system is 
\begin{equation*}
  \left\{
    \begin{array}{ll}
   \theta_t(\al,t)=-\frac{1+y_s}{4}\mathcal H(\theta_\al)(\al,t)+g_\theta(\al,t),& \theta(\al,0)=\theta_0(\al),\\
  y_{st}(s,t)=-\frac{1}{4}\mathfrak h (y_{ss})(s,t)+g_y(s,t),& y_s(s,0)=y_{0s}(s,0),\\
  \mathfrak s_t(t)=-\frac{1}{2\pi}\int^\pi_{-\pi}\theta_\alpha \vu\cdot\vn d\al,& \mathfrak s(0)=\mathfrak s_0.     
    \end{array}
  \right.
\end{equation*}
It is also a parabolic system. The energy dissipation identity is
\begin{align*}
    \frac{d}{dt}\Big(\frac{\mathfrak s^2}{2}\int_\bbT y_s^2ds+\pi(\fs-1)\Big)=-\int_{\mathbb R^2}|\nabla \vu(x,t)|^2dx.
\end{align*} 
We can see that there is no $\theta$ term. However, Lemma \ref{lem:con-S-th} (Fuglede’s isoperimetric inequality) shows that $||\theta-\bar\theta-\al||_{L^2}^2\le C(\fs-1)$ when $||\theta-\al||_{\dot H^1}$ is small enough. Then we can use a similar method to get the global well-posedness of this problem. Furthermore, our method also works for the case ($c_1=0,c_3>0,\lambda>0$).

For the case without stretching energy ($c_3=0$), the force applied on the membranes only depend on the shape of the string. We need only to use $\vz(\al)$ to parametrize the curve. However, $\vz_t(\al,t)\cdot\vn=\vu(\vz(\al,t),t)\cdot\vn$, the velocity field of the fluid on the string determines the evolution of the membrane's shape, so the free boundary problem \eqref{eq1} is equivalent to the following equation:
\begin{align*}
 	\vz_t(\al,t)\cdot\vn=\vn\cdot\int_{\bbT}G\big(\vz(\al,t)-\vz(\al',t)\big)\cdot F(\al',t)|\vz_\al(\al',t)|d\al',
\end{align*}
where
\begin{align*}
	F=\lambda \kappa\vn-(\frac{1}{\fs^2}\pa_\al^2 \kappa+\frac{1}{2}\kappa^3)\vn.
\end{align*}
Next, one can derive the contour dynamic system base on the evolution equations of $\theta$ and $\fs$:
\begin{equation*}
  \left\{
    \begin{array}{ll}
   \theta_t(\al,t)=\frac{1}{4\fs^3}\mathcal H(\theta_{\al\al\al})(\al,t)+g_\theta(\al,t),& \theta(\al,0)=\theta_0(\al),\\
 \fs_t(t)=-\int^\pi_{-\pi}\theta_\alpha \vu\cdot\vn d\al,& \fs(0)=\fs_0,   
    \end{array}
  \right.
\end{equation*}
where $g_\theta$ is the error term. Instead of \eqref{eq:barct}, one can choose
\begin{align*}
	\overline \cT(t)=\frac{1}{4\pi^2}\int^\pi_{-\pi} \vu\cdot\vn\theta_\al d\al\int^\pi_{-\pi}\al\theta_\al d\al- \frac{1}{2\pi}\int^\pi_{-\pi}\theta_\al\int^\al_{-\pi} \vu(\al')\cdot\vn(\al')\theta_\al(\al')d\al' d\al.
\end{align*}
Thus we always have $\int_\bbT g_\theta(\al,t)d\al=0$.

One can study the case with none but surface tension ($c_1=0,c_3=0,\lambda>0$) in the same way to the case ($c_3=0$), and Lemma \ref{lem:con-S-th} will be used as well. We refer the readers to \cite{PS1,PS2,MP} for more results about this kind of problem. 
\subsection{Case with $B>0$ and $\fs_{op}>0$}
When the optimal perimeter $\fs_{op}=0$, the string has zero resting length in the stretching force-free state, which is mentioned in the last section of \cite{LT}. For the case with $B>0$ and $\fs_{op}>0$, the free energy is 
\begin{align*}
	E=\frac{\mathfrak c_1}{2}\int_\bbT (\kappa-B)^2|\vX_s|ds+\frac{\mathfrak c_3}{2}\int_\bbT(|\vX_s|-\fs_{op})^2ds+\lambda \int_\bbT|\vX_s|ds,
\end{align*}
and the elastic force applied on the string has the following formulation:
\begin{align}\label{eq:force-ph}	
\vF=&\underbrace{(1+y_s)\lambda \theta_\al\vn}_{\text{surface tension}}+\underbrace{\mathfrak c_1(1+y_s)\big(B\theta_\al-\frac{1}{2}B^2\theta_\al- \frac{1}{\fs^2}\theta_{\al\al\al}-\frac{1}{2\fs^2}\theta_\al^3\big)\vn}_{\text{bending force}}\\
&+\underbrace{\mathfrak c_3\big(\fs y_{ss}\vt+\frac{\fs(1+y_s)- \fs_{op}}{\fs(1+y_s)}\fs(1+y_s)^2\theta_{\al}\vn\big)}_{\text{stretching force}}.\nonumber
\end{align}
One can see that the main terms of \eqref{eq:force-or} and \eqref{eq:force-ph} are the same in both normal and tangential directions, and the difference between \eqref{eq:force-or} and \eqref{eq:force-ph} are all lower order terms. As a result, the contour dynamic system for the case with $B>0$ and $\fs_{op}>0$ is still a parabolic system, and the local well-posedness can be establish in the same way. 

Let
\begin{align*}
	\fs_*=\Big(-\frac{1}{2\pi}\int_\bbT\vX_0\cdot\vX_{0s}^\perp ds\Big)^{1/2}.
\end{align*}
When $B> \frac{1}{\fs_*}$ or $\fs_{op}>\fs_*$, the equilibrium state may no longer be an evenly parametrized circle, and the global results may be different to the case with $(B=0,\fs_{op}=0)$. It is interesting to study the equilibrium states of strings for arbitrarily chosen $B$ and $\fs_{op}$, and there are some related works about this kind of problems \cite{HO,NOO}.

\begin{appendix}
\section{A Priori Estimates Involving $\mathcal L$ and $\mathfrak L$}
Recalling that
\begin{align*}
	\mathcal L(\theta)(\al,t)=\frac{1}{4\fs^3(t)}\mathcal H(\theta_{\al\al\al})(\al,t),\ \mathfrak L(y_s)(s,t)=-\frac{1}{4}\mathfrak h(y_{ss})(s,t),
\end{align*}
we have following estimates under the assumption that $c\ge \fs(t)\ge c_0>0$ for $t\in[0,T]$. 
\begin{lemma}
	For $\forall \phi\in H^\gamma(\bbT), w\in h^\gamma(\bbT)$ with arbitrary $\gamma\in\mathbb R_+$, it holds that
	\begin{equation*}
	  	\begin{array}{ll}
	  		(1)||e^{t \mathcal{L}} \phi||_{ \dot{H}^{\gamma}} \le e^{-t/(4c^3)}||\phi||_{\dot{H}^{\gamma}},& ||e^{t \mathfrak{L}} w||_{ \dot{h}^{\gamma}} \le e^{-t/4}||w||_{\dot{h}^{\gamma}};\\
	  		(2)e^{t \mathcal{L}} \phi \in C\left([0,+\infty) ; H^{\gamma}(\mathbb{T})\right),& e^{t \mathfrak{L}} w \in C\left([0,+\infty) ; h^{\gamma}(\mathbb{T})\right);\\
	  		(3)e^{t \mathcal{L}} \phi \rightarrow \phi \text { in } H^{\gamma}(\mathbb{T}) \text { as } t \rightarrow 0^{+},& e^{t \mathfrak{L}} w \rightarrow w \text { in } h^{\gamma}(\mathbb{T}) \text { as } t \rightarrow 0^{+}.
	  	\end{array}
	\end{equation*}
\end{lemma}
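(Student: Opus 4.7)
The plan is to prove all three statements by moving to the Fourier side, since both $\mathcal{L}$ and $\mathfrak{L}$ act diagonally on the Fourier basis of $\bbT$. First I would identify the symbols. Since the Hilbert transforms satisfy $\widehat{\mathcal{H}f}(k) = -i\,\mathrm{sgn}(k)\hat f(k)$ and $\widehat{\mathfrak h f}(k) = -i\,\mathrm{sgn}(k)\hat f(k)$, a direct computation gives
\begin{equation*}
\widehat{\mathcal{L}\phi}(k,t) = -\frac{|k|^3}{4\mathfrak s^3(t)}\hat\phi(k),\qquad \widehat{\mathfrak L w}(k) = -\frac{|k|}{4}\hat w(k),
\end{equation*}
using that $\mathfrak L$ is effectively applied to zero-mean functions (recall $y_s$ has zero mean because $y$ is $2\pi$-periodic). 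Both symbols are real and non-positive, so the operators generate contraction semigroups in $\dot H^\gamma$ and $\dot h^\gamma$ respectively. Note that the operators $\mathcal{L}(t_1), \mathcal{L}(t_2)$ commute for different times because they differ only by the scalar $\mathfrak s^{-3}$, so I can define $e^{t\mathcal{L}}$ unambiguously as the evolution operator with integrated symbol.

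Next I would solve the evolution equations $\partial_t U = \mathcal{L}U$ and $\partial_t V = \mathfrak{L}V$ explicitly on Fourier side to get
\begin{equation*}
\hat U(k,t) = \exp\!\Big(-|k|^3\!\int_0^t \tfrac{1}{4\mathfrak s^3(s)}\,ds\Big)\hat\phi(k),\qquad \hat V(k,t) = e^{-|k|t/4}\hat w(k).
\end{equation*}
For (1), I use the bound $\mathfrak s(s) \le c$ together with $|k| \ge 1$ (for all nonzero modes) to obtain
\begin{equation*}
|\hat U(k,t)|^2 \le e^{-|k|^3 t/(2c^3)}|\hat\phi(k)|^2 \le e^{-t/(2c^3)}|\hat\phi(k)|^2,
\end{equation*}
and analogously $|\hat V(k,t)|^2 \le e^{-t/2}|\hat w(k)|^2$. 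Multiplying by $|k|^{2\gamma}$, summing over $k\ne 0$, and taking square roots yields the claimed decay of the $\dot H^\gamma$ and $\dot h^\gamma$ norms.

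For (2) and (3), I would use the dominated convergence theorem on the Fourier series. For any $t_0 \ge 0$ and $k \ne 0$, continuity of the symbols in $t$ gives $\hat U(k,t) \to \hat U(k,t_0)$ pointwise as $t \to t_0$; the integrands $|k|^{2\gamma}|\hat U(k,t)-\hat U(k,t_0)|^2$ are dominated by the summable envelope $4|k|^{2\gamma}|\hat\phi(k)|^2$, so the series converges to zero. This gives continuity at $t_0 > 0$ and, with $t_0 = 0$, strong continuity at $0^+$ in $\dot H^\gamma$; the zero-mode is untouched by the semigroup, so continuity upgrades from the homogeneous to the inhomogeneous norm automatically. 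The same argument handles $e^{t\mathfrak L}$ in $h^\gamma$. There is no real obstacle here since everything diagonalizes; the only minor subtlety worth flagging is the (harmless) time-dependence of $\mathfrak s(t)$ in the symbol of $\mathcal L$, which is handled by writing the exponent as a time-integral and using the uniform bound $\mathfrak s \le c$.
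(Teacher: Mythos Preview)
Your proof is correct and follows the standard Fourier-side argument; the paper in fact states this lemma in the appendix without proof, treating it as a routine auxiliary result, so there is no alternative argument to compare against. Your handling of the time-dependent coefficient $\mathfrak s(t)$ via the integrated exponent is the right way to make sense of $e^{t\mathcal L}$, and the dominated convergence argument for (2)--(3) is exactly what is needed.
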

\begin{lemma}\label{lem:ex}
	Assume $T>0$, let $g\in L^2_TH^\gamma(\bbT)$. The model equation
	\begin{align*}
		(\theta(\al,t)-\al)_t=\mathcal{L}(\theta)(\al,t)+g(\al,t),\  \theta(\al,0)=\theta_0(\al),\ \al\in\bbT,t\ge0,
	\end{align*}
	admits a unique solution $\theta-\al\in L^\infty_{T}H^{\gamma+\frac{3}{2}}(\bbT)\cap L^2_{T} H^{\gamma+3}(\bbT)$ with $\theta_t\in L^2_{T} H^\gamma(\bbT)$. Furthermore, this solution satisfies
\begin{align*}
	||\theta-\al||_{\dot{H}^{\gamma+3/2}}^2(t) \le e^{-t / (4c^3)}||\theta_{0}-\al||_{\dot{H}^{\gamma+3 / 2}}^2+4c^{3}||g||_{L_t^{2} \dot H^{\gamma}}^2,\ \forall t\in[0,T],\\
	\frac{d}{dt}||\theta-\al||_{\dot{H}^{\gamma+3/2}}^2(t)\le-\frac{1}{4c^3}||\theta-\al||_{\dot{H}^{\gamma+3}}^2(t)+4c^3||g||_{\dot H^\gamma}^{2}(t),\ \forall t\in[0,T].
\end{align*}
Hence,
\begin{align*}
	||\theta-\al||_{L_t^{\infty} \dot{H}^{\gamma+3 / 2} \cap L_t^{2} \dot{H}^{\gamma+3}} \le (1+2c^{3/2})||\theta_{0}-\al||_{\dot{H}^{\gamma+3 / 2}}+C||g||_{L_t^{2} \dot{H}^{\gamma}(\mathbb{T})}.
\end{align*}
It also holds that
\begin{align*}
	||(e^{t \mathcal{L}} \theta_{0})-\al||&_{L_t^{\infty} \dot{H}^{\gamma+3 / 2} \cap L_t^{2} \dot{H}^{\gamma+3}} \le (1+2c^{3/2})||\theta_{0}-\al||_{\dot{H}^{\gamma+3 / 2}}, \\ 
	||\partial_{t} \theta||_{L_t^{2} \dot{H}^{\gamma}} \le& \frac{c^{3/2}}{2c_0^3}||\theta_{0}-\al||_{\dot{H}^{\gamma+3 / 2}}+C||g||_{L_t^{2} \dot{H}^{\gamma}}.
\end{align*}
\end{lemma}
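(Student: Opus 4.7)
\textbf{Proof proposal for Lemma \ref{lem:ex}.} The plan is to treat the problem as a linear parabolic equation of order three with a time-dependent coefficient, and to reduce everything to a mode-by-mode analysis in Fourier. Setting $u=\theta-\al$, which satisfies $u_t=\frac{1}{4\fs^3(t)}\mathcal H(u_{\al\al\al})+g$ with $u(\cdot,0)=\theta_0-\al$, I first record the diagonalization
\begin{align*}
\mathcal H(\partial_\al^3 e^{ik\al})=-|k|^3 e^{ik\al},\qquad k\in\Z,
\end{align*}
so that $\mathcal L$ acts on the $k$-th Fourier coefficient as multiplication by $-|k|^3/(4\fs^3(t))$. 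In particular the mean is in the kernel of $\mathcal L$ and all non-zero modes experience a strictly negative, time-dependent multiplier bounded above by $-|k|^3/(4c^3)$ by virtue of the hypothesis $\fs(t)\le c$. This immediately verifies the three properties of $e^{t\mathcal L}$ listed just before the lemma by reading them off the Fourier representation.

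Next I would build the solution directly by solving each mode ODE. For $k\neq 0$ the scalar problem
\begin{align*}
\dot{\hat u}_k(t)=-\frac{|k|^3}{4\fs^3(t)}\hat u_k(t)+\hat g_k(t)
\end{align*}
has the explicit Duhamel solution with integrating factor $\exp\bigl(\int_0^t|k|^3/(4\fs^3(\tau))\,d\tau\bigr)$, and for $k=0$ one just integrates $g$. Summing up in Parseval, the hypothesis $g\in L^2_TH^\gamma$ together with the dominated convergence / Fubini argument gives $u\in L^\infty_T H^{\gamma+3/2}\cap L^2_T H^{\gamma+3}$ with $u_t\in L^2_T H^\gamma$. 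Uniqueness is either read off the explicit formula or reproved by the energy estimate below applied to the difference of two solutions.

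The quantitative bounds come from a single energy identity. Pairing the equation with $(-\partial_\al^2)^{\gamma+3/2}u$ in $L^2$ and using the Fourier-side positivity
\begin{align*}
\bigl\langle -\mathcal L(\theta),\,u\bigr\rangle_{\dot H^{\gamma+3/2}}=\frac{1}{4\fs^3(t)}\,\|u\|_{\dot H^{\gamma+3}}^2\ge\frac{1}{4c^3}\,\|u\|_{\dot H^{\gamma+3}}^2,
\end{align*}
together with Cauchy--Schwarz and Young on the $g$-term in the form $|\langle g,u\rangle_{\dot H^{\gamma+3/2}}|\le \frac{1}{8c^3}\|u\|_{\dot H^{\gamma+3}}^2+2c^3\|g\|_{\dot H^{\gamma}}^2$, I obtain
\begin{align*}
\frac{d}{dt}\|u\|_{\dot H^{\gamma+3/2}}^2\le -\frac{1}{4c^3}\|u\|_{\dot H^{\gamma+3}}^2+4c^3\|g\|_{\dot H^\gamma}^2.
\end{align*}
Since $\|u\|_{\dot H^{\gamma+3/2}}\le\|u\|_{\dot H^{\gamma+3}}$ on mean-zero/homogeneous modes, Gronwall produces the pointwise bound $\|u\|_{\dot H^{\gamma+3/2}}^2(t)\le e^{-t/(4c^3)}\|\theta_0-\al\|_{\dot H^{\gamma+3/2}}^2+4c^3\|g\|_{L^2_t\dot H^\gamma}^2$, while integrating in time yields the $L^2_t\dot H^{\gamma+3}$ bound and therefore the combined $L^\infty_t\dot H^{\gamma+3/2}\cap L^2_t\dot H^{\gamma+3}$ estimate. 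Specializing to $g\equiv 0$ gives the decay of $e^{t\mathcal L}\theta_0$.

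Finally the control of $\partial_t\theta$ follows by reading it off the equation: $\|\partial_t\theta\|_{\dot H^\gamma}\le \frac{1}{4c_0^3}\|\theta-\al\|_{\dot H^{\gamma+3}}+\|g\|_{\dot H^\gamma}$, and squaring and integrating in time together with the previously obtained $L^2_t\dot H^{\gamma+3}$ bound yields the stated estimate. There is no substantive obstacle here; the only mild subtlety is keeping track of the time-dependence of $\fs(t)$, which is handled by replacing $\fs$ by its upper bound $c$ in the dissipation and by $c_0$ in the ellipticity bound for $\mathcal L$ applied to a fixed $H^{\gamma+3}$ function. The continuity in time of the solution at $t=0$ in the strong norm $H^{\gamma+3/2}$, needed to conclude $\theta-\al\in C([0,T];H^{\gamma+3/2})$, follows from the semigroup continuity stated in part (2)-(3) of the preceding lemma applied to the homogeneous part together with the continuity of the Duhamel integral $\int_0^t e^{(t-\tau)\mathcal L}g(\tau)d\tau$ in $H^{\gamma+3/2}$ by a standard approximation argument.
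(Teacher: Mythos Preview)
Your proof is correct and follows the standard energy method for linear parabolic equations with a Fourier-diagonal principal part. Note, however, that the paper does not actually supply a proof for this lemma: it is stated in Appendix~A as a standard a priori estimate for the semigroup $e^{t\mathcal L}$, without any argument. So there is no ``paper's own proof'' to compare against; your write-up simply fills in the omitted details, and does so in exactly the way one would expect (Fourier diagonalization of $\mathcal L$, energy identity at the $\dot H^{\gamma+3/2}$ level, Young's inequality on the forcing, Gronwall, and reading $\partial_t\theta$ back off the equation using the lower bound $\fs\ge c_0$).
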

\begin{lemma}\label{lem:exy}
	Assume $T>0$, let $g\in L^2_Th^\gamma(\bbT)$. The model equation
	\begin{align*}
		y_{st}=\mathfrak{L}(y_s)(s,t)+g(s,t),\  y_s(s,0)=y_{0s}(s),\ s\in\bbT,t\ge0,
	\end{align*}
	admits a unique solution $y_s\in L^\infty_{T}h^{\gamma+\frac{1}{2}}(\bbT)\cap L^2_{T} h^{\gamma+1}(\bbT)$ with $y_{st}\in L^2_{T} h^\gamma(\bbT)$. Furthermore, this solution satisfies
\begin{align*}
	||y_s||_{\dot{h}^{\gamma+1/2}}^2(t) \le e^{-t/4}||y_{0s}||_{\dot{h}^{\gamma+1/2}}^2+4||g||_{L_{[0,t]}^{2}{h}^{\gamma}}^2,\ \forall t\in[0,T],\\
	\frac{d}{dt}||y_s||_{\dot{h}^{\gamma+1/2}}^{2}(t)\le-\frac{1}{4}||y_s||_{\dot{h}^{\gamma+1}}^2(t)+4||g||_{\dot h^\gamma}^{2}(t),\ \forall t\in[0,T].
\end{align*}
It also holds that,
\begin{align*}
	||y_s||_{L_t^{\infty} \dot{h}^{\gamma+1/2} \cap L_t^{2} \dot{h}^{\gamma+1}} \le 3||y_{0s}||_{\dot{H}^{\gamma+3 / 2}}+6||g||_{L_t^{2} \dot{h}^{\gamma}},\\
	\begin{array}{l}
	||e^{t \mathfrak{L}} y_{0s}||_{L_t^{\infty} \dot{h}^{\gamma+1/2} \cap L_t^{2} \dot{h}^{\gamma+1}} \le 3||y_{0s}||_{\dot{h}^{\gamma+1/2}}, \quad||y_{st}||_{L_t^{2} \dot{h}^{\gamma}} \le \frac{1}{2}||y_{0s}||_{\dot{h}^{\gamma+1/2}}+||g||_{L_t^{2} \dot{h}^{\gamma}}.
	\end{array}
\end{align*}
\end{lemma}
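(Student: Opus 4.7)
The plan is to mirror the argument behind Lemma \ref{lem:ex}, only now for the operator $\mathfrak L = -\tfrac14 \mathfrak h \partial_s^2$, which in Fourier variables is the multiplier $-|k|/4$ (since the Hilbert transform has symbol $-i\operatorname{sgn}(k)$ and $\partial_s^2$ has symbol $-k^2$). This identification makes $e^{t\mathfrak L}$ a convolution semigroup, so existence and uniqueness of $y_s \in L^\infty_T \dot h^{\gamma+1/2} \cap L^2_T \dot h^{\gamma+1}$ with $y_{st} \in L^2_T \dot h^\gamma$ follow immediately by writing $y_s(s,t) = e^{t\mathfrak L} y_{0s}(s) + \int_0^t e^{(t-\tau)\mathfrak L} g(s,\tau)\,d\tau$ and noting that $\dot h^\gamma$ is preserved. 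The claim $\|e^{t\mathfrak L} w\|_{\dot h^\gamma} \le e^{-t/4}\|w\|_{\dot h^\gamma}$ in the preceding lemma is a direct Plancherel computation using $|k|\ge 1$ for mean-zero functions on $\bbT$.

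For the dissipative differential identity I would pair the equation with $y_s$ in $\dot h^{\gamma+1/2}$. Using Plancherel and the symbol computation above,
\begin{align*}
\langle y_s, \mathfrak L y_s\rangle_{\dot h^{\gamma+1/2}} = -\frac14 \sum_{k\neq 0} |k|^{2\gamma+2}|\widehat{y_s}(k)|^2 = -\frac14 \|y_s\|^2_{\dot h^{\gamma+1}}.
\end{align*}
The forcing term is split as $\sum_{k\neq 0} |k|^{\gamma+1}\widehat{y_s}(k)\cdot |k|^\gamma \overline{\widehat g(k)}$, and Cauchy--Schwarz followed by Young's inequality with $\varepsilon = 1/4$ gives
\begin{align*}
2|\langle y_s, g\rangle_{\dot h^{\gamma+1/2}}| \le 2\|y_s\|_{\dot h^{\gamma+1}}\|g\|_{\dot h^\gamma} \le \frac14 \|y_s\|^2_{\dot h^{\gamma+1}} + 4\|g\|^2_{\dot h^\gamma}.
\end{align*}
Combining yields exactly the stated differential inequality $\tfrac{d}{dt}\|y_s\|^2_{\dot h^{\gamma+1/2}} \le -\tfrac14 \|y_s\|^2_{\dot h^{\gamma+1}} + 4\|g\|^2_{\dot h^\gamma}$. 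Since $\|y_s\|^2_{\dot h^{\gamma+1}} \ge \|y_s\|^2_{\dot h^{\gamma+1/2}}$ on $\bbT$ (again $|k|\ge 1$), Gronwall's inequality gives the pointwise-in-$t$ bound $\|y_s\|^2_{\dot h^{\gamma+1/2}}(t) \le e^{-t/4}\|y_{0s}\|^2_{\dot h^{\gamma+1/2}} + 4\|g\|^2_{L^2_{[0,t]}h^\gamma}$.

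Integrating the dissipative inequality from $0$ to $t$ gives simultaneously $\|y_s\|^2_{L^\infty_t\dot h^{\gamma+1/2}} \le \|y_{0s}\|^2_{\dot h^{\gamma+1/2}} + 4\|g\|^2_{L^2_t\dot h^\gamma}$ and $\tfrac14\|y_s\|^2_{L^2_t\dot h^{\gamma+1}} \le \|y_{0s}\|^2_{\dot h^{\gamma+1/2}} + 4\|g\|^2_{L^2_t\dot h^\gamma}$, and summing the two square-root bounds yields the desired $\|y_s\|_{L^\infty_t\dot h^{\gamma+1/2}\cap L^2_t\dot h^{\gamma+1}} \le 3\|y_{0s}\|_{\dot h^{\gamma+1/2}} + 6\|g\|_{L^2_t\dot h^\gamma}$. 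The semigroup bound with $g\equiv 0$ is the same computation. Finally, for the time derivative one uses the equation directly: since $\mathfrak h$ is an $L^2$-isometry on mean-zero data, $\|\mathfrak L y_s\|_{\dot h^\gamma} = \tfrac14\|\mathfrak h(y_{ss})\|_{\dot h^\gamma} \le \tfrac14 \|y_s\|_{\dot h^{\gamma+1}}$, so $\|y_{st}\|_{L^2_t\dot h^\gamma} \le \tfrac14 \|y_s\|_{L^2_t\dot h^{\gamma+1}} + \|g\|_{L^2_t\dot h^\gamma}$, and substituting the previous $L^2_t\dot h^{\gamma+1}$ bound yields the stated estimate.

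There is no real obstacle: the whole statement is linear constant-coefficient and can be proven exactly in Fourier. The only mild care is (i) keeping track of the sharp constants $\tfrac14$ and $4$ by choosing the correct Young weight, and (ii) observing that on $\bbT$ the homogeneous norms are controlled by higher homogeneous norms via $|k|\ge 1$, which is what makes the pure $e^{-t/4}$ decay and the absorption of $\|y_s\|^2_{\dot h^{\gamma+1/2}}$ into $\|y_s\|^2_{\dot h^{\gamma+1}}$ legitimate.
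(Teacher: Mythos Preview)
Your approach is correct and is the standard energy/Fourier argument; the paper states this lemma in the appendix without proof, so there is nothing to compare against beyond confirming that your computation delivers the claimed constants.

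Two small points. First, a notational slip: since the unknown here is $y_s$ and $y_{ss}=\partial_s(y_s)$, the operator is $\mathfrak L=-\tfrac14\mathfrak h\,\partial_s$ acting on $y_s$, not $-\tfrac14\mathfrak h\,\partial_s^2$; your stated justification ``$\partial_s^2$ has symbol $-k^2$'' would produce $-\tfrac{i}{4}k|k|$, not $-|k|/4$. The symbol $-|k|/4$ you actually use is the right one, and the rest of the computation is unaffected. Second, your triangle-inequality bound $\|y_{st}\|_{L^2_t\dot h^\gamma}\le\tfrac14\|y_s\|_{L^2_t\dot h^{\gamma+1}}+\|g\|_{L^2_t\dot h^\gamma}$ yields $\tfrac12\|y_{0s}\|_{\dot h^{\gamma+1/2}}+2\|g\|_{L^2_t\dot h^\gamma}$, not the sharper $\tfrac12\|y_{0s}\|_{\dot h^{\gamma+1/2}}+\|g\|_{L^2_t\dot h^\gamma}$ stated in the lemma. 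To hit that constant, test the equation against $y_{st}$ in $\dot h^\gamma$: the term $\langle\mathfrak L y_s,y_{st}\rangle_{\dot h^\gamma}=-\tfrac18\tfrac{d}{dt}\|y_s\|^2_{\dot h^{\gamma+1/2}}$ is a total derivative, so integrating in time and absorbing $\langle g,y_{st}\rangle$ by Young gives $\|y_{st}\|^2_{L^2_t\dot h^\gamma}\le\tfrac14\|y_{0s}\|^2_{\dot h^{\gamma+1/2}}+\|g\|^2_{L^2_t\dot h^\gamma}$ directly.
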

\section{Quantitative Isoperimetric Inequalities}
\begin{lemma}\label{lem:iso-gage}(Gage’s isoperimetric inequality \cite{Ga})
	If $\vz$ is a closed, convex, $C^2$ string which satisfies \eqref{con:area1}, it holds that
	\begin{align*}
		\frac{1}{2}\int_\bbT\kappa^2d\al\ge\pi,
	\end{align*}
	where $\kappa$ is the curvature of the string.
\end{lemma}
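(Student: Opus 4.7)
The statement is the classical Gage isoperimetric inequality specialized to a curve of enclosed area $\pi$ (via \eqref{con:area1}), and my plan is to derive it from Gage's theorem in its standard ``$\int\kappa^{2}ds\ge \pi L/A$'' form and then convert to the paper's arc-length coordinate. The first step is a change of variables: since $\vz$ is closed, convex and $C^{2}$, reparametrize by the outward-normal angle $\vartheta\in[0,2\pi]$ and introduce the support function $h(\vartheta)$ of the convex domain $\Omega_{\vz}$ with origin moved to an interior point (translation does not affect $\kappa$). Then $h>0$, $\rho(\vartheta)\eqdef h(\vartheta)+h''(\vartheta)$ is the radius of curvature (so $\kappa=1/\rho$), $ds=\rho\,d\vartheta$, and the standard identities
\begin{equation*}
L=\int_{0}^{2\pi}h\,d\vartheta,\qquad 2A=\int_{0}^{2\pi}h\rho\,d\vartheta,\qquad \int_{0}^{L}\kappa^{2}\,ds=\int_{0}^{2\pi}\frac{d\vartheta}{\rho}
\end{equation*}
hold; here $A=\pi$ by \eqref{con:area1} and $L=2\pi\fs$ by the arc-length normalization.

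The key nonlinear step is Cauchy--Schwarz applied to the factorization $h=(h\sqrt{\rho})\cdot \rho^{-1/2}$, which gives
\begin{equation*}
L^{2}=\Bigl(\int_{0}^{2\pi}h\,d\vartheta\Bigr)^{2}\le \Bigl(\int_{0}^{2\pi}h^{2}\rho\,d\vartheta\Bigr)\Bigl(\int_{0}^{2\pi}\tfrac{1}{\rho}\,d\vartheta\Bigr)=\Bigl(\int_{0}^{2\pi}h^{2}\rho\,d\vartheta\Bigr)\cdot\!\int_{0}^{L}\kappa^{2}\,ds.
\end{equation*}
Therefore, modulo Gage's key estimate
\begin{equation*}
(\ast)\qquad \int_{0}^{2\pi}h^{2}\rho\,d\vartheta\;\le\;\frac{L\,A}{\pi},
\end{equation*}
plugging $(\ast)$ into the Cauchy--Schwarz bound yields Gage's inequality $\int_{0}^{L}\kappa^{2}\,ds\ge \pi L/A$. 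With $A=\pi$ this becomes $\int_{0}^{L}\kappa^{2}ds\ge L=2\pi\fs$, and converting through $ds=\fs\,d\al$ gives
\begin{equation*}
\int_{\bbT}\kappa^{2}\,d\al=\frac{1}{\fs}\int_{0}^{L}\kappa^{2}\,ds\;\ge\;\frac{L}{\fs}=2\pi,
\end{equation*}
which is exactly $\tfrac{1}{2}\int_{\bbT}\kappa^{2}d\al\ge \pi$.

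The real work is the sharp estimate $(\ast)$ (equality is attained when $\vz$ is a circle about the origin, i.e., $h$ is constant). The standard way to produce it is to translate so the origin is the Steiner point, so that $\int_{0}^{2\pi}h\cos\vartheta\,d\vartheta=\int_{0}^{2\pi}h\sin\vartheta\,d\vartheta=0$; expand $h=a_{0}+\sum_{k\ge 2}(a_{k}\cos k\vartheta+b_{k}\sin k\vartheta)$, use $L=2\pi a_{0}$ and $2A=2\pi a_{0}^{2}-\pi\sum_{k\ge 2}(k^{2}-1)(a_{k}^{2}+b_{k}^{2})$, together with the convexity constraint $\rho=a_{0}-\sum_{k\ge 2}(k^{2}-1)(a_{k}\cos k\vartheta+b_{k}\sin k\vartheta)\ge 0$, to compare $\int h^{2}\rho\,d\vartheta$ to $LA/\pi$ mode by mode. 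This Fourier/ODE step (first carried out in \cite{Ga}) is the only delicate part; everything else in the proof is a direct change of variables. I would cite Gage's original argument for $(\ast)$ and present the Cauchy--Schwarz reduction and the coordinate conversion in full.
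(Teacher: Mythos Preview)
The paper does not prove this lemma at all; it merely states it as a citation of Gage's result \cite{Ga}. Your outline is a correct reconstruction of the standard argument: reduce $\int\kappa^{2}\,ds\ge\pi L/A$ to the support-function estimate $\int_{0}^{2\pi}h^{2}\rho\,d\vartheta\le LA/\pi$ via the Cauchy--Schwarz factorization $h=(h\sqrt{\rho})\cdot\rho^{-1/2}$, and then invoke (or rederive) Gage's estimate for the latter. Your coordinate conversion $ds=\fs\,d\al$, $L=2\pi\fs$, $A=\pi$ is exactly what is needed to land on $\tfrac12\int_{\bbT}\kappa^{2}\,d\al\ge\pi$.

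One remark on your sketch of $(\ast)$: the Fourier expansion after translating to the Steiner point gives you the identities for $L$ and $A$ cleanly, but the inequality $\int h^{2}\rho\,d\vartheta\le LA/\pi$ is cubic in the Fourier coefficients of $h$, so a pure ``mode by mode'' comparison is not quite how Gage closed it; his argument is a variational/integral one using convexity ($\rho\ge0$) more directly. Since you already plan to cite \cite{Ga} for this step, that is only a cosmetic point and does not affect the validity of your proof.
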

\begin{lemma}\label{lem:dif-s}
	Given two closed string
	\begin{align*}
		\vz_1(\al)=\fs_1\int_{-\pi}^\al\big(\cos(\theta_1(\al')),\sin(\theta_1(\al'))\big)d\al',\ \vz_2(\al)=\fs_2\int_{-\pi}^\al\big(\cos(\theta_2(\al')),\sin(\theta_2(\al'))\big)d\al',
	\end{align*}
	if the areas enclosed by $\vz_1$ and $\vz_2$ are both $\mathfrak a$, it holds that
	\begin{align*}
		\frac{\fs_1^2-\fs_2^2}{\fs_1^2\fs_2^2}\le C||\theta_1-\theta_2||_{L^2},
	\end{align*}
	where $C$ is a constant depends only on $\mathfrak a$.
\end{lemma}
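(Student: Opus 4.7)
The plan is to express the enclosed area of a closed string directly in terms of its tangent angle function and perimeter parameter, then compare the two expressions. Using the reconstruction formula for $\vz_i$ together with $\vz_{i\al}^\perp = \fs_i(-\sin\theta_i,\cos\theta_i)$, the standard signed-area formula $\mathfrak a = -\frac{1}{2}\int_{-\pi}^\pi \vz_i\cdot\vz_{i\al}^\perp\,d\al$ becomes, after applying the identity $\cos A\sin B - \sin A\cos B = \sin(B-A)$,
\begin{align*}
\mathfrak a = -\frac{\fs_i^2}{2}\int_{-\pi}^\pi\!\!\int_{-\pi}^\al \sin\bigl(\theta_i(\al)-\theta_i(\al')\bigr)\,d\al'\,d\al,\qquad i=1,2.
\end{align*}

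Next I would subtract the two identities and divide by $\fs_1^2\fs_2^2$ to get
\begin{align*}
\mathfrak a\,\frac{\fs_1^2-\fs_2^2}{\fs_1^2\fs_2^2} = \frac{1}{2}\int_{-\pi}^\pi\!\!\int_{-\pi}^\al \Bigl[\sin\bigl(\theta_1(\al)-\theta_1(\al')\bigr)-\sin\bigl(\theta_2(\al)-\theta_2(\al')\bigr)\Bigr]\,d\al'\,d\al.
\end{align*}
Applying the Lipschitz bound $|\sin A - \sin B|\le |A-B|$ to the integrand with $\phi\eqdef \theta_1-\theta_2$, the right-hand side is controlled by $\int\!\!\int (|\phi(\al)|+|\phi(\al')|)\,d\al'\,d\al \le C\|\phi\|_{L^1(\bbT)}$, and Cauchy--Schwarz upgrades this to $C\|\theta_1-\theta_2\|_{L^2(\bbT)}$. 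Dividing by $\mathfrak a$ produces the stated inequality with $C=C(\mathfrak a)$.

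There is no real obstacle here: the only delicate points are (i) making sure the signed-area formula is applied correctly (note that closedness of each string is implicitly used so that $\int_{-\pi}^\pi(\cos\theta_i,\sin\theta_i)\,d\al=0$, which guarantees $\vz_i$ is $2\pi$-periodic and therefore Green's theorem applies), and (ii) verifying the sign convention so that the two area formulas agree (both give the same signed area, which equals $\mathfrak a$ under the standing orientation). Everything else is a direct Lipschitz estimate.
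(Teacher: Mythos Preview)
Your proof is correct and follows essentially the same route as the paper: both express $\mathfrak a/\fs_i^2$ as a double integral in $\theta_i$, subtract, and control the difference by the Lipschitz property of sine, the only cosmetic difference being that the paper keeps $\cos\theta_i(\al')\sin\theta_i(\al)$ and $\sin\theta_i(\al')\cos\theta_i(\al)$ separate (using integration by parts and the closedness condition to reduce to a single term) rather than combining them via the addition formula. Your area formula carries a harmless sign slip (it should read $+\tfrac{\fs_i^2}{2}$ under the paper's conventions), which you already flag in point~(ii) and which does not affect the final bound.
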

\begin{proof}
	From the assumption we have
	\begin{align*}
		\mathfrak a=&-\frac{1}{2}\int_\bbT\vz_1\cdot\vz_{1\al}^\perp d\al=-\frac{1}{2}\int_\bbT\vz_2\cdot\vz_{2\al}^\perp d\al\\
		=&-\frac{\fs_1^2}{2}\int^{\pi}_{-\pi}\Big(-\int^{\al}_{-\pi}\cos(\theta_1(\al'))d\al'\sin(\theta_1(\al))+\int^{\al}_{-\pi}\sin(\theta_1(\al'))d\al'\cos(\theta_1(\al)) \Big)d\al\\
		=&-\frac{\fs_2^2}{2}\int^{\pi}_{-\pi}\Big(-\int^{\al}_{-\pi}\cos(\theta_2(\al'))d\al'\sin(\theta_2(\al))+\int^{\al}_{-\pi}\sin(\theta_2(\al'))d\al'\cos(\theta_2(\al)) \Big)d\al\\
		=&\fs_1^2\int^{\pi}_{-\pi}\int^{\al}_{-\pi}\cos(\theta_1(\al'))d\al'\sin(\theta_1(\al))d\al=\fs_2^2\int^{\pi}_{-\pi}\int^{\al}_{-\pi}\cos(\theta_2(\al'))d\al'\sin(\theta_2(\al))d\al.
	\end{align*}
	It follows that
	\begin{align*}
		\frac{\fs_1^2-\fs_2^2}{\fs_1^2\fs_2^2}\mathfrak a=&\int^{\pi}_{-\pi}\int^{\al}_{-\pi}\big(\cos(\theta_2(\al'))-\cos(\theta_1(\al'))\big)d\al'\sin(\theta_2(\al))d\al\\
		&+\int^{\pi}_{-\pi}\int^{\al}_{-\pi}\cos(\theta_1(\al'))d\al'\big(\sin(\theta_2(\al))-\sin(\theta_1(\al))\big) d\al.
	\end{align*}
	Then, it is easy to show the result of this lemma.
\end{proof}
\begin{lemma}\label{lem:con-S-th}(Fuglede’s isoperimetric inequality)
	Let $(\theta,\fs)$ be the tangent angle function and perimeter of a closed string which satisfies \eqref{con:area1}, then there exists $\e>0$ such that if in addition $||\theta-\al||_{\dot H^1}\le \e$, it holds that
	\begin{align*}
		\frac{1}{C}(\fs-1)\le||\theta-\bar\theta-\al||_{L^2}^2\le C(\fs-1),
	\end{align*}
	where $C$ is a constant.
\end{lemma}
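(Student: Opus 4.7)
The plan is to perform a careful second-order Taylor expansion of both the closed-curve conditions and the area functional in the small quantity $D := \theta - \bar\theta - \al$, and then to diagonalize the resulting quadratic form in Fourier modes. By applying a rigid rotation in the plane I may WLOG assume $\bar\theta = 0$, so that $\theta(\al) = \al + D(\al)$ with $\int_\bbT D\,d\al = 0$ and, by Sobolev embedding on $\bbT$, $\|D\|_{L^\infty}\lesssim\e$. First I would expand the closed-curve constraints $\int_\bbT\cos\theta\,d\al = \int_\bbT\sin\theta\,d\al = 0$ to second order in $D$; this forces
\begin{align*}
a_1(D) &= \tfrac12\int_\bbT \sin(\al)\,D^2(\al)\,d\al + O(\|D\|_{L^\infty}^2\|D\|_{L^2}),\\
b_1(D) &= -\tfrac12\int_\bbT\cos(\al)\,D^2(\al)\,d\al + O(\|D\|_{L^\infty}^2\|D\|_{L^2}),
\end{align*}
so $|a_1(D)| + |b_1(D)|\lesssim\e\|D\|_{L^2}$. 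This is the same mechanism as in Lemma \ref{lem:normD} and is what allows me to later ignore the $k=1$ mode, which would otherwise be a non-coercive direction of the quadratic form.

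Next I would rewrite the area constraint as $\pi = \tfrac{\fs^2}{2}\int_\bbT\int_{-\pi}^\al \sin(\theta(\al)-\theta(\al'))\,d\al'\,d\al$ and Taylor-expand $\sin\bigl((\al-\al')+(D(\al)-D(\al'))\bigr)$ about $\al-\al'$. Using the elementary antiderivatives $\int_{-\pi}^\al\cos(\al-\al')\,d\al' = -\sin\al$ and $\int_{-\pi}^\al\sin(\al-\al')\,d\al' = 1+\cos\al$, the linear-in-$D$ contribution collapses exactly to $-b_1(D)$, which is already $O(\e\|D\|_{L^2}^2)$ by the previous step, and the quadratic piece simplifies to yield the key identity
\begin{align*}
\frac{\pi}{\fs^2}-\pi = -\tfrac12\|D\|_{L^2}^2 + \tfrac12 J(D) + O(\|D\|_{L^\infty}\|D\|_{L^2}^2),\quad J(D) := \int_\bbT\!\!\int_{-\pi}^\al\sin(\al-\al')D(\al)D(\al')\,d\al'\,d\al.
\end{align*}

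Finally I would evaluate $J(D)$ on Fourier modes. For a single mode $D = \alpha_k\cos(k\al) + \beta_k\sin(k\al)$ with $k\ge 2$, a direct product-to-sum calculation gives $J(D) = -\pi(\alpha_k^2+\beta_k^2)/(k^2-1)$, so that the contribution of this mode to $\pi/\fs^2-\pi$ is $-(\pi/2)\cdot k^2/(k^2-1)\cdot(\alpha_k^2+\beta_k^2)$, with $k^2/(k^2-1)$ bounded between $1$ and $4/3$ uniformly for $k\ge 2$. For a general mean-zero $D$, all cross terms between distinct modes $j\ne k$ with $j,k\ge 2$ vanish thanks to the orthogonality relations $\int_\bbT\cos(m\al)\,d\al = \int_\bbT\sin(m\al)\,d\al = 0$ for $m\ne 0$, and the only non-vanishing off-diagonal contributions involve the $k=1$ mode and are therefore $O(\e\|D\|_{L^2}^2)$ by the first step. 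Combining these facts yields $\pi/\fs^2-\pi \asymp -\|D\|_{L^2}^2$ with constants independent of $\e$, which (since $\fs$ is close to $1$) is equivalent to $\fs-1\asymp\|D\|_{L^2}^2$, as claimed.

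The main obstacle will be the Fourier bookkeeping in $J(D)$: because of the half-range $\al'<\al$, the kernel $\sin(\al-\al')$ is not globally diagonal in Fourier modes, so one must verify by hand that the off-diagonal couplings among $j,k\ge 2$ modes truly cancel through telescoping boundary contributions at $\al=\pm\pi$, and that the $k=1$ contamination is absorbed into the smallness of $a_1(D),b_1(D)$ from step one. Once this is done, everything else is a routine Taylor-remainder and Sobolev estimate.
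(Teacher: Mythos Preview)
Your approach is essentially the paper's: WLOG $\bar\theta=0$, Taylor-expand the area constraint to second order in $D=\theta-\al$, use the closed-string conditions to kill the first Fourier modes of $D$, and conclude by Fourier analysis. The paper organizes the expansion slightly differently (it expands $\cos\theta(\al')$ and $\sin\theta(\al)$ separately via Lemma~\ref{lem:dif-s}, so that the only quadratic cross-term is $\int_{-\pi}^{\pi}\int_{-\pi}^{\al}D(\al')\sin\al'\,D(\al)\cos\al\,d\al'd\al$, which it shows is $O(\e\|D\|_{L^2}^2)$ directly from Lemma~\ref{lem:normD}); you instead keep the bilinear form $J(D)$ and diagonalize it, obtaining the explicit eigenvalues $k^2/(k^2-1)$. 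Both are fine, yours is a bit more quantitative, the paper's a bit shorter.

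One point to tighten. When you expand $-\tfrac12\int_{\bbT}\int_{-\pi}^{\al}(D(\al)-D(\al'))^2\sin(\al-\al')\,d\al'd\al$, the $D(\al)^2$ and $D(\al')^2$ pieces each produce $-\tfrac12\int D^2(1+\cos\al)\,d\al$, so the full second-order contribution (before halving) is $-\|D\|_{L^2}^2-\int D^2\cos\al\,d\al+J(D)$, not $-\|D\|_{L^2}^2+J(D)$ as your key identity asserts. The extra term $\int D^2\cos\al\,d\al$ is \emph{not} diagonal in Fourier modes (it couples adjacent $k$ and $k\pm1$), so it cannot be absorbed into your diagonalization of $J$. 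Fortunately this is exactly where your step~1 saves you: the closed-string condition $\int_{\bbT}\cos\theta\,d\al=0$ at second order reads $b_1(D)=-\tfrac12\int D^2\cos\al\,d\al+O(\|D\|_{L^\infty}^2\|D\|_{L^2})$, so $\int D^2\cos\al\,d\al=-2b_1(D)+O(\e\|D\|_{L^2}^2)=O(\e\|D\|_{L^2}^2)$ and the term is harmless. Just make this substitution explicit; otherwise the ``simplifies'' in your key identity hides a nontrivial use of the constraint.
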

\begin{proof}
	Without loss of generality, we only consider the case $\bar\theta=0$. From Lemma \ref{lem:dif-s}, we know that
	\begin{align*}
		\frac{\fs^2-1}{\fs^2}\pi=&\int^{\pi}_{-\pi}\int^{\al}_{-\pi}\big(\cos(\al')-\cos(\theta(\al'))\big)d\al'\sin(\al)d\al\\
		&+\int^{\pi}_{-\pi}\int^{\al}_{-\pi}\cos(\theta(\al'))d\al'\big(\sin(\al)-\sin(\theta(\al))\big) d\al.
	\end{align*}
	Using Taylor expansion, we have
	\begin{align*}
		\sin(\theta)=\sin(\al)+D\cos(\al)-\frac{D^2}{2}\sin(\al)+O(D^3),\\
		\cos(\theta)=\cos(\al)-D\sin(\al)-\frac{D^2}{2}\cos(\al)+O(D^3),
	\end{align*}
	where $D$ stands for $\big(\theta(\al)-\al\big)$. Therefore, one can deduce that
	\begin{align*}
		\frac{\fs^2-1}{\fs^2}\pi=&\frac{1}{2}\int^{\pi}_{-\pi}D^2(\al)d\al+\int^{\pi}_{-\pi}\int^{\al}_{-\pi}D(\al')\sin(\al')d\al'D(\al)\cos(\al)d\al+R.
	\end{align*}
	Here $R$ is the higher order error term and satisfies $R\le C\e ||D||_{L^2}^2$.

	Using Fourier expansion, we have
	\begin{align*}
		&\int^{\al}_{-\pi}D(\al')\sin(\al')d\al'-\frac{\al}{2\pi}\int^{\pi}_{-\pi}D(\al')\sin(\al')d\al'\\
		=&\sum_{k\in \mathbb Z/0}\frac{e^{ik\al}}{4k\pi}\big(a_{k+1}(D)-a_{k-1}(D)-ib_{k+1}(D)+ib_{k-1}(D)\big)+\frac{1}{2\pi}\int_{-\pi}^{\pi}D\sin(\al)-\al D\sin(\al)d\al,\\
		D&(\al)\cos(\al)=\sum_{k\in \mathbb Z/0}\frac{e^{ik\al}}{4\pi}\big(a_{k+1}(D)+a_{k-1}(D)-ib_{k+1}(D)-ib_{k-1}(D)\big)+\frac{1}{2\pi}\int_{-\pi}^{\pi}D\cos(\al)d\al.
	\end{align*}
	 Therefore, with the help of Lemma \ref{lem:normD}, it holds that
	\begin{align*}
		&\int^{\pi}_{-\pi}\int^{\al}_{-\pi}D(\al')\sin(\al')d\al'D(\al)\cos(\al)d\al\\
		=&\int^{\pi}_{-\pi}\Big(\int^{\al}_{-\pi}D(\al')\sin(\al')d\al'-\frac{\al}{2\pi}\int^{\pi}_{-\pi}D(\al')\sin(\al')d\al'\Big)D(\al)\cos(\al)d\al\\
		&+\frac{1}{2\pi}\int_{-\pi}^{\pi}D(\al')\sin(\al')d\al'\int_{-\pi}^{\pi}\al D(\al)\cos(\al)d\al\\
		=&\frac{1}{2\pi}\int_{-\pi}^{\pi}D\sin(\al)-\al D\sin(\al)d\al\int_{-\pi}^{\pi}D\cos(\al)d\al+\frac{1}{2\pi}\int_{-\pi}^{\pi}D\sin(\al)d\al\int_{-\pi}^{\pi}\al D\cos(\al)d\al\\
		\le&C||D||_{L^2}\big(|a_1(D)|+|b_1(D)|\big)\le C\e||D||_{L^2}^2.
	\end{align*}
	This actually finishes the proof.
\end{proof}
This Lemma is a weak version of 2-dimensional Fuglede's isoperimetric inequality. For general results, we refer the readers to \cite{Fug,Fu}.
\section{Auxiliary Calculations}
\begin{lemma}
	Given $\big(\theta(\bar\theta,\al),y_s(s),\mathfrak s\big)$, let $\vu$, $\vz$, $\vn$, and $\vt$ be the functions defined in \eqref{eq:ual}, \eqref{eq:z}, and \eqref{eq:direc}, it holds that
	\begin{align*}
		\frac{d}{d\bth}\Big((\vu\cdot\vn)(\bar\theta,\al)\Big)=\frac{d}{d\bth}\Big((\vu\cdot\vt)(\bar\theta,\al)\Big)=0.
	\end{align*}
\end{lemma}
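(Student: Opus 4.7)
The plan is to exploit the observation that varying $\bar\theta$ corresponds to a rigid rotation of the entire configuration, under which the Stokes velocity transforms as a vector while its inner products with the (also-rotating) normal and tangent vectors remain invariant.

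First, I would write $\theta(\bar\theta,\al)=\mathring\theta(\al)+\bar\theta$ and note that $\mathring\theta_\al$, $\mathring\theta_{\al\al\al}$, $y_s$, $y_{ss}$, and $\mathfrak s$ are all independent of $\bar\theta$ (since adding a constant to $\theta$ does not affect $\al$-derivatives). Using the elementary identity
\begin{equation*}
\bigl(\cos\theta,\sin\theta\bigr)=R_{\bar\theta}\bigl(\cos\mathring\theta,\sin\mathring\theta\bigr),
\end{equation*}
where $R_{\bar\theta}$ denotes rotation by angle $\bar\theta$, one deduces from the reconstruction formula \eqref{eq:z} that
\begin{equation*}
\vz(\bar\theta,\al)-\vz(\bar\theta,\al')=R_{\bar\theta}\bigl[\vz(0,\al)-\vz(0,\al')\bigr],
\end{equation*}
and similarly $\vn(\bar\theta,\al)=R_{\bar\theta}\vn(0,\al)$, $\vt(\bar\theta,\al)=R_{\bar\theta}\vt(0,\al)$. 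Since $y_s$ is independent of $\bar\theta$, the transfer function $\al(s)=s+\int_{-\pi}^s y_s$ is unchanged, so the analogous rotation identity holds for $\vX$ as well.

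Next, I would verify the rotational invariance of the Stokes fundamental solution: for any rotation $R$,
\begin{equation*}
G(Rx)=\tfrac{1}{4\pi}\bigl(-\ln|x|\,Id+R\tfrac{x\otimes x}{|x|^2}R^\top\bigr)=R\,G(x)\,R^\top,
\end{equation*}
using $R\,Id\,R^\top=Id$. Combining this with the transformations above, the force vector appearing as the integrand in \eqref{eq:ual} satisfies $\vF(\bar\theta,\al')=R_{\bar\theta}\vF(0,\al')$ (all scalar coefficients are $\bar\theta$-independent; only the direction vectors $\vn,\vt$ rotate). Therefore
\begin{align*}
\vu(\bar\theta,\al)
&=\int_\bbT R_{\bar\theta}\,G(\vz(0,\al)-\vz(0,\al'))\,R_{\bar\theta}^\top\cdot R_{\bar\theta}\vF(0,\al')\,d\al'\\
&=R_{\bar\theta}\,\vu(0,\al).
\end{align*}

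Finally, since rotations preserve the Euclidean inner product,
\begin{equation*}
\vu(\bar\theta,\al)\cdot\vn(\bar\theta,\al)=\bigl(R_{\bar\theta}\vu(0,\al)\bigr)\cdot\bigl(R_{\bar\theta}\vn(0,\al)\bigr)=\vu(0,\al)\cdot\vn(0,\al),
\end{equation*}
and similarly for $\vu\cdot\vt$; hence both quantities are independent of $\bar\theta$, yielding the claimed vanishing derivatives.

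The only mildly delicate step is the verification that every piece of the integrand in \eqref{eq:ual} — including the mixed contribution from the $\vX$-integral, which uses $G(\vz(\al)-\vX(s'))$ rather than $G(\vz(\al)-\vz(\al'))$ — transforms covariantly under $R_{\bar\theta}$. This is handled exactly as above once one observes that $\vX(\bar\theta,s)-\vz(\bar\theta,\al)=R_{\bar\theta}[\vX(0,s)-\vz(0,\al)]$, which follows from $\vX(\bar\theta,s)=\vz(\bar\theta,\al(s))$ and the rotation identity for $\vz$. No other subtlety arises.
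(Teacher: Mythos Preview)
Your proof is correct and takes a genuinely different route from the paper's. The paper's proof (Appendix C) proceeds by direct computation: it differentiates $G(\vz(\bar\theta,\al)-\vz(\bar\theta,\al'))$ with respect to $\bar\theta$ explicitly, then expands $\frac{d}{d\bar\theta}\bigl(\vn\cdot\int_\bbT G\cdot f\vn\,d\al'\bigr)$ into a sum of trigonometric expressions and verifies that they cancel term by term. Your argument instead identifies the geometric content of the statement---varying $\bar\theta$ is a rigid rotation of the configuration---and uses the rotational covariance $G(Rx)=RG(x)R^\top$ of the Stokeslet to conclude $\vu(\bar\theta,\al)=R_{\bar\theta}\vu(0,\al)$, after which invariance of $\vu\cdot\vn$ and $\vu\cdot\vt$ is immediate. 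Your approach is shorter, more conceptual, and makes the underlying symmetry transparent; the paper's computation, while more laborious, has the virtue of being self-contained and not requiring the reader to check the covariance of $G$. Note also that since $\vu$ in \eqref{eq:ual} is already written entirely as an $\al'$-integral against $G(\vz(\al)-\vz(\al'))$, your remark about the $\vX$-integral is not actually needed here, though it would be if one worked from \eqref{eq:us} instead.
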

\begin{proof}
	By the definition, one has
	\begin{align*}
		\cos(\theta)=\cos(\rth+\bth),\ \sin(\theta)=\sin(\rth+\bth),\ \frac{d}{d\bth}\cos(\theta)=-\sin(\theta),\ \frac{d}{d\bth}\sin(\theta)=\cos(\theta).
	\end{align*}
	It follows that
	\begin{align*}
		&\frac{d}{d\bth}|\vz(\bth,\al)-\vz(\bth,\al')|^2\\
		=&\frac{2}{\mathfrak s^2}\int^\al_{\al'}\big(\cos(\theta(\al'')),\sin(\theta(\al''))\big)d\al''\cdot \int^\al_{\al'}\big(-\sin(\theta(\al'')),\cos(\theta(\al''))\big)d\al''\\
		=&0.
	\end{align*}
	Therefore, we deduce that
	\begin{align*}
		&4\pi\frac{d}{d\bth}G\big(\vz(\bth,\al)-\vz(\bth,\al')\big)\\
		=&\frac{\int^\al_{\al'}\big(-\sin(\theta(\al'')),\cos(\theta(\al''))\big)d\al''\otimes \int^\al_{\al'}\big(\cos(\theta(\al'')),\sin(\theta(\al''))\big)d\al''}{|\int^\al_{\al'}(\cos(\theta),\sin(\theta))d\al''|^2}\\
		&+\frac{\int^\al_{\al'}\big(\cos(\theta(\al'')),\sin(\theta(\al''))\big)d\al''\otimes \int^\al_{\al'}\big(-\sin(\theta(\al'')),\cos(\theta(\al''))\big)d\al''}{|\int^\al_{\al'}(\cos(\theta),\sin(\theta))d\al''|^2}.
	\end{align*}
	For any scalar function $f(\al)\in C(\bbT)$ which is independent of $\bth$, one can see
	\begin{align*}
		&4\pi\frac{d}{d\bth}\Big(\vn(\al)\cdot\int_\bbT G(\vz(\bth,\al)-\vz(\bth,\al'))f\cdot\vn(\bth,\al')d\al'\Big)\\
		=&-4\pi\int_\bbT f(\al')\Big(\vt(\bth,\al)\cdot G(\vz(\bth,\al)-\vz(\bth,\al'))\cdot\vn(\bth,\al')\\
		&\qquad\qquad\qquad\qquad+\vn(\bth,\al)\cdot G(\vz(\bth,\al)-\vz(\bth,\al'))\cdot\vt(\bth,\al')\Big)d\al'\\
		&+\vn(\bth,\al)\cdot\int_\bbT\frac{\int^\al_{\al'}\big(-\sin(\theta),\cos(\theta)\big)d\al''\otimes \int^\al_{\al'}\big(\cos(\theta),\sin(\theta)\big)d\al''}{|\int^\al_{\al'}(\cos(\theta),\sin(\theta))d\al''|^2}f(\al')\cdot\vn(\bth,\al')d\al'\\
		&+\vn(\bth,\al)\cdot\int_\bbT\frac{\int^\al_{\al'}\big(\cos(\theta),\sin(\theta)\big)d\al''\otimes \int^\al_{\al'}\big(-\sin(\theta),\cos(\theta)\big)d\al''}{|\int^\al_{\al'}(\cos(\theta),\sin(\theta))d\al''|^2}f(\al')\cdot\vn(\bth,\al')d\al'\\
		=&\int_\bbT f(\al')\ln|\vz(\bth,\al)-\vz(\bth,\al')|\big(-\cos(\theta(\al))\sin(\theta(\al'))+\sin(\theta(\al))\cos(\theta(\al'))\\
		&\qquad\qquad\qquad\qquad\qquad\qquad-\sin(\theta(\al))\cos(\theta(\al'))+\cos(\theta(\al))\sin(\theta(\al'))\big)d\al'\\
		&+\int_\bbT f(\al')\frac{\Big(\big(\int^\al_{\al'}\cos(\theta)d\al''\big)^2-\big(\int^\al_{\al'}\sin(\theta)d\al''\big)^2\Big)\sin\big(\theta(\al)+\theta(\al')\big)}{|\int^\al_{\al'}(\cos(\theta),\sin(\theta))d\al''|^2}d\al'\\
		&-\int_\bbT f(\al')\frac{2\big(\int^\al_{\al'}\cos(\theta)d\al''\big)\big(\int^\al_{\al'}\sin(\theta)d\al''\big)\Big)\cos\big(\theta(\al)+\theta(\al')\big)}{|\int^\al_{\al'}(\cos(\theta),\sin(\theta))d\al''|^2}d\al'\\
		&-\int_\bbT f(\al')\frac{\Big(\big(\int^\al_{\al'}\cos(\theta)d\al''\big)^2-\big(\int^\al_{\al'}\sin(\theta)d\al''\big)^2\Big)\sin\big(\theta(\al)+\theta(\al')\big)}{|\int^\al_{\al'}(\cos(\theta),\sin(\theta))d\al''|^2}d\al'\\
		&+\int_\bbT f(\al')\frac{2\big(\int^\al_{\al'}\cos(\theta)d\al''\big)\big(\int^\al_{\al'}\sin(\theta)d\al''\big)\Big)\cos\big(\theta(\al)+\theta(\al')\big)}{|\int^\al_{\al'}(\cos(\theta),\sin(\theta))d\al''|^2}d\al'\\	
		=&0.
	\end{align*}
	For the same reason, it holds that
	\begin{align*}
		&\frac{d}{d\bth}\Big(\vt(\bth,\al)\cdot\int_\bbT G(\vz(\bth,\al)-\vz(\bth,\al'))f\cdot\vn(\bth,\al')d\al'\Big)=0,\\
		&\frac{d}{d\bth}\Big(\vn(\bth,\al)\cdot\int_\bbT G(\vz(\bth,\al)-\vz(\bth,\al'))f\cdot\vt(\bth,\al')d\al'\Big)=0,\\
		&\frac{d}{d\bth}\Big(\vt(\bth,\al)\cdot\int_\bbT G(\vz(\bth,\al)-\vz(\bth,\al'))f\cdot\vt(\bth,\al')d\al'\Big)=0.
	\end{align*}
	Thus, by \eqref{eq:ual} we arrive at
	\begin{align*}
		\frac{d}{d\bth}\Big(\vu(\vz(\bth,\al))\cdot\vn(\bth,\al)\Big)=0,\ \frac{d}{d\bth}\Big(\vu(\vz(\bth,\al))\cdot\vt(\bth,\al)\Big)=0.
	\end{align*}
\end{proof}
\end{appendix}
\section*{Acknowledgment}
The author wishes to express his thanks to Prof. Zhifei Zhang and Prof. Wei Wang for suggesting the problem and for many helpful discussions.

\end{document}